\let\oldtocsection=\tocsection
\let\oldtocsubsection=\tocsubsection
\let\oldtocsubsubsection=\tocsubsubsection
\renewcommand{\tocsection}[2]{\hspace{0em}\oldtocsection{#1}{#2}}
\renewcommand{\tocsubsection}[2]{\hspace{1em}\oldtocsubsection{#1}{#2}}
\renewcommand{\tocsubsubsection}[2]{\hspace{2em}\oldtocsubsubsection{#1}{#2}}
\newcommand{\into}{{\triangleright}}
\newcommand{\downa}{{\downarrow}}
\newcommand{\op}{\operatorname}
\newcommand{\co}{\colon}
\newcommand{\id}{\mathrm{id}}
\newcommand{\NN}{\mathbb{N}}
\newcommand{\RR}{\mathbb{R}}
\newcommand{\ZZ}{\mathbb{Z}}
\newcommand{\calA}{\mathcal{A}}
\newcommand{\calB}{\mathcal{B}}
\newcommand{\calC}{\mathcal{C}}
\newcommand{\calD}{\mathcal{D}}
\newcommand{\calE}{\mathcal{E}}
\newcommand{\calF}{\mathcal{F}}
\newcommand{\calG}{\mathcal{G}}
\newcommand{\calI}{\mathcal{I}}
\newcommand{\calJ}{\mathcal{J}}
\newcommand{\calK}{\mathcal{K}}
\newcommand{\calL}{\mathcal{L}}
\newcommand{\calM}{\mathcal{M}}
\newcommand{\calO}{\mathcal{O}}
\newcommand{\calP}{\mathcal{P}}
\newcommand{\calQ}{\mathcal{Q}}
\newcommand{\calS}{\mathcal{S}}
\newcommand{\calT}{\mathcal{T}}
\newcommand{\calU}{\mathcal{U}}
\newcommand{\calV}{\mathcal{V}}
\newcommand{\calX}{\mathcal{X}}
\newcommand{\calY}{\mathcal{Y}}
\newcommand{\calZ}{\mathcal{Z}}
\newtheorem{thm}{Theorem}[section]
\newtheorem*{thm*}{Theorem}
\newtheorem{lem}[thm]{Lemma}
\newtheorem{cor}[thm]{Corollary}
\newtheorem{prop}[thm]{Proposition}
\theoremstyle{definition}
\newtheorem{defi}[thm]{Definition}
\newtheorem{cons}[thm]{Construction}
\theoremstyle{remark}
\newtheorem{rem}[thm]{Remark}
\newtheorem{obs}[thm]{Observation}
\newtheorem{que}[thm]{Question}
\title{Operad groups and their finiteness properties}
\author{Werner Thumann}
\address{Karlsruhe Institute of Technology, Karlsruhe, Germany}
\subjclass[2010]{Primary 20F65; Secondary 57M07, 20F05, 18D50}
\keywords{Thompson groups, operads, finiteness properties}
\begin{document}

\begin{abstract}
	We propose a new unifying framework for Thompson-like groups using a well-known device called operads and category
	theory as language. We discuss examples of operad groups which have appeared in the literature before. As a first
	application, we proof a theorem which implies that planar or symmetric or braided operads with transformations 
	satisfying some finiteness conditions yield operad groups of type $F_\infty$. This unifies and extends existing 
	proofs that certain Thompson-like groups are of type $F_\infty$.
\end{abstract}
\maketitle
\tableofcontents

\section{Introduction}

In unpublished notes of 1965, Richard Thompson defined three interesting groups $F,T,V$. For example, $F$ is the group of all
orientation preserving piecewise linear homeomorphisms of the unit interval with breakpoints lying in the dyadic rationals and with slopes being
powers of $2$. It has the presentation
\[F=\big\langle x_0,x_1,x_2,\ldots\mid x_k^{-1}x_n x_k=x_{n+1}\text{ for }k<n \big\rangle\]
In the subsequent years until the present days,
hundreds of papers have been devoted to these and to related groups. The reason for this is that they have the
ability to unite seemingly incompatible properties. For example, Thompson showed that $V$ is an infinite finitely-presented
simple group which contains every finite group as a subgroup. Even more is true:
Brown showed in \cite{bro:fpo} that $V$ is of type $F_\infty$ which means that there is a classifying space for $V$ with
finitely many cells in every dimension. For $F$, this was proven by
Brown and Geoghegan in \cite{br-ge:ait}.
They also showed that $H^k(F,\ZZ F)=0$ for every $k\geq 0$. This implies in particular that all homotopy groups of $F$ at infinity
vanish and that $F$ has infinite cohomological dimension. Thus, they found the first example of an infinite dimensional torsion-free 
group of type $F_\infty$. In \cite{br-sq:gop}, Brin and Squier showed that $F$ is a free group free group, i.e.~contains no non-abelian free
subgroups. Geoghegan conjectured in 1979 that $F$ is non-amenable. If this is true, $F$ would be an elegant counterexample to the von Neumann
conjecture. Ol'shanskii disproved the von Neumann conjecture around 1980 by giving a different counterexample (see \cite{mon:gop} and the references 
therein). Despite several attempts of various authors, the amenability question for $F$ still seems to be open at the time of writing.
During the 1970s, Thompson's group $F$ was rediscovered twice: In the context of homotopy theory by Freyd and Heller \cite{fr-he:shi} and
in connection with a problem in shape theory by Dydak \cite{dyd:asp}.

Since the introduction of the classical Thompson groups $F,T$ and $V$, a lot of generalizations have appeared in the literature 
which have a ``Thompson-esque'' feeling to them. Among them are the so-called diagram or picture groups \cite{gu-sa:dg},
various groups of piecewise linear homeomorphisms of the unit interval \cite{ste:gop}, groups acting on ultrametric spaces
via local similarities \cite{hug:lsa}, higher dimensional Thompson groups $nV$ \cite{bri:hdt} and the braided Thompson group
$BV$ \cite{bri:tao}. A recurrent theme in the study of these groups are topological finiteness properties, most notably
property $F_\infty$. The proof of this property
is very similar in each case, going back to a method of Brown, the Brown criterion \cite{bro:fpo}, and a technique of Bestvina
and Brady, the discrete Morse Lemma for affine complexes \cite{be-br:mta}. This program has been conducted in all the above
mentioned classes of groups: For diagram or picture groups in \cites{far:fac,far:haf}, for the piecewise linear
homeomorphisms in \cite{ste:gop}, for local similarity groups in \cite{fa-hu:fpo}, for the higher dimensional Thompson groups 
in \cite{f-m-w-z:tbg} and for the braided Thompson group in \cite{b-f-m-w-z:tbt}.

The main motivation to define the class of operad groups, which are the central objects in this article, was to find a framework 
in which a lot of the Thompson-like groups could be recovered and in which the established techniques 
could be performed to show property $F_\infty$, thus unifying and extending existing proofs in the literature.
The main device to define these groups are discrete operads. Operads are well established objects whose importance 
in mathematics and physics has steadily increased during the last decades. Representations of operads constitute algebras of various 
types and consequently find applications in such diverse areas as Lie-Theory, Noncommutative Geometry, Algebraic Topology, Differential 
Geometry, Field Theories and many more. To apply our $F_\infty$ theorem to a given Thompson-like group, one has to find the operadic 
structure underlying the group. Then one has to check whether this operad satisfies certain finiteness conditions. In a lot of cases, the
proofs of these conditions are either trivial or straightforward.

\subsection{Structure of the article}

Our language will be strongly category theory flavoured. Although we assume the basics of category theory, we collect and recall
in Section \ref{31364} all the tools we will need for the definition of operad groups and for our main result. We lay
a particular emphasis on topological aspects of categories by considering categories as topological objects via the nerve functor.
This can be made precise by endowing the category of (small) categories with a model structure Quillen equivalent to the usual
homotopy category of spaces, but we won't use this fact. In Subsection \ref{00484}, we will discuss a tool which
is probably not so well-known as the others. There, we introduce the discrete Morse method for categories in analogy to the
one for simplicial complexes: With the help of a Morse function, a category can be filtered by a nested sequence of full 
subcategories. The relative connectivity of such a filtration is controlled by the connectivity of certain categories associated to each
filtration step, the so-called descending links. This can be used to compute lower bounds for the connectivity of categories.

In Section \ref{97985}, we will introduce the main objects of this article, the so-called operad groups. Before we do this, we recall
the notion of operads (internal to the category of sets). This is an abstract algebraic structure generalizing that of a monoid.
It comes with an associative multiplication and with identity elements. However, elements in an operad, which are called operations,
can be of higher arity (or degree): An operation posseses several inputs and one output. If we have an operation with $n$ inputs, then we can
plug the outputs of $n$ other operations into the inputs of the first one, yielding composition maps for the operad. This concept can 
be generalized even more: Just as one proceeds from monoids to categories by introducing further objects, we can introduce colors to operads 
and label the inputs and outputs of operations with these colors. Then we require that the composition maps respect this coloring. Furthermore, we
can introduce actions of the symmetric or braid groups on the inputs of the operations and obtain symmetric or braided operads.

We then attach, in a very natural way, a category to each operad, called the category of operators. When taking fundamental groups
of these categories, we arrive at the concept of operad groups. In Subsection \ref{13887}, we then discuss
some examples of operads and corresponding operad groups. We will see that all of the Thompson-like groups mentioned in the first part of the
introduction can be realized as operad groups. Furthermore, we give new examples and even a procedure how to generate a lot of
these Thompson-like groups as operad groups associated to suboperads of endomorphism operads.

We will also discuss so-called operads with transformations which are operads with invertible degree $1$ operations. In this context,
we will introduce very elementary and elementary operations. These model in some sense the generators and relations in such an operad
with transformations. In particular, we can define what it means for such an operad to be finitely generated or of finite type.
This will be important in Section \ref{17057} where we prove the following
\begin{thm*}
	Let $\calO$ be a finite type (symmetric/braided) operad with transformations which is color-tame and such that there are only 
	finitely many colors and degree $1$ operations. Assume further that $\calO$ satisfies the cancellative calculus of 
	fractions. Then the operad groups associated to $\calO$ are of type $F_\infty$. 
\end{thm*}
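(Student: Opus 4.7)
The plan is to apply Brown's criterion to an action of an operad group $G$ associated to $\calO$ on a suitable contractible CW-complex, following the Bestvina-Brady Morse-theoretic strategy that has become standard for Thompson-like groups. Since $\calO$ satisfies the cancellative calculus of fractions, the category of operators embeds into its groupoid of fractions, so $G$ can be presented concretely as equivalence classes of formal fractions of elementary operations at a fixed basepoint. I would construct a Stein-Farley style poset (or simplicial/cubical complex) $X$ whose vertices are \emph{expansions} at the basepoint, obtained by iteratively plugging in very elementary operations, and whose higher cells record collections of very elementary operations that can be applied independently. Directedness coming from the calculus of fractions together with a filtered-colimit contractibility argument should then show that $X$ is contractible.

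Next, I would put a Morse function on $X$ given by the total arity of an expansion and consider the sublevel sets $X_{\le n}$. The Brown criterion reduces property $F_\infty$ to three claims: each $X_{\le n}$ has finitely many $G$-orbits of cells in every dimension; each cell stabilizer is of type $F_\infty$; and the relative connectivity of $(X, X_{\le n})$ tends to infinity with $n$. The finiteness of the color set and of the set of degree $1$ operations, combined with the finite type hypothesis, should yield the first property, while cell stabilizers are built from finite products of subgroups of the (finite) degree $1$ operation groups and are therefore trivially of type $F_\infty$.

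For the connectivity of the filtration I would invoke the discrete Morse method for categories from Subsection \ref{00484}, reducing the problem to showing that descending links of vertices of arity $n$ become arbitrarily highly connected as $n$ grows. Using the operadic grafting structure, such a descending link should decompose as a join (or a join-like complex) over the inputs of the vertex, each factor being a local complex of very elementary operations at a fixed color. Here the color-tame hypothesis is crucial: it guarantees that high-arity expansions contribute sufficiently many independent join factors, so that even modest connectivity of the small local complexes propagates to high connectivity of the whole descending link. I expect this step, namely setting up a matching-complex-like construction for very elementary operations and proving that it is highly connected in the presence of the finite type bound, to be the main technical obstacle, as it must combine the factorization structure supplied by the calculus of fractions with color-tameness in a uniform way.

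Assembling contractibility of $X$, the finiteness and cofinality of the filtration $\{X_{\le n}\}$, the $F_\infty$ property of cell stabilizers, and the diverging connectivity of descending links, Brown's criterion then yields property $F_\infty$ for every operad group associated to $\calO$.
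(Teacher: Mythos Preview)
Your overall architecture matches the paper's: Brown's criterion applied to a contractible poset (the paper uses the universal covering category $\calU_X(\calS)$ modulo transformations rather than a Stein--Farley cube complex, but these are essentially the same object), a degree filtration, and the Morse method for descending links. The stabilizer and finite-type-filtration steps are also broadly correct, though note that in the braided case stabilizers involve braid groups, so ``trivially $F_\infty$'' still needs the fact that $B_k$ is $F_\infty$.

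The genuine gap is in your descending link analysis. Your claim that the descending link ``decomposes as a join over the inputs of the vertex, each factor being a local complex of very elementary operations at a fixed color'' is only close to true in the \emph{planar} case. In the symmetric and braided cases a very elementary operation can be applied to \emph{any} tuple of inputs after a permutation or braiding, not just to a contiguous block, so there is no factorization into local pieces indexed by inputs. The paper's core step (Proposition~\ref{19549}) identifies this part of the descending link with an arc complex $\mathcal{AC}_d(C,A;T)$ in $\RR^d$ with $d=1,2,3$ for the planar, braided, and symmetric cases respectively, and proves its connectivity separately (Theorem~\ref{85504}) by an argument in the style of \cite{b-f-m-w-z:tbt} that is substantially harder than a join calculation, especially for $d=2$. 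Color-tameness enters not by producing ``many join factors'' but by ensuring these arc complexes are nonempty once the degree is large enough.

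There is a second, related gap: the descending link in $\calU/\calG$ contains \emph{all} nontrivial arrows out of $\calK$, not only very elementary ones. The paper handles this via a three-layer filtration $\op{Core}\subset\op{Corona}\subset\underline{lk}_\downa(\calK)$: the Core uses very elementary arrows, the Corona uses elementary arrows, and one shows (i) $\op{Corona}\hookrightarrow\underline{lk}_\downa$ is a homotopy equivalence using the spine property of the set $E$ of elementary classes, and (ii) building the Corona from the Core via a second Morse argument. The hypothesis \emph{finite type} (finiteness of $E$, not just of the very elementary classes) is used precisely in step (ii), and your sketch does not account for it.
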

The conditions are explained in the text and are usually not hard to verify in practice. The proof proceeds roughly as follows and the
ideas are mainly inspired by \cites{bro:fpo,be-br:mta,f-m-w-z:tbg,b-f-m-w-z:tbt,ste:gop}. Denote by $\calS$ the category of operators
of $\calO$. We then can look at the universal covering category $\calU$ of $\calS$ which is contractible due to the conditions in the
theorem. We mod out the isomorphisms in $\calU$ and obtain the quotient category $\calU/\calG$ which is still contractible. The operad group 
$\Gamma$, which is the fundamental group of $\calS$, acts on $\calU$ by deck transformations. This induces an action on $\calU/\calG$.
Brown's criterion applied to this action yields that $\Gamma$ is of type $F_\infty$ if we show that the isotropy groups of the action 
are of type $F_\infty$ and if we find a filtration by invariant finity type subcategories with relative connectivity tending to infinity. The latter
is shown by appealing to the discrete Morse method for categories mentioned earlier. Thus, we have to inspect the connectivity of certain descending
links. This is the hardest part of the proof. We filter each descending link by two subcategories, the core and the corona.
The core is related to certain arc complexes in $\RR^d$ with $d=1,2,3$. A lower bound for the connectivity of these complexes is given in
Theorem \ref{85504}. The connectivity for the corona and for the whole descending link is then deduced from the connectivity of
the core by using again the discrete Morse method.

\subsection{Notation and conventions}

When $f\co A\rightarrow B$ and $g\co B\rightarrow C$ are two composable arrows, we write $f*g$ or $fg$ for the composite $A\rightarrow C$
instead of the usual notation $g\circ f$. Consequently, it is often better to plug in arguments from the left. When we do this, 
we use the notation $x\into f$ for the evaluation of $f$ at $x$. However, we won't entirely drop the usual notation $f(x)$ and use 
both notations side by side. Objects of type $\operatorname{Aut}(X)$ will be made into a group by the definition $f\cdot g:=f*g$.
Conversely, a group $G$ is considered as a groupoid with one object and arrows the elements in $G$ together with the composition
$f*g:=f\cdot g$.

\subsection{Acknowledgements}

I want to thank my adviser Roman Sauer for the opportunity to pursue mathematics, for his guidance, encouragement and
support over the last few years. I also gratefully acknowledge financial support by the DFG grants 1661/3-1 and 1661/3-2.

\section{Preliminaries on categories}\label{31364}

In this section, we review some aspects of category theory which we need for later considerations. In particular, we want to
emphasize the concept of seeing categories as topological objects. Note that everything, except the Morse method for categories
explained in Subsection \ref{00484}, should be mathematical folklore and we make no claim of originality.

\subsection{Comma categories}

Let $\calA\xrightarrow{f}\calC\xleftarrow{g}\calB$ be two functors. Then the comma category $f\downa g$ has as objects all the 
triples $(A,B,\gamma)$ where $A$ resp.~$B$ is an object in $\calA$ resp.~$\calB$ and $\gamma\co f(A)\rightarrow g(B)$ is an arrow in 
$\calC$. An arrow from $(A,B,\gamma)$ to $(A',B',\gamma')$ is a pair $(\alpha,\beta)$ of arrows $\alpha\co A\rightarrow A'$ in $\calA$ 
and $\beta\co B\rightarrow B'$ in $\calB$ such that the diagram
\begin{displaymath}\xymatrix{
	f(A)\ar[r]^{\gamma}\ar[d]_{f(\alpha)} & g(B)\ar[d]^{g(\beta)}\\
	f(A')\ar[r]_{\gamma'} & g(B')
}\end{displaymath}
commutes. Composition is given by composing the components.

If $f$ is the inclusion of a subcategory, we write $\calA\downa g$ for the comma category $f\downa g$. Furthermore, if $\calA$ 
is just a subcategory with one object $A$ and its identity arrow, we write $A\downa g$. In this case, the objects of the comma 
category are pairs $(B,\gamma)$ where $B$ is an object in $\calB$ and $\gamma\co A\rightarrow g(B)$ is an arrow. An arrow from 
$(B,\gamma)$ to $(B',\gamma')$ is an arrow $\beta\co B\rightarrow B'$ such that the triangle 
\begin{displaymath}\xymatrix{
	& g(B)\ar[dd]^{g(\beta)}\\
	A\ar[ur]^{\gamma}\ar[dr]_{\gamma'}&\\
	& g(B')
}\end{displaymath}
commutes. Of course, there are analogous abbreviations for the right factor. 
%For example, the symbol $\calA\downa B$ with $\calA$ a subcategory of $\calC$ and $B$ an object in $\calC$ makes sense.

\subsection{The classifying space of a category}\label{53419}

We assume that the reader is familiar with the basics of simplicial sets (see e.g.~\cite{go-ja:sht}).
The nerve $N(\calC)$ of a category $\calC$ is a
simplicial set defined as follows: A $k$-simplex is a sequence
\[A_0\xrightarrow{\alpha_0}A_1\xrightarrow{\alpha_1}\ldots\xrightarrow{\alpha_{k-1}}A_k\]
of $k$ composable arrows. The $i$'th face map $d_i\co N(\calC)_k\rightarrow N(\calC)_{k-1}$ is given by composing the arrows
at the object $A_i$. When $i$ is $0$ or $k$, then the object $A_i$ is removed from the sequence instead. The $i$'th
degeneracy map $s_i\co N(\calC)_k\rightarrow N(\calC)_{k+1}$ is given by inserting the identity at the object $A_i$.

The geometric realization $|N(\calC)|$ of $N(\calC)$ is a CW-complex which we call the classifying space $B(\calC)$ of 
$\calC$. See \cite{wei:wdt} for the reason why this is called a classifying space. If the category $\calC$ is a group, then 
$B(\calC)$ is the usual classifying space of the group which is defined as the unique space (up to homotopy equivalence) with 
fundamental group the given group and with higher homotopy groups vanishing.

Since we can view any category as a space via the above construction, any topological notion or concept can be
transported to the world of categories. For example, if we say that the category $\calC$ is connected, then
we mean that $B(\calC)$ is connected. Of course, one can easily think of an intrinsic definition of connectedness
for categories and we will give some for other topological concepts below. But there are also concepts for which
a combinatorial description is at least unknown, for example higher homotopy groups.

Transporting topological concepts to the category $\mathtt{CAT}$ of (small) categories via the nerve functor can be made 
precise: The Thomason model structure on $\mathtt{CAT}$ \cites{tho:caa,cis:lcd} is a model structure Quillen equivalent
to the usual model structure on $\mathtt{SSET}$, the category of simplicial sets.

Every simplicial complex is homeomorphic to the classifying space of some category:
A simplicial complex can be seen as a partially ordered set of simplices with the order relation given by the face relation.
Moreover, a partially ordered set (poset) is just a category with at most one arrow between any two objects. The
classifying space of a poset coming from a simplicial complex is exactly the barycentric subdivision of the simplicial complex.
%\begin{displaymath}\xymatrix@-12pt{
%	&&\bullet\ar[ddl]\ar[ddr]\ar[ddd]&&\\
%	&&&&\\
%	&\bullet\ar[dr]&&\bullet\ar[dl]&\\
%	&&\bullet &&\\
%	\bullet\ar[uur]\ar[urr]\ar[rr]&&\bullet\ar[u]&&\bullet\ar[uul]\ar[ull]\ar[ll]	
%}\end{displaymath}

Even more is true: McDuff showed in \cite{mcd:otc} that for each connected simplicial complex there is a monoid
(i.e.~a category with only one object) with classifying space homotopy equivalent to the given complex.
Thus, every path-connected space has the weak homotopy type of some monoid. For example, observe the monoid 
consisting of the identity element and elements $x_{ij}$ with multiplication rules $x_{ij}x_{kl}=x_{il}$. In \cite{fie:act}
it is shown that its classifying space is homotopy equivalent to the $2$-sphere.

\subsection{The fundamental groupoid of a category}

Following the philosophy of transporting topological concepts to categories via the nerve functor, we define the 
fundamental groupoid $\pi_1(\calC)$ of a category $\calC$ to be the fundamental groupoid of its classifying space.
There is also an intrinsic description of the fundamental groupoid of $\calC$ in terms of the category itself 
which we will describe now (see e.g.~\cite{go-ja:sht}*{Chapter III, Corollary 1.2} that these two notions are
indeed the same up to equivalence). The objects of $\pi_1(\calC)$ are the objects of $\calC$ and the arrows of $\pi_1(\calC)$ are paths
modulo homotopy. Here, a path in $\calC$ from an object $A$ to an object $B$ is a zig-zag of morphisms from $A$ to $B$,
i.e.~starting from $A$, one travels from object to object over the arrows of $\calC$, regardless of the direction of the 
arrows. For example, the following zig-zag is a path in $\calC$
\[A\leftarrow C_1\rightarrow C_2\leftarrow C_3\leftarrow C_4\rightarrow C_5\rightarrow B\]
Paths can be concatenated in the obvious way. The homotopy relation on paths is the smallest equivalence 
relation respecting the operation of concatenation of paths generated by the following elementary relations:
\begin{eqnarray*}
	A\xrightarrow{\alpha}B\xrightarrow{\beta}C & \sim & A\xrightarrow{\alpha\beta}C \\
	A\xleftarrow{\alpha}B\xleftarrow{\beta}C & \sim & A\xleftarrow{\beta\alpha}C \\
	A\xrightarrow{\alpha}B\xleftarrow{\alpha}A & \sim & A \\
	A\xleftarrow{\alpha}B\xrightarrow{\alpha}A & \sim & A \\
	A\xrightarrow{\id}A & \sim & A\\
	A\xleftarrow{\id}A & \sim & A
\end{eqnarray*}
where the $A$'s on the right represent the empty path at $A$. Composition in $\pi_1(\calC)$ is given by 
concatenating representatives. The identities are represented by the empty paths. If $A$ is an object of $\calC$ 
then we denote by $\pi_1(\calC,A)$ the automorphism group of $\pi_1(\calC)$ at $A$ and call it the fundamental 
group of $\calC$ at $A$.

The fundamental groupoid of $\calC$ has two further descriptions: First, denote by $G$ the left adjoint functor to the
inclusion functor from groupoids to categories. Then we have $\pi_1(\calC)=G(\calC)$. Second, it is the localization 
$\calC[\calC^{-1}]$ of $\calC$ (at all its morphisms) since it comes with a canonical functor
$\varphi\co \calC\rightarrow\pi_1(\calC)$ satisfying the following universal property:
Having any other functor $\eta\co \calC\rightarrow\calA$ with the property that $\eta(f)$ is an isomorphism in 
$\calA$ for every arrow $f$ in $\calC$, then there is a unique functor $\epsilon\co \pi_1(\calC)\rightarrow\calA$ 
such that $\varphi\epsilon=\eta$.
\begin{displaymath}\xymatrix{
	\calC\ar[r]^\varphi\ar[dr]_\eta & \pi_1(\calC)\ar@{.>}[d]^\epsilon\\
	&\calA
}\end{displaymath}

\subsection{Coverings of categories}\label{78615}

Let $P\co \calD\rightarrow\calC$ be a functor. We say that $P$ is a covering if for every arrow $a$ in $\calC$
and every object $X$ in $\calD$ which projects via $P$ onto the domain or the codomain of $a$, there exists
exactly one arrow $b$ in $\calD$ with domain resp.~codomain $X$ and projecting onto $a$ via $P$. In other
words, arrows can be lifted uniquely provided that the lift of the domain or codomain is given. Of course,
$P$ yields a map on the classifying spaces. To justify the definition of covering functor, we have the following:

\begin{prop}
	Let $P\co \calD\rightarrow\calC$ be a functor. Then $P$ is a covering functor if and only if
	$BP\co B\calD\rightarrow B\calC$ is a covering map of spaces.
\end{prop}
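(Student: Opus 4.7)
The plan is to prove both implications using the nerve construction as the translation between the categorical and topological formulations.

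The combinatorial starting point is to upgrade arrow lifting to simplex lifting. If $P$ is a covering functor, then for every non-degenerate $k$-simplex $\sigma = (A_0 \xrightarrow{\alpha_0} A_1 \xrightarrow{\alpha_1} \cdots \xrightarrow{\alpha_{k-1}} A_k)$ in $N\calC$, every index $0 \leq i \leq k$, and every object $X \in \calD$ with $P(X) = A_i$, there is a unique $k$-simplex $\tilde\sigma = (Y_0 \to \cdots \to Y_k)$ in $N\calD$ with $NP(\tilde\sigma) = \sigma$ and $Y_i = X$. One iterates the covering property: the arrows $\alpha_j$ with $j \geq i$ are lifted forward with prescribed domains, and the arrows $\alpha_j$ with $j < i$ are lifted backward with prescribed codomains. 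Since the unique lift of an identity with prescribed domain is itself an identity, lifts of non-identity arrows are automatically non-identity, so $\tilde\sigma$ remains non-degenerate.

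For the forward implication, I would exhibit evenly covered neighborhoods using this lifting. For each non-degenerate $\sigma$ in $N\calC$, let $U_\sigma \subseteq B\calC$ be its open star, namely the union of open interiors of non-degenerate simplices having $\sigma$ as a face. The sets $U_\sigma$ form an open cover of $B\calC$. The simplex-lifting property yields a bijection between lifts $\tilde\sigma$ of $\sigma$ in $N\calD$ and elements of $P^{-1}(A_0)$, together with a bijective correspondence (for each such lift) between non-degenerate simplices of $N\calD$ containing $\tilde\sigma$ as a face and those of $N\calC$ containing $\sigma$ as a face. This yields a decomposition $BP^{-1}(U_\sigma) = \coprod_{\tilde\sigma} U_{\tilde\sigma}$ on which $BP$ restricts to a homeomorphism $U_{\tilde\sigma} \to U_\sigma$ on each summand.

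For the converse, assume $BP$ is a covering. Since $BP$ is a local homeomorphism, $P$ must send non-identity arrows to non-identity arrows: otherwise the 1-cell of a non-identity arrow in $B\calD$ would be collapsed to a point in $B\calC$. Let $a\co A \to B$ in $\calC$ and $X \in P^{-1}(A)$. If $a = \id_A$, any lift must be degenerate, namely $\id_X$. If $a$ is non-identity, the characteristic path $\gamma\co [0,1] \to B\calC$ of the $a$-cell lifts uniquely to $\tilde\gamma\co [0,1] \to B\calD$ starting at $X$ by unique path lifting. The preimage of the open interior of the $a$-cell under $BP$ is a disjoint union of interiors of 1-cells of $B\calD$ (with no contribution from higher cells, by the dimension preservation just established), and by connectedness $\tilde\gamma((0,1))$ lies in a single such 1-cell, corresponding to a unique arrow $b\co X \to Y$ with $P(b) = a$. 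The symmetric argument handles lifts with prescribed codomain.

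The main obstacle lies in the forward direction, where the disjointness $BP^{-1}(U_\sigma) = \coprod_{\tilde\sigma} U_{\tilde\sigma}$ is subtle when $\sigma$ appears as a face of a larger simplex at multiple vertex positions, which happens in classifying spaces of groups or more generally when objects repeat as vertices within a single non-degenerate simplex. One resolves this by noting that if two distinct lifts $\tilde\sigma_1, \tilde\sigma_2$ appeared as faces of the same $\tilde\tau$ at the same vertex positions, then simplex-lifting uniqueness would force $\tilde\sigma_1 = \tilde\sigma_2$; the case of differing positions is handled by refining to open stars in the barycentric subdivision of $B\calC$, where each barycenter is an unambiguous vertex with a well-defined fiber.
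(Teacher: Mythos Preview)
Your approach is correct but takes a different route from the paper. The paper reduces everything to a cited result of Gabriel--Zisman: $|f|\colon|X|\to|Y|$ is a topological covering if and only if $f\colon X\to Y$ is a covering of simplicial sets, meaning every $n$-simplex lifts uniquely once a lift of one vertex is prescribed. Granting this, each direction becomes a one-liner: the covering-functor condition is exactly the simplex-lifting condition for $n=1$, and for nerves the case of general $n$ follows from $n=1$ by the Segal condition (an $n$-simplex in a nerve is determined by its spine of $1$-simplices). Your opening step---upgrading arrow lifting to simplex lifting---is this same observation; the difference is that you then attempt to prove the Gabriel--Zisman equivalence by hand rather than citing it.

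Your reverse implication (topological covering $\Rightarrow$ covering functor) is sound and in fact more explicit than the paper's. In the forward direction you correctly isolate the genuine difficulty: open stars in the realization of a simplicial \emph{set} need not be evenly covered, because a simplex $\sigma$ can occur as a face of a larger $\tau$ at several positions, so a single cell of $\tau$ may meet the open stars of two distinct lifts of $\sigma$. Your barycentric-subdivision fix is the right idea and can be made to work, but it needs more care than you give it: one subdivision does \emph{not} produce a simplicial complex (two barycenters can still be joined by multiple edges, one for each face inclusion). What actually makes the argument succeed is that distinct lifts $\tilde\sigma_1,\tilde\sigma_2$ of $\sigma$ have equal dimension and therefore cannot both occur in a single strictly increasing flag; combined with the bijection (from simplex lifting) between flags through a fixed $\tilde\sigma$ and flags through $\sigma$, this yields the disjoint-sheet decomposition. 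You should either spell this out fully or, as the paper does, appeal to the literature for this step.
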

\begin{proof}
	By \cite{ga-zi:cof}*{Appendix I, 3.2}, $BP=|NP|\co |N\calD|\rightarrow|N\calC|$ is a covering map if
	and only if $NP\co N\calD\rightarrow N\calC$ is a covering of simplicial sets as defined in
	\cite{ga-zi:cof}*{Appendix I, 2.1}. This means that every $n$-simplex in $N\calC$ uniquely lifts to
	$N\calD$ provided that the lift of a vertex of the simplex is given. The lifting property for $P$ as defined
	above says that this is true for $1$-simplices. So it is clear that $P$ is a covering functor provided that
	$BP$ is a covering map of spaces. For the converse implication, one exploits special properties of nerves of categories. 
	Not every simplicial set arises as the nerve of a category. The Segal condition gives a necessary and sufficient 
	condition for a simplicial set to come from a category: Every horn $\Lambda^i_n$ for $0<i<n$ can be uniquely filled by 
	an $n$-simplex. Using this, the lifting property for $1$-simplices implies the lifting property for $n$-simplices.
\end{proof}

Now let $\calC$ be a category and $X$ an object in $\calC$. Observe the canonical functor
$\varphi\co \calC\rightarrow\pi_1(\calC)$. Define $\calU_X(\calC)$ to be the category $X\downa\varphi$. The canonical
projection $\calU_X(\calC)\rightarrow\calC$ sending an object $(B,\gamma)$ to $B$ is a covering. Furthermore, $\calU_X(\calC)$
is simply connected, i.e.~connected and its fundamental groupoid is equivalent to the terminal category (see \cite{par:ucc}). 
So it deserves the name universal covering category. More precisely, it is the universal covering of the component of $\calC$
which contains the object $X$.

There is a canonical functor $\pi_1(\calC)\rightarrow\mathtt{CAT}$ taking objects $X$ to the category $\calU_X(\calC)$ 
and an arrow $f\co X\rightarrow Y$ to a functor $\calU_X(\calC)\rightarrow\calU_Y(\calC)$ which is given by precomposition
with $f^{-1}$. Fixing the object $X$, this functor restricts to a functor $\pi_1(\calC,X)\rightarrow\mathtt{CAT}$ 
sending the unique object of the group $\pi_1(\calC,X)$ to the universal covering $\calU_X(\calC)$. This is the same as a
representation of $\pi_1(\calC,X)$ in $\mathtt{CAT}$, i.e.~a group homomorphism $\rho\co \pi_1(\calC,X)\rightarrow
\op{Aut}\big(\calU_X(\calC)\big)$ into the group of invertible functors with multiplication given by $f\cdot g:=f*g$. 
Equivalently, this is a right action of the group $\pi_1(\calC,X)$ on $\calU_X(\calC)$ given by the formula 
$\alpha\cdot\gamma:=\alpha\into(\gamma\into\rho)$ for $\gamma\in\pi_1(\calC,X)$ and arrows $\alpha$ in $\calU_X(\calC)$. 
This gives the usual deck transformations on the universal covering.

\subsection{Contractibility and homotopy equivalences}\label{03887}

We say that a category is contractible if its classifying space is contractible and we say that a functor 
$F\co \calC\rightarrow\calD$ is a homotopy equivalence if $BF\co B\calC\rightarrow B\calD$ is one. There are some
standard conditions which assure that a category is contractible or a functor is a homotopy equivalence. These
will be recalled below.

A non-empty category $\calC$ is contractible if
\begin{itemize}
	\item[i)] $\calC$ has an initial object.
	\item[ii)] $\calC$ has binary products.
	\item[iii)] $\calC$ is a generalized poset (see Definition \ref{55528}) and there is an object $X_0$ together with a functor $F\co \calC\rightarrow\calC$ 
		such that for each object $X$ there exist arrows $X\rightarrow F(X)\leftarrow X_0$ (compare with 
		\cite{qui:hpo}*{Subsection 1.5}).
	\item[iv)] $\calC$ is filtered which means that for every two objects $X,Y$ there is an object $Z$ with arrows
		$X\rightarrow Z$, $Y\rightarrow Z$ and for every two arrows $f,g\co A\rightarrow B$
		there is an arrow $h\co B\rightarrow C$ such that $fh=gh$.
\end{itemize}
Of course, the dual statements are also true. It is instructive to sketch the arguments for these four claims:
\begin{itemize}
	\item[i)] Let $\calI$ be the category with two objects and one non-identity arrow from the first to the second
	object. The classifying space of $\calI$ is the unit interval $I$. A natural transformation of two functors
	$f,g\co \calC\rightarrow\calD$ can be interpreted as a functor $\calC\times\calI\rightarrow\calD$. On the level
	of spaces, this gives a homotopy $B\calC\times I\rightarrow B\calD$. If $\calC$ is a category with initial
	object $X_0$, then there is a unique natural transformation from the functor $\mathrm{const}_{X_0}$ (sending every 
	arrow of $\calC$ to $\id_{X_0}$) to the identity functor $\id_\calC$. On the level of spaces, this yields a homotopy 
	between $\id_{B\calC}$ and the constant map $B\calC\rightarrow B\calC$ with value the point $X_0$.
	\item[ii)] Choose an object $X_0$ in $\calC$. Let $F\co \calC\rightarrow\calC$ be the functor $Y\mapsto X_0\times Y$. 
	Projection onto the first factor yields a natural transformation $F\rightarrow \mathrm{const}_{X_0}$ and projection onto 
	the second factor yields a natural transformation $F\rightarrow\id_\calC$. This gives two homotopies which together give 
	the desired contraction of $B\calC$.
	\item[iii)] First note that, if $F,G\co \calC\rightarrow\calC$ are two functors with the property that
	there is an arrow $F(X)\rightarrow G(X)$ for each object $X$, then this already defines a natural transformation
	$F\rightarrow G$ by uniqueness of arrows in the generalized poset. Now the conditions on $X_0$ and $F$ yield that
	there are natural transformations $\id_\calC\rightarrow F$ and $\mathrm{const}_{X_0}\rightarrow F$. On the level of 
	spaces this gives the desired contraction of $B\calC$.
	\item[iv)] First, let $\calD$ be a finite subcategory of $\calC$. We claim that there exists a cocone over 
	$\calD$ in $\calC$, i.e.~there is an object $Z$ in $\calC$ and for each object $Y$ in $\calD$ an arrow 
	$Y\rightarrow Z$ which commute with the arrows in $\calD$. This cocone is contractible because $Z$ is a terminal
	object. A cocone can be constructed as follows: First pick two objects $Y_1,Y_2$ in $\calD$ and find an object $Z'$ with arrows 
	$Y_1\rightarrow Z'$ and $Y_2\rightarrow Z'$. Pick another object $Y_3$ and find an object $Z''$ with arrows
	$Y_3\rightarrow Z''$ and $Z'\rightarrow Z''$. Repeating this with all objects of $\calD$, we obtain an object
	$Q$ together with arrows $f_Y\co Y\rightarrow Q$ for every object $Y$ in $\calD$. The $f_Y$ probably
	won't commute with the arrows in $\calD$ yet, but we can repair this by repeatedly applying the second
	property of filteredness. Pick an arrow $d\co Y\rightarrow Y'$ in $\calD$ and observe the parallel arrows
	$df_{Y'}$ and $f_Y$. Apply the second property to find an arrow $\omega\co Q\rightarrow Q'$ with 
	$df_{Y'}\omega=f_Y\omega$. Replace $Q$ by $Q'$ and all the arrows $f_D$ for objects $D$ in $\calD$ by
	$f_D\omega$. Repeat this with all the other arrows in $\calD$.
	
	Now to finish the proof of this item, take a map $S^n\rightarrow B\calC$. Since $S^n$ is compact, it can be homotoped
	to a map such that the image is covered by the geometric realization of a finite subcategory. The cocone over this 
	subcategory then gives the desired null-homotopy.
%	To obtain the first homotopy, one could use Kan's fibrant replacement functor 
%	$\op{Ex}^\infty$ and argue as follows: Since $\op{Ex}^\infty(N\calC)$ is fibrant
%	and weakly homotopy equivalent to $N\calC$, an element in $\pi_n(B\calC)=\pi_n(N\calC)$ can be represented
%	by a morphism of simplicial sets $\partial\Delta_n\rightarrow\op{Ex}^\infty(N\calC)$. Since 
%	$\partial\Delta_n$ has only finitely many non-degenerate simplices, this map factors through
%	$\op{Ex}^k(N\calC)$ for some $k$ and we obtain a map $\partial\Delta_n\rightarrow\op{Ex}^k(N\calC)$. 
%	By adjointness, this gives a morphism $\op{Sd}^k(\partial\Delta^n)\rightarrow N\calC$ where $\op{Sd}$ is the barycentric 
%	subdivision functor. This representative can now be null-homotoped in a cocone as above.
\end{itemize}

We recall Quillen's famous Theorem A from \cite{qui:hak} which gives a sufficient but in general not necessary
condition for a functor to be a homotopy equivalence.
\begin{thm}\label{56361}
	Let $f\co \calC\rightarrow\calD$ be a functor. If for each object $Y$ in $\calD$ the category $Y\downa f$ is
	contractible, then the functor $f$ is a homotopy equivalence. Similarly, if the category $f\downa Y$ is
	contractible for each object $Y$ in $\calD$, then $f$ is a homotopy equivalence.
\end{thm}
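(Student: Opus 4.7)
The plan is to factor $f$ through the comma category $\calE := \calD\downa f$, whose objects are triples $(D, C, \alpha\co D \to f(C))$ and whose arrows are pairs $(\delta, \gamma)$ making the square
\begin{displaymath}\xymatrix{
D\ar[r]^{\alpha}\ar[d]_{\delta} & f(C)\ar[d]^{f(\gamma)}\\
D'\ar[r]_{\alpha'} & f(C')
}\end{displaymath}
commute. There are two projections $p_\calC\co \calE\to\calC$ and $p_\calD\co \calE\to\calD$, and the structure arrow $\alpha$ defines a natural transformation $p_\calD\Rightarrow f\circ p_\calC$ whose component at $(D,C,\alpha)$ is $\alpha$ itself. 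By the discussion in Subsection \ref{03887}, natural transformations induce homotopies on classifying spaces, so $Bp_\calD\simeq Bf\circ Bp_\calC$. Consequently, it suffices to show that $p_\calC$ and $p_\calD$ are both homotopy equivalences.

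The projection $p_\calC$ is a homotopy equivalence without using the hypothesis: the functor $s\co \calC\to\calE$, $C\mapsto (f(C), C, \id_{f(C)})$, is a strict section of $p_\calC$, and $(D,C,\alpha)\mapsto (\alpha, \id_C)$ defines a natural transformation $\id_\calE\Rightarrow s\circ p_\calC$, which provides the required homotopy.

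The crux is to show that $p_\calD$ is a homotopy equivalence; this is where the hypothesis enters. The plan is to introduce the bisimplicial set
\[
W_{p,q} \;=\; \bigl\{\,(D_0\to\cdots\to D_p,\ C_0\to\cdots\to C_q,\ D_p\to f(C_0))\,\bigr\},
\]
viewed as a chain in $\calD$ followed by a bridging arrow into the image of a chain in $\calC$. Fixing a $p$-chain in $\calD$, the simplicial set $W_{p,*}$ is a disjoint union, indexed by such chains, of nerves of the comma categories $D_p\downa f$, each contractible by hypothesis; so the projection of simplicial spaces $[p]\mapsto|W_{p,*}|\to N(\calD)_p$ is a levelwise weak equivalence and the realization lemma yields $|W|\simeq B\calD$. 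Fixing instead a $q$-chain in $\calC$, the simplicial set $W_{*,q}$ is a disjoint union of nerves of $\calD\downa f(C_0)$; each of these has the terminal object $(f(C_0),\id)$ and is therefore contractible, so the same argument gives $|W|\simeq B\calC$. Tracking the comparison maps shows that the resulting equivalence $B\calC\simeq|W|\simeq B\calD$ coincides, up to homotopy, with $Bp_\calD\circ(Bp_\calC)^{-1}$, completing the proof that $p_\calD$ is a homotopy equivalence.

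The hardest step is this bisimplicial argument: it rests on the realization lemma for (sufficiently well-behaved) simplicial spaces and requires some bookkeeping to identify the resulting equivalence with the classifying-space map of $p_\calD$, but both ingredients are by now standard. The second, dual statement of the theorem follows by applying the first to $f^{op}\co \calC^{op}\to\calD^{op}$.
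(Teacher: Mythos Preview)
Your proof is correct and is essentially Quillen's original bisimplicial argument from \cite{qui:hak}. However, the paper does not prove this theorem: it merely \emph{recalls} Quillen's Theorem~A and cites \cite{qui:hak} for the proof. So there is no proof in the paper to compare against; what you have written is a sketch of the standard proof that the paper defers to the literature.
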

\begin{rem}\label{86729}
	When applying this theorem to an inclusion $f\co \calA\rightarrow\calB$ of a {\it full} subcategory, it suffices 
	to check $Y\downa f=Y\downa\calA$ for objects $Y$ not in $\calA$. If $Y$ is an object in $\calA$, the comma 
	category $Y\downa\calA$ has the object $(Y,\id_Y)$ as initial object and thus is automatically
	contractible. Similar remarks apply to the comma categories $f\downa Y=\calA\downa Y$.
\end{rem}
\begin{rem}
	If $\calD$ is a groupoid, then for $Y,Y'\in\calD$ the comma categories $Y\downa f$ and $Y'\downa f$ are
	isomorphic. Thus one has to check contractibility only for one $Y$. The same remarks apply to the comma
	categories $f\downa Y$.
\end{rem}

\subsection{Smashing isomorphisms in categories}\label{16497}

Recall that a connected groupoid is equivalent, as a category, to any of its automorphism groups. Consequently, a connected
groupoid is contractible if and only if its automorphism groups are trivial. This is the case if and only if there is
exactly one isomorphism between any two objects.

Let $\calC$ be a category and $\calG\subset\calC$ a subcategory which is a disjoint union of contractible groupoids. We define
the quotient category $\calC/\calG$ as follows: The objects of $\calC/\calG$ are equivalence classes of objects
of $\calC$ where we say that $X\sim Y$ are equivalent if there is an isomorphism $X\rightarrow Y$ in $\calG$.
Note that such an isomorphism is unique since each component of $\calG$ is contractible. We define
\[\op{Hom}_{\calC/\calG}\big([X],[Y]\big):=\big\{A\rightarrow B\text{ in }\calC\ \big|\ A\in[X],
	B\in[Y]\big\}\big/{\sim}\]
where two elements $(A\rightarrow B)\sim(A'\rightarrow B')$ in the set are defined to be equivalent if the
diagram
\begin{displaymath}\xymatrix{
	A\ar[r]\ar@{-->}[d]_{\calG\ni}&B\ar@{-->}[d]^{\in\calG}\\
	A'\ar[r]&B'
}\end{displaymath}
commutes. Let $[\alpha\co A\rightarrow B]$ and $[\beta\co C\rightarrow D]$ be two composable arrows, i.e.~$[B]=[C]$, 
then there is a unique isomorphism $\gamma\co B\rightarrow C$ in $\calG$ and one defines
\[[\alpha\co A\rightarrow B]*[\beta\co C\rightarrow D]:=[\alpha\gamma\beta\co A\rightarrow D]\]
Set $\id_{[X]}=[\id_X]$. One easily checks that $\calC/\calG$ is a well-defined category.

\begin{rem}\label{42310}
	Observe that if $\calX\rightarrow\calY$ is an arrow in $\calC/\calG$ and representatives $X$ and $Y$ have been chosen
	for $\calX$ and $\calY$, then there is a unique arrow $X\rightarrow Y$ representing $\calX\rightarrow\calY$.
\end{rem}

\begin{rem}\label{64237}
	Let $\calX$ and $\calY$ be two objects in $\calC/\calG$. Fix some object $X_0$ representing $\calX$. Then the
	arrows $\calX\rightarrow\calY$ in $\calC/\calG$ are in one to one correspondence with arrows $X_0\rightarrow Y$
	in $\calC$ modulo isomorphisms in $\calG$ on the right. Likewise, if we fix some object $Y_0$ representing
	$\calY$, then arrows $\calX\rightarrow\calY$ in $\calC/\calG$ are in one to one correspondence with arrows
	$X\rightarrow Y_0$ in $\calC$ modulo isomorphisms in $\calG$ on the left.
\end{rem}

\begin{prop}\label{66872}
	The canonical projection $p\co \calC\rightarrow\calC/\calG$ is a homotopy equivalence.
\end{prop}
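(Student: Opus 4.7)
The plan is to apply Quillen's Theorem A (Theorem~\ref{56361}): it suffices to prove that for every object $[Y]$ of $\calC/\calG$, the comma category $[Y]\downa p$ is contractible. I would fix a representative $Y_0\in[Y]$ and show that $[Y]\downa p$ is isomorphic, as a category, to the coslice $Y_0\downa\calC$; the latter has $(Y_0,\id_{Y_0})$ as an initial object and is therefore contractible by item~i) of Subsection~\ref{03887}.

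For the comparison, I would first define a functor $\Phi\co Y_0\downa\calC\rightarrow[Y]\downa p$ by $(X,f)\mapsto(X,[f])$, acting as the identity on morphisms (the relation $f\alpha=f'$ in $\calC$ trivially implies $[f]\cdot p(\alpha)=[f']$ in $\calC/\calG$). For the inverse I would use that every component of $\calG$ is a contractible groupoid and hence has only identities as endomorphisms, so Remark~\ref{42310} tells me that once $Y_0$ and any target $X\in\calC$ are fixed, every arrow $\gamma\co[Y]\rightarrow[X]$ in $\calC/\calG$ has a \emph{unique} representative $\tilde\gamma\co Y_0\rightarrow X$ in $\calC$. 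I would then set $\Psi\co[Y]\downa p\rightarrow Y_0\downa\calC$ on objects by $(X,\gamma)\mapsto(X,\tilde\gamma)$.

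The main (modest) work is checking that $\Psi$ is functorial. Given an arrow $\alpha\co X\rightarrow X'$ from $(X,\gamma)$ to $(X',\gamma')$ in $[Y]\downa p$, the defining equation $\gamma\cdot p(\alpha)=\gamma'$ unfolds, via the composition rule in $\calC/\calG$ and the fact that the unique $\calG$-isomorphism $X\rightarrow X$ is $\id_X$, to $[\tilde\gamma\alpha]=[\tilde{\gamma'}]$ as arrows $Y_0\rightarrow X'$; the uniqueness of representatives then forces $\tilde\gamma\alpha=\tilde{\gamma'}$ on the nose in $\calC$, so $\alpha$ is also an arrow in $Y_0\downa\calC$. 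A direct check shows $\Phi$ and $\Psi$ are mutually inverse strictly, so $[Y]\downa p\cong Y_0\downa\calC$, and Quillen's Theorem A yields the claim. The only delicate point—not really an obstacle—is this translation between equality in $\calC/\calG$ and equality in $\calC$, which rests entirely on the triviality of $\calG$-automorphism groups encoded in the contractibility assumption on $\calG$.
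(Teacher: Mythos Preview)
Your proof is correct and takes essentially the same approach as the paper: both apply Quillen's Theorem~A and use Remark~\ref{42310} to exhibit an initial object in $[Y]\downa p$. The paper simply observes directly that $(Y_0,\id_{[Y_0]})$ is initial in $[Y]\downa p$, whereas you reach the same conclusion via the explicit isomorphism $[Y]\downa p\cong Y_0\downa\calC$---a slight detour, but the underlying idea is identical.
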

\begin{proof}
	We want to apply Quillen's Theorem A (Theorem \ref{56361}) to the projection $p\co \calC\rightarrow\calC/\calG$.
	Hence, we have to show that for each object $[X]$ in $\calC/\calG$ the comma category
	$[X]\downa p$ is contracible. Indeed, it follows with Remark \ref{42310} that the object $(X,\id_{[X]})$ in
	$[X]\downa p$ is an initial object.
%	For each object in $\calC/\calG$ choose a representing object in $\calC$. By Remark \ref{42310}, this already defines 
%	a functor $f\co \calC/\calG\rightarrow\calC$. We have $f*p=\id_{\calC/\calG}$ and
%	we will show that $f$ is a homotopy equivalence. It then follows that also $p$ is a homotopy equivalence.
%	
%	Let $Y$ be an object in $\calC$. We show that $Y\downa f$ has an initial object and thus is contractible.
%	It then follows from Quillen's Theorem A (Theorem \ref{56361}) that $f$ is a homotopy equivalence. 
%	There is a unique isomorphism $\gamma\co Y\rightarrow [Y]\into f$ in $\calG$ and the data $([Y],\gamma)$ is an object in 
%	$Y\downa f$. We want to show that it is initial. Let $([Z],\alpha)$ be another object in $Y\downa f$. 
%	Then $\mu:=[\gamma^{-1}\alpha]\co [Y]\rightarrow[Z]$ gives an arrow $([Y],\gamma)\rightarrow([Z],\alpha)$ because
%	$\mu\into f=\gamma^{-1}\alpha$ and thus the triangle
%	\begin{displaymath}\xymatrix{
%		&[Y]\into f\ar[dd]^{\mu\into f}\\
%		Y\ar[rd]_{\alpha}\ar[ur]^{\gamma}&\\
%		&[Z]\into f
%	}\end{displaymath}
%	commutes. Furthermore, since $\gamma$ is an isomorphism, $\mu\into f$ is the only arrow such that the triangle
%	commutes. Since $f$ is faithful, also $\mu$ is the unique arrow such that the triangle commutes and thus
%	represents the unique arrow $([Y],\gamma)\rightarrow([Z],\alpha)$.
\end{proof}

In the following, this technique will be applied primarily to \emph{generalized posets}:

\begin{defi}\label{55528}
	A generalized poset is a category such that $\alpha=\beta$ whenever $\alpha,\beta\colon A\rightarrow B$.
\end{defi}

Recall that a (honest) poset is a category with at most one arrow between any two objects (regardless of the direction of the arow).
In a generalized poset $\calC$, however, we allow objects to be uniquely isomorphic. Every subgroupoid $\calG$ of a generalized poset 
is a disjoint union of contractible ones and $\calC/\calG$ is a generalized poset again. 
If we collapse each connected component of the subgroupoid
consisting of all the isomorphisms, we even get a homotopy equivalent (honest) poset which we call the \emph{underlying poset} of
the generalized poset.

\subsection{Calculus of fractions and cancellation properties}

The next definition is very classical and due to Gabriel and Zisman \cite{ga-zi:cof}.
\begin{defi}\label{12985}
	Let $\calC$ be a category. It satisfies the calculus of fractions if the following two conditions
	are satisfied:
	\begin{itemize}
		\item ({\it Square filling}) For every pair of arrows $f\co B\rightarrow A$ and 
		$g\co C\rightarrow A$ there are 
		arrows $a\co D\rightarrow B$ and $b\co D\rightarrow C$ such that $af=bg$.
		\begin{displaymath}\xymatrix{
			D\ar@{.>}[r]^a\ar@{.>}[d]_b&B\ar[d]^f\\
			C\ar[r]_g&A
		}\end{displaymath}
		\item ({\it Equalization}) Whenever we have arrows $f,g\co A\rightarrow B$ and $a\co B\rightarrow C$ 
		such that $fa=ga$, then there exists an arrow $b\co D\rightarrow A$ with $bf=bg$.
		\begin{displaymath}\xymatrix{
  			D\ar@{.>}^b[r]&A\ar@<-2pt>[r]_g\ar@<2pt>[r]^f&B\ar[r]^a&C
		}\end{displaymath}
	\end{itemize}
\end{defi}
More precisely, this is called the {\it right} calculus of fractions. There is also a dual {\it left} calculus
of fractions. Since we are mainly interested in the right calculus of fractions, we omit the word ``right''.

\begin{rem}
	The existence of binary pullbacks in $\calC$ trivially implies the square filling property but it 
	also implies the equalization property \cite{ben:sro}*{Lemma 1.2}. So a category with binary pullbacks
	satisfies the calculus of fractions.
\end{rem}

The calculus of fractions has positive effects on the complexity of the fundamental groupoid $\pi_1(\calC)$: 
One can show (see e.g.~\cite{ga-zi:cof} or \cite{bor:hoc}) that each class in $\pi_1(\calC)$ can be represented 
by a \emph{span} which is a zig-zag of the form
\begin{displaymath}\xymatrix{
	\bullet & \bullet\ar[l]\ar[r] & \bullet
}\end{displaymath}
Furthermore, two spans
\begin{displaymath}\xymatrix{
	& \bullet\ar[dl]\ar[dr] &\\
	\bullet & &\bullet\\
	& \bullet\ar[ul]\ar[ur] &
}\end{displaymath}
are homotopic if and only if the diagram can be filled in the following way:
\begin{displaymath}\xymatrix{
	& \bullet\ar[dl]\ar[dr] &\\
	\bullet & \bullet\ar[l]\ar[r]\ar[u]\ar[d] &\bullet\\
	& \bullet\ar[ul]\ar[ur] &
}\end{displaymath}
In other words, the elements in the localization can be described as fractions and this explains the name
of the calculus of fractions. We will frequently write $(\alpha,\beta)$ for a span consisting of arrows $\alpha$
and $\beta$ where the first arrow $\alpha$ points to the left (i.e.~is the denominator) and the second arrow
$\beta$ points to the right (i.e.~is the nominator).
Two spans are composed by concatenating representatives to a zig-zag and then
transforming the zig-zag into a span by choosing a square filling of the middle cospan.
\begin{displaymath}\xymatrix{
	&&\bullet\ar@{-->}[lld]\ar@{.>}[ld]\ar@{.>}[rd]\ar@{-->}[rrd]&&\\
	\bullet&\bullet\ar[l]\ar[r]&\bullet&\bullet\ar[l]\ar[r]&\bullet
}\end{displaymath}
The canonical functor $\varphi\co\calC\rightarrow\pi_1(\calC)$ is given by sending an arrow $\alpha$ to the class 
represented by the span
\begin{displaymath}\xymatrix{
	\bullet&\bullet\ar[l]_{\id}\ar[r]^{\alpha}&\bullet
}\end{displaymath}
Using the special form of the homotopy relation from above, we see that two arrows $\alpha,\beta\co X\rightarrow Y$ are
homotopic if and only if there is an arrow
$\omega\co A\rightarrow X$ such that $\omega\alpha=\omega\beta$. 

%\begin{lem}\label{08274}
%	A connected groupoid $\calG$ is aspherical.
%\end{lem}
%\begin{proof}
%	Let $X$ be some object in $\calG$ and look at the universal covering category $\calU_X(\calG)$. It is also a connected 
%	groupoid. We show that the automorphism group of $\calU_X(\calG)$ at some object is trivial. Consequently, $\calU_X(\calG)$ 
%	is equivalent to the terminal category and therefore contractible. The lemma then follows from elementary covering theory.
%	
%	So let $A=(Y,\alpha)$ be an object in $\calU_X(\calG)$ where $\alpha\co X\rightarrow Y$ is an arrow in
%	$\pi_1(\calG)$. An automorphism $A\rightarrow A$ is an arrow $\gamma\co Y\rightarrow Y$ in $\calG$ such that
%	$\alpha\gamma=\alpha$ in $\pi_1(\calG)$. It follows $\gamma=\id_Y$ in $\pi_1(\calG)$,
%	i.e.~$\gamma$ is homotopic to $\id_Y$ in $\calG$. Since $\calG$ satisfies the calculus of fractions,
%	there must be an arrow $\beta$ with $\beta\gamma=\beta$ in $\calG$. It follows $\gamma=\id_Y$ in $\calG$.
%\end{proof}

\vspace{2mm}
We now turn to cancellation properties in categories.

\begin{defi}
	Let $\calC$ be a category. It is called right cancellative if $fa=ga$ for arrows $f,g,a$ implies $f=g$. It
	is called left cancellative if $af=ag$ implies $f=g$. It is called cancellative if it is left and right
	cancellative.
\end{defi}

\begin{rem}
	Note that we have the following implications:
	\begin{align*}
		\text{right cancellation}&\Longrightarrow\text{equalization}\\
		\text{equalization}+\text{left cancellation}&\Longrightarrow\text{right cancellation}
	\end{align*}
\end{rem}

\begin{prop}\label{20889}
	Let $\calC$ be a category satisfying the cancellative calculus of fractions. Then the canonical functor
	$\varphi\co \calC\rightarrow\pi_1(\calC)$ is faithful and a homotopy equivalence.
\end{prop}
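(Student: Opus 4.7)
My plan is to prove the two assertions separately. Faithfulness is essentially immediate from the discussion at the end of the subsection on calculus of fractions, where it was observed that two parallel arrows $\alpha,\beta\co X\to Y$ of $\calC$ become equal in $\pi_1(\calC)$ if and only if there exists $\omega\co A\to X$ in $\calC$ with $\omega\alpha=\omega\beta$. Left cancellation then forces $\alpha=\beta$, so $\varphi$ is faithful.

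For the homotopy equivalence, I would invoke Quillen's Theorem A (Theorem \ref{56361}): it suffices to show that the comma category $\varphi\downa Y$ is contractible for each object $Y$ of $\pi_1(\calC)$, and since $\pi_1(\calC)$ is a groupoid, one $Y$ per connected component suffices. I plan to verify the dual of the filteredness criterion~(iv) of Subsection \ref{03887} for $\varphi\downa Y$, which by the dual contractibility statement finishes the argument.

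For the first half of that criterion, given two objects $(X_1,\gamma_1), (X_2,\gamma_2)$ of $\varphi\downa Y$ with $\gamma_i\co \varphi(X_i)\to Y$, I would form $\gamma_1\gamma_2^{-1}\co X_1\to X_2$ in the groupoid $\pi_1(\calC)$ and, using the calculus of fractions, represent it by a span $X_1\xleftarrow{\alpha_1} X\xrightarrow{\alpha_2} X_2$ with the $\alpha_i$ arrows of $\calC$. Setting $\gamma:=\varphi(\alpha_1)\gamma_1$, the identity
\[\varphi(\alpha_2)\gamma_2=\varphi(\alpha_1)\varphi(\alpha_1)^{-1}\varphi(\alpha_2)\gamma_2=\varphi(\alpha_1)\bigl(\gamma_1\gamma_2^{-1}\bigr)\gamma_2=\varphi(\alpha_1)\gamma_1=\gamma\]
exhibits both $\alpha_i$ as morphisms $(X,\gamma)\to(X_i,\gamma_i)$ in $\varphi\downa Y$. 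The second half of the criterion is vacuous: parallel arrows $\alpha,\beta\co(X,\gamma)\to(X',\gamma')$ satisfy $\varphi(\alpha)\gamma'=\gamma=\varphi(\beta)\gamma'$, hence $\varphi(\alpha)=\varphi(\beta)$, hence $\alpha=\beta$ by the already-established faithfulness.

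The main subtlety is aligning the direction of the right calculus of fractions (spans with the denominator on the left) with the correct variant of the comma category: working with $\varphi\downa Y$ rather than $Y\downa\varphi$ is what makes the construction go through, while the dual choice would demand pushout-like data that the right calculus of fractions does not supply. Cancellation enters at exactly two points: to establish faithfulness in the first place, and to trivialize the second dual-filteredness condition.
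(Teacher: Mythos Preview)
Your proof is correct and follows essentially the same strategy as the paper: faithfulness from left cancellation together with the span characterization of homotopy, then Quillen's Theorem~A with the relevant comma category shown to be a cofiltered generalized poset. The paper works with $X\downa\varphi$ rather than your $\varphi\downa Y$; instead of representing the single composite $\gamma_1\gamma_2^{-1}$ as a span, it represents each of the two structure maps by a span and fills the square formed by their denominators. Your route is marginally slicker, but the content is the same.

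One point deserves correction: your closing remark that ``working with $\varphi\downa Y$ rather than $Y\downa\varphi$ is what makes the construction go through, while the dual choice would demand pushout-like data'' is not right. The paper itself uses $X\downa\varphi$ (exactly your ``dual choice'') and establishes cofilteredness without trouble. In either comma category the morphisms are morphisms of $\calC$, so proving \emph{cofilteredness} is what aligns with the availability of spans; it is attempting to prove \emph{filteredness} of either variant that would require cospan-type data the right calculus of fractions does not provide.
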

\begin{proof}
	Injectivity is easy: Let $f,g$ be arrows in $\calC$ which are mapped to the same arrow in $\pi_1(\calC)$.
	This means that $f,g$ are homotopic. Since $\calC$ satisfies the calculus of fractions, this implies that there is an arrow
	$\omega$ with $\omega f=\omega g$. From the cancellation property it follows that $f=g$.
	
	For showing that the functor is a homotopy equivalence, we apply Quillen's Theorem A (Theorem \ref{56361}) to
	the functor $\varphi\co \calC\rightarrow\pi_1(\calC)$. Let $X$ be an object in $\pi_1(\calC)$, i.e.~an object 
	in $\calC$. We have to check that the comma category $X\downa\varphi$ is contractible. 
	Note that this is the universal covering category $\calU_X(\calC)$. 
	First we claim that this category is a generalized poset: Let $(Z,a)$ and $(Z',a')$ be objects in
	$X\downa\varphi$ and $\gamma_1,\gamma_2$ be two arrows from $(Z,a)$ to $(Z',a')$. This means that $Z,Z'$
	are objects in $\calC$, $a\co X\rightarrow Z\into\varphi$ and $a'\co X\rightarrow Z'\into\varphi$ are arrows in 
	$\pi_1(\calC)$ and $\gamma_1,\gamma_2\co Z\rightarrow Z'$ are arrows in $\calC$ such that 
	$a*(\gamma_1\into\varphi)=a'=a*(\gamma_2\into\varphi)$ in $\pi_1(\calC)$. 
	It follows $\gamma_1\into\varphi=\gamma_2\into\varphi$ and therefore $\gamma_1=\gamma_2$ by injectivity.
	
	Now we want to show that this generalized poset is cofiltered. Then we can apply item iv) of Subsection \ref{03887}.
	We have to show that for each two objects $A,A'$ in $X\downa\varphi$ there is another object
	$B$ and arrows $B\rightarrow A$ and $B\rightarrow A'$. Let $A=(Z,a)$ and $A'=(Z',a')$ with arrows
	$a\co X\rightarrow Z\into\varphi$ and $a'\co X\rightarrow Z'\into\varphi$ which can be represented by spans 
	$(\alpha,\beta)$ and $(\alpha',\beta')$ respectively. Choose a square filling $(\gamma,\gamma')$ of the
	cospan $(\alpha,\alpha')$.
	\begin{displaymath}\xymatrix@-8pt{
		&&Z\into \varphi\\
		&\bullet\ar[dl]_{\alpha}\ar[ur]^{\beta}&\\
		X&&Y\ar@{-->}[uu]\ar@{-->}[dd]\ar@{-->}[ll]\ar[lu]_{\gamma}\ar[ld]^{\gamma'}\\
		&\bullet\ar[ul]^{\alpha'}\ar[dr]_{\beta'}&\\
		&&Z'\into \varphi\\
	}\end{displaymath}
	Then the arrow $\omega:=\gamma\alpha=\gamma'\alpha'$ can be interpreted as the denominator of a span representing an
	arrow in $\pi_1(\calC)$ which we denote by $\omega^{-1}$. Furthermore, since $\varphi\co\calC\rightarrow\pi_1(\calC)$
	is the identity on objects, we can write $Y=Y\into\varphi$. Thus, we can define the object $B:=(Y,\omega^{-1})$ in
	$X\downa\varphi$. Finally, the arrows $\gamma\beta$ and $\gamma'\beta'$ give arrows $B\rightarrow A$ and $B\rightarrow A'$ 
	respectively.
\end{proof}

%Note that a connected groupoid is equivalent, as a category, to any of its automorphism groups. Since groups are
%aspherical, so are all connected groupoids. In particular, $\pi_1(\calC)$ is aspherical if $\calC$ is connected.
%We thus get the following corollary from Proposition \ref{20889}.
%\begin{cor}\label{91812}
%	Let $\calC$ be a connected category satisfying the cancellative calculus of fractions. Then it is aspherical.
%\end{cor}

\begin{rem}
	The functor $\varphi\co \calC\rightarrow\pi_1(\calC)$ in Proposition \ref{20889} is still a homotopy equivalence if we drop the cancellation property 
	from the hypothesis. This is proved in \cite{dw-ka:csl}*{Section 7}.
\end{rem}

\subsection{Monoidal categories}\label{50967}

We assume that the reader is acquainted with the definition of monoidal categories, symmetric monoidal categories and
braided monoidal categories (see e.g.~\cite{mac:cft}). In the following, we will always assume the strict versions, 
i.e.~the associator, right and left unitor are identities. We frequently use the symbol $I$ to denote the unit object.
Moreover, for objects $X$ and $Y$, the symbol $\gamma_{X,Y}$ denotes the natural braiding isomorphism
$X\otimes Y\rightarrow Y\otimes X$. We will sometimes call a monoidal category planar in order to stress that it's
neither symmetric nor braided.

Joyal and Street introduced the notion of braided monoidal categories in \cite{jo-st:btc}. It is designed
such that the braided monoidal category freely generated by a single object is the groupoid with components
the braid groups $B_n$. More precisely, we have an object for each natural number $n$, there are no morphisms
$n\rightarrow m$ with $n\neq m$ and $\op{Hom}(n,n)=B_n$. More generally, they indroduced the braided
monoidal category $\mathfrak{Braid}(\calC)$ freely generated by another category $\calC$ \cite{jo-st:btc}*{page 37}:
The objects are free words in the objects of $\calC$, i.e.~finite sequences of objects of $\calC$. A morphism
consists of a braid $\beta\in B_n$ where the strands are labelled with morphisms $\alpha_i\co A_i\rightarrow B_i$ 
of $\calC$, yielding an arrow 
\[(\beta,\alpha_1,\ldots,\alpha_n)\co (A_{1\into\beta},\ldots,A_{n\into\beta})\rightarrow(B_1,\ldots,B_n)\]
in $\mathfrak{Braid}(\calC)$. Composition is performed by composing the braids and applying composition in $\calC$
to every strand. The tensor product is given by juxtaposition, i.e.~by
\[(\beta,\alpha_1,\ldots,\alpha_n)\otimes(\beta',\alpha'_1,\ldots,\alpha'_n)=\big(\beta\otimes\beta',\alpha_1,\ldots,\alpha_n,\alpha'_1,\ldots,\alpha'_n\big)\]
where $\beta\otimes\beta'$ means juxtaposition of braids. A set $C$ can be viewed as a discrete category, so
we also obtain the notion of a braided monoidal category $\mathfrak{Braid}(C)$ freely generated by a set. The arrows
are just braids with strands labelled by the elements of $C$, i.e.~are colored.

The same remarks apply to the symmetric version. In particular, a category $\calC$ freely generates a symmetric
monoidal category $\mathfrak{Sym}(\calC)$.

Even simpler, we can form the free monoidal category $\mathfrak{Mon}(\calC)$ generated by a category $\calC$.
The strands are decorated by arrows in $\calC$ but they are not allowed to braid or cross each other.

\vspace{2mm}
If $\calC$ is a (symmetric/braided) monoidal category, then there is exactly one tensor structure on
$\pi_1(\calC)$ making it into a (symmetric/braided) monoidal category and such that the canonical
functor $\varphi\co \calC\rightarrow\pi_1(\calC)$ respects that structure, i.e.
\begin{eqnarray*}
	\varphi(X\otimes Y)&=&\varphi(X)\otimes\varphi(Y)\\
	\varphi(\alpha\otimes\beta)&=&\varphi(\alpha)\otimes\varphi(\beta)\\
	\varphi(I)&=&I\\
	\varphi(\gamma_{X,Y})&=&\gamma_{\varphi(X),\varphi(Y)}
\end{eqnarray*}
for objects $X,Y$ and arrows $\alpha,\beta$. 
The tensor product on the level of arrows can be constructed as follows: Let one arrow be represented by the zig-zag 
\begin{displaymath}\begin{array}{ccccccc}
		A_0&\xleftarrow{\alpha_1}&A_1&\xrightarrow{\alpha_2}&\cdots &\xrightarrow{\alpha_k}&A_k
\end{array}\end{displaymath}
and the other arrow by the zig-zag
\begin{displaymath}\begin{array}{ccccccccc}
		B_0&\xrightarrow{\beta_1}&B_1&\xleftarrow{\beta_2}&\cdots &\xrightarrow{\beta_l}&B_l
\end{array}\end{displaymath}
Then the tensor product may be represented by the zig-zag
\begin{displaymath}\begin{array}{ccccccccccccc}
		A_0&\xleftarrow{\alpha_1}&A_1&\xrightarrow{\alpha_2}&\cdots &\xrightarrow{\alpha_k}&A_k&
		\xrightarrow{\id}&A_k&\xleftarrow{\id}&\cdots &
		\xrightarrow{\id}&A_k\\
		\otimes &\otimes &\otimes &\otimes &\cdots &\otimes &\otimes &\otimes &\otimes &
		\otimes &\cdots &\otimes &\otimes\\
		B_0&\xleftarrow{\id}&B_0&\xrightarrow{\id}&\cdots &
		\xrightarrow{\id}&
		B_0&\xrightarrow{\beta_1}&B_1&\xleftarrow{\beta_2}&\cdots &\xrightarrow{\beta_l}&B_l
\end{array}\end{displaymath}

\subsection{Cones and joins}\label{06663}

Let $\calC,\calD$ be two categories. We define the join $\calC*\calD$. The set of objects of $\calC*\calD$ is the 
disjoint union of the objects of $\calC$ and $\calD$. The set of arrows is the disjoint union
of the arrows of $\calC$ and $\calD$ together with exactly one arrow $C\rightarrow D$ for each pair $(C,D)$ of 
objects $C$ of $\calC$ and $D$ of $\calD$. The composition rules are the unique ones extending the compositions
in $\calC$ and $\calD$. The classifying space of the join is homotopy equivalent to
the join of the classifying spaces $B(\calC*\calD)\simeq B\calC*B\calD$.

Now we define the cone over a category. The objects of $\op{Cone}(\calC)$
are the objects of $\calC$ plus another object called $\mathtt{tip}$. The arrows are the arrows of $\calC$ together
with exactly one arrow from $\mathtt{tip}$ to every object in $\calC$. Dually, there is a 
$\op{Cocone}(\calC)$ over $\calC$. In the cocone, the extra arrows go from the objects of $\calC$
to the extra object $\mathtt{tip}$. Last but not least, when we have a join $\calC*\calD$ of two categories,
there is a mixed version $\op{Coone}(\calC*\calD)$ which we call the coone over the join.
Again, there is one extra object $\mathtt{tip}$ and for every object in $\calC*\calD$ we have an extra arrow.
When we have an object in $\calC$, the extra arrow {\it goes to} $\mathtt{tip}$. When we have an arrow in
$\calD$, the extra arrow {\it comes from} $\mathtt{tip}$. The composition of an arrow $C\rightarrow\mathtt{tip}$
with an arrow $\mathtt{tip}\rightarrow D$ is the unique arrow $C\rightarrow D$ from the definition of the join.
All three coning versions give the usual conings on the topological level:
\begin{eqnarray*}
	B(\op{Cone}(\calC))&\cong &\op{Cone}(B(\calC))\\
	B(\op{Cocone}(\calC))&\cong &\op{Cone}(B(\calC))\\
	B(\op{Coone}(\calC*\calD))&\cong &\op{Cone}(B(\calC*\calD))
\end{eqnarray*}

\vspace{2mm}
The join of two spaces $X$ and $Y$ is defined to be the homotopy pushout of the two projections
$X\leftarrow X\times Y\rightarrow Y$. Thus, it is defined only up to homotopy and there is some freedom to choose models
of a join. Indeed, there is another construction giving the join of two categories. For this, we need to recall the Grothendieck
construction: Let $\calJ$ be some indexing category and $F\co \calJ\rightarrow\mathtt{CAT}$ a diagram in $\mathtt{CAT}$. 
The objects of the Grothendieck construction $\int F$ are pairs $(J,X)$ of objects $J$ in $\calJ$ and $X$ in $J\into F$. 
An arrow from $(J,X)$ to $(J',X')$ is a pair $(f,\alpha)$ consisting of an arrow $f\co J\rightarrow J'$ and an arrow 
$\alpha\co X\into(f\into F)\rightarrow X'$.
Composition is given by 
\[(f,\alpha)*(f',\alpha'):=\big(f*f',\alpha\into(f'\into F)*\alpha'\big)\]
In \cite{tho:caa}
it is shown that there is a model structure on $\mathtt{CAT}$ Quillen equivalent to $\mathtt{SSET}$, nowadays called
the Thomason model structure, and in \cite{tho:hci} it is shown that the nerve of the Grothendieck construction $\int F$ 
is homotopy equivalent to the homotopy pushout of the diagram $F*N$ which is obtained from the diagram $F$ by applying the nerve 
functor. In fact, $\int F$ realizes the homotopy pushout of $F$ with respect to the Thomason model structure on $\mathtt{CAT}$
\cite{f-l-s:eco}*{Section 3}.

Now let $\calC,\calD$ be categories. We call the Grothendieck construction of the diagram
\[\calC\xleftarrow{\op{pr}_\calC}\calC\times\calD\xrightarrow{\op{pr}_\calD}\calD\]
the Grothendieck join of $\calC$ and $\calD$ and denote it by $\calC\circ\calD$. From \cite{tho:hci} we know
that $B(\calC\circ\calD)$ is homotopy equivalent to the homotopy pushout of the diagram
\[B\calC\xleftarrow{\op{pr}_{B\calC}}B\calC\times B\calD\xrightarrow{\op{pr}_{B\calD}}B\calD\]
But the latter is the join $B\calC * B\calD$ by definition. So we have $B(\calC\circ\calD)\simeq B\calC * B\calD$.

One can show that the Grothendieck join is associative and thus we can write $\calC_1\circ\ldots\circ\calC_k$
for a finite collection $\calC_i$ of categories. The objects of such an iterated Grothendieck join are elements
of the set
\[\op{obj}(\calC_1\circ\ldots\circ\calC_k)=\coprod_{S\subset\{1,\ldots,k\}}\prod_{s\in S}\op{obj}(\calC_s)\]
Whenever we have $S\subset T\subset\{1,\ldots,k\}$, objects $(Y_t)_{t\in T}$ and $(X_s)_{s\in S}$ and arrows 
$\alpha_s\co Y_s\rightarrow X_s$ in $\calC_s$ for each $s\in S$, then there is an arrow
\[(\alpha_s)_{s\in S}\co (Y_t)_{t\in T}\rightarrow (X_s)_{s\in S}\]
For $R\subset S\subset T$ the composition is given by
\[(\alpha_s)_{s\in S}*(\beta_r)_{r\in R}=(\alpha_r*\beta_r)_{r\in R}\]

There is also a dual notion of the Grothendieck join which we define as
\[\calC\bullet\calD:=(\calC^{op}\circ\calD^{op})^{op}\]
Since $B(\calA^{op})=B(\calA)$ for any category, we still have $B(\calC\bullet\calD)\simeq B\calC * B\calD$.
Furthermore, it is still associative, so that we can write $\calC_1\bullet\ldots\bullet\calC_k$ for a finite collection 
$\calC_i$ of categories. The objects of such an iterated dual Grothendieck join are elements of the set
\[\op{obj}(\calC_1\bullet\ldots\bullet\calC_k)=\coprod_{S\subset\{1,\ldots,k\}}\prod_{s\in S}\op{obj}(\calC_s)\]
Whenever we have $S\subset T\subset\{1,\ldots,k\}$, objects $(X_s)_{s\in S}$ and $(Y_t)_{t\in T}$ and arrows 
$\alpha_s\co X_s\rightarrow Y_s$ in $\calC_s$ for each $s\in S$, then there is an arrow
\[(\alpha_s)_{s\in S}\co (X_s)_{s\in S}\rightarrow(Y_t)_{t\in T}\]
For $R\subset S\subset T$ the composition is given by
\[(\beta_r)_{r\in R}*(\alpha_s)_{s\in S}=(\beta_r*\alpha_r)_{r\in R}\]

\subsection{The Morse method for categories}\label{00484}

We first recall the Morse method in the case of simplicial complexes which has been used in \cite{be-br:mta} to prove 
finiteness properties of certain groups. We then explain the same method in the context of categories.

Let $\calC$ be a simplicial complex. Let $v$ be a vertex in $\calC$. Denote by $\calC^{-v}$ the full subcomplex spanned by the 
vertices of $\calC$ except $v$. Observe the link $lk(v)$ of $v$ in $\calC$ which is contained in $\calC^{-v}$. 
We then have a canonical pushout diagram
\begin{displaymath}\xymatrix{
	lk(v)\ar[d]\ar[rr]&& \op{Cone}\big(lk(v)\big)\ar[d]\\
	\calC^{-v}\ar[rr]&&\calC
}\end{displaymath}
where $\op{Cone}\big(lk(v)\big)$ denotes the simplicial cone over $lk(v)$. The following lemma expresses the 
connectivity of the pair $(\calC,\calC^{-v})$ in terms of the connectivity of $lk(v)$.

\begin{lem}\label{76169}
	Let $X$ and $L$ be two spaces and $L\rightarrow C$ a cofibration into a contractible space $C$. Let
	\begin{displaymath}\xymatrix{
		L\ar[d]\ar[rr]&& C\ar[d]\\
		X\ar[rr]&&Z
	}\end{displaymath}
	be a pushout of spaces. If $L$ is $(n-1)$-connected, then the pair $(Z,X)$ is $n$-connected.
\end{lem}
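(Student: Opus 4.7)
The plan is to reduce the statement to the elementary fact that attaching cells of dimension strictly greater than $n$ produces an $n$-connected pair.

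First I would verify that the pair $(C,L)$ is itself $n$-connected. Since $L\hookrightarrow C$ is a cofibration and $C$ is contractible, the long exact sequence of homotopy groups of the pair together with $\pi_k(C)=0$ gives $\pi_k(C,L)\cong\pi_{k-1}(L)$ for $k\geq 2$; the cases $k=0,1$ are direct consequences of the $(n-1)$-connectedness of $L$ (in particular, $L$ is nonempty, so $\pi_0(C,L)$ is trivial). Hence $\pi_k(C,L)=0$ for $0\leq k\leq n$.

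Next, by the CW approximation theorem for pairs, I may replace $(C,L)$ up to weak equivalence by a relative CW pair $(C',L')$. Because $(C,L)$ is $n$-connected, I can further arrange that $(C',L')$ has no relative cells of dimension $\leq n$, so that $C'$ is built from $L'$ by attaching cells only in dimensions $>n$. Now form the pushout $Z':=X\cup_{L'}C'$ along the composite $L'\to L\to X$.

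Finally, because $L'\hookrightarrow C'$ is a cofibration, this pushout already computes the homotopy pushout of $X\leftarrow L'\to C'$; combined with left-properness of the standard model structure on spaces, the weak equivalence $(C',L')\to(C,L)$ then induces a weak equivalence $Z'\to Z$ compatible with the maps from $X$. Since $Z'$ is obtained from $X$ by attaching cells in dimensions $>n$, the pair $(Z',X)$ is $n$-connected, and transferring along the weak equivalence shows the same for $(Z,X)$.

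The main obstacle is the CW-approximation step at the level of pairs: one needs the cellular model to be compatible with the cofibration $L\hookrightarrow C$ so that the comparison of the resulting pushouts is itself a weak equivalence. All other steps are formal consequences of the long exact sequence of a pair and the definition of connectivity.
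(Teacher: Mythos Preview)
Your argument is correct. You reduce to the elementary fact that attaching cells of dimension greater than $n$ yields an $n$-connected pair, by first passing to a relative CW model of $(C,L)$ with cells only above dimension $n$ and then using that both $Z$ and $Z'$ compute the same homotopy pushout.

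The paper takes a different route: it does not change the model at all but computes the relative homotopy groups of $(Z,X)$ directly. For $\pi_1$ one uses the Seifert--van Kampen theorem applied to the pushout square; for the higher groups one combines the relative Hurewicz theorem with the Mayer--Vietoris sequence in homology for the pushout (which is available because $L\to C$ is a cofibration). This is the more classical ``compute the invariants'' approach and avoids any appeal to CW approximation or to homotopy invariance of homotopy pushouts. Your approach, by contrast, is more model-categorical: once you know $(C,L)$ is $n$-connected you replace it cellularly and the result is immediate, at the cost of invoking CW approximation for pairs with dimension control and the comparison of homotopy pushouts under weak equivalence (left properness). Both are standard; yours transports better to other settings, while the paper's keeps the toolkit minimal.
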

The proof is a standard application of the Seifert--van Kampen theorem, the Hurewicz theorem and the Mayer--Vietoris sequence for pushouts.

More generally, let $\calX_0$ be the full subcomplex of the simplicial complex $\calX$ spanned by a subset of vertices. Then $\calX$
can be built up from $\calX_0$ by successively adding vertices. We thus get a filtration $\calX_0\subset\calX_1\subset\ldots\subset \calX$
of $\calX$ by full subcomplexes. If $v$ is the vertex in $\calX_i$ which is not contained in $\calX_{i-1}$, then we define
\[lk_\downa(v):=lk_{\calX_i}(v)=lk_\calX(v)\cap\calX_{i-1}\]
to be the \emph{descending link} of $v$. If all the descending links appearing this way are highly connected, 
then also the pair $(\calX,\calX_0)$ will be highly connected and, using the long exact homotopy sequence, we obtain the following:

\begin{prop}
	Let $x_0\in \calX_0$ be a point. Assume that each descending link is $n$-connected. Then, we have
	\[\pi_k(\calX_0,x_0)\cong\pi_k(\calX,x_0)\]
	for $k=0,\ldots,n$.
\end{prop}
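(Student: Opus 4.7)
The plan is to iterate Lemma \ref{76169} along the filtration $\calX_0\subset\calX_1\subset\ldots\subset\calX$, then pass to the colimit using compactness. The heart of the argument is entirely local to each filtration step.

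First, I would fix an enumeration (well-ordering) $v_1,v_2,\ldots$ of the vertices of $\calX$ not lying in $\calX_0$, so that $\calX_i$ is the full subcomplex spanned by the vertices of $\calX_0$ together with $v_1,\ldots,v_i$. At step $i$, the vertex $v_i$ is the unique vertex of $\calX_i$ not lying in $\calX_{i-1}$, and the descending link $lk_\downa(v_i)=lk_{\calX_i}(v_i)$ is exactly the intersection $lk_\calX(v_i)\cap\calX_{i-1}$. The canonical pushout diagram
\begin{displaymath}\xymatrix{
	lk_\downa(v_i)\ar[d]\ar[rr] && \op{Cone}\big(lk_\downa(v_i)\big)\ar[d]\\
	\calX_{i-1}\ar[rr] && \calX_i
}\end{displaymath}
then fits the hypotheses of Lemma \ref{76169}, since $\op{Cone}\big(lk_\downa(v_i)\big)$ is contractible and the left vertical map is an inclusion of simplicial complexes, hence a cofibration.

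Next I would extract the consequences for homotopy groups. Since $lk_\downa(v_i)$ is $n$-connected by assumption, Lemma \ref{76169} (applied with $n$ replaced by $n+1$) says that the pair $(\calX_i,\calX_{i-1})$ is $(n+1)$-connected, i.e.\ $\pi_k(\calX_i,\calX_{i-1})=0$ for all $k\leq n+1$. The long exact sequence of the pair
\[\pi_{k+1}(\calX_i,\calX_{i-1})\rightarrow\pi_k(\calX_{i-1})\rightarrow\pi_k(\calX_i)\rightarrow\pi_k(\calX_i,\calX_{i-1})\]
then forces $\pi_k(\calX_{i-1},x_0)\rightarrow\pi_k(\calX_i,x_0)$ to be an isomorphism for every $k\leq n$ (for small $k$ one has to replace $\pi_0$ and $\pi_1$ statements by the appropriate bijection/isomorphism formulations, which go through in the standard way since basepoints lie in $\calX_0$).

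Finally I would pass to the colimit. Composing the isomorphisms $\pi_k(\calX_0,x_0)\cong\pi_k(\calX_i,x_0)$ yields compatible isomorphisms for every finite stage. For surjectivity of $\pi_k(\calX_0,x_0)\rightarrow\pi_k(\calX,x_0)$, any based map $S^k\rightarrow\calX$ has compact image, hence factors through some $\calX_i$, and can therefore be homotoped into $\calX_0$. For injectivity, the same compactness argument applied to a null-homotopy $D^{k+1}\rightarrow\calX$ shows that any class dying in $\calX$ already dies in some $\calX_i$, hence in $\calX_0$. If the enumeration is transfinite, one proceeds by transfinite induction, with limit stages handled again by compactness since $\calX_\lambda=\bigcup_{i<\lambda}\calX_i$. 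The only mild subtlety—nothing deep—is this compactness / colimit step; the substantive input is Lemma \ref{76169} together with the long exact sequence of a pair.
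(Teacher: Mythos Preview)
Your proof is correct and follows precisely the approach the paper indicates: the paper does not spell out a proof for this proposition but simply remarks, in the sentence preceding it, that successively applying Lemma \ref{76169} at each filtration step shows the pair $(\calX,\calX_0)$ is highly connected, and then the long exact homotopy sequence yields the conclusion. You have faithfully filled in exactly these details, including the compactness/colimit passage that the paper leaves implicit.
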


Note that, in general, the descending links depend on the order in which the vertices are added. We call two vertices 
$v_1,v_2$ in $\calX\setminus\calX_0$ \emph{independent} if they are not joined by an edge in $\calX$. Assume now that we want to
add $v_1$ and then $v_2$ at some step of the process.
The independence condition ensures that the descending links of $v_1$ and $v_2$ do \emph{not} depend on the order in which $v_1$ and 
$v_2$ are added.

The adding order is often encoded in a Morse function. This is a function $f$ assigning to each vertex in $\calX\setminus\calX_0$ 
an element in a totally ordered set, e.g.~$\NN$. We require that vertices with the same $f$-value are pairwise independent. We 
then add vertices in order of ascending $f$-values. Because of the independence property, the adding order of vertices with the same 
$f$-value can be chosen arbitrarily. Alternatively, we can add vertices with the same $f$-value all at once.

\vspace{2mm}
We now give a version of this concept for categories. Let $\calC$ be a category and $X$ an object in $\calC$ with 
$\op{Hom}_\calC(X,X)=\{\id_X\}$. Define $\calC^{-X}$ to be the full subcategory of $\calC$ spanned by the objects of $\calC$
except $X$. We define
\[\overline{lk}_\downa(X):=\calC^{-X}\downa X\]
to be the {\it descending up link} of $X$ and 
\[\underline{lk}_\downa(X):=X\downa\calC^{-X}\]
to be the {\it descending down link} of $X$. Furthermore, define
\[lk_\downa(X):=\overline{lk}_\downa(X)*\underline{lk}_\downa(X)\]
to be the {\it descending link} of $X$.

We have a commutative diagram $\mathfrak{D}$ as follows:
\begin{displaymath}\xymatrix{
	lk_\downa(X)\ar[d]\ar[rr]&&
	\op{Coone}\big(\overline{lk}_\downa(X)*\underline{lk}_\downa(X)\big)\ar[d]\\
	\calC^{-X}\ar[rr]&&\calC
}\end{displaymath}
The horizontal arrows are the obvious inclusions. We explain the vertical arrows, starting with
\[\overline{lk}_\downa(X)*\underline{lk}_\downa(X)\rightarrow\calC^{-X}\]
An object either comes from $\overline{lk}_\downa(X)$ and thus is a pair $(Y,Y\rightarrow X)$ with $Y$ an object in 
$\calC^{-X}$ or comes from $\underline{lk}_\downa(X)$ and thus is a pair $(Y,X\rightarrow Y)$ with $Y$ an object in 
$\calC^{-X}$. In both cases, the object will be sent to $Y$. Similarly, on the level of arrows, it is also the canonical
projection from $\overline{lk}_\downa(X)$ or $\underline{lk}_\downa(X)$ to $\calC^{-X}$. However, for each object 
$(Y,Y\rightarrow X)$ in $\overline{lk}_\downa(X)$ and each object $(Y',X\rightarrow Y')$ in $\underline{lk}_\downa(X)$, 
there is another unique arrow in the join. Send this arrow to the composed arrow $Y\rightarrow X\rightarrow Y'$. Next, we will 
define the arrow
\[\op{Coone}\big(\overline{lk}_\downa(X)*\underline{lk}_\downa(X)\big)\rightarrow\calC\]
Of course, in order to make the diagram commutative, this functor restricted to the base $lk_\downa(X)$ of
the coone is the one already defined above. So we have to define the images of the extra object $\mathtt{tip}$
and the extra arrows. Send $\mathtt{tip}$ to $X$. Let $(Y,Y\rightarrow X)$ be an object of
$\overline{lk}_\downa(X)$. The arrow from this object to $\mathtt{tip}$ is sent to the arrow $Y\rightarrow X$.
Similarly, let $(Y,X\rightarrow Y)$ be an object of $\underline{lk}_\downa(X)$. Then the arrow from
$\mathtt{tip}$ to this object is sent to the arrow $X\rightarrow Y$.

Our goal is to show that the diagram $\mathfrak{D}$ becomes a pushout on the level of classifying spaces. Unfortunately, this
is not always the case. Consider for example the groupoid $\bullet\rightleftarrows\bullet$ with two objects and two non-identity
arrows which are inverse to each other. In all these cases, however, the situation is even better:

\begin{lem}\label{11228}
	Assume that there is an object $A\neq X$ and arrows $\alpha\co X\rightarrow A$ and $\beta\co A\rightarrow X$.
	Then the inclusion $\calC^{-X}\rightarrow\calC$ is a homotopy equivalence.
\end{lem}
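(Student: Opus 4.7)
I would prove this via Quillen's Theorem A (Theorem \ref{56361}), exploiting the very restrictive hypothesis $\op{Hom}_\calC(X,X)=\{\id_X\}$ that is implicit from the preceding discussion. Let $f\co\calC^{-X}\hookrightarrow\calC$ denote the inclusion of the full subcategory. By Remark \ref{86729}, to show $f$ is a homotopy equivalence it suffices to verify that the comma category $X\downa f = X\downa\calC^{-X}$ is contractible, since for every object $Y\neq X$ the comma $Y\downa\calA$ already has $(Y,\id_Y)$ as an initial object.

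\textbf{Key forced identity.} The composite $\alpha\beta\co X\to A\to X$ is an endomorphism of $X$, and the assumption $\op{Hom}_\calC(X,X)=\{\id_X\}$ forces $\alpha\beta=\id_X$. This is the whole engine of the proof: it will let the arrow $\alpha\co X\to A$ behave as a ``section'' allowing us to push everything toward the distinguished object $(A,\alpha)$ of $\calK:=X\downa\calC^{-X}$, which is well-defined because $A\neq X$.

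\textbf{Natural transformation to a constant.} I would then imitate the argument of item i) in Subsection \ref{03887}. Let $F\co\calK\to\calK$ be the constant functor at $(A,\alpha)$, and for each object $(Z,\gamma)$ of $\calK$ (so $\gamma\co X\to Z$ with $Z\neq X$), define
\[
\eta_{(Z,\gamma)} \;:=\; \beta\gamma\co A\longrightarrow Z.
\]
This is an arrow $(A,\alpha)\to(Z,\gamma)$ in $\calK$ because $\alpha*(\beta\gamma)=(\alpha\beta)\gamma=\id_X\gamma=\gamma$. Naturality in $(Z,\gamma)$ is immediate: for $\rho\co(Z,\gamma)\to(Z',\gamma')$ (i.e.\ $\gamma\rho=\gamma'$ in $\calC$) one has $\eta_{(Z,\gamma)}*\rho=\beta\gamma\rho=\beta\gamma'=\eta_{(Z',\gamma')}$, while $F(\rho)=\id_{(A,\alpha)}$. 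Thus $\eta\co F\Rightarrow\id_\calK$ is a natural transformation, inducing a homotopy on classifying spaces between $\id_{B\calK}$ and the constant map at $(A,\alpha)$. Hence $\calK$ is contractible, and Quillen's Theorem A finishes the job.

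\textbf{What I expect to be the main point.} There is essentially no calculation; the only substantive step is spotting that the hypothesis on $\op{Hom}_\calC(X,X)$ collapses $\alpha\beta$ to $\id_X$, which then makes $\beta\gamma$ a canonical arrow from $(A,\alpha)$ to any $(Z,\gamma)$ and thus produces the natural transformation. No symmetric dualization is needed, although one could equally run the argument on $\calC^{-X}\downa X$ using $\gamma\alpha$ as the natural transformation components.
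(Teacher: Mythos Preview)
Your proof is correct and shares the paper's overall strategy: apply Quillen's Theorem~A by showing the relevant comma category is contractible, with the key input being the forced identity $\alpha\beta=\id_X$. The paper, however, works with the dual comma category $\calC^{-X}\downa X$ and verifies that it is \emph{filtered} (item~iv of Subsection~\ref{03887}): every object $(Y,\gamma)$ admits the arrow $\gamma\alpha\co(Y,\gamma)\to(A,\beta)$, and any two parallel arrows are coequalized by this same map. Your route is marginally more economical, since the single natural transformation $\eta_{(Z,\gamma)}=\beta\gamma$ yields contractibility directly without a separate coequalization check; the paper's filteredness argument encodes the same content but split into two verifications. Your closing remark that the argument dualizes to $\calC^{-X}\downa X$ via $\gamma\alpha$ is exactly the map the paper uses.
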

\begin{proof}
	We show that $\calC^{-X}\downa X$ is filtered and thus contractible. The lemma then follows from 
	Theorem \ref{56361} and Remark \ref{86729}. Let $(Y,\gamma)$ be an object in $\calC^{-X}\downa X$, 
	i.e.~$\gamma\co Y\rightarrow X$ is an arrow in $\calC$ with $Y$ an object in $\calC^{-X}$. Set $\epsilon:=\gamma\alpha$. 
	Because of the assumption $\op{Hom}_\calC(X,X)=\{\id_X\}$, the arrow $\alpha\beta\co X\rightarrow X$ must be the
	identity. Then we calculate
	\[\gamma=\gamma(\alpha\beta)=(\gamma\alpha)\beta=\epsilon\beta\]
	This shows that $\epsilon$ represents an arrow $(Y,\gamma)\rightarrow(A,\beta)$ in $\calC^{-X}\downa X$. In particular, 
	for every two objects in the comma category, there are arrows to the object $(A,\beta)$. This shows the first property of a 
	filtered category.
	
	For the second property, we have to show that any two parallel arrows are coequalized by another arrow.
	So let $(Z,\nu)$ and $(Y,\gamma)$ be two objects and $\epsilon,\epsilon'\co (Z,\nu)\rightarrow(Y,\gamma)$ 
	be two arrows, i.e.~$\epsilon,\epsilon'\co Z\rightarrow Y$ are arrows in $\calC^{-X}$ and we have 
	$\epsilon\gamma=\nu=\epsilon'\gamma$. Set $\mu=\gamma\alpha$ which is an arrow $(Y,\gamma)\rightarrow
	(A,\beta)$ as already pointed out. Then we calculate
	\[\epsilon\mu=\epsilon\gamma\alpha=\nu\alpha=\epsilon'\gamma\alpha=\epsilon'\mu\]
	and we are done.
\end{proof}

In all other cases, diagram $\mathfrak{D}$ is indeed a pushout on the level of classifying spaces:

\begin{lem}\label{23847}
	Assume that for any object $A\neq X$ either there are only arrows from $X$ to $A$ or there are only arrows from $A$ to $X$,
	but never both. Then the diagram $B(\mathfrak{D})$
	\begin{displaymath}\xymatrix{
		B\big(lk_\downa X\big)\ar[d]\ar[rr]&&
		\op{Cone}\big(B\big(lk_\downa X\big)\big)\ar[d]\\
		B\big(\calC^{-X}\big)\ar[rr]&&B(\calC)
	}\end{displaymath}
	is a pushout of spaces.
\end{lem}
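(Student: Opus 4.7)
The plan is to argue at the level of nerves. Since $|\cdot|\colon\mathtt{SSET}\rightarrow\mathtt{TOP}$ is a left adjoint and hence preserves colimits, it suffices to show that $N(\mathfrak{D})$ is a pushout of simplicial sets; and since colimits in $\mathtt{SSET}$ are computed levelwise, I only need to check that for each $k\geq 0$ the induced square of $k$-simplex sets is a pushout of sets.

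The key structural observation, exploiting the hypotheses, is a classification of the $k$-simplices of $N(\calC)$ that contain $X$. Since $\op{Hom}_\calC(X,X)=\{\id_X\}$, any block of consecutive $X$'s in a chain $A_0\rightarrow\cdots\rightarrow A_k$ is linked by identities. The arrow hypothesis on $X$ then forces every $A_j\neq X$ preceding such a block to sit in $\overline{lk}_\downa(X)$ (with canonical arrow $A_j\rightarrow X$ obtained by composition along the chain) and every $A_j\neq X$ following it to sit in $\underline{lk}_\downa(X)$ (with canonical arrow $X\rightarrow A_j$ similarly obtained). This yields a canonical lift of such a chain to a $k$-simplex of $\op{Coone}\big(lk_\downa(X)\big)$ in which every $X$ is replaced by $\mathtt{tip}$; conversely the functor to $\calC$ sends $\mathtt{tip}$ to $X$, so this lift is inverse to the restriction of $N\big(\op{Coone}(lk_\downa(X))\big)\rightarrow N(\calC)$ to tip-containing simplices. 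In particular, this restriction is a bijection onto the $X$-containing simplices of $N(\calC)$.

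With this in hand the pushout check at each level $k$ reduces to elementary set theory. Surjectivity of the comparison map
\[N_k(\calC^{-X})\ \sqcup_{N_k(lk_\downa(X))}\ N_k\big(\op{Coone}(lk_\downa(X))\big)\ \longrightarrow\ N_k(\calC)\]
is the dichotomy ``$X$-free or $X$-containing''. Injectivity splits into cases according to which side of the pushout the representatives come from. The only subtle case is two non-tip simplices in $N_k\big(\op{Coone}(lk_\downa(X))\big)$ with a common image in $N_k(\calC)$: both lie in $N_k(lk_\downa(X))$ and so become identified in the pushout via their common image in $N_k(\calC^{-X})$. The main obstacle to anticipate is exactly this: the map $N(lk_\downa(X))\rightarrow N(\calC^{-X})$ is in general \emph{not} injective, because the extra arrow data of an object of $\overline{lk}_\downa(X)$ or $\underline{lk}_\downa(X)$ cannot be recovered from its image in $\calC^{-X}$ when no tip is present to anchor it; however, precisely this non-injectivity is absorbed by the pushout relation and causes no trouble, so the levelwise comparison is a bijection and the lemma follows.
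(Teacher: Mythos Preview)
Your proof is correct and follows essentially the same route as the paper's: reduce to nerves via the left adjoint $|\cdot|$, then use the hypothesis together with $\op{Hom}_\calC(X,X)=\{\id_X\}$ to show that any $X$-containing chain has a single contiguous block of $X$'s and hence lifts uniquely to a tip-containing simplex of the coone. The only presentational difference is that the paper verifies the universal property of the pushout in $\mathtt{SSET}$ directly (uniqueness of the dashed arrow $h$, existence left to the reader), whereas you compute the pushout levelwise and check that the comparison map to $N_k(\calC)$ is a bijection; your explicit handling of the non-injectivity of $N(lk_\downa X)\rightarrow N(\calC^{-X})$ is a detail the paper's formulation sidesteps.
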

\begin{proof}
	We claim that the nerve functor applied to the diagram $\mathfrak{D}$
	\begin{displaymath}\xymatrix{
		N\big(lk_\downa X\big)\ar[d]\ar[rr]&&
		N\big(\op{Coone}\big(\overline{lk}_\downa X*\underline{lk}_\downa X\big)\big)\ar[d]\\
		N\big(\calC^{-X}\big)\ar[rr]&&N(\calC)
	}\end{displaymath}
	yields a pushout in $\mathtt{SSET}$. Since the geometric realization functor $|?|\co \mathtt{SSET}\rightarrow\mathtt{TOP}$ is 
	left adjoint to the singular simplex functor, it preserves all colimits and in particular all pushouts. Therefore, applying the 
	geometric realization functor $|?|$ to the diagram $N(\mathfrak{D})$, we obtain a pushout in $\mathtt{TOP}$, as claimed in the 
	lemma.
	
	A simplex in $N(\calC)$ is just a string of composable arrows $A_0\rightarrow\ldots\rightarrow A_k$. 
	One can easily deduce from the assumption that whenever there are two occurences of $X$
	in such a string of composable arrows, then there cannot be objects different from $X$ in between. In other
	words, if $X$ occurs at all, then all the $X$ in the string are contained in a maximal substring
	of the form $X\rightarrow X\rightarrow\ldots\rightarrow X$ where all the arrows are (necessarily) $\id_X$.
	
	Assume now that we have a commutative diagram as follows:
	\begin{displaymath}\xymatrix{
		N\big(lk_\downa X\big)\ar[d]\ar[rr]&&
		N\big(\op{Coone}\big(\overline{lk}_\downa X*\underline{lk}_\downa X\big)\big)
		\ar[d]\ar[ddr]^f&\\
		N\big(\calC^{-X}\big)\ar[rr]\ar[rrrd]_{g}&&N(\calC)\ar@{-->}[dr]^h&\\
		&&&Z
	}\end{displaymath}
	We will show that $h$ is uniquely determined by $f$ and $g$. Assume $\sigma$ is a simplex in $N(\calC)$ given
	by a string of composable arrows $A_0\rightarrow\ldots\rightarrow A_k$. If all the $A_i$ are contained
	in the full subcategory $\calC^{-X}$ then $\sigma$ is a simplex in the simplicial subset
	$N\big(\calC^{-X}\big)$ and then necessarily $h(\sigma)=g(\sigma)$. On the other hand, assume
	that not all the $A_i$ are objects of $\calC^{-X}$, i.e.~at least one $A_i=X$. As pointed
	out above, $\sigma$ must be of the form
	\[B_0\rightarrow\ldots\rightarrow B_r\rightarrow X\rightarrow\ldots\rightarrow X\rightarrow C_0\rightarrow\ldots\rightarrow C_s\]
	where $B_i\neq X$ for all $i=0,\ldots,r$ and $C_j\neq X$ for all $j=0,\ldots,s$. All the $B_i$ are in the image of 
	$\overline{lk}_\downa(X)$ because, after composing, we get an arrow $B_i\rightarrow X$. Analogously, all the $C_j$ are in 
	the image of $\underline{lk}_\downa(X)$. Such a simplex $\sigma$ always lifts to a unique simplex $\bar{\sigma}$ along the map
	\[N\big(\op{Coone}\big(\overline{lk}_\downa X*\underline{lk}_\downa X\big)\big)\rightarrow N(\calC)\]
	Thus we have $h(\sigma)=f(\bar{\sigma})$. This proves uniqueness of $h$. Showing that $h$ actually defines a map of simplicial 
	sets is left to the reader.
\end{proof}

We combine Lemmas \ref{76169}, \ref{11228} and \ref{23847} to get:
\begin{prop}
	If the descending link $lk_\downa(X)$ is $(n-1)$-connected, then the pair $(\calC,\calC^{-X})$ is $n$-connected.
\end{prop}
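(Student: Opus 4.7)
The plan is to dichotomize on whether there exist arrows between $X$ and some other object in both directions, matching the hypotheses of Lemmas \ref{11228} and \ref{23847}.

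In the first case, suppose there is some object $A \neq X$ together with arrows $\alpha\co X \to A$ and $\beta\co A \to X$. Then Lemma \ref{11228} applies directly and shows that the inclusion $\calC^{-X} \hookrightarrow \calC$ is a homotopy equivalence, hence the pair $(\calC, \calC^{-X})$ is $\infty$-connected and in particular $n$-connected. In this case the hypothesis on the descending link is not even used.

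In the remaining case, for every object $A \neq X$ arrows go only one way between $X$ and $A$, so Lemma \ref{23847} provides that $B(\mathfrak{D})$ is a genuine pushout of spaces. I would now identify this pushout with the setting of Lemma \ref{76169}: take $L := B(lk_\downa(X))$, $C := \op{Cone}(B(lk_\downa(X)))$, $X := B(\calC^{-X})$ and $Z := B(\calC)$. The map $L \to C$ is the standard cone inclusion, which is a cofibration into a contractible space. By hypothesis, $L$ is $(n-1)$-connected. Lemma \ref{76169} then yields that the pair $(B\calC, B\calC^{-X})$ is $n$-connected, which by our convention is exactly what it means for $(\calC, \calC^{-X})$ to be $n$-connected.

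The proof is essentially just the combination of the three preceding lemmas, so there is no substantial obstacle; all the work has already been done. The only thing to be careful about is the case split: one must observe that Lemmas \ref{11228} and \ref{23847} together cover all possibilities for the behavior of arrows between $X$ and other objects (given that $\op{Hom}_\calC(X,X) = \{\id_X\}$), and that in the exceptional case of Lemma \ref{11228} the conclusion is stronger than needed.
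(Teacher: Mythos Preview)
Your proof is correct and is exactly the approach the paper takes: the paper simply states that the proposition follows by combining Lemmas \ref{76169}, \ref{11228} and \ref{23847}, and your case split together with the application of Lemma \ref{76169} to the pushout from Lemma \ref{23847} is precisely this combination.
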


More generally, let $\calX_0$ be the full subcategory of the category $\calX$ spanned by a collection of objects in $\calX$. 
Assume that $\op{Hom}_\calX(X,X)=\{\id_X\}$ for all objects $X$ in $\calX\setminus\calX_0$. Then $\calX$
can be built up from $\calX_0$ by successively adding objects. If all the descending links appearing this way are highly connected, 
then also the pair $(\calX,\calX_0)$ will be highly connected and, using the long exact homotopy sequence, we obtain the following:

\begin{thm}\label{34332}
	Let $x_0\in \calX_0$ be an object. Assume that each descending link is $n$-connected. Then, we have
	\[\pi_k(\calX_0,x_0)\cong\pi_k(\calX,x_0)\]
	for $k=0,\ldots,n$.
\end{thm}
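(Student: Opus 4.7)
The strategy is to iterate the single-object pushout argument (Lemmas \ref{76169}, \ref{11228}, \ref{23847}) along the filtration produced by a Morse function, and then pass to the colimit using that classifying spaces are CW-complexes. I would first fix a Morse function on $\calX\setminus\calX_0$ to linearly order its objects as $X_1,X_2,\ldots$ (using transfinite induction if necessary, but compactness will reduce to the countable case), and define $\calX_i$ to be the full subcategory of $\calX$ spanned by $\op{obj}(\calX_0)\cup\{X_1,\ldots,X_i\}$. This yields a filtration $\calX_0\subset\calX_1\subset\calX_2\subset\cdots$ whose union is $\calX$, and the independence property of the Morse function ensures that the descending link $lk_\downa(X_i)$ computed inside $\calX_i$ (i.e.~with $\calX_i^{-X_i}=\calX_{i-1}$) agrees with the descending link assumed to be $n$-connected in the hypothesis.

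Next I would analyze one filtration step $\calX_{i-1}\subset\calX_i$ by splitting into the two cases of Lemmas \ref{11228} and \ref{23847}, which together are exhaustive: either some object $A\neq X_i$ admits arrows in both directions with $X_i$, in which case Lemma \ref{11228} makes $\calX_{i-1}\hookrightarrow\calX_i$ a homotopy equivalence, or no such $A$ exists, in which case Lemma \ref{23847} realizes $B\calX_i$ as a pushout
\begin{displaymath}\xymatrix{
B\big(lk_\downa(X_i)\big)\ar[d]\ar[rr]&&\op{Cone}\big(B\big(lk_\downa(X_i)\big)\big)\ar[d]\\
B\big(\calX_{i-1}\big)\ar[rr]&&B\big(\calX_i\big)
}\end{displaymath}
Since the cone is contractible and $lk_\downa(X_i)$ is $n$-connected by hypothesis, Lemma \ref{76169} yields that the pair $(\calX_i,\calX_{i-1})$ is $(n+1)$-connected. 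The long exact homotopy sequence of the pair then gives that the inclusion $\calX_{i-1}\hookrightarrow\calX_i$ induces isomorphisms $\pi_k(\calX_{i-1},x_0)\cong\pi_k(\calX_i,x_0)$ for $k=0,\ldots,n$; in the first case this is trivially true, and in the second it follows from $(n+1)$-connectedness.

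Finally I would pass to the colimit. Since $B\calX=\bigcup_i B\calX_i$ as a nested union of CW-subcomplexes and both $S^k$ and $S^k\times[0,1]$ are compact, any class in $\pi_k(\calX,x_0)$ is represented by a map whose image lies in some $B\calX_i$, and any null-homotopy between two such representatives is itself contained in some $B\calX_j$. Composing the finite chain of isomorphisms produced in the previous step therefore yields $\pi_k(\calX_0,x_0)\cong\pi_k(\calX,x_0)$ for $k=0,\ldots,n$.

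The main conceptual obstacle is verifying that the descending link defined at stage $i$ of the filtration is indeed the one that is $n$-connected by assumption, i.e.~that the Morse-theoretic independence lets us add multiple objects of the same Morse value in any order without affecting the links; once this is in place the argument is mechanical. The passage to the colimit is a standard CW-complex compactness argument and presents no real difficulty.
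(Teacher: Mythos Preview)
Your proposal is correct and follows essentially the same route as the paper. The paper does not actually spell out a proof of this theorem: it states the preceding Proposition (combining Lemmas \ref{76169}, \ref{11228}, \ref{23847} to conclude that an $(n{-}1)$-connected descending link makes $(\calC,\calC^{-X})$ $n$-connected), then says that by successively adding objects the pair $(\calX,\calX_0)$ becomes highly connected and the long exact sequence gives the result. Your write-up simply fills in these omitted details---the case split between Lemmas \ref{11228} and \ref{23847}, the long exact sequence at each step, and the compactness/colimit argument---exactly as intended.
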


We say that two objects $x_1$ and $x_2$ in $\calX\setminus\calX_0$ are independent
if there are no arrows $x_1\rightarrow x_2$ or $x_2\rightarrow x_1$ in $\calX$. This guarantees independence of $lk_\downa(x_1)$
and $lk_\downa(x_2)$ from the adding order of $x_1$ and $x_2$. 

Again, we can encode the adding order with the help of a Morse function $f$ which assigns to each object in $\calX\setminus\calX_0$
an element in a totally ordered set, e.g.~$\NN$. We require that objects with the same $f$-value are pairwise independent and we add
objects in order of increasing $f$-values.

\section{Operad groups}\label{97985}

In this section, we want to introduce our main objects of study, the operad groups. We first define the types of operads
we will be working with. We will then define operad groups to be the fundamental groups of the category of operators
naturally associated to operads. In the last subsection, we will discuss examples of operads and their corresponding operad
groups. We will recover some already well-known Thompson-like groups this way.

\subsection{Basic definitions}

\begin{defi}
	An operad $\calO$ consists of a set of colors $C$ and sets of operations $\calO(a_1,\ldots,a_n;b)$ for each
	finite ordered sequence $a_1,\ldots,a_n,b$ of colors in $C$ (the $a_i$ are the input colors and $b$ is the 
	output color) with $n\geq 1$ (allowing operations with no inputs is possible, but we won't consider such
	operads). See Figure \ref{91910} for a visualization of operations. There are composition maps
	(Figure \ref{91910})
	\begin{displaymath}\xymatrix{
		\calO(c_{11},\ldots,c_{1k_1};a_1)\times\ldots\times\calO(c_{n1},\ldots,c_{nk_n};a_n)\times\calO(a_1,\ldots,a_n;b)\ar[d]\\
		\calO(c_{11},\ldots,c_{1k_1},c_{21},\ldots,c_{nk_n};b)
	}\end{displaymath}
	denoted by $(\phi_1,\ldots,\phi_n)*\theta$. Composition is associative (Figure \ref{68615}):
	\begin{gather*}
		\big((\psi_{11},\ldots,\psi_{1k_1})*\phi_1,\ldots,(\psi_{n1},\ldots,\psi_{nk_n})*\phi_n\big)*\theta\\*
		\parallel\\*
		(\psi_{11},\ldots,\psi_{1k_1},\psi_{21},\ldots,\psi_{nk_n})*\big((\phi_1,\ldots,\phi_n)*\theta\big)
	\end{gather*}
	For each color $a$ there are distinguished unit elements $1_a\in\calO(a;a)$ such that
	\[(1_{a_1},\ldots,1_{a_n})*\theta=\theta=\theta*1_b\]
	for each operation $\theta$. Sometimes we call such an operad planar in order to distinguish it from the symmetric or 
	braided versions below.
	
	A symmetric/braided operad comes with additional maps (Figure \ref{82260})
	\[x\cdot\underline{\hspace{2mm}}\co \calO(a_1,\ldots,a_n;b)\rightarrow \calO(a_{1\into x},\ldots,a_{n\into x};b)\]
	for each $x$ in the symmetric group $S_n$ or in the braid group $B_n$ respectively. Here, $i\into x$ for $x\in S_n$ means
	plugging the element $i$ into the permutation $x$ which is considered as a bijection of the set
	$\{1,\ldots,n\}$. There is a canonical projection $B_n\rightarrow S_n$, so this makes sense also in the braided
	case. These maps are assumed to be actions:
	\[x\cdot(y\cdot\theta)=(xy)\cdot\theta\hspace{8mm}1\cdot\theta=\theta\]
	They also have to be equivariant with respect to composition (Figure \ref{58700}):
	\begin{gather*}
		(\phi_{1\into x},\ldots,\phi_{n\into x})*(x\cdot\theta)=\bar{x}\cdot\big((\phi_1,\ldots,\phi_n)*\theta\big)\\
		(y_1\cdot\phi_1,\ldots,y_n\cdot\phi_n)*\theta=(y_1,\ldots,y_n)\cdot\big((\phi_1,\ldots,\phi_n)*\theta\big)
	\end{gather*}
	Here, $\bar{x}$ is obtained from $x$ by replacing the $i$'th strand of $x$ by $n_i$ strands and $n_i$ is the 
	number of inputs of $\phi_{i\into x}$. Furthermore, $(y_1,\ldots,y_n)$ is the juxtaposition of the permutations
	resp.~braidings $y_i$.
\end{defi}

\ifthenelse{\boolean{drawtikz}}{
\begin{figure}
\begin{center}\begin{tikzpicture}[scale=0.35]
	\draw (0,0) -- node[right=15pt]{$\theta$} +(0,6) -- +(6,3) node[right]{$b$} -- +(0,0);
	\draw (0,1) -- +(-2,0) node[left]{$a_3$};
	\draw (0,3) -- +(-2,0) node[left]{$a_2$};
	\draw (0,5) -- +(-2,0) node[left]{$a_1$};
\end{tikzpicture}\hspace{10mm}\begin{tikzpicture}[scale=0.35]
	\draw (0,4.6) -- +(-1,0) node[left]{$c_{12}$};
	\draw (0,5.4) -- +(-1,0) node[left]{$c_{11}$};
	\draw (0,4.1) -- node[right=-3pt]{$\phi_1$} +(0,1.8) -- +(1.8,0.9) -- +(0,0);	
	\draw (0,3) -- +(-1,0) node[left]{$c_{21}$};
	\draw (0,2.1) -- node[right=-3pt]{$\phi_2$} +(0,1.8) -- +(1.8,0.9) -- +(0,0);	
	\draw (0,0.4) -- +(-1,0) node[left]{$c_{33}$};
	\draw (0,1) -- +(-1,0) node[left]{$c_{32}$};
	\draw (0,1.6) -- +(-1,0) node[left]{$c_{31}$};
	\draw (0,0.1) -- node[right=-3pt]{$\phi_3$} +(0,1.8) -- +(1.8,0.9) -- +(0,0);	
	\draw (3.8,0) -- node[right=15pt]{$\theta$} +(0,6) -- +(6,3) node[right]{$b$} -- +(0,0);
	\draw (3.8,1) -- node[above]{$a_3$} +(-2,0);
	\draw (3.8,3) -- node[above]{$a_2$} +(-2,0);
	\draw (3.8,5) -- node[above]{$a_1$} +(-2,0);
\end{tikzpicture}\end{center}
\caption{Visualization of an operation and composition of operations.}\label{91910}
\end{figure}

\begin{figure}
\begin{center}\begin{tikzpicture}[scale=0.4]
	\draw (0,13.6) -- +(-1,0);
	\draw (0,14.4) -- +(-1,0);
	\draw (0,13.1) -- node[right=-4pt]{$\psi_{11}$} +(0,1.8) -- +(1.8,0.9) -- +(0,0);
	\draw (0,12) -- +(-1,0);
	\draw (0,11.1) -- node[right=-4pt]{$\psi_{21}$} +(0,1.8) -- +(1.8,0.9) -- +(0,0);
	\draw (0,9.4) -- +(-1,0);
	\draw (0,10) -- +(-1,0);
	\draw (0,10.6) -- +(-1,0);
	\draw (0,9.1) -- node[right=-4pt]{$\psi_{22}$} +(0,1.8) -- +(1.8,0.9) -- +(0,0);
	\draw (3.3,9.1) -- node[right=3pt]{$\phi_2$} +(0,2.8) -- +(2.8,1.4) -- +(0,0);
	\draw (3.3,9.8) to[out=left,in=right] (1.8,10);
	\draw (3.3,11.2) to[out=left,in=right] (1.8,12);
	\draw (3.3,12.1) -- node[right=3pt]{$\phi_1$} +(0,2.8) -- +(2.8,1.4) -- +(0,0);
	\draw (3.3,13.5) to[out=left,in=right] (1.8,14);
	\draw (8.1,9) -- node[right=18pt]{$\theta$} +(0,6) -- +(6,3) -- +(0,0);
	\draw (8.1,10.5) -- +(-2,0);
	\draw (8.1,13.5) -- +(-2,0);
	\draw[dashed] (2.55,8.8) -- +(0,6.4) -- +(11.75,6.4) -- +(11.75,0) -- +(0,0);
	\draw (4.6,7) -- +(0,1);
	\draw (4.8,7) -- +(0,1);
	\draw[dashed] (-1.2,-0.2) -- +(0,6.4) -- +(8.3,6.4) -- +(8.3,0) -- +(0,0);	
	\draw (0,4.6) -- +(-1,0);
	\draw (0,5.4) -- +(-1,0);
	\draw (0,4.1) -- node[right=-4pt]{$\psi_{11}$} +(0,1.8) -- +(1.8,0.9) -- +(0,0);
	\draw (0,3) -- +(-1,0);
	\draw (0,2.1) -- node[right=-4pt]{$\psi_{21}$} +(0,1.8) -- +(1.8,0.9) -- +(0,0);
	\draw (0,0.4) -- +(-1,0);
	\draw (0,1) -- +(-1,0);
	\draw (0,1.6) -- +(-1,0);
	\draw (0,0.1) -- node[right=-4pt]{$\psi_{22}$} +(0,1.8) -- +(1.8,0.9) -- +(0,0);
	\draw (3.3,0.1) -- node[right=3pt]{$\phi_2$} +(0,2.8) -- +(2.8,1.4) -- +(0,0);
	\draw (3.3,0.8) to[out=left,in=right] (1.8,1);
	\draw (3.3,2.2) to[out=left,in=right] (1.8,3);
	\draw (3.3,3.1) -- node[right=3pt]{$\phi_1$} +(0,2.8) -- +(2.8,1.4) -- +(0,0);
	\draw (3.3,4.5) to[out=left,in=right] (1.8,5);
	\draw (8.1,0) -- node[right=18pt]{$\theta$} +(0,6) -- +(6,3) -- +(0,0);
	\draw (8.1,1.5) -- +(-2,0);
	\draw (8.1,4.5) -- +(-2,0);
\end{tikzpicture}\end{center}
\caption{Associativity.}\label{68615}
\end{figure}

\begin{figure}
\begin{center}\begin{tikzpicture}[scale=0.3]
	\draw (0,0) -- node[right=12pt]{$\theta$} +(0,6) -- +(6,3) node[right]{$b$} -- +(0,0);
	\draw (0,1) -- +(-1,0) node[above]{$a_3$};
	\draw (0,3) -- +(-1,0) node[above]{$a_2$};
	\draw (0,5) -- +(-1,0) node[above]{$a_1$};
	\draw (-1,1) to[out=left,in=right] (-5,3) node[left]{$a_3$};
	\draw[white, line width=4pt] (-1,5) to[out=left,in=right] (-5,1) node[left]{$a_1$};
	\draw (-1,5) to[out=left,in=right] (-5,1) node[left]{$a_1$};
	\draw[white, line width=4pt] (-1,3) to[out=left,in=right] (-5,5) node[left]{$a_2$};
	\draw (-1,3) to[out=left,in=right] (-5,5) node[left]{$a_2$};
\end{tikzpicture}\end{center}
\caption{Action of the braid groups on the operations.}\label{82260}
\end{figure}

\begin{figure}
	\centering
	\begin{minipage}{0.5\textwidth}
	\centering
		\begin{tikzpicture}[scale=0.35]
			\draw (2.8,9) -- node[right=15pt]{$\theta$} +(0,6) -- +(6,3) -- +(0,0);
			\draw (2.8,10) -- +(-1,0);
			\draw (2.8,12) -- +(-1,0);
			\draw (2.8,14) -- +(-1,0);
			\draw[white, line width=4pt] (1.8,10) to[out=left,in=right] (-2.2,12);
			\draw (1.8,10) to[out=left,in=right] (-2.2,12);
			\draw[white, line width=4pt] (1.8,14) to[out=left,in=right] (-2.2,10);
			\draw (1.8,14) to[out=left,in=right] (-2.2,10);
			\draw[white, line width=4pt] (1.8,12) to[out=left,in=right] (-2.2,14);
			\draw (1.8,12) to[out=left,in=right] (-2.2,14);	
			\draw (-4,13.6) -- +(-1,0);
			\draw (-4,14.4) -- +(-1,0);
			\draw (-4,13.1) -- node[right=-3pt]{$\phi_2$} +(0,1.8) -- +(1.8,0.9) -- +(0,0);
			\draw (-4,12) -- +(-1,0);
			\draw (-4,11.1) -- node[right=-3pt]{$\phi_3$} +(0,1.8) -- +(1.8,0.9) -- +(0,0);
			\draw (-4,9.4) -- +(-1,0);
			\draw (-4,10) -- +(-1,0);
			\draw (-4,10.6) -- +(-1,0);
			\draw (-4,9.1) -- node[right=-3pt]{$\phi_1$} +(0,1.8) -- +(1.8,0.9) -- +(0,0);
			\draw[dashed] (-2,8.8) -- +(0,6.4) -- +(11,6.4) -- +(11,0) -- +(0,0);
			\draw (-0.1,7) -- +(0,1);
			\draw (0.1,7) -- +(0,1);
			\draw[dashed] (-0.5,-0.2) -- +(0,6.4) -- +(9.5,6.4) -- +(9.5,0) -- +(0,0);	
			\draw (0,4.4) -- +(-1,0);
			\draw (0,5) -- +(-1,0);
			\draw (0,5.6) -- +(-1,0);
			\draw (0,4.1) -- node[right=-3pt]{$\phi_1$} +(0,1.8) -- +(1.8,0.9) -- +(0,0);
			\draw (0,2.6) -- +(-1,0);
			\draw (0,3.4) -- +(-1,0);
			\draw (0,2.1) -- node[right=-3pt]{$\phi_2$} +(0,1.8) -- +(1.8,0.9) -- +(0,0);
			\draw (0,1) -- +(-1,0);
			\draw (0,0.1) -- node[right=-3pt]{$\phi_3$} +(0,1.8) -- +(1.8,0.9) -- +(0,0);
			\draw (2.8,0) -- node[right=15pt]{$\theta$} +(0,6) -- +(6,3) -- +(0,0);
			\draw (2.8,1) -- +(-1,0);
			\draw (2.8,3) -- +(-1,0);
			\draw (2.8,5) -- +(-1,0);
			\draw[white, line width=4pt] (-1,1) to[out=left,in=right] (-5,3);
			\draw (-1,1) to[out=left,in=right] (-5,3);
			\draw[white, line width=2pt] (-1,5.6) to[out=left,in=right] (-5,1.6);
			\draw (-1,5.6) to[out=left,in=right] (-5,1.6);
			\draw[white, line width=2pt] (-1,5) to[out=left,in=right] (-5,1);
			\draw (-1,5) to[out=left,in=right] (-5,1);
			\draw[white, line width=2pt] (-1,4.4) to[out=left,in=right] (-5,0.4);
			\draw (-1,4.4) to[out=left,in=right] (-5,0.4);	
			\draw[white, line width=4pt] (-1,3.4) to[out=left,in=right] (-5,5.4);
			\draw (-1,3.4) to[out=left,in=right] (-5,5.4);
			\draw[white, line width=4pt] (-1,2.6) to[out=left,in=right] (-5,4.6);
			\draw (-1,2.6) to[out=left,in=right] (-5,4.6);
		\end{tikzpicture}
	\end{minipage}\hfill
	\begin{minipage}{0.5\textwidth}
	\centering
		\begin{tikzpicture}[scale=0.35]
			\draw (0,13.6) -- +(-1,0);
			\draw (0,14.4) -- +(-1,0);
			\draw (0,13.1) -- node[right=-3pt]{$\phi_1$} +(0,1.8) -- +(1.8,0.9) -- +(0,0);
			\draw (0,12) -- +(-1,0);
			\draw (0,11.1) -- node[right=-3pt]{$\phi_2$} +(0,1.8) -- +(1.8,0.9) -- +(0,0);
			\draw (0,9.4) -- +(-1,0);
			\draw (0,10) -- +(-1,0);
			\draw (0,10.6) -- +(-1,0);
			\draw (0,9.1) -- node[right=-3pt]{$\phi_3$} +(0,1.8) -- +(1.8,0.9) -- +(0,0);
			\draw (3.8,9) -- node[right=15pt]{$\theta$} +(0,6) -- +(6,3) -- +(0,0);
			\draw (3.8,10) -- +(-2,0);
			\draw (3.8,12) -- +(-2,0);
			\draw (3.8,14) -- +(-2,0);
			\draw[white, line width=4pt] (-1,14.4) to[out=left,in=right] (-3,13.6);
			\draw (-1,14.4) to[out=left,in=right] (-3,13.6);
			\draw[white, line width=4pt] (-1,13.6) to[out=left,in=right] (-3,14.4);
			\draw (-1,13.6) to[out=left,in=right] (-3,14.4);
			\draw (-1,12) to[out=left,in=right] (-3,12);
			\draw[white, line width=4pt]  (-1,10) to[out=left,in=right] (-3,10.6);
			\draw (-1,10) to[out=left,in=right] (-3,10.6);
			\draw[white, line width=4pt]  (-1,9.4) to[out=left,in=right] (-3,10);
			\draw (-1,9.4) to[out=left,in=right] (-3,10);
			\draw[white, line width=4pt]  (-1,10.6) to[out=left,in=right] (-3,9.4);
			\draw (-1,10.6) to[out=left,in=right] (-3,9.4);
			\draw[dashed] (-3.1,9.1) -- +(0,1.8) -- +(5,1.8) -- +(5,0) -- +(0,0);
			\draw[dashed] (-3.1,11.1) -- +(0,1.8) -- +(5,1.8) -- +(5,0) -- +(0,0);
			\draw[dashed] (-3.1,13.1) -- +(0,1.8) -- +(5,1.8) -- +(5,0) -- +(0,0);
			\draw (1.7,7) -- +(0,1);
			\draw (1.9,7) -- +(0,1);
			\draw[dashed] (-0.5,-0.2) -- +(0,6.4) -- +(10.5,6.4) -- +(10.5,0) -- +(0,0);
			\draw (0,4.6) -- +(-1,0);
			\draw (0,5.4) -- +(-1,0);
			\draw (0,4.1) -- node[right=-3pt]{$\phi_1$} +(0,1.8) -- +(1.8,0.9) -- +(0,0);
			\draw (0,3) -- +(-1,0);
			\draw (0,2.1) -- node[right=-3pt]{$\phi_2$} +(0,1.8) -- +(1.8,0.9) -- +(0,0);
			\draw (0,0.4) -- +(-1,0);
			\draw (0,1) -- +(-1,0);
			\draw (0,1.6) -- +(-1,0);
			\draw (0,0.1) -- node[right=-3pt]{$\phi_3$} +(0,1.8) -- +(1.8,0.9) -- +(0,0);
			\draw (3.8,0) -- node[right=15pt]{$\theta$} +(0,6) -- +(6,3) -- +(0,0);
			\draw (3.8,1) -- +(-2,0);
			\draw (3.8,3) -- +(-2,0);
			\draw (3.8,5) -- +(-2,0);
			\draw[white, line width=4pt] (-1,5.4) to[out=left,in=right] (-3,4.6);
			\draw (-1,5.4) to[out=left,in=right] (-3,4.6);
			\draw[white, line width=4pt] (-1,4.6) to[out=left,in=right] (-3,5.4);
			\draw (-1,4.6) to[out=left,in=right] (-3,5.4);
			\draw (-1,3) to[out=left,in=right] (-3,3);
			\draw[white, line width=4pt] (-1,1) to[out=left,in=right] (-3,1.6);
			\draw (-1,1) to[out=left,in=right] (-3,1.6);
			\draw[white, line width=4pt] (-1,0.4) to[out=left,in=right] (-3,1);
			\draw (-1,0.4) to[out=left,in=right] (-3,1);
			\draw[white, line width=4pt] (-1,1.6) to[out=left,in=right] (-3,0.4);
			\draw (-1,1.6) to[out=left,in=right] (-3,0.4);
		\end{tikzpicture}
	\end{minipage}
	\caption{First and second equivariance property.}\label{58700}
\end{figure}
}{TIKZ}

\begin{rem}\label{73668}
	There is an equivalent way of writing the composition, namely with so-called partial compositions. The
	$i$'th partial compositions
	\[*_i\co\calO(c_1,\ldots,c_k;a_i)\times\calO(a_1,\ldots,a_n;b)\rightarrow
		\calO(a_1,\ldots,a_{i-1},c_1,\ldots,c_k,a_{i+1},\ldots,a_n;b)\]
	are defined as
	\[\phi*_i\theta:=\big(1_{a_1},\ldots,1_{a_{i-1}},\phi,1_{a_{i+1}},\ldots,1_{a_n}\big)*\theta\]
	Conversely, one could define operads via partial compositions and obtain the usual composition from
	successive partial compositions.
\end{rem}

The planar operads, symmetric operads and braided operads can be organized into categories $\mathtt{OP}$, $\mathtt{SYM.OP}$ and 
$\mathtt{BRA.OP}$ respectively. Denote by $\mathtt{MON}$, $\mathtt{SYM.MON}$ and $\mathtt{BRA.MON}$ the categories of monoidal 
categories, symmetric monoidal categories and braided monoidal categories respectively. There are functors
\begin{gather*}
	\op{End}\co\mathtt{MON}\longrightarrow\mathtt{OP}\\*
	\op{End}\co\mathtt{SYM.MON}\longrightarrow\mathtt{SYM.OP}\\*
	\op{End}\co\mathtt{BRA.MON}\longrightarrow\mathtt{BRA.OP}
\end{gather*}
assigning to each (symmetric/braided) monoidal category $\calC$ an operad $\op{End}(\calC)$, called the endomorphism operad.
The colors of $\op{End}(\calC)$ are the objects of $\calC$ and the sets of operations are given by
\[\op{End}(\calC)(a_1,\ldots,a_n;b)=\op{Hom}_\mathcal{C}(a_1\otimes\ldots\otimes a_n,b)\]
Composition in $\op{End}(\calC)$ is induced by the composition in $\mathcal{C}$ in the obvious way. The unit element
in $\op{End}(\calC)(a;a)$ is the identity $\id_a\co a\rightarrow a$ in $\mathcal{C}$. In the symmetric or braided case, $\calC$
comes with additional natural isomorphisms $\gamma_{X,Y}\co X\otimes Y\rightarrow Y\otimes X$. These can be used to define
the action of the symmetric resp.~braid groups on the sets of operations.
In the theory of operads, these endomorphism operads play an important role since morphisms of operads
\[\calO\rightarrow\op{End}(\calC)\]
are representations of or algebras over the operad $\calO$.

The functors $\op{End}$ have left adjoints
\begin{gather*}
	\calS\co\mathtt{OP}\longrightarrow\mathtt{MON}\\*
	\calS\co\mathtt{SYM.OP}\longrightarrow\mathtt{SYM.MON}\\*
	\calS\co\mathtt{BRA.OP}\longrightarrow\mathtt{BRA.MON}
\end{gather*}
The (symmetric/braided) monoidal category $\calS(\calO)$ is called the category of operators. We will define these categories
explicitly. We start with the planar case and then use it to define the braided case. The symmetric case is similar
to the braided case.

\ifthenelse{\boolean{drawtikz}}{
\begin{figure}
\begin{center}\begin{tikzpicture}[scale=0.4]
	\draw (0,4.6) -- +(-1,0);
	\draw (0,5.4) -- +(-1,0);
	\draw (0,4.1) -- node[right=-2pt]{$\phi_1$} +(0,1.8) -- +(1.8,0.9) -- +(0,0);
	\draw (0,3) -- +(-1,0);
	\draw (0,2.1) -- node[right=-2pt]{$\phi_2$} +(0,1.8) -- +(1.8,0.9) -- +(0,0);
	\draw (0,0.4) -- +(-1,0);
	\draw (0,1) -- +(-1,0);
	\draw (0,1.6) -- +(-1,0);
	\draw (0,0.1) -- node[right=-2pt]{$\phi_3$} +(0,1.8) -- +(1.8,0.9) -- +(0,0);
	\draw (-1,5.4) to[out=left,in=right] (-6,5.4);
	\draw[white, line width=4pt] (-3.8,2.4) to[out=left,in=right] (-6,3);
	\draw (-3.8,2.4) to[out=left,in=right] (-6,3);	
	\draw[white, line width=4pt] (-2,4.6) to[out=left,in=right] (-6,1.6);
	\draw (-2,4.6) to[out=left,in=right] (-6,1.6);
	\draw[white, line width=4pt] (-1,1.6) to[out=left,in=right] (-6,1);
	\draw (-1,1.6) to[out=left,in=right] (-6,1);
	\draw[white, line width=4pt] (-1,1) to[out=left,in=right] (-6,4.6);
	\draw (-1,1) to[out=left,in=right] (-6,4.6);
	\draw[white, line width=4pt] (-1.6,3) to[out=left,in=right] (-3.8,2.4);
	\draw (-1.6,3) to[out=left,in=right] (-3.8,2.4);
	\draw (-1,0.4) to[out=left,in=right] (-6,0.4);
	\draw (-2,4.6) -- (-1,4.6);
	\draw (-1.6,3) -- (-1,3);
	\draw[dotted] (-1,0) -- +(0,6);
\end{tikzpicture}\end{center}
\caption{Arrow in $\calS(\calO)$.}\label{05041}
\end{figure}

\begin{figure}
\begin{center}\begin{tikzpicture}[scale=0.5]
	\draw (2,4.6) -- +(-1,0);
	\draw (2,5.4) -- +(-1,0);
	\draw (2,4.1) -- node[right=0pt]{$\phi_1$} +(0,1.8) -- +(1.8,0.9) -- +(0,0);
	\draw (2,3) -- +(-1,0);
	\draw (2,2.1) -- node[right=0pt]{$\phi_2$} +(0,1.8) -- +(1.8,0.9) -- +(0,0);
	\draw (2,0.4) -- +(-1,0);
	\draw (2,1) -- +(-1,0);
	\draw (2,1.6) -- +(-1,0);
	\draw (2,0.1) -- node[right=0pt]{$\phi_3$} +(0,1.8) -- +(1.8,0.9) -- +(0,0);
	\draw[white, line width=4pt] (1,4.6) to[out=left,in=right] (-1,5.4);
	\draw (1,4.6) to[out=left,in=right] (-1,5.4);
	\draw[white, line width=4pt] (1,5.4) to[out=left,in=right] (-1,4.6);
	\draw (1,5.4) to[out=left,in=right] (-1,4.6);
	\draw (1,3) to[out=left,in=right] (-1,3);
	\draw[white, line width=4pt] (1,0.4) to[out=left,in=right] (-1,1);
	\draw (1,0.4) to[out=left,in=right] (-1,1);
	\draw[white, line width=4pt] (1,1.6) to[out=left,in=right] (-1,0.4);
	\draw (1,1.6) to[out=left,in=right] (-1,0.4);
	\draw[white, line width=4pt] (1,1) to[out=left,in=right] (-1,1.6);
	\draw (1,1) to[out=left,in=right] (-1,1.6);
	\draw[white, line width=4pt] (-2.7,2.7) to[out=left,in=right] (-4,3);
	\draw (-2.7,2.7) to[out=left,in=right] (-4,3);
	\draw[white, line width=4pt] (-1.5,5.4) to[out=left,in=right] (-4,1.6);
	\draw (-1.5,5.4) to[out=left,in=right] (-4,1.6);
	\draw[white, line width=4pt] (-1,4.6) to[out=left,in=right] (-4,5.4);
	\draw (-1,4.6) to[out=left,in=right] (-4,5.4);
	\draw[white, line width=4pt] (-1,1.6) to[out=left,in=right] (-4,4.6);
	\draw (-1,1.6) to[out=left,in=right] (-4,4.6);
	\draw[white, line width=4pt] (-1,1) to[out=left,in=right] (-4,0.4);
	\draw (-1,1) to[out=left,in=right] (-4,0.4);
	\draw[white, line width=4pt] (-1,0.4) to[out=left,in=right] (-4,1);
	\draw (-1,0.4) to[out=left,in=right] (-4,1);
	\draw[white, line width=4pt] (-1.4,3) to[out=left,in=right] (-2.7,2.7);
	\draw (-1.4,3) to[out=left,in=right] (-2.7,2.7);
	\draw (-1.5,5.4) -- (-1,5.4);
	\draw (-1.4,3) -- (-1,3);
	\draw[dashed] (-1,0.1) -- +(0,1.8) -- +(4.9,1.8) -- +(4.9,0) -- +(0,0);
	\draw[dashed] (-1,2.1) -- +(0,1.8) -- +(4.9,1.8) -- +(4.9,0) -- +(0,0);
	\draw[dashed] (-1,4.1) -- +(0,1.8) -- +(4.9,1.8) -- +(4.9,0) -- +(0,0);
	
	\draw (0.06,-0.6) -- +(0,-0.8);
	\draw (-0.06,-0.6) -- +(0,-0.8);
	
	\draw (2,-3.4) -- +(-1,0);
	\draw (2,-2.6) -- +(-1,0);
	\draw (2,-3.9) -- node[right=0pt]{$\phi_1$} +(0,1.8) -- +(1.8,0.9) -- +(0,0);
	\draw (2,-5) -- +(-1,0);
	\draw (2,-5.9) -- node[right=0pt]{$\phi_2$} +(0,1.8) -- +(1.8,0.9) -- +(0,0);
	\draw (2,-7.6) -- +(-1,0);
	\draw (2,-7) -- +(-1,0);
	\draw (2,-6.4) -- +(-1,0);
	\draw (2,-7.9) -- node[right=0pt]{$\phi_3$} +(0,1.8) -- +(1.8,0.9) -- +(0,0);	
	\draw[white, line width=4pt] (-2.7,-5.3) to[out=left,in=right] (-4,-5);
	\draw (-2.7,-5.3) to[out=left,in=right] (-4,-5);
	\draw[white, line width=4pt] (-1.5,-2.6) to[out=left,in=right] (-4,-6.4);
	\draw (-1.5,-2.6) to[out=left,in=right] (-4,-6.4);
	\draw[white, line width=4pt] (-1,-3.4) to[out=left,in=right] (-4,-2.6);
	\draw (-1,-3.4) to[out=left,in=right] (-4,-2.6);
	\draw[white, line width=4pt] (-1,-6.4) to[out=left,in=right] (-4,-3.4);
	\draw (-1,-6.4) to[out=left,in=right] (-4,-3.4);
	\draw[white, line width=4pt] (-1,-7) to[out=left,in=right] (-4,-7.6);
	\draw (-1,-7) to[out=left,in=right] (-4,-7.6);
	\draw[white, line width=4pt] (-1,-7.6) to[out=left,in=right] (-4,-7);
	\draw (-1,-7.6) to[out=left,in=right] (-4,-7);
	\draw[white, line width=4pt] (-1.4,-5) to[out=left,in=right] (-2.7,-5.3);
	\draw (-1.4,-5) to[out=left,in=right] (-2.7,-5.3);
	\draw (-1.5,-2.6) -- (-1,-2.6);
	\draw (-1.4,-5) -- (-1,-5);
	\draw[white, line width=4pt] (1,-3.4) to[out=left,in=right] (-1,-2.6);
	\draw (1,-3.4) to[out=left,in=right] (-1,-2.6);
	\draw[white, line width=4pt] (1,-2.6) to[out=left,in=right] (-1,-3.4);
	\draw (1,-2.6) to[out=left,in=right] (-1,-3.4);
	\draw (1,-5) to[out=left,in=right] (-1,-5);
	\draw[white, line width=4pt] (1,-7.6) to[out=left,in=right] (-1,-7);
	\draw (1,-7.6) to[out=left,in=right] (-1,-7);
	\draw[white, line width=4pt] (1,-6.4) to[out=left,in=right] (-1,-7.6);
	\draw (1,-6.4) to[out=left,in=right] (-1,-7.6);
	\draw[white, line width=4pt] (1,-7) to[out=left,in=right] (-1,-6.4);
	\draw (1,-7) to[out=left,in=right] (-1,-6.4);
	\draw[dotted] (-1,-8) -- +(0,6);
	\draw[dotted] (1,-8) -- +(0,6);
	
	\draw (0.06,-8.6) -- +(0,-0.8);
	\draw (-0.06,-8.6) -- +(0,-0.8);
	
	\draw (2,-11.4) -- +(-1,0);
	\draw (2,-10.6) -- +(-1,0);
	\draw (2,-11.9) -- node[right=0pt]{$\phi_1$} +(0,1.8) -- +(1.8,0.9) -- +(0,0);
	\draw (2,-13) -- +(-1,0);
	\draw (2,-13.9) -- node[right=0pt]{$\phi_2$} +(0,1.8) -- +(1.8,0.9) -- +(0,0);
	\draw (2,-15.6) -- +(-1,0);
	\draw (2,-15) -- +(-1,0);
	\draw (2,-14.4) -- +(-1,0);
	\draw (2,-15.9) -- node[right=0pt]{$\phi_3$} +(0,1.8) -- +(1.8,0.9) -- +(0,0);		
	\draw (1,-10.6) to[out=left,in=right] (-4,-10.6);
	\draw[white, line width=4pt] (-1.8,-13.6) to[out=left,in=right] (-4,-13);
	\draw (-1.8,-13.6) to[out=left,in=right] (-4,-13);	
	\draw[white, line width=4pt] (0,-11.4) to[out=left,in=right] (-4,-14.4);
	\draw (0,-11.4) to[out=left,in=right] (-4,-14.4);
	\draw[white, line width=4pt] (1,-14.4) to[out=left,in=right] (-4,-15);
	\draw (1,-14.4) to[out=left,in=right] (-4,-15);
	\draw[white, line width=4pt] (1,-15) to[out=left,in=right] (-4,-11.4);
	\draw (1,-15) to[out=left,in=right] (-4,-11.4);
	\draw[white, line width=4pt] (0.4,-13) to[out=left,in=right] (-1.8,-13.6);
	\draw (0.4,-13) to[out=left,in=right] (-1.8,-13.6);
	\draw (1,-15.6) to[out=left,in=right] (-4,-15.6);
	\draw (0,-11.4) -- (1,-11.4);
	\draw (0.4,-13) -- (1,-13);
	\draw[dotted] (1,-16) -- +(0,6);
\end{tikzpicture}\end{center}
\caption{Equivalence in $\calS(\calO)$.}\label{46525}
\end{figure}

\begin{figure}
\begin{center}\begin{tikzpicture}[scale=0.5]
	\draw (0,4.6) -- +(-1,0);
	\draw (0,5.4) -- +(-1,0);
	\draw (0,4.1) -- node[right=0pt]{$\phi_1$} +(0,1.8) -- +(1.8,0.9) -- +(0,0);
	\draw (0,3) -- +(-1,0);
	\draw (0,2.1) -- node[right=0pt]{$\phi_2$} +(0,1.8) -- +(1.8,0.9) -- +(0,0);
	\draw (0,0.4) -- +(-1,0);
	\draw (0,1) -- +(-1,0);
	\draw (0,1.6) -- +(-1,0);
	\draw (0,0.1) -- node[right=0pt]{$\phi_3$} +(0,1.8) -- +(1.8,0.9) -- +(0,0);
	\draw (-1,5.4) to[out=left,in=right] (-4,5.4);
	\draw[white, line width=4pt] (-2.8,2.8) to[out=left,in=right] (-4,3);
	\draw (-2.8,2.8) to[out=left,in=right] (-4,3);
	\draw[white, line width=4pt] (-2,4.6) to[out=left,in=right] (-4,1.6);
	\draw (-2,4.6) to[out=left,in=right] (-4,1.6);
	\draw[white, line width=4pt] (-1,1.6) to[out=left,in=right] (-4,1);
	\draw (-1,1.6) to[out=left,in=right] (-4,1);
	\draw[white, line width=4pt] (-1,1) to[out=left,in=right] (-4,4.6);
	\draw (-1,1) to[out=left,in=right] (-4,4.6);
	\draw[white, line width=4pt] (-1.6,3) to[out=left,in=right] (-2.8,2.8);
	\draw (-1.6,3) to[out=left,in=right] (-2.8,2.8);
	\draw (-1,0.4) to[out=left,in=right] (-4,0.4);
	\draw (-2,4.6) -- (-1,4.6);
	\draw (-1.6,3) -- (-1,3);
	\draw (5.3,0.1) -- node[right=6pt]{$\psi_2$} +(0,2.8) -- +(2.8,1.4) -- +(0,0);
	\draw (5.3,3.1) -- node[right=6pt]{$\psi_1$} +(0,2.8) -- +(2.8,1.4) -- +(0,0);
	\draw (5.3,4.5) -- +(-1.5,0);
	\draw (5.3,0.8) -- +(-1.5,0);
	\draw (5.3,2.2) -- +(-1.5,0);
	\draw (3.8,4.5) to[out=left,in=right] (1.8,5);
	\draw[white, line width=4pt] (3.8,2.2) to[out=left,in=right] (1.8,1);
	\draw (3.8,2.2) to[out=left,in=right] (1.8,1);
	\draw[white, line width=4pt] (3.8,0.8) to[out=left,in=right] (1.8,3);
	\draw (3.8,0.8) to[out=left,in=right] (1.8,3);
	\draw[dotted] (-1,0) -- +(0,6);
	\draw[dashed] (1.8,0) -- +(0,6);
	\draw[dotted] (3.8,0) -- +(0,6);
	
	\draw (1.74,-0.6) -- +(0,-0.8);
	\draw (1.86,-0.6) -- +(0,-0.8);
	
	\draw (2,-3.4) -- +(-1,0);
	\draw (2,-2.6) -- +(-1,0);
	\draw (2,-3.9) -- node[right=0pt]{$\phi_1$} +(0,1.8) -- +(1.8,0.9) -- +(0,0);
	\draw (2,-7) -- +(-1,0);
	\draw (2,-5.9) -- node[right=0pt]{$\phi_3$} +(0,1.8) -- +(1.8,0.9) -- +(0,0);
	\draw (2,-5.6) -- +(-1,0);
	\draw (2,-5) -- +(-1,0);
	\draw (2,-4.4) -- +(-1,0);
	\draw (2,-7.9) -- node[right=0pt]{$\phi_2$} +(0,1.8) -- +(1.8,0.9) -- +(0,0);
	\draw (-1,-2.6) to[out=left,in=right] (-4,-2.6);
	\draw[white, line width=4pt] (-2.8,-5.2) to[out=left,in=right] (-4,-5);
	\draw (-2.8,-5.2) to[out=left,in=right] (-4,-5);
	\draw[white, line width=4pt] (-2,-3.4) to[out=left,in=right] (-4,-6.4);
	\draw (-2,-3.4) to[out=left,in=right] (-4,-6.4);
	\draw[white, line width=4pt] (-1,-6.4) to[out=left,in=right] (-4,-7);
	\draw (-1,-6.4) to[out=left,in=right] (-4,-7);
	\draw[white, line width=4pt] (-1,-7) to[out=left,in=right] (-4,-3.4);
	\draw (-1,-7) to[out=left,in=right] (-4,-3.4);
	\draw[white, line width=4pt] (-1.6,-5) to[out=left,in=right] (-2.8,-5.2);
	\draw (-1.6,-5) to[out=left,in=right] (-2.8,-5.2);
	\draw (-1,-7.6) to[out=left,in=right] (-4,-7.6);
	\draw (-2,-3.4) -- (-1,-3.4);
	\draw (-1.6,-5) -- (-1,-5);
	\draw (1,-2.6) to[out=left,in=right] (-1,-2.6);
	\draw (1,-3.4) to[out=left,in=right] (-1,-3.4);
	\draw[white, line width=4pt] (1,-4.4) to[out=left,in=right] (-1,-6.4);
	\draw (1,-4.4) to[out=left,in=right] (-1,-6.4);
	\draw[white, line width=4pt] (1,-5) to[out=left,in=right] (-1,-7);
	\draw (1,-5) to[out=left,in=right] (-1,-7);
	\draw[white, line width=4pt] (1,-5.6) to[out=left,in=right] (-1,-7.6);
	\draw (1,-5.6) to[out=left,in=right] (-1,-7.6);
	\draw[white, line width=4pt] (1,-7) to[out=left,in=right] (-1,-5);
	\draw (1,-7) to[out=left,in=right] (-1,-5);	
	\draw (5.3,-7.9) -- node[right=6pt]{$\psi_2$} +(0,2.8) -- +(2.8,1.4) -- +(0,0);
	\draw (5.3,-4.9) -- node[right=6pt]{$\psi_1$} +(0,2.8) -- +(2.8,1.4) -- +(0,0);
	\draw (5.3,-7.2) to[out=left,in=right] (3.8,-7);
	\draw (5.3,-5.8) to[out=left,in=right] (3.8,-5);
	\draw (5.3,-3.5) to[out=left,in=right] (3.8,-3);
	\draw[dotted] (-1,-8) -- +(0,6);
	\draw[dotted] (1,-8) -- +(0,6);
	\draw[dotted] (3.8,-8) -- +(0,6);
	
	\draw (1.74,-8.6) -- +(0,-0.8);
	\draw (1.86,-8.6) -- +(0,-0.8);
	
	\draw (2,-11.4) -- +(-1,0);
	\draw (2,-10.6) -- +(-1,0);
	\draw (2,-11.9) -- node[right=0pt]{$\phi_1$} +(0,1.8) -- +(1.8,0.9) -- +(0,0);
	\draw (2,-15) -- +(-1,0);
	\draw (2,-13.9) -- node[right=0pt]{$\phi_3$} +(0,1.8) -- +(1.8,0.9) -- +(0,0);
	\draw (2,-13.6) -- +(-1,0);
	\draw (2,-13) -- +(-1,0);
	\draw (2,-12.4) -- +(-1,0);
	\draw (2,-15.9) -- node[right=0pt]{$\phi_2$} +(0,1.8) -- +(1.8,0.9) -- +(0,0);
	\draw (1,-10.6) to[out=left,in=right] (-4,-10.6);
	\draw[white, line width=4pt] (1,-13.6) to[out=left,in=right] (-4,-15);
	\draw (1,-13.6) to[out=left,in=right] (-4,-15);
	\draw[white, line width=4pt] (1,-12.4) to[out=left,in=right] (-4,-13.6);
	\draw (1,-12.4) to[out=left,in=right] (-4,-13.6);
	\draw[white, line width=4pt] (1,-15) to[out=left,in=right] (-4,-12.4);
	\draw (1,-15) to[out=left,in=right] (-4,-12.4);
	\draw[white, line width=4pt] (0,-11.4) to[out=left,in=right] (-4,-13);
	\draw (0,-11.4) to[out=left,in=right] (-4,-13);
	\draw[white, line width=4pt] (1,-13) to[out=left,in=right] (-4,-11.4);
	\draw (1,-13) to[out=left,in=right] (-4,-11.4);
	\draw (0,-11.4) -- (1,-11.4);	
	\draw (5.3,-15.9) -- node[right=6pt]{$\psi_2$} +(0,2.8) -- +(2.8,1.4) -- +(0,0);
	\draw (5.3,-12.9) -- node[right=6pt]{$\psi_1$} +(0,2.8) -- +(2.8,1.4) -- +(0,0);	
	\draw (5.3,-15.2) to[out=left,in=right] (3.8,-15);
	\draw (5.3,-13.8) to[out=left,in=right] (3.8,-13);
	\draw (5.3,-11.5) to[out=left,in=right] (3.8,-11);
	\draw[dotted] (1,-16) -- +(0,6);
	\draw[dashed] (2,-11.9) -- +(3.3,-1) -- +(6.2,-1) -- +(6.2,1.8) -- 
		+(0,1.8) -- +(-0.5,1.8) -- +(-0.5,0) -- +(0,0);
	\draw[dashed] (2,-12.1) -- +(3.3,-1) -- +(6.2,-1) -- +(6.2,-3.8) -- 
		+(0,-3.8) -- +(-0.5,-3.8) -- +(-0.5,0) -- +(0,0);
\end{tikzpicture}\end{center}
\caption{Composition in $\calS(\calO)$.}\label{56806}
\end{figure}
}{TIKZ}

So let $\calO$ be a planar operad with a set of colors $C$. The objects of $\calS(\calO)$ are free words
in the colors, i.e.~finite sequences of colors in $C$. An arrow in $\calS(\calO)$ is a finite sequence
of operations in $\calO$: If $X_1,\ldots,X_n$ are operations in $\calO$, the (ordered) input colors of $X_i$
are $(c_i^1,\ldots,c_i^{k_i})$ and the output color of $X_i$ is $d_i$, then the $X_i$ give an arrow
\[(X_1,\ldots,X_n)\co (c_1^1,\ldots,c_1^{k_1},c_2^1,\ldots,c_n^{k_n})\rightarrow(d_1,\ldots,d_n)\]
in $\calS(\calO)$. Composition is induced by the composition in the operad $\calO$ and the identities are given
by the identity operations in $\calO$. The tensor product is given by juxtaposition.

Now let $\calO$ be a braided operad with set of colors $C$. By forgetting the action of the braid groups, we 
get a planar operad $\calO_{\mathrm{pl}}$. The braided monoidal category $\calS(\calO)$ is a certain product 
$\mathfrak{Braid}(C)\boxtimes\calS(\calO_\mathrm{pl})$. The objects of $\calS(\calO)$ are once more finite sequences of 
colors in $C$. Arrows in $\calS(\calO)$ are equivalence classes of pairs 
$(\beta,X)\in\mathfrak{Braid}(C)\times\calS(\calO_\mathrm{pl})$ consisting of a $C$-colored braid $\beta$ and a sequence 
$X=(X_1,\ldots,X_n)$ of operations of $\calO$ where the codomain of $\beta$ equals the domain of $X$
(Figure \ref{05041}). The equivalence relation on such pairs is the following: Let $(\beta,X)$ be such a pair with 
$X=(X_1,\ldots,X_n)$. For each $i=1,\ldots,n$ let $\sigma_i$ be a $C$-colored braid such that $\sigma_i\cdot X_i$ is defined.
Let $\sigma:=\sigma_1\otimes\ldots\otimes \sigma_n$ and define
\[\sigma\cdot(\beta,X):=\big(\beta*\sigma^{-1},(\sigma_1\cdot X_1,\ldots,\sigma_n\cdot X_n)\big)\]
We require $(\beta,X)$ and $(\beta',X')$ to be equivalent if there exists a $\sigma$ as above such that
$(\beta',X')=\sigma\cdot(\beta,X)$. In other words, it is the smallest equivalence relation respecting juxtaposition and which
is generated by the relation 
\[(\beta*\sigma,X)\sim(\beta,\sigma\cdot X)\]
with $X$ a single operation. 
%Roughly speaking, parts of the braid $\beta$ can be used to act on the operations
%$X_i$ or, conversely, braids acting on the operations $X_i$ can be merged into $\beta$. 
This is visualized in Figure \ref{46525}.

Composition in $\calS(\calO)$ is defined on representatives $(\beta,X)$ and $(\delta,Y)$. Loosely speaking, we push the sequence 
$X$ of operations through the colored braid $\delta$ just as in the definition of equivariance for operads, obtain another 
colored braid $X{\curvearrowright}\delta$ which is obtained from $\delta$ by multiplying the strands according to $X$ and another
sequence of operations $X{\curvearrowleft}\delta$ which is obtained from $X$ by permuting the operations according to
$\delta$, and finally compose the left and right side in $\mathfrak{Braid}(C)$ and $\calS(\calO_\mathrm{pl})$
respectively:
\[(\beta,X)*(\delta,Y):=\big(\beta*X{\curvearrowright}\delta,X{\curvearrowleft}\delta*Y\big)\]
See Figure \ref{56806} for a visualization of this procedure. That this definition is independent of the chosen representatives 
follows from the equivariance properties of operads.

Last but not least, the tensor product is defined on representatives $(\beta,X)$ and $(\delta,Y)$ via juxtaposition,
i.e.~$(\beta,X)\otimes(\delta,Y):=(\beta\otimes\delta,X\otimes Y)$. The identity arrows are those represented by a pair 
of identities.

\begin{defi}
	The degree of an operation is its number of inputs. The degree of an object in $\calS(\calO)$ is the length of the corresponding
	color word. The degree of an arrow in $\calS(\calO)$ is the degree of its domain. A higher degree operation resp.~object
	resp.~arrow is one with degree at least $2$.
\end{defi}

\begin{defi}
	Let $\calO$ be a planar, symmetric or braided operad and let $X$ be an object in $\calS(\calO)$.
	Then the group
	\[\pi_1(\calO,X):=\pi_1\big(\calS(\calO),X\big)\]
	is called the operad group associated to $\calO$ based at $X$.
\end{defi}

\subsection{Normal forms}\label{88061}

In case $\calO$ is a planar operad, arrows in $\calS(\calO)$ are just tensor products of operations. In the symmetric
and braided case, however, arrows are equivalence classes of pairs $(\beta,X)$. In this subsection, we want to give a
normal form of such arrows, i.e.~canonical representatives $(\beta,X)$. We will treat the braided case, the symmetric case 
is similar and simpler.

Consider a colored braid $\beta$ with $n$ strands. The $i$'th strand is the strand starting from the node with
index $i\in\{1,\ldots,n\}$. Let $S$ be a subset of the index set $\{1,\ldots,n\}$. Deleting all strands in $\beta$ other than those with
an index in $S$ yields another colored braid $\beta|_S$. We say that $\beta$ is unbraided on $S$ if $\beta|_S$ is trivial.

Let $(n_1,\ldots,n_k)$ be a sequence of natural numbers with $1=n_1< n_2< \ldots< n_k=n+1$. A sequence like this is called a partition 
of $n$, denoted by $[n_1,\ldots,n_k]$, because the sets $S_i:=\{n_i,\ldots,n_{i+1}-1\}$ form a partition of the set $\{1,\ldots,n\}$. We
say $\beta$ is unbraided with respect to the partition $[n_1,\ldots,n_k]$ if it is unbraided on the sets $S_i$.

\begin{lem}
	Let $[n_1,\ldots,n_k]$ be a partition of $n$ and $\beta$ a colored braid with $n$ strands. Then there is a
	unique decomposition $\beta=\beta_p*\beta_u$ into colored braids $\beta_p$ and $\beta_u$ such that 
	$\beta_p=\beta^1_p\otimes\ldots\otimes\beta^{k-1}_p$ is a tensor product of colored braids $\beta^i_p$ with
	$|S_i|$ strands and $\beta_u$ is unbraided with respect to $[n_1,\ldots,n_k]$.
\end{lem}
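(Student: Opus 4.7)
The plan is to prove existence and uniqueness separately by exploiting the deletion operation $\beta \mapsto \beta|_S$.

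First I would set up the key technical tool: the restriction (strand‑deletion) operation. Given a colored braid $\beta$ on $n$ strands and a subset $S \subset \{1,\ldots,n\}$, the braid $\beta|_S$ is obtained by erasing all strands whose top index is not in $S$. I would record two compatibility properties. \emph{Composition:} if $\alpha$ and $\gamma$ are composable colored braids and $\alpha$ has underlying permutation $\pi_\alpha$, then $(\alpha*\gamma)|_S = \alpha|_S \,*\, \gamma|_{\pi_\alpha(S)}$, with the obvious renumbering. \emph{Inverse under block preservation:} if $\pi_\alpha(S) = S$ as a set (equivalently, the strands starting in $S$ all end in $S$), then $(\alpha^{-1})|_S = (\alpha|_S)^{-1}$. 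Both statements are evident from the topological picture of deleting strands. Note that any braid of the tensor-product form $\beta_p = \beta_p^1 \otimes \cdots \otimes \beta_p^{k-1}$ automatically preserves every $S_i$, and satisfies $\beta_p|_{S_i} = \beta_p^i$.

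For \emph{existence}, I would simply set $\beta_p^i := \beta|_{S_i}$ for $i = 1,\ldots,k-1$, form $\beta_p := \beta_p^1 \otimes \cdots \otimes \beta_p^{k-1}$, and then define $\beta_u := \beta_p^{-1} * \beta$. Since $\beta_p$ preserves each block, the technical tools give
\[
\beta_u|_{S_i} \;=\; (\beta_p^{-1})|_{S_i} \,*\, \beta|_{\pi_{\beta_p^{-1}}(S_i)} \;=\; (\beta_p^i)^{-1} \,*\, \beta|_{S_i} \;=\; (\beta|_{S_i})^{-1} \,*\, \beta|_{S_i} \;=\; 1,
\]
so $\beta_u$ is unbraided with respect to $[n_1,\ldots,n_k]$. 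The colorings automatically match, as $\beta_p$ has the same top coloring as $\beta$, so $\beta_p^{-1}*\beta$ is a well-defined colored braid.

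For \emph{uniqueness}, suppose $\beta = \beta_p' * \beta_u'$ is another such decomposition. Since $\beta_p'$ preserves the partition and $\beta_u'|_{S_i} = 1$, the composition rule yields
\[
\beta|_{S_i} \;=\; \beta_p'|_{S_i} \,*\, \beta_u'|_{S_i} \;=\; (\beta_p')^i,
\]
which forces $(\beta_p')^i = \beta|_{S_i} = \beta_p^i$ for every $i$, hence $\beta_p' = \beta_p$ and consequently $\beta_u' = \beta_u$.

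The only real subtlety is the bookkeeping in the composition formula $(\alpha*\gamma)|_S = \alpha|_S * \gamma|_{\pi_\alpha(S)}$: one must check that the strand labels at the junction are correctly transported through $\alpha$, and that restriction is compatible with color assignments on the strands. Once this is verified — and it is visually obvious from a strand‑diagram picture, with the block‑preservation of $\beta_p$ ensuring $\pi_{\beta_p}(S_i)=S_i$ so the index shift never appears — the proof reduces to the short calculations above.
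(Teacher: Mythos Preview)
Your proof is correct and follows essentially the same approach as the paper: both define $\beta_p^i := \beta|_{S_i}$ and $\beta_u := \beta_p^{-1} * \beta$, with your version supplying the explicit verification that $\beta_u|_{S_i}$ is trivial and the uniqueness argument that the paper leaves to the reader.
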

\begin{proof}
	Define $\beta_p^i:=\beta|_{S_i}$ and
	\[\beta_u:=\left(\beta|_{S_1}^{-1}\otimes\ldots\otimes\beta|_{S_{k-1}}^{-1}\right)*\beta\]
	Then we have $\beta=\beta_p*\beta_u$ and $\beta_u$ is unbraided with respect to $[n_1,\ldots,n_k]$.
	The uniqueness statement is left to the reader.
\end{proof}

Now let $[\beta,X]$ be an arrow in $\calS(\calO)$ with $X=(X_1,\ldots,X_k)$. Assume $\deg(X_i)=d_i$ and 
$d_1+\ldots+d_k=n$. Define $n_i=1+\sum_{j=1}^{i-1}d_j$ for $i=1,\ldots,k+1$ and observe the partition 
$[n_1,\ldots,n_{k+1}]$. Decompose the colored braid $\beta^{-1}$ as in the previous lemma to obtain
$\beta=\tau*\rho$ where $\tau^{-1}$ is unbraided with respect to $[n_1,\ldots,n_{k+1}]$ and
$\rho=\rho_1\otimes\ldots\otimes\rho_k$ is a tensor product of colored braids $\rho_i$ with $d_i$ 
strands. Define $Y_i=\rho_i\cdot X_i$. Then from the definition of arrows in $\calS(\calO)$ it follows that
\[[\beta,X]=[\tau,Y]\]
with $Y=(Y_1,\ldots,Y_k)$. So each arrow has a representative $(\tau,Y)$ such that $\tau^{-1}$ is unbraided
in the ranges defined by the domains of the operations in the second component. It is easy to see that there
is at most one such pair.

Similarly, in the symmetric case, for each arrow in $\calS(\calO)$, there is a unique representative $(\tau,Y)$
such that the colored permutation $\tau^{-1}$ is unpermuted on the domains of the operations in the second
component.

\begin{defi}
	The unique representative $(\tau,Y)$ of an arrow in $\calS(\calO)$ with $\tau^{-1}$ unpermuted resp.~unbraided
	on the domains of the operations in $Y$ is called the normal form of that arrow.
\end{defi}

\subsection{Calculus of fractions and cancellation properties}

In the following, we write $\theta\approx\psi$ if two operations $\theta,\psi$ in an operad are equivalent
modulo the action of the symmetric resp.~braid groups, i.e.~there exists a permutation resp.~braid $\gamma$ such that
$\theta=\gamma\cdot\psi$. Of course, in the planar case, this just means equality of operations.

\begin{defi}\label{81202}
	Let $\calO$ be a (symmetric/braided) operad. We say that $\calO$ satisfies the calculus of fractions if the following two 
	conditions are satisfied:
	\begin{itemize}
		\item ({\it Square filling}) For every pair of operations $\theta_1$ and $\theta_2$ with the same output color,
		there are sequences of operations $\Psi_1=(\psi^1_1,\ldots,\psi^{k_1}_1)$ and $\Psi_2=(\psi^1_2,\ldots,\psi^{k_2}_2)$ such that 
		$\Psi_i*\theta_i$ is defined for $i=1,2$ and such that $\Psi_1*\theta_1\approx\Psi_2*\theta_2$.
		\item ({\it Equalization}) Assume we have an operation $\theta$ and sequences of operations 
		$\Psi_1=(\psi^1_1,\ldots,\psi^k_1)$ and $\Psi_2=(\psi^1_2,\ldots,\psi^k_2)$ such that $\Psi_1*\theta\approx\Psi_2*\theta$,
		i.e.~there is a $\gamma$ with $\Psi_1*\theta=\gamma\cdot(\Psi_2*\theta)$. Then $\gamma$ is already of the form
		$\gamma=\gamma_1\otimes\ldots\otimes\gamma_k$
		such that $\gamma_j\cdot\psi^j_2$ is defined for each $j=1,\ldots,k$ and there is a sequence
		of operations $\Xi_j$ for each $j=1,\ldots,k$ such that $\Xi_j*\psi^j_1=\Xi_j*(\gamma_j\cdot\psi^j_2)$.
	\end{itemize}
\end{defi}

\begin{defi}
	Let $\calO$ be a (symmetric/braided) operad. We define right cancellativity and left cancellativity for $\calO$ as follows:
	\begin{itemize}
		\item ({\it Right cancellativity)} Assume we have an operation $\theta$ and sequences of operations 
		$\Psi_1=(\psi^1_1,\ldots,\psi^k_1)$ and $\Psi_2=(\psi^1_2,\ldots,\psi^k_2)$ such that $\Psi_1*\theta\approx\Psi_2*\theta$,
		i.e.~there is a $\gamma$ with $\Psi_1*\theta=\gamma\cdot(\Psi_2*\theta)$. Then $\gamma$ is already of the form
		$\gamma=\gamma_1\otimes\ldots\otimes\gamma_k$
		such that $\gamma_j\cdot\psi^j_2$ is defined and equal to $\psi^j_1$ for each $j=1,\ldots,k$.
		\item ({\it Left cancellativity)} Assume we have operations $\theta_1$ and $\theta_2$ and a sequence of operations $\Psi$
		such that $\Psi*\theta_1=\Psi*\theta_2$. Then $\theta_1=\theta_2$.
	\end{itemize}
	We say that $\calO$ is cancellative if it is both left and right cancellative.
\end{defi}

These two definitions are designed such that the following two propositions hold. The proofs are straightforward and left to the reader
(see also \cite{thesis}).

\begin{prop}\label{16866}
	$\calO$ satisfies the calculus of fractions if and only if $\calS(\calO)$ does.
\end{prop}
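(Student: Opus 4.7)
The plan is to translate between operations of $\calO$ and arrows of $\calS(\calO)$ via the explicit presentation of arrows as pairs $(\beta, X)$. Two preparatory observations underlie the whole argument. First, the normal form from Subsection \ref{88061} writes every arrow in $\calS(\calO)$ uniquely as $(\tau, X)$ with $\tau^{-1}$ unbraided on the partition induced by $X$, so every arrow decomposes into a braid part (an isomorphism) and a juxtaposition of single operations. Second, when the second component $X$ is a single operation, the equivalence relation defining the arrows of $\calS(\calO)$ is generated precisely by $(\beta * \sigma, X) \sim (\beta, \sigma \cdot X)$, so identifying two pairs with single-operation second components reduces to a single equation $X = \sigma \cdot X'$ together with $\beta = \beta' * \sigma$.

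For the forward direction, assume $\calO$ satisfies the operadic calculus of fractions. Given arrows $f, g$ in $\calS(\calO)$ with common codomain, normal form writes $f = (\alpha, X)$ and $g = (\beta, Y)$ where $X = (X_1, \ldots, X_n)$ and $Y = (Y_1, \ldots, Y_n)$ have output colors matching the codomain entrywise. For each $i$ the operations $X_i, Y_i$ share an output color, so operadic square filling yields sequences $\Psi_1^i, \Psi_2^i$ with $\Psi_1^i * X_i \approx \Psi_2^i * Y_i$. Assembling these componentwise and conjugating by the braids $\alpha, \beta$ (which are invertible in $\calS(\calO)$) produces arrows $a, b$ with $a * f = b * g$. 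Equalization is handled analogously: an equation $f * h = g * h$ in $\calS(\calO)$, after normalization and componentwise decomposition, reduces to equations $\Psi_1 * \theta \approx \Psi_2 * \theta$ in $\calO$ on each block, to which operadic equalization applies.

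Conversely, assume $\calS(\calO)$ satisfies the categorical calculus of fractions. To derive operadic square filling, view two operations $\theta_1, \theta_2$ with common output color $c$ as arrows $(1, (\theta_i))$ in $\calS(\calO)$ with codomain $(c)$ and apply categorical square filling. Writing the resulting arrows as $(\alpha_a, \Psi_1)$ and $(\alpha_b, \Psi_2)$, the composites have single-operation second components $\Psi_i * \theta_i$, so the second observation above forces a witness $\sigma$ with $\Psi_1 * \theta_1 = \sigma \cdot (\Psi_2 * \theta_2)$, i.e.\ $\Psi_1 * \theta_1 \approx \Psi_2 * \theta_2$. For operadic equalization, given $\Psi_1 * \theta = \gamma \cdot (\Psi_2 * \theta)$, I form arrows $f = (1, \Psi_1)$ and $g = (\gamma, \Psi_2)$ sharing the same domain and codomain, verify $f * (1, (\theta)) = g * (1, (\theta))$ in $\calS(\calO)$ via the single-operation equivalence, and apply categorical equalization to obtain $h = (\eta, \Xi)$ with $h * f = h * g$. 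The sequence $\Xi$ decomposes along the partition of the inputs of $\Psi_1$ as $\Xi = (\Xi_1, \ldots, \Xi_k)$, and expressing $h * f = h * g$ in normal form should yield both the block decomposition $\gamma = \gamma_1 \otimes \ldots \otimes \gamma_k$ and the componentwise equations $\Xi_j * \psi_1^j = \Xi_j * (\gamma_j \cdot \psi_2^j)$.

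The main obstacle is the careful bookkeeping of the braid and permutation actions, particularly in extracting the tensor decomposition of $\gamma$ from the categorical data. Here the normal form does the essential work: the braid part of $h * g$ must be unbraided on the partition induced by its second component, which forces $\gamma$ to respect the block structure and split accordingly. The equivariance axioms for operads ensure that all the componentwise manipulations are consistent, and the single-operation case of the equivalence relation lets every identification in $\calS(\calO)$ be read back as an equation of operations in $\calO$.
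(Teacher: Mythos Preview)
Your approach is the natural one and is what the paper intends: it omits the proof entirely (``straightforward and left to the reader'', with a pointer to \cite{thesis}), and your outline via normal forms and componentwise reduction is exactly the expected unwinding of Definitions~\ref{12985} and~\ref{81202}.

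One technical point in your backward equalization deserves a sharper statement. After applying categorical equalization to obtain $h=(\eta,\Xi)$ with $h*f=h*g$, what the equality of arrows in $\calS(\calO)$ gives you directly is that the \emph{cabled} braid $\bar\gamma:=\Xi{\curvearrowright}\gamma$ decomposes as a tensor product $\tau_1\otimes\cdots\otimes\tau_k$ along the partition determined by the $\psi^j_1$. You then need that $\gamma$ itself decomposes as $\gamma_1\otimes\cdots\otimes\gamma_k$; your appeal to ``the normal form does the essential work'' is not quite the right justification here. What is needed is the (easy but not vacuous) fact that the cabling homomorphism on braid groups reflects the block-diagonal subgroup: if $\bar\gamma$ lies in $\prod_j B_{|\bar B_j|}\subset B_M$, then $\gamma$ lies in $\prod_j B_{|B_j|}\subset B_m$. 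On underlying permutations this is immediate; for braids one can argue via the mapping-class description (cabling replaces each puncture by a small disk of punctures, and preserving the big regions upstairs forces preserving them downstairs). Once $\gamma=\otimes_j\gamma_j$, the first equivariance axiom turns your equation $\Xi^{(j)}*\psi^j_1=\tau_j\cdot\big((\Xi{\curvearrowleft}\gamma)^{(j)}*\psi^j_2\big)$ into $\Xi^{(j)}*\psi^j_1=\Xi^{(j)}*(\gamma_j\cdot\psi^j_2)$, which is exactly the operadic conclusion with $\Xi_j:=\Xi^{(j)}$. In the symmetric and planar cases the cabling step is of course trivial.
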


\begin{prop}
	$\calO$ satisfies the left resp.~right cancellation property if and only if $\calS(\calO)$ does.
\end{prop}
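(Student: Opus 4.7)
I treat the left and right cancellation cases separately and in both directions. The central tool is the normal form of arrows in $\calS(\calO)$ from Subsection~\ref{88061}, which rigidifies the representatives $(\tau,Y)$ by requiring $\tau^{-1}$ to be unbraided on the blocks determined by~$Y$; together with the defining equivalence relation $(\beta*\sigma,X)\sim(\beta,\sigma\cdot X)$, this lets me match operadic cancellation conditions with categorical ones.

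\textbf{Direction ``$\calS(\calO)$ cancellative $\Rightarrow$ $\calO$ cancellative''.} I embed operations and their sequences into $\calS(\calO)$ with trivial braid part: $\Psi\mapsto[\id,\Psi]$ and $\theta\mapsto[\id,(\theta)]$. A direct computation with the composition formula shows $[\id,\Psi]*[\id,(\theta)]=[\id,\Psi*\theta]$. If $\Psi_1*\theta=\gamma\cdot(\Psi_2*\theta)$ in $\calO$, the equivalence relation rewrites this as $[\id,\Psi_1]*[\id,(\theta)]=[\gamma,\Psi_2]*[\id,(\theta)]$, and right cancellativity of $\calS(\calO)$ yields $[\id,\Psi_1]=[\gamma,\Psi_2]$; unravelling the equivalence relation once more forces $\gamma=\gamma_1\otimes\ldots\otimes\gamma_k$ with $\psi^j_1=\gamma_j\cdot\psi^j_2$, which is precisely the operadic right cancellation condition. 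Left cancellativity is even simpler: from $\Psi*\theta_1=\Psi*\theta_2$ and left cancellativity of $\calS(\calO)$ I obtain $[\id,(\theta_1)]=[\id,(\theta_2)]$, and on one-operation sequences this forces $\theta_1=\theta_2$ because the identity braid admits no nontrivial decomposition.

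\textbf{Direction ``$\calO$ cancellative $\Rightarrow$ $\calS(\calO)$ cancellative''.} Given $f*a=g*a$ in $\calS(\calO)$, put all three arrows in normal form as $f=[\tau_f,X_f]$, $g=[\tau_g,X_g]$, $a=[\tau_a,Z]$, and expand the two composites using the formula $(\beta,X)*(\delta,Y)=(\beta*(X\curvearrowright\delta),\,(X\curvearrowleft\delta)*Y)$. Comparing the two composites via the equivalence relation on pairs produces an operadic equation of the shape $(X_f\curvearrowleft\tau_a)*Z=\eta\cdot\big((X_g\curvearrowleft\tau_a)*Z\big)$, where $\eta$ is a braid encoding the discrepancy between the braid parts $\tau_f$ and $\tau_g$ modulo $\tau_a$. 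Operad right cancellativity decomposes $\eta=\eta_1\otimes\ldots\otimes\eta_k$ and identifies each component of $X_f\curvearrowleft\tau_a$ with a twisted component of $X_g\curvearrowleft\tau_a$; reassembling via the inverse of $\curvearrowleft\tau_a$ shows that $(\tau_f,X_f)\sim(\tau_g,X_g)$, so $f=g$. The left cancellativity of $\calS(\calO)$ follows by the same method applied componentwise using operad left cancellativity; no braid bookkeeping is required because the common factor sits to the left.

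\textbf{Main obstacle.} The technical heart of the argument is the bookkeeping for the braid data under composition, since the push-throughs $\curvearrowright$ and $\curvearrowleft$ destroy the normal form in general and reshuffle the braid into both the prefix and the operation factors. The operadic right cancellation condition has been engineered precisely to supply the block decomposition $\eta=\eta_1\otimes\ldots\otimes\eta_k$ required to extract componentwise equalities between the twisted operations of $X_f$ and $X_g$, and hence to recover the equivalence of the normal-form representatives; once that decomposition is in hand, the matching of braid parts via the equivalence relation is routine.
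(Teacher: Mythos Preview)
The paper does not actually supply a proof of this proposition; it declares the argument ``straightforward and left to the reader'' (with a pointer to the author's thesis). Your approach---embedding operations and sequences into $\calS(\calO)$ with trivial braid part for the $\calS(\calO)\Rightarrow\calO$ direction, and unwinding the composition formula via the normal form of Subsection~\ref{88061} for the converse---is exactly the natural way to fill this gap and is correct in outline.

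One small caveat: your assertion that the left-cancellativity direction $\calO\Rightarrow\calS(\calO)$ requires ``no braid bookkeeping because the common factor sits to the left'' is a bit too optimistic. In the composition $(\tau_a,W)*(\tau_f,X)$ the braid of the composite is $\tau_a*(W\curvearrowright\tau_f)$, so the braid of $f$ still gets entangled with the operation part of $a$ via the push-through $\curvearrowright$, and you must still argue that equality of the composites forces $W\curvearrowright\tau_f=W\curvearrowright\tau_g$ and then $\tau_f=\tau_g$ before invoking operadic left cancellation on the operation parts. This is indeed routine (and simpler than the right-cancellation bookkeeping, since the prefix $\tau_a$ factors off cleanly), but it is not entirely absent. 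With that adjustment, your plan is complete and matches what the paper presumably has in mind.
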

%\begin{proof}
%	The equivalence of right cancellativity of $\calO$ with that of $\calS(\calO)$ is proven similarly as in the case
%	of the equalization property in Proposition \ref{16866}.
%	
%	It is easy to prove the left cancellativity of $\calO$ using the one of $\calS(\calO)$.
%	
%	Conversely, assume we have left cancellativity for $\calO$ and let $[\beta,X],[\sigma,Y]$ and $[\delta,Z]$ be arrows
%	in $\calS(\calO)$ with $[\delta,Z]*[\beta,X]=[\delta,Z]*[\sigma,Y]$. For showing left cancellativity in this situation, we
%	can assume without loss of generality that $\beta=\id$. Moreover, we can then assume without loss of generality that also
%	$\delta=\id$. By assumption, we have that the pairs $(\id,Z*X)$ and $(\bar{\sigma},\bar{Z}*Y)$ are equivalent where
%	$\bar{\sigma}=Z{\curvearrowright}\sigma$ and $\bar{Z}=Z{\curvearrowleft}\sigma$. When writing
%	$Z*X=(R_1,\ldots,R_l)$ and $\bar{Z}*Y=(S_1,\ldots,S_l)$, this implies that $\bar{\sigma}$ has to split as a tensor product
%	$\bar{\sigma}=\bar{\sigma}_1\otimes\ldots\otimes\bar{\sigma}_l$ such that $\bar{\sigma}_i\cdot S_i=R_i$. When writing 
%	$X=(K_1,\ldots,K_l)$ and $Y=(T_1,\ldots,T_l)$, it follows that also $\sigma$ has to split as a tensor product 
%	$\sigma=\sigma_1\otimes\ldots\otimes\sigma_l$ such that $\sigma_i\cdot T_i$ is defined and, using left cancellativity of
%	$\calO$, we obtain $\sigma_i\cdot T_i=K_i$. This implies $[\id,X]=[\sigma,Y]$.
%\end{proof}

\subsection{Operads with transformations}

Observe that the colors of an operad $\calO$ together with the degree $1$ operations form a category $\calI(\calO)$. 
In general, this category could be any category. Thus, to prove certain theorems, it is often necessary to impose restrictions on the
degree $1$ operations.

\begin{defi}
	A planar resp.~symmetric resp.~braided operad $\calO$ is called a \emph{planar resp.~symmetric resp.~braided 
	operad with transformations} if the category $\calI(\calO)$ is a groupoid. In other words, all the degree $1$
	operations are invertible.
\end{defi}

For such an operad, a transformation is an arrow in $\calS(\calO)$ of the form $[\sigma,X]$ where
$X=(X_1,\ldots,X_n)$ is a sequence of operations of degree $1$. The transformations form a groupoid which we
call $\calT(\calO)$.

We say that two operations $\theta_1$ and $\theta_2$ are transformation equivalent if there is a transformation $\alpha$
such that $\theta_2=\alpha*\theta_1$. We denote by $\mathcal{TC}(\calO)$ the set of equivalence classes of 
operations modulo transformation. Note that two transformation equivalent operations have the same degree. Thus, we also have
a notion of degree for elements in $\mathcal{TC}(\calO)$. We define a partial order on the set $\mathcal{TC}(\calO)$ as follows: 
Write $\Theta_1\leq\Theta_2$ if there is an operation $\theta_1$ with $[\theta_1]=\Theta_1$ and operations $\psi_1,\ldots,\psi_n$ 
such that $(\psi_1,\ldots,\psi_n)*\theta_1\in\Theta_2$. Then, for \emph{every} $\theta_1$ with $[\theta_1]=\Theta_1$ there are operations 
$\psi_1,\ldots,\psi_n$ such that $(\psi_1,\ldots,\psi_n)*\theta_1\in\Theta_2$. It is not hard to prove that this relation is indeed a 
partial order. Note that the degree function on $\mathcal{TC}(\calO)$ strictly respects this order relation which means
\[\Theta_1<\Theta_2\hspace{5mm}\Longrightarrow\hspace{5mm}\op{deg}(\Theta_1)<\op{deg}(\Theta_2)\]

The following observation, which easily follows from the definitions, reinterpretes the square filling property of Definition \ref{81202} 
in terms of the poset $\mathcal{TC}(\calO)$ of transformation classes:
\begin{obs}\label{77706}
	Let $\calO$ be a (symmetric/braided) operad with transformations. Then $\calO$ satisfies the square filling property if and only if for each pair 
	$\Theta_1,\Theta_2$ of transformation classes with the same codomain color there is another transformation class $\Theta$ with 
	$\Theta_1\leq\Theta\geq\Theta_2$.
\end{obs}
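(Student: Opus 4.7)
The plan is to translate both conditions into relations between operations modulo transformation equivalence and then observe that they coincide.

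First I would clarify how transformation equivalence of operations relates to the symmetric/braid action $\approx$. On one hand, if $\omega_1=\gamma\cdot\omega_2$ for a braid or permutation $\gamma$, then $[\gamma,(1_{a_1},\ldots,1_{a_n})]$ is a transformation in $\calS(\calO)$, and using the defining equivalence $(\beta*\sigma,X)\sim(\beta,\sigma\cdot X)$ one computes that composing this transformation with $[\id,(\omega_2)]$ yields $[\id,(\omega_1)]$. Hence $\approx$ between operations implies transformation equivalence. On the other hand, if $\omega_1=\alpha*\omega_2$ for a general transformation $\alpha=[\sigma,(X_1,\ldots,X_n)]$, expanding the composition formula in $\calS(\calO)$ and applying the same defining equivalence gives $\omega_1=\sigma\cdot\bigl((X_1,\ldots,X_n)*\omega_2\bigr)$, so transformation equivalence reduces to $\approx$ after prepending a sequence of degree one operations.

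For the forward direction I would pick representatives $\theta_i\in\Theta_i$ sharing the common output color and apply square filling to obtain sequences $\Psi_i$ with $\Psi_1*\theta_1\approx\Psi_2*\theta_2$. Setting $\Theta:=[\Psi_1*\theta_1]$, the first half of the previous paragraph gives $\Theta=[\Psi_2*\theta_2]$ as well, and the relation $\Theta_1\leq\Theta\geq\Theta_2$ is then immediate from the definition of the order on $\mathcal{TC}(\calO)$.

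For the reverse direction, given operations $\theta_1,\theta_2$ with the same output color, pass to the transformation classes $\Theta_i=[\theta_i]$, apply the hypothesis to extract a common upper bound $\Theta$, and unpack the inequalities $\Theta_i\leq\Theta$ to obtain sequences $\Psi_i$ with $\Psi_1*\theta_1,\,\Psi_2*\theta_2\in\Theta$. These representatives are transformation equivalent: $\Psi_1*\theta_1=\alpha*(\Psi_2*\theta_2)$ for some transformation $\alpha=[\sigma,X]$. The second half of the first paragraph rewrites this as $\Psi_1*\theta_1=\sigma\cdot\bigl(X*(\Psi_2*\theta_2)\bigr)$, and operadic associativity lets me absorb $X$ into $\Psi_2$: define $\tilde\Psi_2$ by partial-composing each entry of $X$ into the appropriate entry of $\Psi_2$, so that $X*(\Psi_2*\theta_2)=\tilde\Psi_2*\theta_2$. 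Hence $\Psi_1*\theta_1\approx\tilde\Psi_2*\theta_2$, which is square filling for $\theta_1,\theta_2$.

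The only delicate point, more bookkeeping than genuine obstacle, is the observation in the first paragraph that a transformation acting on a single operation can be unpacked into a braid together with a sequence of degree one precompositions, so that the difference between $\approx$ and transformation equivalence can always be reabsorbed into the chosen sequences $\Psi_i$. Once this is in hand, both implications reduce to routine shuffling, which is presumably why the author presents the result as an observation.
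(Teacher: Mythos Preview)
Your proof is correct and complete: the paper states the observation without proof, remarking only that it ``easily follows from the definitions,'' and your unpacking of transformation equivalence in terms of the $\approx$-action together with precomposition by degree~$1$ operations is exactly the routine verification the author has in mind. The one point worth flagging is that in the reverse direction you implicitly use the paper's remark that $\Theta_i\leq\Theta$ can be witnessed using the \emph{given} representative $\theta_i$ (not just some representative), which you do correctly.
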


%\begin{que}
%	Is it enough to find a class $\Theta$ with $\Theta_1\leq\Theta\geq\Theta_2$ for each pair $\Theta_1,\Theta_2$ of \emph{very}
%	elementary classes with the same codomain? For the definition of ``very elementary'' see Definition \ref{13449} below.
%\end{que}

\subsubsection{Spines in graded posets}

We call a poset $P$ graded if there is degree function $\deg\co P\rightarrow\NN$ such that $\deg(x)<\deg(y)$ whenever $x<y$. For example,
$\mathcal{TC}(\calO)$ above is graded.

\begin{defi}\label{79040}
	Let $P$ be a graded poset and $M\subset P$ be the subset of minimal elements in $P$. The spine $S$ of $P$ is the smallest subset 
	$S\subset P$ such that $M\subset S$ and which satisfies the following property: Whenever $v\in P\setminus S$, then there is a 
	greatest element $g\in S$ such that $g<v$.
\end{defi}

We want to prove that the spine of a graded poset always exists.

\begin{cons}\label{45209}
	We define $S_i\subset P$ for $i\in\{0,1,2,\ldots\}$ inductively. Set $S_0=M$. Assume that $S_i$ has been constructed.
	For each pair $x,y\in S_i$ with $x\neq y$, define $M_{i+1}(x,y)\subset P$ to be the set consisting of all the minimal elements $z$
	with the property $x\leq z\geq y$. Now, let $S_{i+1}$ be the union of all the $M_{i+1}(x,y)$. Finally, define $S=\bigcup_{i=0}^\infty S_i$.
\end{cons}

In the following, we want to show that this $S$ satisfies the defining properties of the spine of $P$.

\begin{obs}\label{61724}
	Let $A\subset P$. Assume that $v\in P$ satisfies $a<v$ for all $a\in A$. We claim that there is a minimal element $p$ in the set 
	$\{z\in P\mid\forall_{a\in A}\ a\leq z\}$ which also satisfies $p\leq v$. If $v$ is already minimal, then we can set $p=v$. If it is not 
	minimal, there must be another element $v'\in P$ with $a\leq v'<v$ for all $a\in A$. Then $v'$ has strictly smaller degree than $v$. If we 
	repeat this argument with $v'$, we have to end up with a minimal element $p$ at some time, because the degree function is bounded below.
	This $p$ surely satisfies $p\leq v$.
\end{obs}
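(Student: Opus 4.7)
The plan is to work inside the subset $U_{\leq v} := \{z \in P : a \leq z \text{ for all } a \in A,\ z \leq v\}$. This is nonempty because $v$ itself lies in it: the hypothesis $a < v$ for all $a \in A$ gives $a \leq v$, and trivially $v \leq v$. If some $z \in U_{\leq v}$ is not minimal within $U_{\leq v}$, then there exists $z' \in U_{\leq v}$ with $z' < z$, which by the grading forces $\deg(z') < \deg(z)$. Starting from $v$ and iterating this descent produces a strictly decreasing sequence of natural numbers, which must terminate. The terminal element is an element $p \in U_{\leq v}$ that is minimal in $U_{\leq v}$, and by construction $p \leq v$.

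It remains to promote minimality of $p$ from $U_{\leq v}$ to the full upper-bound set $U := \{z \in P : a \leq z\ \forall a \in A\}$. Suppose for contradiction that some $z \in U$ satisfies $z < p$. Then by transitivity $z < p \leq v$, so $z \in U_{\leq v}$, contradicting the minimality of $p$ inside $U_{\leq v}$. Hence $p$ is minimal in $U$, and we still have $p \leq v$, which is exactly what is claimed. The only substantive ingredient is the well-foundedness supplied by the degree function; no real obstacle is anticipated, and the author's own hint essentially describes the same descent.
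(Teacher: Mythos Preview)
Your proof is correct and follows essentially the same descent argument as the paper: both start at $v$ and step down through strictly smaller elements of the upper-bound set until the degree function forces termination at a minimal element. Your explicit promotion of minimality from $U_{\leq v}$ to $U$ via transitivity is a minor repackaging of the same idea, not a genuinely different route.
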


Let $v\in P\setminus S$. We want to find the greatest element in the set 
\[V:=\{z\in S\mid z<v\}\]
For each $i$, set 
$S^\downa_i=S_i\cap V$. We claim: There exists exactly one $i_0$ such that $|S^\downa_j|>1$ for $j<i_0$,
$|S^\downa_{i_0}|=1$ and $S^\downa_j=\emptyset$ for $j>i_0$ and the unique element in $S^\downa_{i_0}$ is the greatest element 
in $V$. 

Observation \ref{61724} applied to $A=\emptyset$ reveals that $S^\downa_0\neq\emptyset$. Note that either all but finitely many of the
$S_i$ are empty or the sequence of numbers 
\[d_i:=\op{min}\{\op{deg}(z)\mid z\in S_i\}\]
tends to infinity. But the degree of all the 
elements in all the $S^\downa_i$ is bounded by $\op{deg}(v)$. It follows that in any case there must be an $i_0$ such that 
$S^\downa_j=\emptyset$ for all $j>i_0$. Choose the $i_0$ which is minimal with respect to this property, 
i.e.~$S^\downa_{i_0}\neq\emptyset$. Assume $|S^\downa_{i_0}|>1$ and let $x\neq y$ be two elements in this set. Write $A=\{x,y\}$ and recall that 
$x,y<v$. Thus, by Observation \ref{61724}, we know that there must be a $p\in M_{i_0+1}(x,y)$ with $p\leq v$. Since $v\not\in S$, we have indeed 
$p<v$. Consequently, $p\in S^\downa_{i_0+1}$, a contradiction. So we have indeed $|S^\downa_{i_0}|=1$. Next, observe that for any $j$, if $S_j\neq\emptyset$,
then $S_{j-1}$ consists of at least two elements. This follows directly from the definitions. Consequently, the same holds for the $S^\downa_j$.
From this, it easily follows $|S^\downa_j|>1$ for $j<i_0$.

We now use this to prove that the unique element $g\in S^\downa_{i_0}$ is the greatest element in $V$, i.e.~$x\leq g$ whenever 
$x\in S$ with $x<v$. Let $x$ be such an element. If $x\neq g$, then there must be some $j<i_0$ such that $x\in S^\downa_j$. There is another element 
$x'$ in this $S^\downa_j$. Observation \ref{61724} applied to $A=\{x,x'\}$ shows that there is $p\in S^\downa_{j+1}$ with $x\leq p$. If $j+1=i_0$, that 
$p$ must be $g$ and we are done. Else, we repeat this process with $p$ in place of $x$ until we reach level $i_0$. This completes the proof
that $S$ satisfies the last property in Definition \ref{79040}.

Remains to prove that $S$ is the smallest subset containing $M$ and satisfying this property. So let $S'\subset P$ be another subset containing $M$ and
satisfying this property. We have to show $S\subset S'$. We will prove $S_i\subset S'$ by induction over $i$. The induction start is trivial because
$S_0=M$. For the induction step, assume $S_i\subset S'$. Let $v\in S_{i+1}$. Assume that $v\not\in S'$. Then there is a greatest element $p\in S'$ with $p<v$. 
Furthermore, there must be $x,y\in S_i$ with $x\neq y$ and $v\in M_{i+1}(x,y)$. This means that $v$ is minimal with respect to $x\leq v\geq y$.
Since $x,y\in S'$ but $v\not\in S'$ we have indeed $x<v>y$. Since $p$ is the greatest element in $S'$ with $p<v$, we obtain $x\leq p\geq y$.
This contradicts the minimality of $v$. So we must have $v\in S'$ and thus $S_{i+1}\subset S'$.

\subsubsection{Elementary and very elementary operations}

Denote by $\mathcal{TC}^\ast(\calO)$ the full subposet of $\mathcal{TC}(\calO)$ spanned by the higher degree classes 
(i.e.~the elements of degree at least $2$).

\begin{defi}\label{13449}
	Let $\calO$ be a (symmetric/braided) operad with transformations. The minimal elements in $\mathcal{TC}^\ast(\calO)$
	are called very elementary transformation classes. Denote the set of very elementary classes by $VE$.
\end{defi}

Let $\Theta,\Theta_1,\ldots,\Theta_k\in\mathcal{TC}(\calO)$ be (not necessarily distinct) transformation
classes. We say that $\Theta$ is decomposable into the classes $\Theta_i$ if we find operations 
$\theta_i\in\Theta_i$ for $i=1,\ldots,k$ which can be partially composed (see Remark \ref{73668})
in a certain way to an operation in $\Theta$. It can be shown that any class in $\mathcal{TC}^\ast(\calO)$
decomposes into very elementary classes.

\begin{defi}\label{24121}
	Let $\calO$ be a (symmetric/braided) operad with transformations. The elements in the spine of $\mathcal{TC}^\ast(\calO)$ 
	are called elementary transformation classes. Denote the set of elementary classes by $E$.
\end{defi}

An operation in $\calO$ is called (very) elementary if it is contained in a (very) elementary transformation class. 
We will call the elementary but not very elementary classes resp.~operations strictly elementary.

\begin{defi}\label{26536}
	$\calO$ is finitely generated if there are only finitely many very elementary transformation classes. It is of finite type if 
	there are only finitely many elementary transformation classes.
\end{defi}

The following proposition states that the subsets $VE$ and $E$ are invariant under the right action of degree $1$ operations.
\begin{prop}\label{07546}
	Let $\calO$ be a (symmetric/braided) operad with transformations. Let $\theta$ be a higher degree operation and $\gamma$ be a 
	degree $1$ operation. Then the transformation class $[\theta]$ is (very) elementary if and only if the class $[\theta]*\gamma:=[\theta*\gamma]$ 
	is (very) elementary. In particular, the operation $\theta$ is (very) elementary if and only if $\theta*\gamma$ is (very) elementary.
\end{prop}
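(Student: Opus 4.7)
The strategy is to show that right multiplication by $\gamma$, namely $[\theta] \mapsto [\theta] * \gamma := [\theta * \gamma]$, induces an order-preserving bijection on the relevant portion of $\mathcal{TC}(\calO)$, and then to transport minimality and spine membership along this bijection.

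First I would check that this operation is well-defined on transformation classes: if $\theta' = \alpha * \theta$ for a transformation $\alpha$, then associativity of composition in $\calS(\calO)$ gives $\theta' * \gamma = \alpha * (\theta * \gamma)$, so $[\theta' * \gamma] = [\theta * \gamma]$. Since $\gamma$ is invertible in the groupoid $\calI(\calO)$ (this is precisely the hypothesis that $\calO$ has transformations), right multiplication by $\gamma^{-1}$ provides a two-sided inverse, so right multiplication by $\gamma$ is a bijection between the color-slices $\{\Theta \in \mathcal{TC}(\calO) : \mathrm{cod}(\Theta) = \mathrm{dom}(\gamma)\}$ and $\{\Theta : \mathrm{cod}(\Theta) = \mathrm{cod}(\gamma)\}$. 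It preserves degree (composing with a degree $1$ operation does not change arity) and the partial order: if $(\psi_1, \ldots, \psi_n) * \theta_1 \in [\theta_2]$ witnesses $[\theta_1] \leq [\theta_2]$, then associativity yields $(\psi_1, \ldots, \psi_n) * (\theta_1 * \gamma) = ((\psi_1, \ldots, \psi_n) * \theta_1) * \gamma \in [\theta_2 * \gamma]$, so $[\theta_1 * \gamma] \leq [\theta_2 * \gamma]$. Note that $\leq$ only relates classes of a common output color, since composition $(\psi_1, \ldots, \psi_n) * \theta$ inherits the output color of $\theta$; thus each color-slice is itself a graded subposet, and right multiplication by $\gamma$ is an order isomorphism between the two corresponding slices of $\mathcal{TC}^*(\calO)$.

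Order isomorphisms send minimal elements to minimal elements, which immediately yields the ``very elementary'' part of the proposition. For the ``elementary'' part I would invoke the inductive Construction \ref{45209}, in which the spine is built as $S = \bigcup_i S_i$ with $S_0 = VE$ and $S_{i+1}$ the union of the minimal-upper-bound sets $M_{i+1}(x,y)$ over pairs $x \neq y$ in $S_i$. An induction on $i$ shows that right multiplication by $\gamma$ maps each $S_i$ bijectively onto the corresponding $S_i$ in the target color-slice: the base case is the very elementary statement just proven, and the induction step uses that an order isomorphism bijects $\{z : x \leq z \geq y\}$ onto $\{z : R_\gamma(x) \leq z \geq R_\gamma(y)\}$ and hence matches minimal subsets. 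This gives $R_\gamma(E) = E$, proving the elementary case. The final sentence, concerning operations rather than classes, is then immediate from Definitions \ref{13449} and \ref{24121}, since by definition an operation is (very) elementary iff its transformation class is.

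There is no real obstacle here; the argument is purely structural. The only subtlety worth flagging is that $\leq$ only compares classes of equal output color, so right multiplication by $\gamma$ is a bijection between color-slices rather than of $\mathcal{TC}(\calO)$ as a whole. Because the spine is computed slice by slice, this causes no issue.
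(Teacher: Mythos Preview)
Your proof is correct and rests on the same key observation as the paper's: right multiplication by $\gamma$ is an order isomorphism on the relevant part of $\mathcal{TC}^\ast(\calO)$, which immediately handles the very elementary case. The only difference is in how you transport spine membership: you induct over the explicit layers $S_i$ of Construction~\ref{45209}, whereas the paper instead verifies directly that $E':=E*\gamma$ satisfies the defining property of the spine in Definition~\ref{79040} (namely $VE\subset E'$ and every $\Theta\notin E'$ has a greatest $E'$-element below it), concluding $E\subset E'$ and hence $E=E*\gamma$ by varying $\gamma$. Both arguments are short and equally valid; the paper's is marginally slicker in that it avoids unpacking the inductive construction, while yours makes the mechanism more explicit. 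Your remark that $\mathcal{TC}^\ast(\calO)$ decomposes into color-slices (since comparable classes share an output color) is a useful clarification that the paper leaves implicit.
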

\begin{proof}
	The main observation is that if $\Theta,\Theta'$ are two transformation classes, then $\Theta<\Theta'$ holds if and only if
	$\Theta*\gamma<\Theta'*\gamma$ holds. This implies that $\Theta\in VE$ if and only if $\Theta*\gamma\in VE$ or, in other words,
	$VE*\gamma=VE$. Now write $E'=E*\gamma$. We then have $VE\subset E'$. Let $\Theta\in\mathcal{TC}^\ast(\calO)\setminus E'$.
	Then $\Theta*\gamma^{-1}\in\mathcal{TC}^\ast(\calO)\setminus E$. Thus, by the definition of $E$ as the spine of $\mathcal{TC}^\ast(\calO)$, 
	we have that there is a greatest element $\Psi\in E$ with $\Psi<\Theta*\gamma^{-1}$. Then $\Psi*\gamma\in E'$ is the greatest element
	with $\Psi*\gamma<\Theta$. Consequently, $E'$ satisfies the defining properties of the spine $E$. It follows $E\subset E'=E*\gamma$.
	Since this holds for arbitrary $\gamma$, we obtain $E=E*\gamma$.
\end{proof}

\subsection{Examples}\label{13887}

In this subsection, we want to present some examples of operads leading to already well-known operad groups as well as to
new groups to which the finiteness result of Section \ref{17057} is applicable.

\subsubsection{Free operads}\label{28776}

Only very briefly we want to remark that the operad groups associated to operads \emph{freely} generated by
operations of degree at least $2$ correspond exactly to the so-called diagram groups defined in \cite{gu-sa:dg}.
When considering free \emph{symmetric} operads, we get symmetric versions of diagram groups which are called 
``braided'' in \cite{gu-sa:dg}*{Definition 16.2}. The truly braided diagram groups are the ones arising from free \emph{braided} operads.

In particular, the operad group associated to the operad $\calO_F$ freely generated by one color and a single binary operation
is isomorphic to Thompson's group $F$. This has first been observed in \cite{fi-le:aac}. Moreover, if we consider
the symmetric resp.~braided operad $\calO_V$ resp.~$\calO_{BV}$ freely generated by one color and a single binary operation, we obtain
Thompson's group $V$ resp.~the braided Thompson group $BV$.

More details on the free case can be found in \cite{thesis}.

\subsubsection{Suboperads of endomorphism operads}\label{42483}

Recall that there is a planar resp.~symmetric resp.~braided operad $\op{End}(\calC)$ naturally associated to each planar resp.~symmetric resp.~braided
monoidal category $\calC$,
called the endomorphism operad. The colors of $\op{End}(\calC)$ are the objects in $\calC$ and the sets of operations are given by
\[\op{End}(\calC)(a_1,\ldots,a_n;b)=\op{Hom}_\mathcal{C}(a_1\otimes\ldots\otimes a_n,b)\]
Let $\calG$ be a subgroupoid of $\calC$ and $S$ be a set of higher degree operations in $\calE:=\op{End}(\calC)$ with outputs
and inputs being objects of $\calG$. Then we can look at the suboperad of $\calE$ generated by this data: It is the smallest suboperad $\calO$ such 
that $\calI(\calO)=\calG$ and such that the elements in $S$ are operations in $\calO$. These suboperads are in general not free in the sense of 
\ref{28776} though we have only specified generators. The relations are automatically modelled by the ambient category $\calC$.

Not always is the map $S\rightarrow\mathcal{TC}(\calO)$
sending an operation to its transformation class a bijection onto the set of very elementary classes. However, this is true if the following conditions are 
satisfied:
\begin{itemize}
	\item[$(\calV_1)$] If $\theta,\theta'\in S$ with $\theta\neq\theta'$, then $[\theta]$ and $[\theta']$ are incomparable, i.e.~$[\theta]\not\leq[\theta']$ 
		and $[\theta]\not\geq[\theta']$. In particular, they are not equal.
	\item[$(\calV_2)$] The set of transformation classes represented by operations in $S$ is closed under right multiplication with operations in $\calG$,
		i.e.~for each $\theta\in S$ and $\gamma\in\calG$ there is $\theta'\in S$ with $[\theta*\gamma]=[\theta']$.
\end{itemize}

We want to be a bit more explicit now and observe suboperads of the endomorphism operad $\calE$ of the symmetric monoidal category
$(\mathtt{TOP},\sqcup)$ where $\sqcup$ is the coproduct (i.e.~the disjoint union) of topological spaces. We call an operation
$(f_1,\ldots,f_k)$ in $\calE$ \emph{mono} if the images of the maps $f_i\co X_i\rightarrow X$ are pairwise disjoint in $X$ and the $f_i$
are injective. We call it \emph{epi} if the images cover $X$. It is not hard to prove that if all operations in a suboperad of $\calE$ 
are mono, then it satisfies the right cancellation property. Likewise, if all the operations are epi, then it satisfies the left cancellation
property.

We give an explicit example to illustrate the above procedure. Consider the unit square and the right angled triangle obtained by
halving the unit square:
\ifthenelse{\boolean{drawtikz}}{
\begin{center}
	\begin{tikzpicture}[scale=0.4]
		\draw[thick] (-2.5,1.5) -- (0.5,1.5) -- (0.5,-1.5) -- (-2.5,-1.5) -- cycle;
		\draw[thick] (3,1.5) -- (3,-1.5) -- (6,-1.5) -- cycle;
	\end{tikzpicture}
\end{center}
}{TIKZ}
Consider all isometries of the square and the triangle, i.e.~the dihedral group $D_4$ and $\ZZ/2\ZZ$. The disjoint union of these isometry
groups forms a groupoid $\calG$ lying in $\mathtt{TOP}$. Consider the following subdivisions, called very elementary subdivisions:
\ifthenelse{\boolean{drawtikz}}{
\begin{center}
	\begin{tikzpicture}[scale=0.4]
		\draw[thick] (2.5,1.5) -- (5.5,1.5) -- (5.5,-1.5) -- (2.5,-1.5) -- cycle;
		\draw (2.5,-1.5) -- (5.5,1.5);
		\draw[thick] (-2.5,1.5) -- (0.5,1.5) -- (0.5,-1.5) -- (-2.5,-1.5) -- cycle;
		\draw (2.5,1.5) -- (5.5,-1.5);
		\draw[thick] (7.5,1.5) -- (7.5,-1.5) -- (10.5,-1.5) -- cycle;
		\draw (-1,1.5) -- (-1,-1.5);
		\draw (-2.5,0) -- (0.5,0);
		\draw (9,0) -- (7.5,-1.5);
	\end{tikzpicture}
\end{center}
}{TIKZ}
The set $S$ has three elements, one for each sudivision: The first one maps four squares to each square in the first subdivision via coordinate-wise
linear transformations. The second one maps four triangles to each triangle in the second subdivision via orientation preserving similarities.
The third one maps two triangles to each triangle in the third subdivision via orientation preserving similarities. As above, the groupoid
$\calG$ together with the set $S$ generate a suboperad $\calO$ of the symmetric operad $\calE=\op{End}(\mathtt{TOP},\sqcup)$.

The transformation classes are in one to one correspondence with subdivisions of the square or the triangle which can be obtained
by iteratively applying the three subdivisions above. We have $\Theta_1\leq\Theta_2$ if and only if $\Theta_2$ can be obtained from $\Theta_1$
by performing further subdivisions. For example, we have
\ifthenelse{\boolean{drawtikz}}{
\begin{center}
	\begin{tikzpicture}[scale=0.2]
		\draw[thick] (3,4.5) -- (9,4.5) -- (9,-1.5) -- (3,-1.5) -- cycle;		
		\draw (6,4.5) -- (6,-1.5);
		\draw (3,1.5) -- (9,1.5);		
		\draw (7.5,4.5) -- (7.5,1.5);
		\draw (6,3) -- (9,3);
		\draw (3,1.5) -- (6,-1.5);
		\draw (6,1.5) -- (3,-1.5);
		\draw[thick] (13,4.5) -- (19,4.5) -- (19,-1.5) -- (13,-1.5) -- cycle;
		\node at (11,1.5) {$\leq$};
		\draw (16,4.5) -- (16,-1.5);
		\draw (13,1.5) -- (19,1.5);
		\draw (17.5,4.5) -- (17.5,1.5);
		\draw (16,3) -- (19,3);
		\draw (13,1.5) -- (16,-1.5);
		\draw (16,1.5) -- (13,-1.5);
		\draw (14.5,0) -- (14.5,1.5);
		\draw (16,0) -- (14.5,0);
	\end{tikzpicture}
\end{center}
}{TIKZ}
From this it follows easily that the transformation classes represented by the very elementary subdivisions
are not comparable, i.e.~$(\calV_1)$ is satisfied. Furthermore, when applying an isometry of the square or the triangle to one of the 
operations in $S$, we obtain the same operation with a transformation precomposed. Thus, also $(\calV_2)$ is satisfied. It follows that 
the very elementary subdivisions correspond exactly to the very elementary classes of $\calO$.

To find all the elementary transformation classes, we have to follow the construction in \ref{45209}.
There is exactly one minimal subdivision of the square which refines the two very elementary subdivisions of the square:
\ifthenelse{\boolean{drawtikz}}{
\begin{center}
	\begin{tikzpicture}[scale=0.4]
		\draw[thick] (2.5,1.5) -- (5.5,1.5) -- (5.5,-1.5) -- (2.5,-1.5) -- cycle;	
		\draw (4,1.5) -- (4,-1.5);
		\draw (2.5,0) -- (5.5,0);
		\draw (2.5,1.5) -- (5.5,-1.5);
		\draw (5.5,1.5) -- (2.5,-1.5);
		\draw (4,1.5) -- (2.5,0);
		\draw (4,1.5) -- (5.5,0);
		\draw (2.5,0) -- (4,-1.5);
		\draw (4,-1.5) -- (5.5,0);
	\end{tikzpicture}
\end{center}
}{TIKZ}
Thus, this subdivision represents the only elementary class which is not very elementary.

All the operations in $\calO$ are clearly epi, so it satisfies the left cancellation property. Not all of them are mono, but we can change
the definitions a little bit and obtain an isomorphic operad where all operations are mono: Instead of the closed square and triangle, we
can consider the open square and triangle and also subdivisions into open squares and triangles. Thus, $\calO$ also satisfies the
right cancellation property. Moreover, we claim that it satisfies square filling. To see this, consider the following chains of subdivisions:
\ifthenelse{\boolean{drawtikz}}{
\begin{center}
	\begin{tikzpicture}[scale=0.2]
		\draw[thick] (11.5,4.5) -- (17.5,4.5) -- (17.5,-1.5) -- (11.5,-1.5) -- cycle;		
		\draw (14.5,4.5) -- (14.5,-1.5);
		\draw (11.5,1.5) -- (17.5,1.5);		
		\draw (14.5,4.5) -- (17.5,1.5);
		\draw (17.5,4.5) -- (11.5,-1.5);
		\draw (11.5,4.5) -- (17.5,-1.5);
		\draw (14.5,4.5) -- (11.5,1.5);
		\draw (11.5,1.5) -- (14.5,-1.5);
		\draw (17.5,1.5) -- (14.5,-1.5);
		\draw[thick] (20,4.5) -- (26,4.5) -- (26,-1.5) -- (20,-1.5) -- cycle;		
		\draw (23,4.5) -- (23,-1.5);
		\draw (20,1.5) -- (26,1.5);		
		\draw (23,4.5) -- (26,1.5);
		\draw (26,4.5) -- (20,-1.5);
		\draw (20,4.5) -- (26,-1.5);
		\draw (23,4.5) -- (20,1.5);
		\draw (20,1.5) -- (23,-1.5);
		\draw (26,1.5) -- (23,-1.5);
		\draw[thick] (3,4.5) -- (9,4.5) -- (9,-1.5) -- (3,-1.5) -- cycle;		
		\draw (6,4.5) -- (6,-1.5);
		\draw (3,1.5) -- (9,1.5);		
		\draw (6,4.5) -- (9,1.5);
		\draw (9,4.5) -- (3,-1.5);
		\draw (3,4.5) -- (9,-1.5);
		\draw (6,4.5) -- (3,1.5);
		\draw (3,1.5) -- (6,-1.5);
		\draw (9,1.5) -- (6,-1.5);
		\draw (11.5,3) -- (17.5,3);
		\draw (13,4.5) -- (13,-1.5);
		\draw (16,4.5) -- (16,-1.5);
		\draw (11.5,0) -- (17.5,0);
		\draw (21.5,4.5) -- (21.5,-1.5);
		\draw (20,3) -- (26,3);
		\draw (20,0) -- (26,0);
		\draw (24.5,4.5) -- (24.5,-1.5);
		\draw (21.5,4.5) -- (26,0);
		\draw (24.5,4.5) -- (26,3);
		\draw (20,3) -- (24.5,-1.5);
		\draw (20,0) -- (21.5,-1.5);
		\draw (21.5,4.5) -- (20,3);
		\draw (24.5,4.5) -- (20,0);
		\draw (26,3) -- (21.5,-1.5);
		\draw (26,0) -- (24.5,-1.5);
		\node at (28.5,1.5) {$\cdots$};	
		\draw[thick] (3,-3.5) -- (3,-9.5) -- (9,-9.5) -- cycle;
		\draw[thick] (11.5,-3.5) -- (11.5,-9.5) -- (17.5,-9.5) -- cycle;
		\draw[thick] (20,-3.5) -- (20,-9.5) -- (26,-9.5) -- cycle;
		\draw (3,-9.5) -- (6,-6.5);
		\draw (11.5,-9.5) -- (14.5,-6.5);
		\draw (11.5,-6.5) -- (14.5,-6.5);
		\draw (14.5,-9.5) -- (14.5,-6.5);
		\draw (20,-9.5) -- (23,-6.5);
		\draw (20,-6.5) -- (23,-6.5);
		\draw (23,-6.5) -- (23,-9.5);
		\draw (20,-6.5) -- (23,-9.5);
		\draw (20,-6.5) -- (21.5,-5);
		\draw (23,-9.5) -- (24.5,-8);
		\node at (28.5,-6.5) {$\cdots$};
	\end{tikzpicture}
\end{center}
}{TIKZ}
These are cofinal in the sense that every subdivision of the square resp.~triangle is smaller than or equal to one of the subdivisions of the first
resp.~second chain. From Observation \ref{77706} it follows that $\calO$ satisfies square filling. All in all, $\calO$ satisfies the cancellative 
calculus of fractions.

\vspace{2mm}
$\vartriangleright$ {\it Cube cutting operads.} Let $N$ be a finite set of natural numbers greater than or equal to $2$. 
Denote by $\langle N\rangle$ the multiplicative submonoid of $\NN$ generated
by the numbers in $N$. We say that the numbers in $N$ are independent if, whenever a natural number $n$ can be written as a product 
$n_1^{r_1}\cdots n_k^{r_k}$ of pairwise distinct numbers $n_i\in N$, then the exponents $r_i$ are already uniquely determined by $n$. In other words,
$N$ is a basis for $\langle N\rangle$. This is satisfied for example if the numbers in $N$ are pairwise coprime or, even stronger, if they
are prime. For later reference, we record the following two trivial observations:
\begin{itemize}
	\item[$(\calB_1)$] No number $n\in N$ is a product of other numbers in $N$.
	\item[$(\calB_2)$] Whenever $n_1,\ldots,n_k\in N$ are pairwise distinct numbers and $n\in\langle N\rangle$ is divisible by each $n_i$ in
		$\langle N\rangle$, i.e.~there is $m_i\in\langle N\rangle$ with $n=n_im_i$, then $n$ is also divisible by the product $n_1\cdots n_k$
		in $\langle N\rangle$.
\end{itemize}
There are non-bases $N$ which satisfy $(\calB_1)$ but not $(\calB_2)$, for example $N=\{2,6,7,21\}$. For this $N$ we have $6\cdot 7=42=2\cdot 21$.

In the same vein as above, we now construct cube cutting operads. For $d\geq 1$, consider the $d$-dimensional unit cube and a subgroup if its group of 
isometries. Define this group to be the groupoid $\calG$ lying in $\mathtt{TOP}$. Next, we want to specify very elementary subdivisions of the cube.
For each $j\in\{1,\ldots,d\}$, let $N_j\subset\NN$ be a set of natural numbers as in the preceding paragraph. For each such $j$ and $n\in N_j$, there
is a very elementary subdivision of the cube given by cutting it, perpendicularly to the $j$'th coordinate axis, into $n$ congruent subbricks.
The following are the very elementary subdivisions in the case $d=2$, $N_1=\{2\}$ and $N_2=\{3\}$:
\ifthenelse{\boolean{drawtikz}}{
\begin{center}
	\begin{tikzpicture}[scale=0.4]
		\draw[thick] (8,3.5) -- (11,3.5) -- (11,0.5) -- (8,0.5) -- cycle;
		\draw[thick] (13,3.5) -- (16,3.5) -- (16,0.5) -- (13,0.5) -- cycle;
		\draw (9.5,3.5) -- (9.5,0.5); \draw (13,2.5) -- (16,2.5);
		\draw (13,1.5) -- (16,1.5); 
	\end{tikzpicture}
\end{center}
}{TIKZ}
There is one operation in $S$ for each such very elementary subdivision: Cubes are coordinate-wise linearly rescaled to fit into the subbricks
of the subdivisions. The groupoid $\calG$ together with the set $S$ generate a suboperad $\calO$ of $\calE=\op{End}(\mathtt{TOP},\sqcup)$ which
we call a symmetric cube cutting operad since we will also define planar cube cutting operads below.

The transformation classes are in one to one correspondence with subdivisions of the cube obtained by iteratively applying $n$-cuts
in direction $j$ as above. Two transformation classes are comparable if and only if one is a subdivision of the other. 
From $(\calB_1)$ it follows that two very elementary subdivisions are not comparable. Consequently, $(\calV_1)$ is satisfied.
It is not always true that right multiplication of elements in $\calG$ with operations in $S$ yields another operation in $S$ up to transformation.
For example, a rotation of a vertically cutted square by an angle of $\pi/2$ yields a horizontally cutted square. Whether $(\calV_2)$ is
satisfied or not depends on the interplay between the isometries in $\calG$ and the sets $N_j$. For example, it is satisfied if $\calG=1$ or
if $N_1=\ldots=N_d$. Let us always assume that $\calG$ and the $N_j$ are compatible in a way such that $(\calV_2)$ is satisfied. Then the very elementary 
subdivisions are in one to one correspondence with the very elementary transformation classes.

We want to identify the elementary transformation classes. For each element $T=(T_1,\ldots,T_d)\in 2^{N_1}\times\ldots\times 2^{N_d}$ of the product of the 
power sets such that $T\neq(\emptyset,\ldots,\emptyset)$, there is a transformation class $\Theta_T$ which is obtained by iteratively performing, for each 
$j\in\{1,\ldots,d\}$ and each $n\in T_j$, an $n$-cut in direction $j$ on every subbrick. The result is independent of the order of the cuts. These classes 
are exactly the elementary classes. To see this, we make the following claim: If $\Theta_{T}$ and $\Theta_{T'}$ are two such classes, then $\Theta_{T\cup T'}$ 
is the smallest class $\Theta$ satisfying $\Theta_T\leq\Theta\geq\Theta_{T'}$. Here, the inclusion $T\subset T'$ and the union $T\cup T'$ is meant to be coordinate-wise.
%The very elementary classes are the classes $\Theta_S$ where $|S_i|=1$ for exactly one $i$ and $|S_i|=0$ for the others. 
The figure below pictures the elementary operations in the case $d=2$, $N_1=\{2,3\}$ and $N_2=\{2,3\}$.
\ifthenelse{\boolean{drawtikz}}{
\begin{center}
	\begin{tikzpicture}[scale=0.33]
		\draw[thick] (-0.5,-0.5) -- (2.5,-0.5) -- (2.5,-3.5) -- (-0.5,-3.5) -- cycle;
		\draw[thick] (3.5,-0.5) -- (6.5,-0.5) -- (6.5,-3.5) -- (3.5,-3.5) -- cycle;
		\draw[thick] (7.5,-0.5) -- (10.5,-0.5) -- (10.5,-3.5) -- (7.5,-3.5) -- cycle;
		\draw[thick] (11.5,-0.5) -- (14.5,-0.5) -- (14.5,-3.5) -- (11.5,-3.5) -- cycle;
		\draw[thick] (19.5,-0.5) -- (22.5,-0.5) -- (22.5,-3.5) -- (19.5,-3.5) -- cycle;
		\draw[thick] (15.5,-0.5) -- (18.5,-0.5) -- (18.5,-3.5) -- (15.5,-3.5) -- cycle;
		\draw[thick] (3.5,3.5) -- (3.5,0.5) -- (6.5,0.5) -- (6.5,3.5) -- cycle;
		\draw[thick] (7.5,3.5) -- (10.5,3.5) -- (10.5,0.5) -- (7.5,0.5) -- cycle;
		\draw[thick] (11.5,3.5) -- (14.5,3.5) -- (14.5,0.5) -- (11.5,0.5) -- cycle;
		\draw[thick] (15.5,3.5) -- (18.5,3.5) -- (18.5,0.5) -- (15.5,0.5) -- cycle;
		\draw[thick] (1.5,-4.5) -- (4.5,-4.5) -- (4.5,-7.5) -- (1.5,-7.5) -- cycle;
		\draw[thick] (5.5,-4.5) -- (8.5,-4.5) -- (8.5,-7.5) -- (5.5,-7.5) -- cycle;
		\draw[thick] (9.5,-4.5) -- (12.5,-4.5) -- (12.5,-7.5) -- (9.5,-7.5) -- cycle;
		\draw[thick] (13.5,-4.5) -- (16.5,-4.5) -- (16.5,-7.5) -- (13.5,-7.5) -- cycle;
		\draw[thick] (17.5,-4.5) -- (20.5,-4.5) -- (20.5,-7.5) -- (17.5,-7.5) -- cycle;
		\draw[thick] (9.5,-8.5) -- (12.5,-8.5) -- (12.5,-11.5) -- (9.5,-11.5) -- cycle;
		\draw (3.5,2) -- (6.5,2); \draw (9,3.5) -- (9,0.5); \draw (11.5,2.5) -- (14.5,2.5);
		\draw (16.5,3.5) -- (16.5,0.5); \draw (11.5,1.5) -- (14.5,1.5); \draw (17.5,3.5) -- (17.5,0.5);
		\draw (-0.5,-2) -- (2.5,-2); \draw (1,-0.5) -- (1,-3.5); \draw (3.5,-1) -- (6.5,-1);
		\draw (3.5,-1.5) -- (6.5,-1.5); \draw (3.5,-2) -- (6.5,-2); \draw (3.5,-2.5) -- (6.5,-2.5);
		\draw (3.5,-3) -- (6.5,-3); \draw (7.5,-2) -- (10.5,-2); \draw (8.5,-0.5) -- (8.5,-3.5);
		\draw (9.5,-0.5) -- (9.5,-3.5); \draw (11.5,-1.5) -- (14.5,-1.5); \draw (11.5,-2.5) -- (14.5,-2.5);
		\draw (13,-0.5) -- (13,-3.5); \draw (20.5,-0.5) -- (20.5,-3.5); \draw (21.5,-0.5) -- (21.5,-3.5);
		\draw (19.5,-1.5) -- (22.5,-1.5); \draw (19.5,-2.5) -- (22.5,-2.5); \draw (16,-0.5) -- (16,-3.5);
		\draw (16.5,-0.5) -- (16.5,-3.5); \draw (17,-0.5) -- (17,-3.5); \draw (17.5,-0.5) -- (17.5,-3.5);
		\draw (18,-0.5) -- (18,-3.5); \draw (1.5,-5) -- (4.5,-5); \draw (1.5,-5.5) -- (4.5,-5.5);
		\draw (1.5,-6) -- (4.5,-6); \draw (1.5,-6.5) -- (4.5,-6.5); \draw (1.5,-7) -- (4.5,-7);
		\draw (3,-4.5) -- (3,-7.5); \draw (6,-4.5) -- (6,-7.5); \draw (6.5,-4.5) -- (6.5,-7.5);
		\draw (7,-4.5) -- (7,-7.5); \draw (7.5,-4.5) -- (7.5,-7.5); \draw (8,-4.5) -- (8,-7.5);
		\draw (5.5,-6) -- (8.5,-6); \draw (10,-4.5) -- (10,-7.5); \draw (10.5,-4.5) -- (10.5,-7.5);
		\draw (11,-4.5) -- (11,-7.5); \draw (11.5,-4.5) -- (11.5,-7.5); \draw (12,-4.5) -- (12,-7.5);
		\draw (9.5,-5) -- (12.5,-5); \draw (9.5,-5.5) -- (12.5,-5.5); \draw (9.5,-6) -- (12.5,-6);
		\draw (9.5,-6.5) -- (12.5,-6.5); \draw (9.5,-7) -- (12.5,-7); \draw (13.5,-5) -- (16.5,-5);
		\draw (13.5,-5.5) -- (16.5,-5.5); \draw (13.5,-6) -- (16.5,-6); \draw (13.5,-6.5) -- (16.5,-6.5);
		\draw (13.5,-7) -- (16.5,-7); \draw (14.5,-4.5) -- (14.5,-7.5); \draw (15.5,-4.5) -- (15.5,-7.5);
		\draw (17.5,-5.5) -- (20.5,-5.5); \draw (17.5,-6.5) -- (20.5,-6.5); \draw (18,-4.5) -- (18,-7.5);
		\draw (18.5,-4.5) -- (18.5,-7.5); \draw (19,-4.5) -- (19,-7.5); \draw (19.5,-4.5) -- (19.5,-7.5);
		\draw (20,-4.5) -- (20,-7.5); \draw (10,-8.5) -- (10,-11.5); \draw (10.5,-8.5) -- (10.5,-11.5);
		\draw (11,-8.5) -- (11,-11.5); \draw (11.5,-8.5) -- (11.5,-11.5); \draw (12,-8.5) -- (12,-11.5);
		\draw (9.5,-9) -- (12.5,-9); \draw (9.5,-9.5) -- (12.5,-9.5); \draw (9.5,-10) -- (12.5,-10);
		\draw (9.5,-10.5) -- (12.5,-10.5); \draw (9.5,-11) -- (12.5,-11);
		\node at (-2,2) {$E_0$};
		\node at (-2,-2) {$E_1$};
		\node at (-2,-6) {$E_2$};
		\node at (-2,-10) {$E_3$};
	\end{tikzpicture}
\end{center}
}{TIKZ}
To see the above claim, we consider the case $d=1$ and set $N:=N_1$. The case $d>1$ can be derived by applying the following observations 
coordinate-wise. Call a transformation class regular if all the subintervals in the corresponding subdivision of the unit interval have the 
same length. Now, let $\Theta$ be a transformation class with $\Theta_T\leq\Theta\geq\Theta_{T'}$. It is not hard to find the greatest regular
class $\Theta_r$ with $\Theta_r\leq\Theta$. Since $\Theta_T$ and $\Theta_{T'}$ are regular, we have $\Theta_T\leq\Theta_r\geq\Theta_{T'}$.
There is a unique $n\in\langle N\rangle$ such that $\frac{1}{n}$ is the length of the subintervals in the subdivision of $\Theta_r$.
Then $\Theta_T\leq\Theta_r$ means that the product of the numbers in $T$ divides $n$ in $\langle N\rangle$. In particular, each $t\in T$
divides $n$ in $\langle N\rangle$. Likewise, each $t'\in T'$ divides $n$ in $\langle N\rangle$. It follows from $(\calB_2)$ that
the product of the numbers in $T\cup T'$ divides $n$ in $\langle N\rangle$. This implies $\Theta_{T\cup T'}\leq\Theta_r$ and it follows
$\Theta_{T\cup T'}\leq\Theta$, q.e.d.

All the operations in $\calO$ are epi and $\calO$ is isomorphic to a suboperad of $\calE$ where all operations are mono by considering open cubes instead of closed ones.
Consequently, $\calO$ satisfies the left and right cancellation property. We also find a cofinal chain of subdivisions: The first subdivision
in this chain is obtained by iteratively applying, for each $j\in\{1,\ldots,d\}$ and each $n\in N_j$, an $n$-cut in direction $j$ on every subbrick. Then the whole
chain is obtained by iterating this with every subbrick. For example, in the case $d=2$, $N_1=\{2\}$ and $N_2=\{3\}$, we can take the following chain:
\ifthenelse{\boolean{drawtikz}}{
\begin{center}
	\begin{tikzpicture}[scale=0.2]
		\draw[thick] (13,6) -- (19,6) -- (19,0) -- (13,0) -- cycle;			
		\draw (16,6) -- (16,0);
		\draw (3,2) -- (9,2);
		\draw[thick] (3,6) -- (9,6) -- (9,0) -- (3,0) -- cycle;		
		\draw (6,6) -- (6,0);
		\draw (3,4) -- (9,4);	
		\node at (23,3) {$\cdots$};	
		\draw (14.5,6) -- (14.5,0);
		\draw (17.5,6) -- (17.5,0);
		\draw (13,4) -- (19,4);
		\draw (13,2) -- (19,2);
		\draw (13,5.332) -- (19,5.332);
		\draw (13,4.666) -- (19,4.666);
		\draw (13,3.322) -- (19,3.322);
		\draw (13,2.666) -- (19,2.666);
		\draw (13,1.332) -- (19,1.332);
		\draw (13,0.666) -- (19,0.666);
	\end{tikzpicture}
\end{center}
}{TIKZ}
Thus, $\calO$ satisfies the square filling property. All in all, it satisfies the cancellative calculus of fractions.

Note that the symmetric cube cutting operads are symmetric operads with transformations. When forgetting the symmetric structure on $\calE$,
we obtain a planar operad $\calE_{\mathrm{pl}}$ and we can define suboperads, which are then planar operads with transformations and which
we call planar cube cutting operads, as follows: Consider the case $d=1$. Set $\calG=1$. Let $N\subset\NN$ be a set of natural numbers as in the
first paragraph. There is one very elementary subdivision of the unit interval for each $n\in N$, cutting it into $n$ pieces of equal length. 
The operations in $S$ linearly map unit intervals to the subintervals of very elementary subdivisions. This time, however, we specify the order 
of these maps. We require that they are ordered by their images via the natural ordering on the unit interval. Denote by $\calO$ the suboperad of 
$\calE_{\mathrm{pl}}$ generated by this data. Note that $\calO$ is a planar operad with transformations which is degenerate in the sense that there 
are no degree $1$ operations besides the identities. Thus, a transformation class is the same as an operation. Operations in $\calO$ are in one to 
one correspondence with subdivision of the unit interval which are obtained by iteratively applying $n$-cuts for various $n\in N$. Two operations 
are related if and only if one is a subdivision of the other. The very elementary operations are in one to one correspondence with the 
very elementary subdivisions and the elementary operations can be described just as in the case of symmetric cube cutting operads. Furthermore, 
$\calO$ satisfies the cancellative calculus of fractions.

We now look at the operad groups associated to these planar resp.~symmetric cube cutting operads. Using the fact that arrows in the fundamental groupoid of a category 
satisfying the calculus of fractions can be represented by spans, it is easy to identify the following operad groups (where $\calG=1$ in each case):
\begin{itemize}
	\item The Higman-Thompson groups $F_{n,r}$ resp.~$V_{n,r}$ arise as the operad groups (based at the object represented by a
		disjoint union of $r$ unit intervals) associated to the 
		planar resp.~symmetric cube cutting operads with $d=1$ and $N=\{n\}$.
	\item The groups of piecewise linear homeomorphisms of the (Cantor) unit interval 
		$F\big(r,\ZZ[\frac{1}{n_1\cdots n_k}],\langle n_1,\ldots,n_k\rangle\big)$
		resp.~$G\big(r,\ZZ[\frac{1}{n_1\cdots n_k}],\langle n_1,\ldots,n_k\rangle\big)$
		considered in \cite{ste:gop} arise as the operad groups (based at the object represented by a disjoint union of $r$ unit intervals)
		associated to the planar resp.~symmetric cube cutting 
		operads with $d=1$ and $N=\{n_1,\ldots,n_k\}$.
	\item The higher dimensional Thompson groups $nV$ (see \cite{bri:hdt}) arise as the operad groups (based at the object represented
		by the $n$-dimensional unit cube) associated to the symmetric
		cube cutting operads with $d=n$ and $N_j=\{2\}$ for all $j=1,\ldots, d$.
		%$N_1=\ldots=N_d=\{2\}$.
\end{itemize}

\vspace{2mm}
$\vartriangleright$ {\it Local similarity operads.} In \cite{hug:lsa} groups were defined which act in a certain way on compact ultrametric spaces. 
We recall the definition of a finite similarity structure:
\begin{defi}
	Let $X$ be a compact ultrametric space. A finite similarity structure $\mathrm{Sim}_X$ on $X$ consists of a 
	finite set $\mathrm{Sim}_X(B_1,B_2)$ of similarities $B_1\rightarrow B_2$ for every ordered pair of balls 
	$(B_1,B_2)$ such that the following axioms are satisfied: 
	\begin{itemize}
		\item ({\it Identities}) Each $\mathrm{Sim}_X(B,B)$ contains the identity.
		\item ({\it Inverses}) If $\gamma\in\mathrm{Sim}_X(B_1,B_2)$, then also $\gamma^{-1}\in\mathrm{Sim}_X(B_2,B_1)$.
		\item ({\it Compositions}) If $\gamma_1\in\mathrm{Sim}_X(B_1,B_2)$ and $\gamma_2\in\mathrm{Sim}_X(B_2,B_3)$, then 
			also $\gamma_1\gamma_2\in\mathrm{Sim}_X(B_1,B_3)$.
		\item ({\it Restrictions}) If $\gamma\in\mathrm{Sim}_X(B_1,B_2)$ and $B_3\subset B_1$ is a subball, then also 
			$\gamma|_{B_3}\in\mathrm{Sim}_X(B_3,\gamma(B_3))$.
	\end{itemize}
\end{defi}
Here, a similarity $\gamma\co X\rightarrow Y$ of metric spaces is a homeomorphism such that there is a $\lambda>0$ with 
$d(\gamma(x_1),\gamma(x_2))=\lambda d(x_1,x_2)$ for all $x_1,x_2\in X$. Let $\mathrm{Sim}_X$ be a finite similarity
structure on the compact ultrametric space $X$. A homeomorphism $\gamma\co X\rightarrow X$ is said to be locally determined by 
$\mathrm{Sim}_X$ if for every $x\in X$ there is a ball $x\in B\subset X$ such that $\gamma(B)$ is a ball and 
$\gamma|_B\in\mathrm{Sim}_X(B,\gamma(B))$. The set of all such homeomorphisms forms a group which we denote by 
$\Gamma(\mathrm{Sim}_X)$.

To a finite similarity structure $\mathrm{Sim}_X$, we can associate a symmetric operad with transformations $\calO$, 
a suboperad of $\calE=\op{End}(\mathtt{TOP},\sqcup)$, and reobtain the groups $\Gamma(\mathrm{Sim}_X)$ as operad groups. We do this 
by appealing to the procedure above.
Two balls $B_1,B_2$ in $X$ are called $\mathrm{Sim}_X$-equivalent if $\mathrm{Sim}_X(B_1,B_2)\neq\emptyset$. Choose
one ball in each $\mathrm{Sim}_X$-equivalence class (the isomorphism class of the operad we will define does not depend
on this choice). Consider the groupoid $\calG$ lying in $\mathtt{TOP}$ which is the disjoint union of the groups 
$\mathrm{Sim}_X(B,B)$ with $B$ a chosen ball. The set $S$ contains one operation in $\calE$ for each chosen ball $B$:
Consider the maximal proper subballs $A_1,\ldots,A_k$ of $B$. For each $i=1,\ldots,k$ choose a similarity $\gamma_i\in
\mathrm{Sim}_X(B_i,A_i)$ where $B_i$ is the unique chosen ball equivalent to $A_i$. Now the operation associated
to $B$ maps the chosen balls $B_i$ to $A_i$ using the similarities $\gamma_i$. The data $(\calG,S)$ generates a suboperad
$\calO$ of $\calE$.

Each transformation class in $\calO$ is uniquely determined by a chosen ball together with a subdivision into subballs.
Two such subdivisions are related if and only if one can be obtained from the other by further subdividing the subballs.
Condition $(\calV_1)$ is trivially true since the operations in $S$ have different codomains. A similarity in $\mathrm{Sim}_X(B,B)$
is an isometry $\gamma\co B\rightarrow B$ which permutes the maximal proper subballs and the restriction of $\gamma$ to
a maximal proper subball is again a similarity in $\mathrm{Sim}_X$. It follows that right multiplication of an element
in $\calG$ with an operation in $S$ gives the same operation modulo transformation. In particular, $(\calV_2)$ is satisfied.
Thus, the very elementary classes of $\calO$ are in one to one correspondence with the chosen balls together with their subdivisions 
into the proper maximal subballs. Since every two very elementary classes have different colors as codomains, there are no elementary 
classes which are not very elementary.

All the operations in $\calO$ are both mono and epi. Thus, it satisfies both left and right cancellation. It also satisfies
square filling and thus the cancellative calculus of fractions since we again find a cofinal sequence of subdivisions for each chosen
ball $B$: Define the chain inductively by subdividing each subball by their maximal proper subballs.

Using the fact that arrows in the fundamental groupoid of a category satisfying the calculus of fractions can
be represented by spans, it is not hard to establish an isomorphism $\pi_1(\calO,X)\cong\Gamma(\mathrm{Sim}_X)$ where we
assume that $X$ is the chosen ball of its $\mathrm{Sim}_X$-equivalence class.

\subsubsection{Ribbon Thompson group}\label{94422}

To close this subsection, we briefly want to discuss an operad yielding an operad group $RV$ which naturally fits into the sequence
of well-known groups $F,V,BV$. First observe the free braided operad with transformations generated by a single color, the group $\ZZ$ as groupoid 
of degree $1$ operations and a single binary operation. The components of the corresponding groupoid of transformations are the groups $B_n\ltimes\ZZ^n$. 
Think of elements of these groups as ribbons which can braid and twist. A single twist corresponds to a generator in $\ZZ$. Then we impose the
following relation on this operad:
\ifthenelse{\boolean{drawtikz}}{
\begin{center}
	\begin{tikzpicture}[scale=0.45]
		\draw (-3.4,-1.2) .. controls (-2.8,-1.2) and (-2.6,-0.8) .. (-2,-0.8);
		\draw[white, line width=3pt] (-3.4,-0.8) .. controls (-2.8,-0.8) and (-2.6,-1.2) .. (-2,-1.2);
		\draw (-3.4,-0.8) .. controls (-2.8,-0.8) and (-2.6,-1.2) .. (-2,-1.2);
		\draw (-3.4,-0.8) -- (-3.4,-1.2);
		\draw (-2,-0.8) -- (-2,-1.2);
		\draw (1,-2.2) -- (2.4,-2.2) -- (2.4,-2.6) -- (1,-2.6) -- cycle;
		\draw (1,0.6) -- (1,0.2) -- (2.4,0.2) -- (2.4,0.6) -- cycle;
		\draw (5.6,-0.8) -- (5.6,-1.2) -- (7,-1.2) -- (7,-0.8) -- cycle;
		\draw (10,-2.2) .. controls (10.8,-2.2) and (12,0.6) .. (13,0.6);
		\draw (10,-2.6) .. controls (11,-2.6) and (12.2,0.2) .. (13,0.2);
		\draw[white, line width=8pt] (10,0.4) .. controls (10.9,0.4) and (12.1,-2.4) .. (13,-2.4);
		\draw (10,0.6) .. controls (11,0.6) and (12.2,-2.2) .. (13,-2.2);
		\draw (10,0.2) .. controls (10.8,0.2) and (12,-2.6) .. (13,-2.6);
		\draw (13,0.2) .. controls (13.6,0.2) and (13.8,0.6) .. (14.4,0.6);
		\draw[white, line width=3pt] (13,0.6) .. controls (13.6,0.6) and (13.8,0.2) .. (14.4,0.2);
		\draw (13,0.6) .. controls (13.6,0.6) and (13.8,0.2) .. (14.4,0.2);
		\draw (14.4,0.6) -- (14.4,0.2);
		\draw (13,-2.6) .. controls (13.6,-2.6) and (13.8,-2.2) .. (14.4,-2.2);
		\draw[white, line width=3pt] (13,-2.2) .. controls (13.6,-2.2) and (13.8,-2.6) .. (14.4,-2.6);
		\draw (13,-2.2) .. controls (13.6,-2.2) and (13.8,-2.6) .. (14.4,-2.6);
		\draw (14.4,-2.2) -- (14.4,-2.6);
		\node at (4,-1) {$=$};
		\draw[thick] (0.8,0.4) -- (-1.8,-1) -- (0.8,-2.4);
		\draw (10,0.6) -- (10,0.2);
		\draw (10,-2.2) -- (10,-2.6);
		\draw[thick] (9.8,0.4) -- (7.2,-1) -- (9.8,-2.4);
	\end{tikzpicture}
\end{center}
}{TIKZ}
The caret corresponds to the generating binary operation. The operations in this braided operad with transformations are in one to one
correspondence with binary trees together with braiding and twisting ribbons attached to the leaves. The transformation classes are in
one to one correspondence with binary trees. The only very elementary class is represented by the binary tree with two leaves (the caret). 
There are no strictly elementary classes. It satisfies the cancellative calculus of fractions. Consequently, elements in the associated operad
group based at $1$ can be represented by pairs of binary trees where the leaves are connected by braiding and twisting ribbons.
Composition is modelled by concatenating two such tree pair diagrams, removing all dipoles formed by carets and then applying the above
relation in order to obtain another tree pair diagram.

\section{A topological finiteness result}\label{17057}

Before we state the main theorem of this article, we have to introduce two more definitions.

\begin{defi}
	We say that a group $G$ is of type $F^+_\infty$ if $G$ and all of its subgroups are of type $F_\infty$. We then say that a groupoid is of type 
	$F^+_\infty$ (or $F_\infty$) if its automorphism groups are of type $F^+_\infty$ (or $F_\infty$).
\end{defi}

For example, all finite groups and $\ZZ$ are of type $F^+_\infty$.

\begin{defi}\label{77101}
	Let $\calO$ be a (symmetric/braided) operad with transformations. An object $X$ in $\calS(\calO)$ is called
	reduced if no non-transformation arrow in $\calS(\calO)$ has $X$ as its domain. We call $\calO$ color-tame if the
	degree of all reduced objects is bounded from above.
\end{defi}

Note that if $\calO$ is monochromatic and there exists at least one higher degree operation, then it is automatically color-tame.

\begin{thm}\label{41762}
	Let $\calO$ be a planar or symmetric or braided operad with transformations. Assume that $\calO$ has only finitely many colors and
	is color-tame. Assume further that $\calO$ satisfies the cancellative calculus of fractions, is of finite type and $\calI(\calO)$ 
	is a groupoid of type $F_\infty^+$. Then for every object $X$ in $\calS(\calO)$ the operad group $\pi_1(\calO,X)$ is of type $F_\infty$.
\end{thm}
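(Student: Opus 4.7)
The plan is to mimic the strategy established for all the individual Thompson-like groups cited in the introduction: build a contractible complex on which the operad group $\Gamma := \pi_1(\calO,X) = \pi_1(\calS,X)$ acts with well-behaved stabilizers, then verify the hypotheses of Brown's criterion \cite{bro:fpo}. Throughout, let $\calS := \calS(\calO)$ denote the category of operators.

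First I would form the universal covering category $\calU := \calU_X(\calS) = X \downa \varphi$ of Subsection \ref{78615}. By Proposition \ref{16866} the cancellative calculus of fractions passes from $\calO$ to $\calS$, and the proof of Proposition \ref{20889} shows that in this situation $\calU$ is a cofiltered generalized poset and in particular contractible. Let $\calG \subset \calU$ be the subgroupoid of isomorphisms; since $\calU$ is a generalized poset, $\calG$ is a disjoint union of contractible groupoids, so Proposition \ref{66872} gives a homotopy equivalence $\calU \simeq \calU/\calG$, and $\calU/\calG$ is therefore also contractible. The deck-transformation action of $\Gamma$ on $\calU$ preserves $\calG$ and so descends to a cellular action on $\calU/\calG$.

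To apply Brown's criterion I would use the Morse function on $\op{obj}(\calU/\calG)$ given by the degree of the underlying object in $\calS$ (well-defined since isomorphisms in $\calS$ preserve degree), producing an exhaustive $\Gamma$-invariant filtration by full subcategories $(\calU/\calG)_{\leq n}$. Finite type, together with finitely many colors and color-tameness, makes each $(\calU/\calG)_{\leq n}$ have only finitely many $\Gamma$-orbits of cells; the $F_\infty^+$ hypothesis on $\calI(\calO)$, combined with the fact that stabilizers of objects are assembled, via the normal-form description of arrows in $\calS$ from Subsection \ref{88061}, out of subgroups of automorphism groups in $\calI(\calO)$ and of symmetric or braid groups, then yields isotropy of type $F_\infty$. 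By Brown's criterion it remains to prove that the relative connectivity of the pair $\bigl((\calU/\calG)_{\leq n},(\calU/\calG)_{\leq n-1}\bigr)$ tends to infinity with $n$.

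By the Morse method for categories (Theorem \ref{34332}) this reduces to a uniform lower bound, growing with $n$, on the connectivity of every descending link $lk_\downa(Y) = \overline{lk}_\downa(Y) * \underline{lk}_\downa(Y)$ of a newly added object $Y$ of degree $n$. The strategy I would pursue is to equip each of the two join factors with a \emph{secondary} Morse function that isolates a \emph{core}, built only from the strictly elementary operations of Definition \ref{24121}, from a \emph{corona} consisting of the remaining cells, and to retract each corona onto its core by a further Morse argument. The core should then be identified, combinatorially, with the face poset of an arc complex in $\RR^d$ --- with $d=1$ in the planar case, $d=2$ in the symmetric case, and $d=3$ in the braided case --- and a general connectivity theorem for such arc complexes (to be proved separately) supplies a bound growing linearly with $n$. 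This last step, establishing the arc-complex connectivity and carefully matching the core inside the descending link with an arc complex in each of the three flavors of operads, is the main obstacle; everything preceding it follows rather mechanically from the results collected in Section \ref{31364} together with the hypotheses of the theorem.
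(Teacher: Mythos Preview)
Your overall architecture is exactly the paper's: universal cover $\calU$, quotient by transformations, degree filtration, Brown's criterion, and a Morse analysis of descending links culminating in arc complexes. A few points need correcting, however.

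First, since the Morse function is the degree and arrows in $\calS$ never decrease degree, the descending \emph{up} link $\overline{lk}_\downa(\calK)$ is always empty; only $\underline{lk}_\downa(\calK)$ has content. So there is a single factor to analyze, not two, and the relevant filtration is the three-step chain
\[
\op{Core}(\calK)\ \subset\ \op{Corona}(\calK)\ \subset\ \underline{lk}_\downa(\calK),
\]
where the last inclusion is shown to be a homotopy equivalence (via a separate Morse argument using the spine property of $E$), and the middle one is handled by your secondary Morse function.

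Second, the core is spanned by the \emph{very} elementary arrows, not the strictly elementary ones; the corona is spanned by all elementary arrows. Strictly elementary means elementary but not very elementary, so your terminology is inverted.

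Third, and most consequential, the dimension assignment is swapped: the paper identifies $\op{Core}(\calK)$ with $\mathcal{AC}_d(C,A;T)$ where $d=1$ in the planar case, $d=2$ in the \emph{braided} case, and $d=3$ in the \emph{symmetric} case. The braided case is genuinely two-dimensional because braids live in a thickened plane and the combing/weaving isomorphism produces arcs up to planar isotopy with punctures; the symmetric case is three-dimensional precisely because permutations impose no braiding constraint, so arcs can always be made disjoint in $\RR^3$. Getting this backwards would send you after the wrong (and harder, or vacuous) connectivity statement.
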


\begin{que}
	Can the requirement color-tameness be dropped?
\end{que}

\begin{rem}
	There is also a version of this theorem for free operads: Assume that $\calO$ is free as a (symmetric/braided) operad with transformations,
	has only finitely many colors, is color-tame, finitely generated and that $\calI(\calO)$ is a groupoid of type $F_\infty$.
	Then $\pi_1(\calO,X)$ is of type $F_\infty$.
	
	The proof of the free case is parallel to the proof in this article (with small modifications and additions,
	see \cite{thesis}). Parts of the theorem for the free case are also proven in \cites{far:fac,far:haf} (in the language of diagram groups).
	
	Concerning the examples in Subsection \ref{13887}, it should be noted that the free operads $\calO_F$, $\calO_V$ and $\calO_{BV}$
	also satisfy the conditions in Theorem \ref{41762}.
\end{rem}

The main tool to prove this theorem is, as usual, Brown's criterion \cite{bro:fpo}. More precisely, we will need the following special
version of it:

\begin{thm}\label{46200}
	Let $\Gamma$ be a discrete group and $X$ be a contractible $\Gamma$-CW-complex
	with isotropy groups of type $F_\infty$. Assume we have a filtration $(X_n)_{n\in\NN}$ of $X$ such that each $X_n$ is
	a $\Gamma$-CW-subcomplex of finite type and such that the connectivity of the pairs 
	$(X_n,X_{n-1})$ tends to infinity as $n\rightarrow\infty$. Then $\Gamma$ is of type $F_\infty$.
\end{thm}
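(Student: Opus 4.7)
The plan is to reduce the statement to the classical ``base case'' of Brown's criterion: for any fixed $n\in\NN$, if a group $\Gamma$ acts on an $(n-1)$-connected CW-complex $Y$ with finitely many $\Gamma$-orbits of cells in every dimension $\leq n$ and the stabilizer of each $k$-cell ($k\leq n$) is of type $F_{n-k}$, then $\Gamma$ is of type $F_n$. Given this base case, the theorem will follow by picking, for each $N$, an appropriate term of the filtration and feeding it in with $n=N$.

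First I would fix an arbitrary $N\in\NN$ and locate some $n_0$ for which $X_{n_0}$ is $(N-1)$-connected. Using the hypothesis that the connectivity of $(X_m,X_{m-1})$ tends to infinity, choose $n_0$ so that $(X_m,X_{m-1})$ is $N$-connected for every $m>n_0$. The long exact sequence of each such pair shows that the inclusion $X_m\hookrightarrow X_{m+1}$ induces isomorphisms on $\pi_k$ for $k<N$. Passing to the colimit over $m$ and using that $X=\bigcup_m X_m$ is contractible gives $\pi_k(X_{n_0})\cong\pi_k(X)=0$ for $k<N$, so $X_{n_0}$ is indeed $(N-1)$-connected.

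Second, I would apply the base case with $Y=X_{n_0}$ and $n=N$. By assumption $X_{n_0}$ is a $\Gamma$-CW-complex of finite type, and its cell stabilizers are isotropy subgroups of the $\Gamma$-action on $X$, hence of type $F_\infty$; in particular, every $k$-cell has stabilizer of type $F_{N-k}$. The base case then yields that $\Gamma$ is of type $F_N$. Since $N$ was arbitrary, this gives $\Gamma$ of type $F_\infty$, as desired.

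The main obstacle is, of course, the base case itself, which is the substance of Brown's original result \cite{bro:fpo}. The cleanest route is algebraic: the augmented cellular chain complex of $Y$ furnishes a partial resolution of the trivial $\ZZ\Gamma$-module $\ZZ$ by permutation modules $\bigoplus_\sigma\ZZ[\Gamma/\Gamma_\sigma]$, exact through degree $N-1$ thanks to the $(N-1)$-connectivity of $Y$. Since each $\Gamma_\sigma$ is of type $F_{N-k}$, each $\ZZ[\Gamma/\Gamma_\sigma]$ admits a partial resolution of the required length by finitely generated projective $\ZZ\Gamma$-modules; totalising these into a double complex yields type $\mathrm{FP}_N$ for $\Gamma$. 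Upgrading $\mathrm{FP}_N$ to $F_N$ requires finite presentability, which is extracted separately from the $N=2$ case of the argument together with a van Kampen-style analysis of $Y/\Gamma$. All of this is standard and can be quoted from the literature, so the real content of the proof in our setting is the extraction step of the first two paragraphs above.
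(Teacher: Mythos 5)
Your proof is correct, and its first step --- extracting an $(N-1)$-connected filtration stage $X_{n_0}$ from the hypothesis that the connectivity of the pairs tends to infinity, via the long exact sequences of the pairs and the identification $\pi_k(X)\cong\op{colim}_m\pi_k(X_m)$ --- is exactly the extraction step the paper performs as well. Where you genuinely diverge is in how type $F_N$ is deduced from the action on $X_{n_0}$. You reduce to Brown's classical criterion in its algebraic form: the cellular chain complex of $X_{n_0}$ gives a partial resolution of $\ZZ$ by permutation modules $\bigoplus_\sigma\ZZ[\Gamma/\Gamma_\sigma]$, each of these is partially resolved by finitely generated projective $\ZZ\Gamma$-modules using the finiteness properties of the stabilizers, a double complex argument yields $\mathrm{FP}_N$, and finite presentability is handled by a separate van Kampen-style argument. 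The paper instead argues geometrically via L\"uck's blow-up construction: each equivariant cell $\Gamma/H\times D^k$ is replaced by $(\Gamma\times_H EH)\times D^k$, where $EH$ is a free contractible $H$-CW-complex of finite type (this is precisely where the $F_\infty$ hypothesis on the isotropy groups enters), producing a \emph{free} $\Gamma$-CW-complex of finite type homotopy equivalent to $X_{n_0}$; attaching free equivariant cells in dimensions $N+2$ and higher then gives a free contractible $\Gamma$-CW-complex with finitely many orbits of cells up to dimension $N+1$, i.e.\ a $K(\Gamma,1)$ with finite $(N+1)$-skeleton. The trade-off is this: the paper's route never has to split homotopical from homological finiteness (no separate finite-presentability argument, since a classifying space with finite skeleta is built directly), whereas your route, while requiring that well-known two-track treatment, is potentially sharper --- Brown's criterion only needs the stabilizer of each $k$-cell to be of type $F_{N-k}$, a graded hypothesis strictly weaker than the blanket $F_\infty$ assumption that the blow-up needs in order to choose the complexes $EH$ of finite type.
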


We sketch a geometric proof of this criterion using a blow-up construction of L\"uck \cite{lue:tto}*{Lemma 4.1}:
For each conjugacy class $[H]$ of isotropy groups of $X$, we choose a free contractible $H$-CW-complex $EH$ of finite type.
Using these, we can construct a \emph{free} $\Gamma$-CW-complex $\calF(X)$ which is homotopy equivalent to $X$. The idea
is to replace the equivariant cell $\Gamma/H\times D^n$ in $X$ by the $\Gamma$-CW-complex $(\Gamma\times_{H}EH)\times D^n$. 
More details can be found in the proofs of \cite{lue:tto}*{Lemma 4.1 and Theorem 3.1}. We can also apply this construction
to each $\Gamma$-CW-subcomplex $X_n$ and obtain free $\Gamma$-CW-complexes $\calF(X_i)$ homotopy equivalent to $X_i$ and
of finite type. For each $n\in\NN$ we find $k\in\NN$ big enough so that $X_k$ and hence $\calF(X_k)$ is $n$-connected.
By equivariantly gluing cells in dimensions $n+2$ and higher, we obtain a free contractible $\Gamma$-CW-complex with
finitely many equivariant cells up to dimension $n+1$. Consequently, $\Gamma$ is of type $F_{n+1}$.
Since $n$ was arbitrary, it follows that $\Gamma$ is of type $F_\infty$ (see e.g.~\cite{geo:tmi}*{Proposition 7.2.2}).

The remaining subsections are devoted to the proof of Theorem \ref{41762}.

\subsection{Three types of arc complexes}\label{53852}

Let $d\in\{1,2,3\}$ and $C$ be a set of colors. Let $X=(c_1,\ldots,c_n)$ be a word in the colors of $C$. 
An archetype consists of a unique identifier together with a word in the colors of $C$ of length at least $2$.
Let $A$ be a set of archetypes. To this data, we will associate a simplicial complex $\mathcal{AC}_d(C,A;X)$.

Consider the points $1,\ldots,n\in\RR$ and embed them into $\mathbb{R}^d$ via the first component embedding 
$\RR\rightarrow\RR^d$. Color these points with the colors in the word $X$ (i.e.~the point $i$ is colored with the
color $c_i$) and call them nodes. Denote the set of nodes by $N$. A link is the image of an embedding $\gamma\co[0,1]
\rightarrow\RR^d$ such that $\gamma(0)$ and $\gamma(1)$ are nodes. Note that a link may contain more than two nodes.
Two links connecting the same set of nodes are equivalent if there is an isotopy of $\RR^d\setminus N$ which takes one link 
to the other. An  equivalence class of links is called an arc. Note that in the case $d=1$, arcs and links are the same since 
each arc is represented by a unique link. We say that two arcs are disjoint if there are representing links which are disjoint.
In the cases $d=2,3$, we can choose representing links of a collection of arcs such that the links are in minimal position:
\begin{lem}\label{20331}
	Assume $d=2$ or $d=3$. Let $\mathfrak{a}_0,\ldots,\mathfrak{a}_k$ be arcs with $\mathfrak{a}_i\neq\mathfrak{a}_j$ for each 
	$i\neq j$. Then there are representing links $\alpha_0,\ldots,\alpha_k$ such that $|\alpha_i\cap\alpha_j|$ is finite and minimal
	for each $i\neq j$.
\end{lem}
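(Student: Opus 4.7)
The plan is to handle the two dimensions separately, starting from a common general-position step. First I would fix smooth representing links $\alpha_0^0,\ldots,\alpha_k^0$ and, keeping the endpoints in $N$ fixed, perturb them slightly so that each $\alpha_i^0$ is a smooth embedding, meets $N$ only at its endpoints, and every pair $\alpha_i^0,\alpha_j^0$ with $i\neq j$ is mutually transverse away from $N$. This is a standard transversality argument applied in the complement of $N$ inside $\RR^d$, and it guarantees in particular that each intersection $\alpha_i^0\cap\alpha_j^0\setminus N$ consists of isolated points.

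For $d=3$, two smooth $1$-manifolds in a $3$-manifold are generically disjoint, since $1+1-3<0$. Therefore a further small perturbation inside $\RR^3\setminus N$ removes all interior intersections, leaving $\alpha_i\cap\alpha_j$ equal to the (finite) set of common endpoints in $N$. Since any representatives must at least share these common endpoints, this is automatically minimal, so finiteness and minimality both follow immediately.

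For $d=2$, after the general-position step the intersections are already finite, so only minimality requires work. I would invoke the classical bigon reduction in the punctured plane $\RR^2\setminus N$ (see e.g.\ Farb--Margalit's \emph{Primer on Mapping Class Groups}): if the configuration is not in minimal position, some pair $(\alpha_i,\alpha_j)$ together with possibly the node endpoints bounds an embedded closed disk $D\subset\RR^2$ whose boundary consists of a subarc of $\alpha_i$ and a subarc of $\alpha_j$, meeting at two transverse intersection points in $\RR^2\setminus N$ (a bigon); and conversely, the absence of bigons for every pair implies minimality. Among all bigons formed by all pairs, I would choose an innermost one, i.e.\ one whose interior meets no $\alpha_m$ at all. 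Isotoping $\alpha_i$ across this $D$ (supported in a small neighborhood of $D$) reduces $|\alpha_i\cap\alpha_j|$ by $2$ and leaves every other $|\alpha_p\cap\alpha_q|$ unchanged, since the isotopy is supported in a region disjoint from the other links. Because each $|\alpha_i\cap\alpha_j|$ is a nonnegative integer, iterating this procedure terminates in a configuration with no bigons, hence in simultaneous minimal position for every pair.

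The main obstacle is the simultaneous aspect of the reduction in the $d=2$ case: naively, removing a bigon between $\alpha_i$ and $\alpha_j$ might create new intersections with some other $\alpha_m$ or destroy minimality of another pair. The innermost-bigon selection across the whole collection is precisely what overcomes this, since it ensures the supporting isotopy is localized in a disk meeting no other arc. The rest of the argument is either transversality (for $d=3$) or a direct application of the bigon criterion for simple arcs in a surface (for $d=2$).
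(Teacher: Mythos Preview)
Your $d=3$ argument is correct and matches the paper's: both use general position/transversality to make the links disjoint away from shared nodes.

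For $d=2$, the paper takes a different and slicker route: instead of bigon reduction, it puts a complete hyperbolic metric on $\RR^2\setminus N$ (citing \cite{b-f-m-w-z:tbt}*{Lemma 3.2}) and takes each $\alpha_i$ to be the geodesic in its isotopy class. Distinct geodesics are automatically in pairwise minimal position, so simultaneity comes for free with no induction or bookkeeping.

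Your bigon approach is a reasonable alternative, but the step you flag as the main obstacle does have a gap. You assert that among all bigons one can choose an \emph{innermost} one---one whose interior meets no $\alpha_m$ at all---and this is not true in general. Suppose $D$ is a bigon between $\alpha_i$ and $\alpha_j$ through which a third arc $\alpha_m$ passes exactly once, entering through $\gamma_i$ and exiting through $\gamma_j$, with $\alpha_m$ already in minimal position with both $\alpha_i$ and $\alpha_j$. Then $D$ may be the unique bigon in the whole configuration, and its interior is not disjoint from $\alpha_m$; the two regions into which $\alpha_m$ cuts $D$ are triangles, not bigons.

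The repair is to choose instead a bigon $D$ that is minimal with respect to inclusion (equivalently, area). In such a $D$, every subarc of any $\alpha_m$ in the interior must run from the $\gamma_i$-side to the $\gamma_j$-side, since an arc with both endpoints on one side would cut out a strictly smaller bigon. The isotopy of $\alpha_i$ across $D$ then removes the two vertex intersections with $\alpha_j$, and for each crossing arc of $\alpha_m$ it trades one intersection with $\alpha_i$ on the $\gamma_i$-side for one near the $\gamma_j$-side, so no $|\alpha_p\cap\alpha_q|$ increases. Iterating on the total intersection number terminates. With this correction your argument goes through; the paper's geodesic argument simply avoids this bookkeeping entirely.
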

\begin{proof}
	In the case $d=3$, we can always find representing links which only intersect at nodes, if at all. The case $d=2$ is a bit 
	more complicated. We use the ideas from \cite{b-f-m-w-z:tbt}*{Lemma 3.2}: Consider the nodes as punctures in the plane $\RR^2$. Then 
	we can find a hyperbolic metric on that punctured plane. Now define $\alpha_i$ to be the geodesic within the class
	$\mathfrak{a}_i$.
\end{proof}

A link connecting a set of nodes $M$ is called admissible if there is an isotopy of $\RR^d\setminus M$ taking the link 
into the image of the first component embedding $\RR\rightarrow\RR^d$. In the case $d=1$, this is vacuous. In the case 
$d=2$, this implies in particular that, when travelling the link starting from the lowest node, the nodes are visited in 
ascending order. This last property is even equivalent to being admissible in the case $d=3$. An arc is called admissible 
if one and consequently all of its links are admissible. Now label an admissible arc with the identifier of an archetype in 
$A$. We require that the word formed by the colors of the connected nodes (in ascending order) equals the color word of the 
archetype. Call such a labelled admissible arc an archetypal arc.

The vertices of $\mathcal{AC}_d(C,A;X)$ are the archetypal arcs. Two vertices are joined by an edge if the corresponding arcs 
are disjoint. This determines the complex as a flag complex. A $k$-simplex is therefore a set of $k+1$ pairwise disjoint archetypal 
arcs. We call this an archetypal arc system. It follows from Lemma \ref{20331} above that if $\{\mathfrak{a}_0,\ldots,\mathfrak{a}_k\}$ 
is an archetypal arc system, then we always find representing links $\alpha_i$ of $\mathfrak{a}_i$ such that the $\alpha_i$ are 
pairwise disjoint. The following are examples of $2$-simplices in the cases $d=1,2,3$ (where we have omitted the labels on the arcs).
\ifthenelse{\boolean{drawtikz}}{
	\vspace{2mm}
	\begin{center}
		\begin{tikzpicture}[scale=0.6]
			\node[circle,draw=black,fill=red,inner sep=0pt,minimum size=6pt] at (4.5,0) {};
			\draw (0,0) -- (1.5,0);
			\draw (3,0) -- (6,0);
			\draw (10.5,0) -- (12,0);
			\node[circle,draw=black,fill=blue,inner sep=0pt,minimum size=6pt] at (0,0) {};
			\node[circle,draw=black,fill=green,inner sep=0pt,minimum size=6pt] at (1.5,0) {};
			\node[circle,draw=black,fill=red,inner sep=0pt,minimum size=6pt] at (3,0) {};
			\node[circle,draw=black,fill=green,inner sep=0pt,minimum size=6pt] at (6,0) {};
			\node[circle,draw=black,fill=red,inner sep=0pt,minimum size=6pt] at (7.5,0) {};
			\node[circle,draw=black,fill=green,inner sep=0pt,minimum size=6pt] at (9,0) {};
			\node[circle,draw=black,fill=blue,inner sep=0pt,minimum size=6pt] at (10.5,0) {};
			\node[circle,draw=black,fill=blue,inner sep=0pt,minimum size=6pt] at (12,0) {};
			\node[circle,draw=black,fill=green,inner sep=0pt,minimum size=6pt] at (13.5,0) {};
		\end{tikzpicture}
	\end{center}
	\vspace{2mm}
	\begin{center}
		\begin{tikzpicture}[scale=0.6]
			\node[circle,draw=black,fill=red,inner sep=0pt,minimum size=6pt] at (4.5,0) {};
			\draw (0.75,0) .. controls (0.75,1) and (3,1) .. (3,0);
			\draw (0.75,0) .. controls (0.75,-1.5) and (4.5,-1.5) .. (4.5,0);
			\draw (4.5,0) .. controls (4.5,1.5) and (7.5,1.5) .. (7.5,0);
			\draw (6,0) .. controls (6,-2) and (11.25,-2) .. (11.25,0);
			\draw (11.25,0) .. controls (11.25,1) and (13.5,1) .. (13.5,0);
			\draw (9,0) .. controls (9,1) and (10.5,1) .. (10.5,0);
			\node[circle,draw=black,fill=blue,inner sep=0pt,minimum size=6pt] at (0,0) {};
			\node[circle,draw=black,fill=green,inner sep=0pt,minimum size=6pt] at (1.5,0) {};
			\node[circle,draw=black,fill=red,inner sep=0pt,minimum size=6pt] at (3,0) {};
			\node[circle,draw=black,fill=green,inner sep=0pt,minimum size=6pt] at (6,0) {};
			\node[circle,draw=black,fill=red,inner sep=0pt,minimum size=6pt] at (7.5,0) {};
			\node[circle,draw=black,fill=green,inner sep=0pt,minimum size=6pt] at (9,0) {};
			\node[circle,draw=black,fill=blue,inner sep=0pt,minimum size=6pt] at (10.5,0) {};
			\node[circle,draw=black,fill=blue,inner sep=0pt,minimum size=6pt] at (12,0) {};
			\node[circle,draw=black,fill=green,inner sep=0pt,minimum size=6pt] at (13.5,0) {};
		\end{tikzpicture}
	\end{center}
	\begin{center}
		\begin{tikzpicture}[scale=0.6]
			\node[circle,draw=black,fill=green,inner sep=0pt,minimum size=6pt] at (6,0) {};
			\node[circle,draw=black,fill=green,inner sep=0pt,minimum size=6pt] at (9,0) {};
			\draw (1.5,0) .. controls (1.5,2) and (6,2) .. (6,0);
			\draw[white, line width=4pt] (0,0) .. controls (0,1.5) and (3,1.5) .. (3,0);
			\draw (0,0) .. controls (0,1.5) and (3,1.5) .. (3,0);
			\draw (6,0) .. controls (6,-1.5) and (9,-1.5) .. (9,0);
			\draw (9,0) .. controls (9,1.5) and (12,1.5) .. (12,0);
			\draw[white, line width=4pt] (7.5,0) .. controls (7.5,1.5) and (10.5,1.5) .. (10.5,0);
			\draw (7.5,0) .. controls (7.5,1.5) and (10.5,1.5) .. (10.5,0);
			\node[circle,draw=black,fill=blue,inner sep=0pt,minimum size=6pt] at (0,0) {};
			\node[circle,draw=black,fill=green,inner sep=0pt,minimum size=6pt] at (1.5,0) {};
			\node[circle,draw=black,fill=red,inner sep=0pt,minimum size=6pt] at (3,0) {};
			\node[circle,draw=black,fill=red,inner sep=0pt,minimum size=6pt] at (4.5,0) {};
			\node[circle,draw=black,fill=red,inner sep=0pt,minimum size=6pt] at (7.5,0) {};
			\node[circle,draw=black,fill=blue,inner sep=0pt,minimum size=6pt] at (10.5,0) {};
			\node[circle,draw=black,fill=blue,inner sep=0pt,minimum size=6pt] at (12,0) {};
			\node[circle,draw=black,fill=green,inner sep=0pt,minimum size=6pt] at (13.5,0) {};
		\end{tikzpicture}
	\end{center}
}{TIKZ}
The following are \emph{non}-examples of simplices in the case $d=2$. In the first diagram, the two arcs are not disjoint
and in the second diagram, the arc is not admissible. However, the second diagram would represent an admissible arc in the
case $d=3$.
\ifthenelse{\boolean{drawtikz}}{
	\begin{center}
		\begin{tikzpicture}[scale=0.6]
			\draw (0.75,0) .. controls (0.75,1) and (3,1) .. (3,0);
			\draw (0.75,0) .. controls (0.75,-1.5) and (4.5,-1.5) .. (4.5,0);
			\draw (0,0) .. controls (0,0.5) and (1.5,0.5) .. (1.5,0);
			\node[circle,draw=black,fill=blue,inner sep=0pt,minimum size=6pt] at (0,0) {};
			\node[circle,draw=black,fill=green,inner sep=0pt,minimum size=6pt] at (1.5,0) {};
			\node[circle,draw=black,fill=red,inner sep=0pt,minimum size=6pt] at (3,0) {};
			\node[circle,draw=black,fill=red,inner sep=0pt,minimum size=6pt] at (4.5,0) {};
		\end{tikzpicture}\hspace{25mm}\begin{tikzpicture}[scale=0.6]
			\node[circle,draw=black,fill=green,inner sep=0pt,minimum size=6pt] at (1.5,0) {};
			\draw (3.75,0) .. controls (3.75,-1) and (1.5,-1) .. (1.5,0);
			\draw (0,0) .. controls (0,1.5) and (3.75,1.5) .. (3.75,0);
			\draw (1.5,0) .. controls (1.5,1) and (3,1) .. (3,0);
			\node[circle,draw=black,fill=blue,inner sep=0pt,minimum size=6pt] at (0,0) {};
			\node[circle,draw=black,fill=red,inner sep=0pt,minimum size=6pt] at (3,0) {};
			\node[circle,draw=black,fill=red,inner sep=0pt,minimum size=6pt] at (4.5,0) {};
		\end{tikzpicture}
	\end{center}
}{TIKZ}

\begin{defi}
	Let $C$ be a set of colors and $A$ be a set of archetypes. A word in the colors of $C$ is called
	reduced if it admits no archetypal arc on it. The set of archetypes $A$ is called tame if the length of all reduced
	words is bounded from above. The length of an archetype is the length of its color word. The set of archetypes $A$ is
	of finite type if the length of all archetypes is bounded from above.
\end{defi}

\begin{thm}\label{85504}
	Let $d\in\{1,2,3\}$. Let $C$ be a set of colors and $A$ be a set of archetypes. Assume that $A$ is tame and of 
	finite type. Let $m_r$ be the smallest natural number greater than the length of any reduced color word and $m_a$ 
	be the maximal length of archetypes in $A$. Define
	\[\nu_\kappa(l):=\left\lfloor\frac{l-m_r}{\kappa}\right\rfloor-1\]
	Let $X$ be a word in the colors of $C$ and denote by $lX$ the length of $X$. Then the complex $\mathcal{AC}_d(C,A;X)$ is 
	$\nu_{\kappa_d}(lX)$-connected where
	\begin{align*}
		\kappa_1 & := 2m_a+m_r-2 \\
		\kappa_2 & := 2m_a-1 \\
		\kappa_3 & := 2m_a-1
	\end{align*}
\end{thm}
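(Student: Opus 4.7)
The plan is to induct on $lX$, combined with the simplicial version of the discrete Morse method (the analogue of Lemma \ref{76169} familiar from Bestvina--Brady). The base case is $\nu_{\kappa_d}(lX) = -1$, where one needs only non-emptiness of $\mathcal{AC}_d(C,A;X)$: since $\nu_{\kappa_d}(lX) \geq -1$ forces $lX \geq m_r$, the tameness of $A$ supplies at least one archetypal arc on $X$.

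For the inductive step with $\nu_{\kappa_d}(lX) \geq 0$, I would introduce a Morse function on archetypal arcs -- for instance the rightmost node covered by an arc, with a suitable secondary tie-breaker -- and filter $\mathcal{AC}_d(C,A;X)$ by adding vertices in order of ascending Morse value. The starting subcomplex can be taken to consist of arcs supported on a short left prefix of $X$, whose high connectivity is already guaranteed by applying induction to that prefix word. The simplicial Morse lemma then reduces the induction to showing that, when each new archetypal arc $\mathfrak{a}$ is added, its descending link is at least $(\nu_{\kappa_d}(lX) - 1)$-connected.

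The geometric heart of the proof is a join decomposition of these descending links. In the symmetric and braided cases $d \in \{2,3\}$, arcs disjoint from $\mathfrak{a}$ can topologically route around the at most $m_a$ nodes of $\mathfrak{a}$, so the descending link is equivalent to an arc complex $\mathcal{AC}_d(C,A;X')$ on a word $X'$ of length $lX - k$ for some $k \leq m_a$. In the planar case $d=1$, disjoint arcs must lie strictly to one side of $\mathfrak{a}$, and the descending link splits as a join $\mathcal{AC}_1(C,A;X_\ell) * \mathcal{AC}_1(C,A;X_r)$ of arc complexes on the two shorter subwords cut out by $\mathfrak{a}$. Applying the induction hypothesis to these shorter words and using the standard join-connectivity formula $\operatorname{conn}(Y_1 * Y_2) = \operatorname{conn}(Y_1) + \operatorname{conn}(Y_2) + 2$ then delivers the required descending link bound; the exact values $\kappa_d$ stated in the theorem are chosen so that the resulting inequalities close up.

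The main obstacle will be the planar case. When $\mathfrak{a}$ splits $X$ into left and right pieces $X_\ell$ and $X_r$, one of them can have length below $m_r$ and therefore be reduced, forcing the corresponding join factor to be empty. The lost factor contributes nothing to the join's connectivity, so the budget for descending link loss is larger -- this is precisely what drives $\kappa_1 = 2m_a + m_r - 2$ to exceed $\kappa_2 = \kappa_3 = 2m_a - 1$ by $m_r - 1$. Arranging the Morse function (and, in the $d = 2$ case, carefully using minimal-position representatives from Lemma \ref{20331} to verify the join structure in the richer isotopy setting) so that the descending links have exactly the claimed join decomposition, and then grinding through the floor-function arithmetic, will be the delicate technical core of the argument.
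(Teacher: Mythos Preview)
There is a real gap: what you describe as the descending link of $\mathfrak{a}$ is in fact its \emph{full} link. With the Morse function ``rightmost node $j$'', the descending link of $\mathfrak{a}$ consists only of arcs disjoint from $\mathfrak{a}$ whose rightmost node lies strictly below $j$. In the planar case, if $\mathfrak{a}$ occupies the interval $[i,j]$, this is $\mathcal{AC}_1(C,A;X_\ell)$ alone --- the right factor $\mathcal{AC}_1(C,A;X_r)$ is absent, and when $i$ is small the descending link can be empty. The same issue arises for $d=2,3$: the complex on $X'$ of length $lX-k$ that you describe is the full link, not the descending one. More fundamentally, a pure build-up argument cannot close: any starting subcomplex you propose is itself an arc complex on a strictly shorter word, so induction gives it connectivity only $\nu_{\kappa_d}(lX')$ for some $lX'<lX$, and this falls one short of $\nu_{\kappa_d}(lX)$ whenever $lX-m_r$ is a multiple of $\kappa_d$.

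The paper's argument has a two-step structure that bypasses this. One fixes an archetypal arc $\mathfrak{b}$ with nodes $v_1<\cdots<v_t$ and sets $\mathcal{AC}_d^0$ to be the subcomplex of arcs missing every $v_i$. A Morse function recording \emph{which} $v_i$ an arc meets, read as a binary number, shows that the pair $(\mathcal{AC}_d,\mathcal{AC}_d^0)$ is $\nu_{\kappa_d}(lX)$-connected; the descending links here are arc complexes losing at most $l\mathfrak{a}+(\tau-1)\leq 2m_a-1$ nodes for $d=2,3$, which is exactly $\kappa_d$. The second step does \emph{not} appeal to the connectivity of $\mathcal{AC}_d^0$ at all: instead one shows that the inclusion $\mathcal{AC}_d^0\hookrightarrow\mathcal{AC}_d$ is zero on $\pi_m$, because any simplicial sphere in $\mathcal{AC}_d^0$ already lies in $st(\mathfrak{b})$ (for $d=1,3$) or can be pushed there crossing-by-crossing (for $d=2$). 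The long exact sequence of the pair then gives $\pi_m(\mathcal{AC}_d)=0$. For $d=2$ this pushing argument forces the induction to run over a larger class --- arc complexes in \emph{punctured} planes --- which you do not mention but which is essential for the descending-link identifications to go through.
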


For the proof in the case $d=2$ we have to pass to a slightly larger class of complexes: Instead of $\RR^2$ we consider links and
arcs in the punctured plane $S=\RR^2\setminus\{p_1,\ldots,p_l\}$ with finitely many punctures $p_i\in\RR^2$ disjoint from the nodes. 
Here, we define two links connecting the same set of nodes to be equivalent if they differ by an isotopy of $S\setminus N$ and
a link connecting a set of nodes $M$ to be admissible if there is an isotopy of $\RR^2\setminus M$ taking the link into the image 
of the first component embedding $\RR\rightarrow\RR^2$. Note that in the latter case, we require an isotopy of $\RR^2\setminus M$
and not of $S\setminus M$, i.e.~we allow the links to be pulled over punctures. We denote the corresponding complex of archetypal arc 
systems again by $\mathcal{AC}_d(C,A;X)$, suppressing the additional data of punctures since, as we will see, Theorem \ref{85504} is 
still valid for this larger class of complexes.

\subsubsection{Proof of the connectivity theorem}\label{48981}

The proof essentially consists of slightly modified ideas from \cite{b-f-m-w-z:tbt}*{Subsection 3.3}.

We induct over the length $lX$ of $X$. The induction start is $lX\geq m_r$. This implies that $X$ is not reduced and thus admits 
an archetypal arc on it. It follows that $\mathcal{AC}_d(C,A;X)$ is non-empty, i.e.~$(-1)$-connected. For the induction step,
assume $lX\geq m_r+\kappa_d$. We look at the cases $d=1,2,3$ separately, starting with the case $d=2$ since it is the hardest one.

\vspace{2mm}
$\vartriangleright$ {\it The two-dimensional case.} Choose a vertex of $\mathcal{AC}_2:=\mathcal{AC}_2(C,A;X)$ represented by an archetypal arc 
$\mathfrak{b}$. Let $v_1<\ldots<v_t$ be the nodes connected by $\mathfrak{b}$. Let $\mathcal{AC}^0_2$ be the full subcomplex of 
$\mathcal{AC}_2$ spanned by the archetypal arcs which do not meet the nodes $v_i$. 

We want to estimate the connectivity of the pair $(\mathcal{AC}_2,\mathcal{AC}^0_2)$ using the Morse method for simplicial 
complexes (see e.g.~Subsection \ref{00484}). Let $\mathfrak{a}$ be an archetypal arc. Define
\[s_i(\mathfrak{a}):=\begin{cases}1&\text{ if $\mathfrak{a}$ meets $v_i$}\\0&\text{ else}\end{cases}\]
for each $i=1,\ldots,t$. Now set
\[h(\mathfrak{a}):=\big(s_1(\mathfrak{a}),\ldots,s_t(\mathfrak{a})\big)\]
Note that the right side is a sequence of $t$ numbers in $\{0,1\}$. Interpret these sequences as binary numbers and order them 
accordingly. Then $h$ is a Morse function building up $\mathcal{AC}_2$ from $\mathcal{AC}^0_2$ since
archetypal arcs with $h$-value equal to $(0,\ldots,0)$ are exactly the archetypal arcs in $\mathcal{AC}^0_2$ and two
archetypal arcs with the same $h$-value different from $(0,\ldots,0)$ are not connected by an edge.

We want to inspect the descending links with respect to this Morse function $h$. Let $\mathfrak{a}$ be an archetypal arc with
Morse height greater than $(0,\ldots,0)$. Find the smallest $\tau\in\{1,\ldots,t\}$ such that $s_\tau(\mathfrak{a})=1$. It is not
hard to prove that $lk_\downa(\mathfrak{a})$ is the full subcomplex of $\mathcal{AC}_2$ spanned by archetypal arcs disjoint from 
$\mathfrak{a}$ and not meeting any $v_i$ with $i<\tau$. Let $X'$ be the color word which is obtained from $X$ by removing the
colors corresponding to nodes which are contained in $\mathfrak{a}$ and to the nodes $v_i$ with $i<\tau$. Then we see that
$lk_\downa(\mathfrak{a})$ is isomorphic to $\mathcal{AC}_2(C,A;X')$ with an additional puncture corresponding to $\mathfrak{a}$ 
and further additional punctures corresponding to the nodes $v_i$ with $i<\tau$. By induction, it follows that 
$lk_\downa(\mathfrak{a})$ is $\nu_{\kappa_2}(lX')$-connected. Denote by $l\mathfrak{a}$ the length of $\mathfrak{a}$,
i.e.~the number of nodes it meets. Then we can estimate
\begin{align*}
	lX' &= lX-l\mathfrak{a}-(\tau-1)\\
		&\geq lX-l\mathfrak{a}-t+1\\
		&\geq lX-m_a-t+1\\
		&\geq lX-2m_a+1
\end{align*}
Thus, $lk_\downa(\mathfrak{a})$ is $\nu_{\kappa_2}(lX-2m_a+1)$-connected. Consequently, by the Morse method, the connectivity of
the pair $(\mathcal{AC}_2,\mathcal{AC}^0_2)$ is
\[\nu_{\kappa_2}(lX-2m_a+1)+1=\nu_{\kappa_2}(lX)\]
because of $\kappa_2=2m_a-1$.

The second step of the proof consists of showing that the inclusion $\iota\co\mathcal{AC}_2^0\rightarrow\mathcal{AC}_2$ induces
the trivial map in $\pi_m$ for $m\leq\nu_{\kappa_2}(lX)$. It then follows from the long exact homotopy sequence of the pair 
$(\mathcal{AC}_2,\mathcal{AC}^0_2)$ that $\mathcal{AC}_2$ is $\nu_{\kappa_2}(lX)$-connected which completes the proof in the
case $d=2$.

Let $\varphi\co S^m\rightarrow\mathcal{AC}_2^0$ be a map with $m\leq\nu_{\kappa_2}(lX)$. We have to show that 
$\psi:={\varphi*\iota}\co S^m\rightarrow \mathcal{AC}_2$ is homotopic to a constant map. Think of $S^m$ as the boundary of an
$(m+1)$-simplex. By simplicial approximation \cite{spa:at}*{Theorem 3.4.8} we can subdivide $S^m$ and homotope $\varphi$ to
a simplicial map. So we will assume in the following that $\varphi$ is simplicial.
Next, we want to apply \cite{b-f-m-w-z:tbt}*{Lemma 3.9} in order to subdivide $S^m$ further and 
homotope $\psi$ to a simplexwise injective map. This means that whenever vertices $v\neq w$ in $S^m$ are joined by an edge, then
$\psi(v)\neq\psi(w)$. To apply the lemma, we have to show that the link of every $k$-simplex $\sigma$ in $\mathcal{AC}_2$
is $(m-2k-2)$-connected. So let $\mathfrak{a}_0,\ldots,\mathfrak{a}_k$ be pairwise disjoint archetypal arcs representing a $k$-simplex
$\sigma$. The link of this simplex is the full subcomplex spanned by the archetypal arcs which are disjoint from every 
$\mathfrak{a}_i$. Deleting every color corresponding to nodes which are contained in one of the $\mathfrak{a}_i$ from $X$, we obtain
a color word  $X'$ and it is easy to see that the link of $\sigma$ is isomorphic to $\mathcal{AC}_2(C,A;X')$ with one additional 
puncture for each $\mathfrak{a}_i$. By induction, we obtain that $lk(\sigma)$ is $\nu_{\kappa_2}(lX')$-connected. We have 
the estimate $lX'\geq lX-(k+1)m_a$ and thus
\begin{align*}
	\nu_{\kappa_2}(lX') &\geq \nu_{\kappa_2}\big(lX-(k+1)m_a\big)\\
		&= \left\lfloor\frac{lX-(k+1)m_a-m_r}{2m_a-1}\right\rfloor-1\\
		&= \left\lfloor\frac{lX-m_r}{2m_a-1}-\frac{(k+1)m_a}{2m_a-1}\right\rfloor-1\\
		&\geq \left\lfloor\frac{lX-m_r}{2m_a-1}-\frac{(2k+2)(2m_a-1)}{2m_a-1}\right\rfloor-1\\
		&= \nu_{\kappa_2}(lX)-(2k+2)\\
		&\geq m-2k-2
\end{align*}
So the hypothesis of the lemma is satisfied and we will assume in the following that $\psi$ is simplexwise injective.

We now want to show that $\psi$ can be homotoped so that the image is contained in the star of $\mathfrak{b}$. Since the star 
of a vertex is always contractible, this will finish the proof. We will homotope $\psi$ by moving single vertices of $S^m$ step 
by step, eventually landing in the star of $\mathfrak{b}$. Consider the vertices $\mathfrak{a}_1,\ldots,\mathfrak{a}_l$ of $\psi(S^m)$ which 
do net yet lie in the star of $\mathfrak{b}$, i.e.~which are not disjoint to $\mathfrak{b}$. Choose representing links $\alpha_i$ of
$\mathfrak{a}_i$ and $\beta$ of $\mathfrak{b}$ such that the system of links $(\beta,\alpha_1,\ldots,\alpha_l)$ is in minimal position
as in Lemma \ref{20331}. Note the little subtlety that archetypal arcs may have the same underlying arc but are different because
they have different labels. In this case, homotope the corresponding links a little bit so that they intersect only at nodes. Note also that
each $\alpha_i$ intersects $\beta$, but not at nodes since each $\mathfrak{a}_i$ comes from $\mathcal{AC}_2^0$. Last but not least,
we can assume that whenever $p$ is an intersection point of $\beta$ with one of the $\alpha_i$, then there is at most one $\alpha_i$
meeting the point $p$.

Now look at the intersection point $p$ of one of the $\alpha_i$ with $\beta$ which is closest to $v_1$ along $\beta$. Write $\alpha$
for the link which intersects $\beta$ at this point and $\mathfrak{a}$ for the corresponding arc.
\ifthenelse{\boolean{drawtikz}}{
	\begin{center}
		\begin{tikzpicture}[scale=0.75]
			\node[circle,fill=gray,inner sep=0pt,minimum size=6pt] at (0,0) {};
			\node[circle,fill=gray,inner sep=0pt,minimum size=6pt] at (4.5,0) {};
			\node[circle,fill=gray,inner sep=0pt,minimum size=6pt] at (6,0) {};
			\node[circle,fill=gray,inner sep=0pt,minimum size=6pt] at (7.5,0) {};
			\node[circle,fill=gray,inner sep=0pt,minimum size=6pt] at (12,0) {};
			\node[circle,fill=gray,inner sep=0pt,minimum size=6pt] at (13.5,0) {};
			\draw (3,0) .. controls (3.5,-0.5) and (4,-0.5) .. (4.5,0);
			\draw (4.5,0) .. controls (5,0.5) and (5.5,1) .. (6,1);
			\draw (7.5,0) .. controls (8,-0.5) and (8.5,-1) .. (9,-1);
			\draw (7.5,0) .. controls (7,0.5) and (6.5,1) .. (6,1);
			\draw (9,-1) .. controls (9.5,-1) and (10,-0.5) .. (10.5,0);
			\draw[blue] (1.5,0) .. controls (2,-0.5) and (2.7,-1.5) .. (3.75,-1.5);
			\draw[blue] (3.75,-1.5) .. controls (4.8,-1.5) and (5.5,-0.5) .. (6,0);
			\draw[blue] (6,0) .. controls (6.5,0.5) and (7,1) .. (7.5,1);
			\draw[blue] (7.5,1) .. controls (8,1) and (8.5,0.5) .. (9,0);
			\node[circle,fill=gray,inner sep=0pt,minimum size=6pt] at (1.5,0) {};
			\node[circle,fill=gray,inner sep=0pt,minimum size=6pt] at (9,0) {};
			\node[circle,fill=gray,inner sep=0pt,minimum size=6pt] at (10.5,0) {};
			\node[circle,fill=gray,inner sep=0pt,minimum size=6pt] at (3,0) {};
			\node[blue] at (8.5,1) {$\alpha$};
			\node at (10,-1) {$\beta$};
			\node[gray] at (6,-0.4) {$w$};
			\node[gray] at (9,-0.4) {$w'$};
			\node[gray] at (4.5,-0.4) {$v_j$};
			\node[gray] at (7.3,-0.4) {$v_{j+1}$};
		\end{tikzpicture}
	\end{center}
}{TIKZ}
Choose a vertex $x$ in $S^m$ which maps to $\mathfrak{a}$ via $\psi$. Define another link $\alpha'$ as follows: Let $j$ be such 
that the intersection point $p$ lies on the segment of $\beta$ connecting $v_j$ with $v_{j+1}$. Denote by $w<w'$ the nodes such that 
$p$ lies on the segment of $\alpha$ connecting $w$ with $w'$. Now push this segment of $\alpha$ along $\beta$ over the node $v_j$
such that $\alpha$ and $\alpha'$ bound a disk whose interior does not contain any puncture or node other than $v_j$.
\ifthenelse{\boolean{drawtikz}}{
	\begin{center}
		\begin{tikzpicture}[scale=0.75]
			\node[circle,fill=gray,inner sep=0pt,minimum size=6pt] at (0,0) {};
			\node[circle,fill=gray,inner sep=0pt,minimum size=6pt] at (4.5,0) {};
			\node[circle,fill=gray,inner sep=0pt,minimum size=6pt] at (6,0) {};
			\node[circle,fill=gray,inner sep=0pt,minimum size=6pt] at (7.5,0) {};
			\node[circle,fill=gray,inner sep=0pt,minimum size=6pt] at (12,0) {};
			\node[circle,fill=gray,inner sep=0pt,minimum size=6pt] at (13.5,0) {};
			\draw (3,0) .. controls (3.5,-0.5) and (4,-0.5) .. (4.5,0);
			\draw (4.5,0) .. controls (5,0.5) and (5.5,1) .. (6,1);
			\draw (7.5,0) .. controls (8,-0.5) and (8.5,-1) .. (9,-1);
			\draw (7.5,0) .. controls (7,0.5) and (6.5,1) .. (6,1);
			\draw (9,-1) .. controls (9.5,-1) and (10,-0.5) .. (10.5,0);
			\draw[blue] (1.5,0) .. controls (2,-0.5) and (2.7,-1.5) .. (3.75,-1.5);
			\draw[blue] (3.75,-1.5) .. controls (4.8,-1.5) and (5.5,-0.5) .. (6,0);
			\draw[blue] (6,0) .. controls (6.5,0.5) and (6,1) .. (5,0);
			\draw[blue] (4.5,0.5) .. controls (6,2) and (7.5,1.5) .. (9,0);
			\node[circle,fill=gray,inner sep=0pt,minimum size=6pt] at (1.5,0) {};
			\node[circle,fill=gray,inner sep=0pt,minimum size=6pt] at (9,0) {};
			\node[circle,fill=gray,inner sep=0pt,minimum size=6pt] at (10.5,0) {};
			\node[circle,fill=gray,inner sep=0pt,minimum size=6pt] at (3,0) {};
			\node[blue] at (8.5,1) {$\alpha'$};
			\node at (10,-1) {$\beta$};
			\draw[blue] (4.5,0.5) .. controls (3.5,-0.5) and (4,-1) .. (5,0);
		\end{tikzpicture}
	\end{center}
}{TIKZ}
Note that $\alpha'$ is still admissible. Denote by $\mathfrak{a}'$ the archetypal arc with link $\alpha'$ and the same label as $\mathfrak{a}$.
Our goal is now to homotope $\psi$ to a simplicial map $\psi'$ such that $\psi'(x)=\mathfrak{a}'$ and $\psi'(y)=\psi(y)$ for all 
other vertices $y$. Iterating this procedure often enough, we arrive at a map $\psi^*$ homotopic to $\psi$ such that 
$\psi^*(y)\in st(\mathfrak{b})$ for each vertex $y$. For example, the next step would be to move $x$ to the vertex $\alpha''$:
\ifthenelse{\boolean{drawtikz}}{
	\begin{center}
		\begin{tikzpicture}[scale=0.75]
			\node[circle,fill=gray,inner sep=0pt,minimum size=6pt] at (0,0) {};
			\node[circle,fill=gray,inner sep=0pt,minimum size=6pt] at (4.5,0) {};
			\node[circle,fill=gray,inner sep=0pt,minimum size=6pt] at (6,0) {};
			\node[circle,fill=gray,inner sep=0pt,minimum size=6pt] at (7.5,0) {};
			\node[circle,fill=gray,inner sep=0pt,minimum size=6pt] at (12,0) {};
			\node[circle,fill=gray,inner sep=0pt,minimum size=6pt] at (13.5,0) {};
			\draw (3,0) .. controls (3.5,-0.5) and (4,-0.5) .. (4.5,0);
			\draw (4.5,0) .. controls (5,0.5) and (5.5,1) .. (6,1);
			\draw (7.5,0) .. controls (8,-0.5) and (8.5,-1) .. (9,-1);
			\draw (7.5,0) .. controls (7,0.5) and (6.5,1) .. (6,1);
			\draw (9,-1) .. controls (9.5,-1) and (10,-0.5) .. (10.5,0);
			\draw[blue] (1.5,0) .. controls (2,-0.5) and (2.7,-1.5) .. (3.75,-1.5);
			\draw[blue] (3.75,-1.5) .. controls (4.8,-1.5) and (5.5,-0.5) .. (6,0);
			\draw[blue] (6,0) .. controls (6.5,0.5) and (6,1) .. (5,0);
			\draw[blue] (4.5,0.5) .. controls (6,2) and (7.5,1.5) .. (9,0);
			\node[circle,fill=gray,inner sep=0pt,minimum size=6pt] at (1.5,0) {};
			\node[circle,fill=gray,inner sep=0pt,minimum size=6pt] at (9,0) {};
			\node[circle,fill=gray,inner sep=0pt,minimum size=6pt] at (10.5,0) {};
			\node[circle,fill=gray,inner sep=0pt,minimum size=6pt] at (3,0) {};
			\node[blue] at (8.5,1) {$\alpha''$};
			\node at (10,-1) {$\beta$};
			\draw[blue] (2.5,0) .. controls (3.5,-1) and (4,-1) .. (5,0);
			\draw[blue] (4.5,0.5) .. controls (4,0) and (3.5,0) .. (3,0.5);
			\draw[blue] (3,0.5) .. controls (2.5,1) and (2,0.5) .. (2.5,0);
		\end{tikzpicture}
	\end{center}
}{TIKZ}

By simplexwise injectivity, no vertex of $lk(x)$ is mapped to $\mathfrak{a}$. Furthermore, a vertex of $\mathcal{AC}_2$ in the 
image of $\psi$ disjoint to $\mathfrak{a}$ must also be disjoint to $\mathfrak{a}'$ because we have chosen $\alpha$ such that 
no other $\alpha_i$ intersects $\beta$ between $p$ and $v_1$. From these observations, it follows that
\[\psi\big(lk(x)\big)\subset lk(\mathfrak{a})\cap lk(\mathfrak{a}')\]
This inclusion enables us to define a simplicial map $\psi'\co S^m\rightarrow \mathcal{AC}_2$ with $\psi'(x)=\mathfrak{a}'$ and 
$\psi'(y)=\psi(y)$ for all other vertices $y$. Let $X'$ be the color word obtained from $X$ by removing all colors corresponding 
to nodes which are contained in $\mathfrak{a}$ or to the node $v_j$. Then $lk(\mathfrak{a})\cap lk(\mathfrak{a}')$ is isomorphic to 
$\mathcal{AC}_2(C,A;X')$ with an additional puncture corresponding to the disk bounded by $\alpha\cup\alpha'$. Thus, by induction, 
it is $\nu_{\kappa_2}(lX')$-connected. We have the estimate $lX'\geq lX-m_a-1$ and therefore
\begin{align*}
	\nu_{\kappa_2}(lX') &\geq \nu_{\kappa_2}\big(lX-m_a-1\big)\\
		&= \left\lfloor\frac{lX-m_a-1-m_r}{2m_a-1}\right\rfloor-1\\
		&= \left\lfloor\frac{lX-m_r}{2m_a-1}-\frac{m_a+1}{2m_a-1}\right\rfloor-1\\
		&\geq \left\lfloor\frac{lX-m_r}{2m_a-1}-\frac{2m_a-1}{2m_a-1}\right\rfloor-1\\
		&= \nu_{\kappa_2}(lX)-1\\
		&\geq m-1
\end{align*}
Since $lk(x)$ is an $(m-1)$-sphere, this connectivity bound for $lk(\mathfrak{a})\cap lk(\mathfrak{a}')$ implies that the map
$\psi|_{lk(x)}\co lk(x)\rightarrow lk(\mathfrak{a})\cap lk(\mathfrak{a}')$ can be extended to the star $st(x)$ of $x$ which is
an $m$-disk. So we obtain a map $\vartheta\co st(x)\rightarrow lk(\mathfrak{a})\cap lk(\mathfrak{a}')$ coinciding with $\psi$ on
the boundary $lk(x)$. We can now homotope $\psi|_{st(x)}$ rel $lk(x)$ to $\vartheta$ within $st(\mathfrak{a})$ and further to 
$\psi'$ within $st(\mathfrak{a}')$. This finishes the proof of the theorem in the case $d=2$.

\vspace{2mm}
$\vartriangleright$ {\it The three-dimensional case.} Choose an archetypal arc $\mathfrak{b}$ connecting the nodes $v_1<\ldots<v_t$ and let 
$\mathcal{AC}_3^0$ be the full subcomplex of $\mathcal{AC}_3:=\mathcal{AC}_3(C,A;X)$ spanned by the archetypal arcs which do not meet the nodes $v_i$.

With a very similar Morse argument as in the case $d=2$ above, we can show that the pair $(\mathcal{AC}_3,\mathcal{AC}_3^0)$
is $\nu_{\kappa_3}(lX)$-connected.

Again, the second step consists of showing that the inclusion $\iota\co \mathcal{AC}_3^0\rightarrow \mathcal{AC}_3$ induces
the trivial map in $\pi_m$ for $m\leq\nu_{\kappa_3}(lX)$. This is much easier in the case $d=3$: Let $\varphi\co S^m\rightarrow
\mathcal{AC}_3^0$ be a map and assume without loss of generality that it is simplicial. But then the map
$\psi:=\varphi*\iota\co S^m\rightarrow\mathcal{AC}_3$ already lies in the star $st(\mathfrak{b})$ of $\mathfrak{b}$ since
an archetypal arc not meeting any of the nodes $v_i$ is already disjoint to $\mathfrak{b}$. Consequently, $\psi$ can be homotoped
to a constant map and this concludes the proof in the case $d=3$.

\vspace{2mm}
$\vartriangleright$ {\it The one-dimensional case.} Choose an archetypal arc $\mathfrak{b}$ connecting the nodes $v_1<\ldots<v_t$ such that the 
color word formed by the first $r$ nodes $w<v_1$ is reduced. Let $\mathcal{AC}^0_1$ be the full subcomplex of $\mathcal{AC}_1:=\mathcal{AC}_1(C,A;X)$ 
spanned by the archetypal arcs which do not meet the nodes $v_i$. This condition is equivalent to not meeting any nodes 
$w\leq v_t$. These are simply the first $s$ nodes $w_1<\ldots<w_s$ where $s=r+t$. In other words, $w_i$ is the point $i\in\RR$ 
colored with the color $c_i$ from $X$.

For each archetypal arc $\mathfrak{a}$ not contained in $\mathcal{AC}_1^0$ there exists a unique $1\leq q\leq s$ such that 
$\mathfrak{a}$ meets $w_q$ but not $w_1,\ldots,w_{q-1}$. In this case, define $h(\mathfrak{a})=-q$. Then $h$ is a Morse function 
building up $\mathcal{AC}_1$ from $\mathcal{AC}^0_1$. So let $\mathfrak{a}$ be such an archetypal arc. Let $X'$ be the
color word obtained from $X$ by removing all colors corresponding to the nodes contained in $\mathfrak{a}$ and to the nodes
$w_1,\ldots,w_{q-1}$. Then the descending link $lk_\downa(\mathfrak{a})$ is isomorphic to $\mathcal{AC}_1(C,A;X')$
and by induction, it is $\nu_{\kappa_1}(lX')$-connected. We can estimate
\begin{align*}
	lX' &= lX-l\mathfrak{a}-(q-1)\\
		&= lX-l\mathfrak{a}-q+1\\
		&\geq lX-m_a-q+1\\
		&\geq lX-m_a-(m_a+m_r-1)+1\\
		&= lX-2m_a-m_r+2
\end{align*}
Thus, $lk_\downa(\mathfrak{a})$ is $\nu_{\kappa_1}(lX-2m_a-m_r+2)$-connected. Consequently, by the Morse method, the connectivity
of the pair $(\mathcal{AC}_1,\mathcal{AC}_1^0)$ is
\[\nu_{\kappa_1}(lX-2m_a-m_r+2)+1=\nu_{\kappa_1}(lX)\]
because of $\kappa_1=2m_a+m_r-2$.

Just as in the case $d=3$, one can show that the inclusion $\iota\co\mathcal{AC}^0_1\rightarrow\mathcal{AC}_1$ induces
the trivial map in $\pi_m$ for $m\leq\nu_{\kappa_1}(lX)$. This proves the theorem in the case $d=1$.

\begin{rem}
	The method used in the proof of \cite{far:fac}*{Proposition 4.11} yields the better connectivity $\nu_\kappa(lX)$
	with $\kappa=m_a+m_r-1$ for the case $d=1$.
\end{rem}

\subsection{A contractible complex}

From now on, let $\calO$ be an operad as in Theorem \ref{41762}. Furthermore, let $X$ be an object in $\calS:=\calS(\calO)$. By abuse of notation, 
the connected component of $\calS$ containing the object $X$ will again be denoted by $\calS$. Furthermore, we abbreviate $\Gamma:=\pi_1(\calO,X)$.

As already noted above, the strategy to prove Theorem \ref{41762} is to apply Brown's criterion \ref{46200} to a suitable contractible complex 
on which the group in question acts. Consider the universal covering 
category $\calU:=\calU_X(\calS)$ of $\calS$ based at $X$. We claim:
\[\calU\text{ is a generalized poset and contractible.}\]
This follows from Proposition \ref{20889}. The first claim together with the remarks after Definition \ref{55528} implies that we can form the quotient category
$\calU/\calG$ where $\calG$ is the subgroupoid of $\calU$ consisting of the transformations in $\calS$ (lifted to $\calU$) and that $\calU/\calG$ is
a poset, the underlying poset of $\calU$. Recall that $\Gamma$ acts on $\calU$ which is encoded in a functor $\Gamma\rightarrow\mathtt{CAT}$ sending the unique 
object of $\Gamma$ to $\calU$. One can easily see that this functor induces a functor $\Gamma\rightarrow\mathtt{CAT}$ sending the unique object to
$\calU/\calG$. In other words, $\Gamma$ also acts on $\calU/\calG$. More concretely, an arrow $f\co X\rightarrow X$ in $\Gamma$ acts on an object 
$[g\co X\rightarrow Y]$ of $\calU/\calG$ from the right via $[g]\cdot f:=[f^{-1}g]$. This will be the action to which we want to apply Brown's criterion.
Since $\calU/\calG$ is homotopy equivalent to $\calU$, the second claim implies that also $\calU/\calG$ is contractible. So the first condition in Brown's
criterion is satisfied.

\subsection{Isotropy groups}

We continue to verify the conditions in Brown's criterion for the action of $\Gamma$ on $\calU/\calG$. In this
subsection, we show that cell stabilizers are of type $F_\infty$. First, we note that $\calU/\calG$ is indeed
a $\Gamma$-CW-complex. This follows from the following general remark.

\begin{rem}
	Let $G$ be a group acting on a category $\calC$. An element $g\in G$ fixing a cell setwise already fixes its vertices
	and so fixes the cell pointwise. Consequently, a category with an action of a discrete group $G$ is a $G$-CW-complex.
	If $\calC$ is a generalized poset, a cell stabilizer is equal to the intersection of the vertex stabilizers of that cell.
\end{rem}

In the following, we abbreviate $\calT:=\calT(\calO)$ and $\calI:=\calI(\calO)$.

\begin{lem}\label{32724}
	The groupoid $\calT$ formed by the transformations in $\calS$ is of type $F_\infty$.
\end{lem}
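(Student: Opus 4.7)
The plan is to identify each automorphism group of $\calT$ explicitly as a group extension and then invoke standard closure properties of the class of groups of type $F_\infty$. Fix an object $W=(c_1,\ldots,c_n)$ of $\calT$. The first step is to verify that a transformation $W\to W$ is uniquely represented by a pair $(\sigma,X)$ with $\sigma$ a $C$-colored braid (respectively permutation, respectively nothing in the planar case) and $X=(\gamma_1,\ldots,\gamma_n)$ a sequence of degree $1$ operations: indeed, the equivalence relation $(\beta*\sigma,X)\sim(\beta,\sigma\cdot X)$ on representatives involves a $\sigma$ which is a tensor product of $1$-strand braids, hence the identity, so the relation is vacuous once we restrict to transformations.

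Sending a transformation to its underlying braid/permutation then gives a group homomorphism $p\co\op{Aut}_\calT(W)\to B_n$ (respectively $S_n$, respectively the trivial group), and unpacking the composition formula of $\calS(\calO)$ shows that
\[
1\longrightarrow\prod_{i=1}^n\op{Aut}_{\calI}(c_i)\longrightarrow\op{Aut}_\calT(W)\xrightarrow{\;p\;}Q_W\longrightarrow 1
\]
is a short exact sequence, where $Q_W$ is the subgroup consisting of those braids/permutations whose underlying permutation preserves the partition of $\{1,\ldots,n\}$ obtained by grouping the colors $c_i$ according to their connected component in $\calI$. Surjectivity of $p$ is clear because any such permutation admits filler isomorphisms $\gamma_i\in\op{Hom}_{\calI}(c_{\pi^{-1}(i)},c_i)$, and the kernel description follows because, when $\sigma=\op{id}$, each $\gamma_i$ must lie in $\op{Aut}_{\calI}(c_i)$.

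The remainder is automatic from closure properties. The image of $Q_W$ inside $S_n$ is the direct product of the symmetric groups on the blocks of the partition, which has finite index in $S_n$; hence $Q_W$ is of finite index in $B_n$ or $S_n$ and is therefore of type $F_\infty$, using that $B_n$ and $S_n$ are themselves of type $F_\infty$. The kernel $\prod_i\op{Aut}_{\calI}(c_i)$ is a finite product of groups of type $F_\infty$, since $\calI$ is of type $F_\infty^+$ by hypothesis. Finally, an extension of an $F_\infty$ group by an $F_\infty$ group is again of type $F_\infty$, so $\op{Aut}_\calT(W)$ is of type $F_\infty$; as $W$ was arbitrary, this is the lemma. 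The only real obstacle I anticipate is the bookkeeping in the first two paragraphs: verifying that the equivalence relation trivialises on transformations and that the composition rule in $\calS(\calO)$ restricts as claimed, giving the extension with the stated kernel and image. Once that is in place the $F_\infty$ conclusion is immediate.
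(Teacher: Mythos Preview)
Your proof is correct and follows essentially the same strategy as the paper: identify $\op{Aut}_\calT(W)$ in terms of a braid/symmetric group piece and a product of $\op{Aut}_\calI(c_i)$'s, then appeal to closure of type $F_\infty$ under finite products, finite index, and extensions. The only substantive difference is a matter of packaging: the paper first replaces $W$ by an isomorphic object whose letters are chosen representatives (one per component of $\calI$), which makes the extension visibly split and yields the semidirect product $B'_k\ltimes\prod_i\op{Aut}_\calI(c_i)$; you instead work with a general $W$ and use that extensions of $F_\infty$ groups by $F_\infty$ groups are $F_\infty$, which is the same closure principle in slightly greater generality. Both routes rely on the same facts ($B_n$ and $S_n$ are $F_\infty$, finite index preserves $F_\infty$) and arrive at the same conclusion with comparable effort.
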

\begin{proof}
	By assumption, the groupoid $\calI$ formed by the degree $1$ operations is of type $F_\infty$.
	The groupoid $\calT$ is $\mathfrak{Mon}(\calI)$ in the planar case, $\mathfrak{Sym}(\calI)$ in 
	the symmetric case and $\mathfrak{Braid}(\calI)$ in the braided case (see Subsection \ref{50967} for
	the definitions of these categories).
	
	Choose a color in each component of $\calI$. Let $Y$ be an object in $\calT$. We have to show that $\op{Aut}_\calT(Y)$ is of
	type $F_\infty$. We can assume without loss of generality that
	$Y$ decomposes as a tensor product of chosen colors: $Y=c_1\otimes\ldots\otimes c_k$.
	In the planar case we have
	\[\op{Aut}_{\mathfrak{Mon}(\calI)}(Y)=\op{Aut}_\calI(c_1)\times\ldots\times\op{Aut}_\calI(c_k)\]
	and the claim follows because the $\op{Aut}_\calI(c_i)$ are of type $F_\infty$. For the symmetric and braided
	case, first assume that all the colors $c_i$ are equal to one chosen color $c$. In the symmetric case, we then have
	\[\op{Aut}_{\mathfrak{Sym}(\calI)}(Y)=S_k\ltimes\op{Aut}_\calI(c)^k\]
	where $S_k$, the symmetric group on $k$ strands, acts by permutation of the factors. More precisely, we have the
	group homomorphism
	\[\varphi\co S_k\rightarrow\op{Aut}(G^k)\hspace{8mm}\sigma\mapsto\big[(g_1,\ldots,g_k)\mapsto(g_{1\into\sigma^{-1}},\ldots,
		g_{k\into\sigma^{-1}})\big]\]
	which gives a right action of $S_k$ on $G^k$ by the definition $g\cdot\sigma=g\into(\sigma\into\varphi)$. The multiplication 
	in the semidirect product $S_k\ltimes G^k$ is then given by
	\[(\sigma,g)*(\sigma',g'):=\big(\sigma*\sigma',(g\cdot\sigma')*g'\big)\]
	Since $S_k$ is a finite group, it is also of type $F_\infty$. Since semidirect products of type
	$F_\infty$ groups are of type $F_\infty$ \cite{geo:tmi}*{Exercise 1 on page 176 and Proposition 7.2.2}, it
	follows that $\op{Aut}_{\mathfrak{Sym}(\calI)}(Y)$ is of type $F_\infty$. In the braided case we have
	\[\op{Aut}_{\mathfrak{Braid}(\calI)}(Y)=B_k\ltimes\op{Aut}_\calI(c)^k\]
	where $B_k$, the braid group on $k$ strands, acts via permutation of the factors through the projection
	$B_k\rightarrow S_k$. The braid groups $B_k$ are of type $F_\infty$ \cite{squ:tha}*{Theorem A}. As above, it
	follows that $\op{Aut}_{\mathfrak{Braid}(\calI)}(Y)$ is of type $F_\infty$.
	
	Remains to handle the case where not all the colors $c_i$ lie in the same component of $\calI$. Denote by $B'_k$ the
	finite index subgroup of $B_k$ consisting of the elements $\sigma$ with the property $c_{i\into\sigma}=c_i$. Since different
	$c_i$ are not connected by an isomorphism in $\calI$, we now have
	\[\op{Aut}_{\mathfrak{Braid}(\calI)}(Y)=B'_k\ltimes\big(\op{Aut}_\calI(c_1)\times..\times\op{Aut}_\calI(c_k)\big)\]
	where $B'_k$ still acts by permuting the factors. This action is well-defined due to the definition of $B'_k$. 
	Recall that a group is of type $F_\infty$ if and only if a finite index subgroup is of type $F_\infty$ 
	\cite{geo:tmi}*{Corollary 7.2.4}. It follows that $B'_k$ and thus $\op{Aut}_{\mathfrak{Braid}(\calI)}(Y)$ is of type 
	$F_\infty$. The symmetric case can be treated similarly.
\end{proof}

\begin{lem}\label{06276}
	Let $\calP$ be an object in $\calU/\calG$. Then the stabilizer subgroup $\op{Stab}_\Gamma(\calP)$ is of type 
	$F_\infty$.
\end{lem}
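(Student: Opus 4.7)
The plan is to identify the stabilizer with an automorphism group in the transformation groupoid $\calT$ and then invoke Lemma \ref{32724}. First I would choose a representative $(Y,\gamma)$ of $\calP$, where $Y$ is an object in $\calS$ and $\gamma\co X\to Y$ is an arrow in $\pi_1(\calS)$. Recall from Subsection \ref{78615} that an element $f\in\Gamma$ acts on $\calU$ by precomposition with $f^{-1}$, sending $(Y,\gamma)$ to $(Y,f^{-1}\gamma)$. By the construction of $\calU/\calG$, these two objects determine the same class in $\calU/\calG$ if and only if there exists a transformation $\tau\in\op{Aut}_\calT(Y)$ such that
\[
f^{-1}\gamma = \gamma\cdot\varphi(\tau)\quad\text{in }\pi_1(\calS),
\]
where $\varphi\co\calS\to\pi_1(\calS)$ is the canonical functor. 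Equivalently, $f = \gamma\cdot\varphi(\tau)^{-1}\cdot\gamma^{-1}$.

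Next I would use this to define a map
\[
\Phi\co\op{Aut}_\calT(Y)\longrightarrow\op{Stab}_\Gamma(\calP),\qquad \tau\longmapsto \gamma\cdot\varphi(\tau)^{-1}\cdot\gamma^{-1}.
\]
It is clearly a group homomorphism, and the discussion above shows that it is surjective onto the stabilizer. For injectivity, suppose $\Phi(\tau_1)=\Phi(\tau_2)$. Cancelling $\gamma$ and $\gamma^{-1}$ in $\pi_1(\calS)$ yields $\varphi(\tau_1)=\varphi(\tau_2)$. Since $\calO$ satisfies the cancellative calculus of fractions, Propositions \ref{16866} and \ref{20889} imply that $\varphi$ is faithful, so $\tau_1=\tau_2$.

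Therefore $\op{Stab}_\Gamma(\calP)\cong\op{Aut}_\calT(Y)$, and Lemma \ref{32724} shows that the latter is of type $F_\infty$. No step looks like a genuine obstacle; the only care required is to keep track of the side on which $\Gamma$ acts and to invoke faithfulness of $\varphi$ at the right moment.
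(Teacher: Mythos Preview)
Your argument is essentially the same as the paper's: both identify $\op{Stab}_\Gamma(\calP)$ with $\op{Aut}_\calT(Y)$ via conjugation by a chosen representative and then invoke Lemma~\ref{32724}. The paper writes the isomorphism in the other direction, $\gamma\mapsto p^{-1}\gamma p$, and leaves the use of faithfulness of $\varphi$ implicit (it is needed to know that the transformation $t$ with $\varphi(t^{-1})=p^{-1}\gamma p$ is unique), whereas you make this explicit via Proposition~\ref{20889}.

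One small slip: as written, your $\Phi(\tau)=\gamma\cdot\varphi(\tau)^{-1}\cdot\gamma^{-1}$ is an \emph{anti}-homomorphism in the paper's composition convention, since $\Phi(\tau_1*\tau_2)=\gamma\cdot\varphi(\tau_2)^{-1}\varphi(\tau_1)^{-1}\cdot\gamma^{-1}=\Phi(\tau_2)*\Phi(\tau_1)$. This does not affect the conclusion (an anti-isomorphism still gives isomorphic groups), but if you want a genuine homomorphism either drop the inverse and set $\Phi(\tau)=\gamma\cdot\varphi(\tau)\cdot\gamma^{-1}$, or phrase the map from $\op{Stab}_\Gamma(\calP)$ to $\op{Aut}_\calT(Y)$ as the paper does.
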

\begin{proof}
	Fix an arrow $p\co X\rightarrow Y$ in $\pi_1(\calS)$ which represents the object $\calP$ in $\calU/\calG$,
	i.e.~$[p]=\calP$. Let $\gamma\in\Gamma$ fix the point $\calP$. This means $[\gamma^{-1}p]=[p]\cdot\gamma=
	[p]$. It follows that there is some transformation $t\co Y\rightarrow Y$ such that $\gamma^{-1}p=pt$. This is equivalent to
	$p^{-1}\gamma p=t^{-1}$ which implies that $p^{-1}\gamma p$ is an element in $\op{Aut}_\calT(Y)$. Conversely,
	for $\tau$ a transformation in $\op{Aut}_\calT(Y)$, the element $p\tau p^{-1}$ is contained in $\op{Stab}_\Gamma(\calP)$.
	Thus, the map
	\[\op{Stab}_\Gamma(\calP)\rightarrow\op{Aut}_\calT(Y)\hspace{8mm}\gamma\mapsto p^{-1}\gamma p\]
	is an isomorphism with inverse given by $\tau\mapsto p\tau p^{-1}$. Since $\op{Aut}_\calT(Y)$ is of type
	$F_\infty$ by the previous lemma, the claim follows. Note that this isomorphism depends on the choice of $p$. However,
	two such choices differ by a transformation $\tau$ and the two corresponding isomorphisms differ by conjugation
	with $\tau$.
\end{proof}

We say that two operations $\theta_1$ and $\theta_2$ are \emph{two-sided} transformation equivalent if there are transformations
$\alpha,\gamma$ such that $\theta_2=\alpha*\theta_1*\gamma$.

\begin{prop}
	The stabilizer subgroups of cells in $\calU/\calG$ are of type $F_\infty$.
\end{prop}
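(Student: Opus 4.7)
The plan is to reduce to showing that a specific subgroup of $\op{Aut}_\calT(Z_0)$ is of type $F_\infty$, then invoke the structural description from Lemma~\ref{32724}.

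First I would use the remark opening this subsection: since $\calU/\calG$ is a (generalized) poset, a cell stabilizer equals the intersection of its vertex stabilizers. So I let the $k$-cell correspond to a chain $\calP_0 < \calP_1 < \ldots < \calP_k$ in $\calU/\calG$, fix a representative $\alpha_0 \colon X \to Z_0$ of $\calP_0$ in $\pi_1(\calS)$, and choose arrows $q_i \colon Z_{i-1} \to Z_i$ in $\calS$ such that $\alpha_i := \alpha_0 * q_1 * \ldots * q_i$ represents $\calP_i$. Running the argument of Lemma~\ref{06276} vertex by vertex and using left cancellativity of $\calS$ (ensuring that each intermediate $\tau_i$ is uniquely determined), I identify the cell stabilizer, via $\gamma \mapsto \alpha_0^{-1}\gamma\alpha_0$, with the subgroup
\[
H := \bigl\{\tau \in \op{Aut}_\calT(Z_0) \bigm| \forall i \leq k,\ \exists\, \tau_i \in \op{Aut}_\calT(Z_i),\ \tau * q_{0i} = q_{0i} * \tau_i \bigr\},
\]
where $q_{0i} = q_1 * \ldots * q_i$.

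Next I would observe that the class of groups $G$ such that every subgroup of $G$ is of type $F_\infty$ (i.e., $G$ is $F_\infty^+$) is closed under finite direct products and under extensions by finite groups: for $L \leq G_1 \times G_2$ the extension $1 \to L \cap G_2 \to L \to \mathrm{pr}_1(L) \to 1$ combined with the extension-closure of $F_\infty$ (cf.\ \cite{geo:tmi}*{Proposition 7.2.2}) handles products, and intersecting $L$ with the $F_\infty^+$ kernel of a finite quotient handles the finite extension case. Since by hypothesis each $\op{Aut}_\calI(c)$ is $F_\infty^+$, in the planar and symmetric cases the description in Lemma~\ref{32724} makes $\op{Aut}_\calT(Z_0)$ itself $F_\infty^+$, so $H$ is automatically of type $F_\infty$.

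The hard part will be the braided case, where $\op{Aut}_\calT(Z_0) \cong B' \ltimes \prod_j \op{Aut}_\calI(c_j)$ with $B' \leq B_n$ a finite-index subgroup of the braid group, which fails to be $F_\infty^+$ for $n \geq 3$. I would then consider the short exact sequence
\[
1 \longrightarrow K \longrightarrow H \longrightarrow P \longrightarrow 1
\]
with $K := H \cap \prod_j \op{Aut}_\calI(c_j)$ and $P$ the image of $H$ in $B'$. The kernel $K$ is a subgroup of the $F_\infty^+$ group $\prod_j \op{Aut}_\calI(c_j)$ and hence $F_\infty$. For $P$, the key observation is that each $q_{0i}$ partitions the $n$ strands of $Z_0$ into blocks determined by the operations making up $q_{0i}$, and the equivariance axiom forces any $\sigma \in P$ to permute these blocks as blocks for every $i$. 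The resulting ``block braid subgroup'' of $B_n$ is an iterated extension of finite groups by products of braid groups on strictly fewer strands, and so is of type $F_\infty$ by induction on $n$. Combining via the extension-closure of $F_\infty$ then shows $H$ is of type $F_\infty$, completing the proof.
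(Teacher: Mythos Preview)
Your overall strategy---identifying the cell stabilizer with a subgroup $H\leq\op{Aut}_\calT(Z_0)$ and then analysing $H$---is sound, and your treatment of the planar and symmetric cases via the closure of the class $F^+_\infty$ under finite products and finite extensions is correct and in fact cleaner than the paper's unified argument. The key difference in setup is that you anchor at the \emph{top} vertex $\calP_0$ (largest degree), whereas the paper anchors at the \emph{bottom} vertex $\calP_k$ (smallest degree); both identifications are valid, but they lead to rather different pictures in the braided case.

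The gap is in your braided argument for $P$. The equivariance axiom indeed forces every $\sigma\in P$ to be a cabling $\Theta_k\curvearrowright\rho$ for some $\rho\in B_{m_k}$ (where $m_k=\deg Z_k$), so $P$ lies inside the cabled-braid subgroup $c_k(B_{m_k})\cong B_{m_k}$. But this is only an \emph{upper bound}: a containment $P\leq Q$ with $Q$ of type $F_\infty$ does not make $P$ of type $F_\infty$ (already $B_3$ contains infinitely generated free groups). Your ``iterated extension of finite groups by products of braid groups on strictly fewer strands, by induction on $n$'' does not pin down $P$ itself, and I could not reconstruct a valid inductive scheme from the sketch. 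What is missing is a \emph{lower bound}: for every pure braid $\rho\in P_{m_k}$ one checks directly that $(c_k(\rho),\id)\in H$, because a pure $\rho$ does not permute the operations in any $\Theta_i$ and hence pushes through each $q_{0i}$ to $(\rho_i,\id)$. This gives $P_{m_k}\leq c_k^{-1}(P)\leq B_{m_k}$, so $P$ has finite index in $B_{m_k}$ and is therefore $F_\infty$; then your extension argument finishes the proof.

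The paper sidesteps this difficulty by working at $\calP_k$ from the start. There the arrow $e\co Y_i\to Y_k$ can be arranged to be a tensor product of fixed representatives of two-sided transformation classes, and the stabilizer of $\calP_i$ inside $\op{Aut}_\calT(Y_k)\cong B_l\ltimes\prod_j\op{Aut}_\calI(c_j)$ becomes visibly $B^*_l\ltimes\prod_j H_j$ for a finite-index $B^*_l\leq B_l$ and subgroups $H_j\leq\op{Aut}_\calI(c_j)$. Passing to the pure braid group $P_l$ turns each of these into a direct product $P_l\times\prod_j H_j$, and the intersection over all $i$ is then simply $P_l\times\prod_j\bigl(\bigcap_i H^{\calP_i}_j\bigr)$, of type $F_\infty$ by the $F^+_\infty$ hypothesis. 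Anchoring at the small end makes the braid part as small as possible and the semidirect structure transparent, which is why the paper's route is shorter here.
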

\begin{proof}
	In the following, we restrict ourselves to the braided case. The planar and symmetric cases are similar and simpler.
	
	We first choose a color in each connected component of $\calI$. Next, we choose an operation in each two-sided transformation
	class such that the output of the chosen operation is a chosen color. 
	
	A non-degenerate cell in the geometric realization of $\calU/\calG$ is a sequence of composable non-trivial 
	arrows in $\calU/\calG$
	\begin{displaymath}\xymatrix{
		\calP_0\ar[r]^{\epsilon_0}&\calP_1\ar[r]^{\epsilon_1}&\cdots\ar[r]^{\epsilon_{k-1}}&\calP_k
	}\end{displaymath}
	Let $p_k\co X\rightarrow Y_k$ be a representing path 
	of $\calP_k$ such that  $Y_k=c_1\otimes\ldots\otimes c_l$ is a tensor product of chosen colors. In the proofs of Lemmas \ref{32724}
	and \ref{06276}, we have seen that $p_k$ induces an isomorphism
	\[\varphi\co\op{Stab}_\Gamma(\calP_k)\rightarrow B'_l\ltimes\big(\op{Aut}_\calI(c_1)\times..\times\op{Aut}_\calI(c_l)\big)\]
	
	Choose some $\calP_i=:\calP$ different from $\calP_k$ and observe the arrow $\epsilon\co\calP\rightarrow\calP_k$ 
	which is the composition of the $\epsilon_j$ in between. Choose a representing path $p\co X\rightarrow Y$ of $\calP$. 
	Then there is exactly one arrow $e\co Y\rightarrow Y_k$ representing $\epsilon$. One can compose $p$ and $p_k$ with
	transformations $\eta$ and $\lambda$ such that $\lambda\co Y_k\rightarrow Y_k$ is a tensor product of degree $1$ operations 
	$\lambda_i\co c_i\rightarrow c_i$ and $e$ is a tensor product of chosen operations. Write $p'_k=p_k\lambda$ for the new representative
	of $\calP_k$. To $p'_k$ corresponds another isomorphism
	\[\varphi'\co\op{Stab}_\Gamma(\calP_k)\rightarrow B'_l\ltimes\big(\op{Aut}_\calI(c_1)\times..\times\op{Aut}_\calI(c_l)\big)\]
	which differs from $\varphi$ by conjugation with $\lambda$. Denote the new representative $p\eta$ of $\calP$ again by $p$. 
	
	Now let $\gamma\in\op{Stab}_\Gamma(\calP_k)$. Then $\gamma$ fixes also $\calP$, i.e.~$\calP\cdot\gamma=\calP$, if and only if
	\begin{align*}
		[p'_ke^{-1}] = [p] & = [p]\cdot\gamma\\
		& = [\gamma^{-1}p] \\
		& = [\gamma^{-1}p'_ke^{-1}] \\
		& = [p'_k{p'_k}^{-1}\gamma^{-1}p'_ke^{-1}] \\
		& = [p'_kt_\gamma^{-1} e^{-1}]
	\end{align*}
	where we have set $t_\gamma:={p'_k}^{-1}\gamma p'_k$, an element in the image of the isomorphism $\varphi'$. Therefore,
	we have to identify all such $t_\gamma$ which satisfy this equation. In other words, we look for all $t_\gamma$ such that 
	there is a transformation $\tau$ with 
	\[e t_\gamma=\tau e\]
	Roughly speaking, we look for all $t_\gamma$ which can be pulled through $e$ from the codomain to the domain.
	
	For better readability, we assume without loss of generality that the colors $c_i$ are all equal to one color $c$. In 
	particular, the codomains of $\varphi$ and $\varphi'$ are of the form $B_l\ltimes\op{Aut}_\calI(c)^l$. Then write 
	$e=\theta_1\otimes\ldots\otimes\theta_l$ where the $\theta_i$ are chosen operations with codomain the chosen color $c$.
	Define $H_i$ to be the subgroup of $\op{Aut}_\calI(c)$ consisting of elements $h$ which can be pulled through the operation 
	$\theta_i$, i.e.~there exists a transformation $\tau$ with $\theta_i h=\tau\theta_i$. Furthermore, let $B^*_l$ be the finite 
	index subgroup of $B_l$ consisting of the elements $\sigma$ with the property $\theta_{i\into\sigma}=\theta_i$. Denote by 
	$\op{Stab}_\Gamma(\calP,\calP_k)$ the subgroup of $\op{Stab}_\Gamma(\calP_k)$ which 
	also fixes $\calP$. Then the isomorphism $\varphi'$ restricts to an isomorphism
	\[\varphi'_\calP\co\op{Stab}_\Gamma(\calP,\calP_k)\rightarrow B^*_l\ltimes(H_1\times\ldots\times H_l)=:\Lambda_\calP\]
	where the subgroup $B^*_l$ still acts via permutation of the factors and this is well-defined due to the definition of $B^*_l$. 
	The proof of this is straightforward and uses the fact that two two-sided transformation equivalent $\theta_i$ must be equal.
	
	Recall that $\varphi'$ differs from $\varphi$ by conjugation with $\lambda$. So the image of $\op{Stab}_\Gamma(\calP,\calP_k)$
	under $\varphi$ is its image under $\varphi'$ conjugated with $\lambda$. More precisely, $\varphi$ restricts to an isomorphism
	\[\varphi_\calP\co\op{Stab}_\Gamma(\calP,\calP_k)\rightarrow\lambda\Lambda_\calP\lambda^{-1}=:\Omega_\calP\]
	Consider the pure braid group $P_l$ which is a finite index subgroup of $B_l$. It is also a finite index subgroup of $B^*_l$.
	Recall that we have $\lambda=\lambda_1\otimes\ldots\otimes\lambda_l$ where the $\lambda_i$ are degree $1$ operations. We have
	\begin{align*}	
		\lambda\big(P_l\ltimes(H_1\times\ldots\times H_l)\big)\lambda^{-1} 
			&= \lambda\big(P_l\times(H_1\times\ldots\times H_l)\big)\lambda^{-1}\\
			&= P_l\times\big(\lambda_1 H_1\lambda_1^{-1}\times\ldots\times\lambda_l H_l\lambda_l^{-1}\big)\\
			&= P_l\times\big(H^\calP_1\times\ldots\times H^\calP_l\big)
	\end{align*}
	where $H^\calP_i:=\lambda_i H_i\lambda_i^{-1}$ is isomorphic to $H_i$. This is a finite index subgroup of $\Omega_\calP$.
	
	Remains to consider the case when $\gamma\in\op{Stab}_\Gamma(\calP_k)$ fixes more than one additional
	vertex $\calP_i$. For this we have to show that the intersection
	\[\Omega_{\calP_0}\cap\ldots\cap\Omega_{\calP_{k-1}}\subset B_l\ltimes\op{Aut}_\calI(c)^l\]
	is of type $F_\infty$. For better readability, we assume without loss of generality that $k=2$. Then the last statement is
	equivalent to
	\begin{multline*}
		\left(P_l\times\big(H^{\calP_0}_1\times\ldots\times H^{\calP_0}_l\big)\right)\cap 
		\left(P_l\times\big(H^{\calP_1}_1\times\ldots\times H^{\calP_1}_l\big)\right)=\\
		P_l\times\left(\big(H^{\calP_0}_1\cap H^{\calP_1}_1\big)\times\ldots\times\big(H^{\calP_0}_l\cap H^{\calP_1}_l\big)\right)
	\end{multline*}
	being of type $F_\infty$ since it is a finite index subgroup. This is true because $P_l$ is of type $F_\infty$ and 
	all the groups $H^{\calP_0}_i\cap H^{\calP_1}_i$ are of type $F_\infty$. The latter statement is true because
	the groups $H^{\calP_0}_i\cap H^{\calP_1}_i$ are subgroups of $\op{Aut}_\calI(c)$ which is of type $F^+_\infty$.
	This completes the proof of the proposition.
\end{proof}

\subsection{Finite type filtration}

To apply Brown's criterion to the $\Gamma$-CW-complex $\calU/\calG$, we need a filtration by
$\Gamma$-CW-subcomplexes $(\calU/\calG)_n$ which are of finite type. Observe that the degree function on $\calS$ induces degree
functions on $\calU$ and $\calU/\calG$. Define $\calS_n$ resp.~$\calU_n$ resp.~$(\calU/\calG)_n$ to be the full
subcategories spanned by the objects of degree at most $n$. Note that we have $\calU_n/\calG_n=(\calU/\calG)_n$ where $\calG_n=\calG\cap\calU_n$. 
In the following, we want to show that $(\calU/\calG)_n$ only has finitely many $\Gamma$-equivariant cells in each dimension.

\vspace{2mm}
Choose one operation in each very elementary transformation class and denote the resulting set of operations by $S$. 
By the assumptions in Theorem \ref{41762}, $S$ is a finite set. 
\begin{obs}\label{37111}
	Let $\theta\in S$ and $\gamma$ be a degree $1$ operation such that $\theta*\gamma$ is defined. Then, by Proposition \ref{07546},
	$\theta*\gamma$ is again very elementary and there is a $\theta'\in S$ and a transformation $\tau$ with $\theta*\gamma=\tau*\theta'$.
\end{obs}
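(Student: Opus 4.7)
The plan is to extract the statement directly from Proposition \ref{07546} together with the defining property of $S$. First, I would observe that by assumption $\theta \in S$ is very elementary, meaning the transformation class $[\theta]$ lies in $VE$. Since $\theta * \gamma$ is defined and $\gamma$ is a degree $1$ operation, Proposition \ref{07546} applies and gives that the class $[\theta] * \gamma = [\theta * \gamma]$ is again very elementary, i.e.\ an element of $VE$.

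Next, I would invoke the defining property of $S$: by construction, $S$ contains exactly one operation in each very elementary transformation class. Hence there is a unique $\theta' \in S$ such that $[\theta'] = [\theta * \gamma]$ in $\mathcal{TC}(\calO)$.

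Finally, I would unfold the definition of transformation equivalence. Two operations represent the same class in $\mathcal{TC}(\calO)$ precisely when they differ by a transformation acting on the left; that is, $[\theta'] = [\theta * \gamma]$ means there exists a transformation $\tau \in \calT(\calO)$ with $\theta * \gamma = \tau * \theta'$, which is exactly the claimed identity.

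There is essentially no obstacle here; the observation is a bookkeeping consequence of Proposition \ref{07546} (stability of $VE$ under right multiplication by degree $1$ operations) and the choice of $S$ as a system of representatives of $VE$. The only small point to watch is that the transformation $\tau$ need not be unique in general, but uniqueness is not claimed; only existence of some $\tau$ and some $\theta' \in S$ is asserted, and both follow immediately from the two ingredients above.
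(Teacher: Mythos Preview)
Your proposal is correct and matches the paper's reasoning exactly. The paper does not give a separate proof of this observation; the phrase ``by Proposition~\ref{07546}'' within the statement itself is the entire justification, and you have simply unpacked that reference together with the defining property of $S$ as a system of representatives for $VE$.
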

Denote by $\Omega$ the set of all identity operations together with all operations of degree at most $n$ which are obtained 
by partially composing operations in $S$. Note that $\Omega$ is finite because $S$ is finite and there are only finitely
many colors by assumption. Denote by $\Lambda$ the set of arrows in $\calS_n$ which are obtained by taking tensor products of
operations in $\Omega$. Again, the set $\Lambda$ is finite. 

Let $\Lambda^{*p}\subset\Lambda^p$ be the subset of $p$-tuples of composable arrows in $\Lambda$. We claim that there is a surjective function
\[\Lambda^{*p}\twoheadrightarrow\big\{p\text{-cells in }(\calU/\calG)_n\big\}\big/\Gamma\]
which proves that there are only finitely many $\Gamma$-equivariant cells in $(\calU/\calG)_n$. Let
$(e_0,\ldots,e_{p-1})\in\Lambda^{*p}$. Choose a path $p_0\co X\rightarrow\op{dom}(e_0)$. Define paths $p_k\co X\rightarrow
\op{dom}(e_k)$ by the composite $p_k:=p_0e_0\ldots e_{k-1}$. The $p_i$ represent objects $\calP_i$ and the $e_i$ 
represent arrows $\epsilon_i\co\calP_i\rightarrow\calP_{i+1}$ in $(\calU/\calG)_n$. Thus, the sequence 
$\epsilon_0,\ldots,\epsilon_{p-1}$ gives a $p$-cell in $(\calU/\calG)_n$. This $p$-cell surely depends on the choice
of $p_0$ but two such choices give equivalent $p$-cells modulo the action of $\Gamma$. So we get a well-defined function
as above.

Remains to show that this function is indeed surjective. Consider a $p$-cell in $(\calU/\calG)_n$ in the form of a string
\[\calP_0\xrightarrow{\epsilon_0}\calP_1\xrightarrow{\epsilon_1}\cdots\xrightarrow{\epsilon_{p-1}}\calP_{p}\]
of composable arrows in $(\calU/\calG)_n$. For each $\calP_i$ we can choose representatives $P_i$ in $\calU_n$. Then each 
$\epsilon_i$ is represented by a unique arrow $e_i\co P_i\rightarrow P_{i+1}$ in $\calU_n$. We now want to change these 
representatives so that each $e_i$ lies in $\Lambda$. 

Start with the last arrow $e_{p-1}=[\sigma,\Theta]$. Let $T$ be the set of operations of the form $\tau*\theta$ 
where $\tau$ is a transformation and $\theta\in S$. In other words, $T$ is the set of all very 
elementary operations. Each higher degree operation $\theta$ in the sequence $\Theta$ can be written,
up to transformation, as a partial composition of operations in $T$ (see the remarks after Definition \ref{13449}).
It follows $\theta=s*\psi$ where $s$ is a transformation and $\psi$ is an operation decomposable into operations of
the form $(\gamma_1,\ldots,\gamma_k)*\xi$ with $\xi\in S$ and $\gamma_i$ of degree $1$. Using Observation \ref{37111}, we can pull the
degree $1$ operations to the domain of $\psi$, starting with the rightmost degree $1$ operations, and obtain
$\theta=s*\psi$ where $s$ is a transformation and $\psi$ is an operation decomposable into 
operations of $S$. We now have $e_{p-1}=\tau*\Psi$ where $\tau$ is some transformation and $\Psi$ is
simply a tensor product of identities or higher degree operations decomposable into operations of $S$. By changing the 
representives $P_{p-1}$, $e_{p-1}$ and $e_{p-2}$ in their respective classes modulo the subgroupoid $\calG$, we can assume
$\tau=\id$ and thus that $e_{p-1}$ lies in $\Lambda$. We can now repeat this argument with $e_{p-2}$ and then
with $e_{p-3}$ and so forth until we have changed each $e_i$ to lie in $\Lambda$. This proves surjectivity.

\subsection{Connectivity of the filtration}

It remains to show the connectivity statement in Brown's criterion, i.e.~we have to show that the connectivity
of the pair $\big((\calU/\calG)_n,(\calU/\calG)_{n-1}\big)$ tends to infinity as $n\rightarrow\infty$. To show this,
we apply the Morse method for categories. The degree function on $\calU/\calG$ is a Morse
function and the corresponding filtration is exactly $(\calU/\calG)_n$. Thus, we have to prove that the 
connectivity of the descending link $lk_\downa(\calK)$ tends to infinity as the degree of the object $\calK$ tends to infinity. 
Note that the descending up link $\overline{lk}_\downa(\calK)$ is always empty, so we have $lk_\downa(\calK)=
\underline{lk}_\downa(\calK)$.

\begin{defi}
	An arrow $[\sigma,\Theta]$ in $\calS$ is called (very) elementary if it is not a transformation and every higher degree
	operation in $\Theta$ is (very) elementary. An arrow in $\calU$ is called (very) elementary if the
	corresponding arrow in $\calS$ is (very) elementary. An arrow in $\calU/\calG$ is called (very) elementary if
	there is a (very) elementary representative in $\calU$.
\end{defi}

It follows from Proposition \ref{07546} that the number of (very) elementary operations in an arrow 
$a\in\calU$ does does not change if we replace $a$ by another representative in the class $[a]\in\calU/\calG$. In particular,
if the arrow $\alpha\in\calU/\calG$ is (very) elementary, then \emph{all} representing arrows $a\in\calU$ of $\alpha$ are (very) 
elementary.

\vspace{2mm}
The data of an object in $\underline{lk}_\downa(\calK)$ consists of an object $\calY$ in $\calU/\calG$ with 
$\op{deg}(\calY)<\op{deg}(\calK)$ and an arrow $\alpha\co\calK\rightarrow\calY$ in $\calU/\calG$. Now we define $\op{Core}(\calK)$ 
to be the full subcategory of $\underline{lk}_\downa(\calK)$ spanned by the objects $(\calY,\alpha)$ where $\alpha$ is a very 
elementary arrow. Denote by $\op{Corona}(\calK)$ the full subcategory of $\underline{lk}_\downa(\calK)$ spanned by the objects 
$(\calY,\alpha)$ with $\alpha$ an elementary arrow. So we have
\[\op{Core}(\calK)\subset\op{Corona}(\calK)\subset\underline{lk}_\downa(\calK)\]
and we will study the connectivity of these spaces successively.

\subsubsection{The core}

In this subsubsection, we adopt the normal form point of view of Subsection \ref{88061}: Arrows in $\calS$ are always represented 
by a unique pair $(\sigma,\Theta)$ such that $\sigma^{-1}$ is unpermuted resp.~unbraided on the domains of the operations in the sequence 
$\Theta$.

We say that two operations $\theta_1$ and $\theta_2$ are \emph{right} transformation equivalent if there is a transformation
$\gamma$ such that $\theta_2=\theta_1*\gamma$. Recall from Proposition \ref{07546} that being elementary or
very elementary is invariant under right transformations.

The object $\calK$ in $\calU/\calG$ is a class of objects in $\calU$ modulo transformations. Fix some
representing object $K$. Then the objects in $\op{Core}(\calK)$ are in one to one correspondence with pairs $(Y,a)$
where $Y$ is an object in $\calU$ with $\op{deg}(Y)<\op{deg}(K)$ and $a\co K\rightarrow Y$ is a very elementary arrow in 
$\calU$ modulo transformations on the codomain (compare with Remark \ref{64237}). 
Choose one operation in each right transformation equivalence class and denote the resulting
set of operations by $R$. We choose the identity for a class of degree $1$ operations so that the degree $1$ operations
in $R$ are identities. Now define a very elementary $R$-arrow to be a very elementary
arrow $(\sigma,\Theta)$ in $\calS$ such that the operations in $\Theta$ are elements of $R$. Thus, $\Theta$ is a tensor 
product of identities and at least one very elementary operation lying in $R$. This notion of very elementary $R$-arrows 
can be lifted to arrows in $\calU$. Now the objects in $\op{Core}(\calK)$ are in one to one correspondence with pairs 
$(Y,a)$ where $Y$ is an object with $\op{deg}(Y)<\op{deg}(K)$ and $a\co K\rightarrow Y$ is 
\begin{itemize}
	\item (planar case) a very elementary $R$-arrow.
	\item (symmetric case) a very elementary $R$-arrow modulo colored permutations on the codomain.
	\item (braided case) a very elementary $R$-arrow modulo colored braidings on the codomain.
\end{itemize}
The equivalence relation modulo braidings on the codomain is called ``dangling'' in \cite{b-f-m-w-z:tbt} because these objects may be 
visualized as a braiding where some strands at one end are connected by very elementary operations in $R$, called ``feet'', and
these are allowed to dangle freely (see \cite{b-f-m-w-z:tbt}*{Figure 9}).

Now let $C$ be the set of colors of the operad $\calO$. We define a set of archetypes $A$ as follows: For each
operation in $R$, form an archetype with identifier this operation and with color word the domain of that
operation. The object $K$ in $\calU$ is a path of arrows in $\calS$ modulo homotopy. It starts at the
color word $X$ and ends at some other color word $T$. Consider the simplicial complex $\mathcal{AC}_d(C,A;T)$ from Subsection 
\ref{53852}. It can be seen as a poset of simplices with an arrow from a simplex $\sigma$ to another simplex $\sigma'$
if and only if $\sigma$ is a face of $\sigma'$.

\begin{prop}\label{19549}
	The category $\op{Core}(\calK)$ is a poset and isomorphic, as a poset, to $\mathcal{AC}_d(C,A;T)$
	where $d=1$ in the planar case, $d=2$ in the braided case and $d=3$ in the symmetric case.
\end{prop}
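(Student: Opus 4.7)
The plan is to construct an explicit bijection between objects of $\op{Core}(\calK)$ and simplices (non-empty faces) of $\mathcal{AC}_d(C,A;T)$, and verify it intertwines the face-inclusion order with the descending-link order. Throughout, the normal form from Subsection \ref{88061} is the key bookkeeping device.

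First I would observe that $\op{Core}(\calK)$ is indeed a poset: it is a full subcategory of $\underline{lk}_\downa(\calK)$, which is itself a slice of $\calU/\calG$. Since $\calU$ is a generalized poset (Proposition \ref{20889}) and quotienting by $\calG$ yields its underlying honest poset, $\op{Core}(\calK)$ inherits posetness for free. What remains is to exhibit the isomorphism with the face poset of $\mathcal{AC}_d$.

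For the forward map, represent an object of $\op{Core}(\calK)$ by a very elementary arrow $\alpha \co K \to Y$ in $\calU$. Pass to its normal form $[\sigma,\Theta]$ with $\Theta = (\theta_1,\ldots,\theta_n)$, where each higher-degree $\theta_i$ lies in the chosen set of representatives $R$ (using right-transformations on the codomain to achieve this). For every higher-degree $\theta_i$, the inputs of $\theta_i$ correspond, via the inverse of $\sigma$, to a subset of nodes in $T$; draw an arc through those nodes labelled by the archetype associated with $\theta_i \in R$. Repeating this for each $i$ produces a finite collection of labelled arcs. Admissibility is where the normal form pays off: the condition that $\sigma^{-1}$ is unpermuted (resp.\ unbraided) on the domain of each $\theta_i$ is precisely what lets the $i$-th link be isotoped onto the real axis in $\RR^d$, through its nodes in ascending order. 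Disjointness of the arcs (as a simplex in $\mathcal{AC}_d$) follows from the fact that distinct $\theta_i$ sit in distinct tensor factors of $\Theta$, so their contributing strands never cross in $\sigma$. In each dimension $d = 1,2,3$, the matching between very elementary $R$-arrows modulo codomain dangling and archetypal arc systems is set up by the same recipe; the difference is only how much freedom the surrounding space allows, which is exactly what fixes $d = 1,2,3$ for planar, braided and symmetric respectively.

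For the inverse map, start with an archetypal arc system on $T$. Each arc, with its archetype label, singles out a specific operation in $R$ and a specific subset of input nodes of $T$. Admissibility of the arc, together with disjointness of the arcs in the system, allows one to build a colored braid/permutation $\sigma$ that gathers, for each arc, its nodes into consecutive positions in the domain of the corresponding $R$-operation, while leaving the remaining nodes alone as identity strands. Tensoring the arcs' operations with the remaining identities gives a normal-form arrow $[\sigma,\Theta]$, and the two constructions are mutually inverse by inspection.

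For the poset structure, an arrow $(\calY,\alpha)\to(\calY',\alpha')$ in $\op{Core}(\calK)$ amounts to an arrow $\beta$ in $\calU/\calG$ with $\alpha\beta = \alpha'$ and with both $\alpha,\alpha'$ very elementary. The crucial point is that composing a very elementary arrow $\alpha$ with further operations can produce another very elementary arrow only by acting on the identity strands of $\alpha$; enlarging one of the already-present $\theta_i$ would replace $[\theta_i]$ by a strictly larger class in $\mathcal{TC}(\calO)$, which cannot be very elementary since $\theta_i$ already is. This translates verbatim to: the arc system of $\alpha'$ is obtained from that of $\alpha$ by adding further arcs outside the existing ones, i.e.\ the former is a face of the latter. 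The main obstacle in executing this plan is verifying precisely this minimality claim and checking that the normalization step (dangling + right-transformation choice) is unambiguous; once those are dispatched, the rest is a direct unwinding of definitions.
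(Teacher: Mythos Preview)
Your approach is the same as the paper's --- the ``comb/weave'' bijection --- and your treatment of the poset structure is correct and matches the paper's analysis. However, your forward map has a genuine gap in the braided case $d=2$.

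You write: ``the inputs of $\theta_i$ correspond, via the inverse of $\sigma$, to a subset of nodes in $T$; draw an arc through those nodes.'' In $\RR^2$ this is not a well-defined instruction: there are infinitely many isotopy classes of admissible arcs through a fixed set of nodes, and you never say which one to pick. The whole content of the braided case is that the braid $\sigma$ determines a \emph{specific} isotopy class, not merely a node set. The paper makes this precise geometrically: realise the braid between two parallel lines in $\RR^3$, draw the operations as straight segments on the lower line, then ``comb the braid straight'' by isotoping the lower endpoints in the plane $\RR^2\times\{0\}$ until the braid is trivial; the operation-segments get dragged along and become the arcs. Your sketch jumps straight to the node set and loses exactly this information. (For $d=1$ and $d=3$ your description is fine, since there an arc \emph{is} determined by its nodes.)

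Relatedly, your justification of disjointness --- ``distinct $\theta_i$ sit in distinct tensor factors of $\Theta$, so their contributing strands never cross in $\sigma$'' --- is wrong as stated: strands feeding into different operations certainly can braid with each other in $\sigma$. What is true is that the operation-\emph{segments} on the lower line are disjoint (they connect disjoint endpoint sets), and they remain disjoint throughout the combing isotopy; that is why the resulting arcs are disjoint. Finally, for the inverse map you should note that the separating isotopy is not unique, and argue (as the paper does) that different choices yield the same very elementary $R$-arrow up to dangling; otherwise ``weave'' is not well-defined.
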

\begin{proof}
	We restrict our attention to the braided case, i.e.~$d=2$. The other two cases are much simpler.
	
	First, it is clear that $\op{Core}(\calK)$ is a poset since $\calU/\calG$ is a poset. We want to understand
	the poset structure a bit better:
	Let $\Lambda$ be an object of $\op{Core}(\calK)$ in the form of a very elementary $R$-arrow $K\rightarrow Y$
	modulo dangling. Fix some very elementary $R$-arrow $\lambda$ representing this class with the property that 
	the colored braiding of that arrow is unbraided not only on the sets of strands connected to
	single operations but also on the set of strands connected to identity operations. Then arrows in $\op{Core}(\calK)$
	with domain $\Lambda$ are in one to one correspondence with very elementary $R$-arrows $\alpha$ in $\calU$, modulo dangling,
	such that the very elementary operations of $\alpha$ only connect to identity operations of $\lambda$ in the composition 
	$\lambda*\alpha$ (since compositions of very elementary operations are not very elementary anymore). The following
	diagram, in which the gray triangles are identity operations, illustrates such a situation:
	\ifthenelse{\boolean{drawtikz}}{
	\begin{center}
		\begin{tikzpicture}[scale=0.22]
			\draw (-2,3) -- (1,1) -- (-2,-1) -- cycle;
			\draw (-2,-2) -- (1,-4) -- (-2,-6) -- cycle;
			\draw[fill=gray] (-2,-7) -- (-1,-7.5) -- (-2,-8) -- cycle;
			\draw[fill=gray] (-2,-9) -- (-1,-9.5) -- (-2,-10) -- cycle;
			\draw[fill=gray] (-2,-11) -- (-1,-11.5) -- (-2,-12) -- cycle;
			\draw (-2,2) -- (-2.5,2);
			\draw (-2,0) -- (-2.5,0);
			\draw (-2,-3) -- (-2.5,-3);
			\draw (-2,-5) -- (-2.5,-5);
			\draw (-2,-7.5) -- (-2.5,-7.5);
			\draw (-2,-9.5) -- (-2.5,-9.5);
			\draw (-2,-11.5) -- (-2.5,-11.5);
			\draw (-2.5,2) -- (-6.5,2);
			\draw (-2.5,-11.5) -- (-6.5,-11.5);
			\draw (-2.5,-9.5) -- (-6.5,-9.5);
			\draw (-2.5,-7.5) .. controls (-3.5,-7.5) and (-3.5,-5.5) .. (-4.5,-5.5);
			\draw[white, line width=4pt] (-2.5,-5) .. controls (-3.5,-5) and (-3.5,-7) .. (-4.5,-7);
			\draw (-2.5,-5) .. controls (-3.5,-5) and (-3.5,-7) .. (-4.5,-7);
			\draw (-4.5,-7) .. controls (-5.5,-7) and (-5.5,-5) .. (-6.5,-5);
			\draw (-2.5,-3) .. controls (-4,-3) and (-5,0) .. (-6.5,0);
			\draw[white, line width=4pt] (-2.5,0) .. controls (-4,0) and (-5,-3) .. (-6.5,-3);
			\draw (-2.5,0) .. controls (-4,0) and (-5,-3) .. (-6.5,-3);
			\draw[white, line width=4pt] (-4.5,-5.5) .. controls (-5.5,-5.5) and (-5.5,-7.5) .. (-6.5,-7.5);
			\draw (-4.5,-5.5) .. controls (-5.5,-5.5) and (-5.5,-7.5) .. (-6.5,-7.5);
			\draw (-1,-7.5) -- (1,-7.5);
			\draw (-1,-9.5) -- (1,-9.5);
			\draw (-1,-11.5) -- (1,-11.5);
			\draw (5.5,-1) -- (5.5,-5) -- (8.5,-3) -- cycle;
			\draw[fill=gray] (5.5,0.5) -- (5.5,1.5) -- (6.5,1) -- cycle;
			\draw[fill=gray] (5.5,-6.5) -- (5.5,-7.5) -- (6.5,-7) -- cycle;
			\draw[fill=gray] (5.5,-9) -- (5.5,-10) -- (6.5,-9.5) -- cycle;
			\draw (5.5,1) -- (5,1);
			\draw (5.5,-2) -- (5,-2);
			\draw (5.5,-4) -- (5,-4);
			\draw (5.5,-7) -- (5,-7);
			\draw (5.5,-9.5) -- (5,-9.5);
			\draw (5,1) -- (1,1);
			\draw (5,-4) .. controls (3.5,-4) and (3,-11.5) .. (1,-11.5);
			\draw[white, line width=4pt] (5,-7) .. controls (3.5,-7) and (2.5,-4) .. (1,-4);
			\draw (5,-7) .. controls (3.5,-7) and (2.5,-4) .. (1,-4);
			\draw[white, line width=4pt] (5,-9.5) .. controls (3.5,-9.5) and (2.5,-7.5) .. (1,-7.5);
			\draw (5,-9.5) .. controls (3.5,-9.5) and (2.5,-7.5) .. (1,-7.5);
			\draw[white, line width=4pt] (5,-2) .. controls (3,-2) and (2.5,-9.5) .. (1,-9.5);
			\draw (5,-2) .. controls (3,-2) and (2.5,-9.5) .. (1,-9.5);
			\draw[dashed] (1,4) -- (1,-13);
			\node at (0,4) {$\lambda$};
			\node at (2,4) {$\alpha$};
		\end{tikzpicture}
	\end{center}
	}{TIKZ}	
	These considerations yield the following interpretation of the poset structure: We have $\Lambda\rightarrow\Lambda'$ if and only 
	if there is a very elementary $R$-arrow $\lambda$ representing the dangling class $\Lambda$ such that adding very elementary operations
	of $R$ to loose strands of $\lambda$ (i.e.~strands connected to identity operations) gives a very elementary $R$-arrow representing
	the dangling class $\Lambda'$.
	
	We will consider an isomorphism of posets
	\[\mathrm{comb}\co \op{Core}(\calK)\rightarrow \calA\calC_2(C,A;T)\]
	called ``combing'' as in \cite{b-f-m-w-z:tbt}*{Section 4} and its inverse
	\[\mathrm{weave}\co \calA\calC_2(C,A;T)\rightarrow \op{Core}(\calK)\]
	which we call ``weaving''. 
	
	To define the first map, start with an object $\Lambda$ in $\op{Core}(\calK)$. As above, it is a very
	elementary $R$-arrow in normal form modulo dangling. Thus, it is represented by a colored braid with unbraided 
	strands connected by very elementary operations in $R$. Think of the domain of the braid as being fixed on the 
	line 
	\[L_1:=\{(x,0,1)\mid x\in\RR\}\subset\RR^3\]
	the codomain as being fixed on the line
	\[L_0:=\{(x,0,0)\mid x\in\RR\}\subset\RR^3\]
	and visualize the operations as straight lines in $L_0$ connecting
	the ends of the corresponding strands. Now ``combing straight'' the braid means moving around the ends
	of the braid in the plane $P:=\RR^2\times\{0\}\subset\RR^3$ such that the whole braid becomes unbraided.
	The segments representing the operations get deformed in $P$ this way and in fact become the archetypal arcs
	in $\mathrm{comb}(\Lambda)$. They are admissible because the braid was required to be unbraided on the 
	domains of the operations. This process is visualized in \cite{b-f-m-w-z:tbt}*{Figure 17}. Note that combing does 
	not depend on the representative under dangling, so it is a well-defined map on the objects of $\op{Core}(\calK)$.
	It also respects the poset structures, so it is a map of posets.
	
	To define the second map, start with an archetypal arc system $\calA$. This is a priori embedded in $\RR^2$ but
	embed it in $\RR^3$ via the embedding $\RR^2\times\{0\}\subset\RR^3$. Connect the nodes of the archetypal arc 
	system with the line $L_1$ by straight lines parallel to the third coordinate axis. The process of weaving first 
	tries to separate the archetypal arcs by moving the nodes in the plane $P$. Here, being separate means being separated
	by a straight line in $P$ parallel to the second coordinate axis. Also, the set of nodes which are not contained in
	an arc should be separated from the arcs. By doing these moves, the vertical strands connecting the nodes with
	the line $L_1$ become braided in a certain way. The separation process is always possible but the resulting braid 
	is not unique (think of two nodes connected by an arc and turn around the arc several times). To make the resulting 
	braid unique (up to dangling), we additionally require that the subbraid determined by an archetypal arc never 
	becomes braided during the separation process. This can be achieved for example by the following additional movement 
	rule: The nodes of an archetypal arc always have to stay on the same line $L$ in $P$ parallel to the first coordinate 
	axis. This line $L$ may move up and down and the nodes of the archetypal arc may move left and right on $L$ but they 
	must never cross each other on $L$. Then, when the archetypal arcs are separated from each other and from the isolated 
	nodes, the property admissible of the archetypal arcs ensures that they can be homotoped to straight lines lying in
	$L_0$. The following figure visualizes this process:
	\ifthenelse{\boolean{drawtikz}}{
	\begin{center}
		\begin{tikzpicture}[scale=0.28]
			\draw[blue] (0,-10) .. controls (1,-11) and (3,-11) .. (4,-10);
			\draw[blue] (4,-10) .. controls (6,-8) and (8,-8) .. (10,-10);
			\draw[red] (6,-10) .. controls (7,-9) and (8,-9) .. (9,-10);
			\draw[red] (9,-10) .. controls (10,-11) and (11,-11) .. (11,-10);
			\draw[red] (11,-10) .. controls (11,-9) and (7,-5) .. (2,-10);
			\draw[white, line width=3pt] (2,-3) -- (2,-9.6);
			\draw (2,-3) -- (2,-9.6);
			\draw[white, line width=3pt] (0,-9.6) -- (0,-3);
			\draw (0,-9.6) -- (0,-3);
			\draw[white, line width=3pt] (8,-3) -- (8,-9.6);
			\draw (8,-3) -- (8,-9.6);
			\draw[white, line width=3pt] (6,-3) -- (6,-9.6);
			\draw (6,-3) -- (6,-9.6);
			\draw[white, line width=3pt] (4,-3) -- (4,-9.6);
			\draw (4,-3) -- (4,-9.6);
			\draw[white, line width=3pt] (10,-3) -- (10,-9.6);
			\draw (10,-3) -- (10,-9.6);
			\filldraw (0,-10) circle (3pt);
			\filldraw (2,-10) circle (3pt);
			\filldraw (4,-10) circle (3pt);
			\filldraw (6,-10) circle (3pt);
			\filldraw (8,-10) circle (3pt);
			\filldraw (10,-10) circle (3pt);
			\draw[thick] (-1,-3) -- (11,-3);
			\draw (0,-9.6) -- (0,-10);
			\draw (8,-10) -- (8,-9.6);
			\draw (10,-9.6) -- (10,-10);
			\draw (6,-10) -- (6,-9.6);
			\draw (4,-9.6) -- (4,-10);
			\draw (2,-10) -- (2,-9.6);
			
			\draw[blue] (15,-10) .. controls (16,-11) and (18,-11) .. (19,-10);
			\draw[blue] (19,-10) .. controls (21,-8) and (23,-8) .. (25,-10);
			\draw[red] (21,-10) .. controls (22,-9) and (23,-9) .. (24,-10);
			\draw[red] (24,-10) .. controls (25,-11) and (26,-11) .. (26,-10);
			\draw[red] (26,-10) .. controls (26,-9) and (22,-5) .. (17,-10);
			\draw[white, line width=3pt] (17,-3) -- (17,-9.6);
			\draw (17,-3) -- (17,-9.6);
			\draw[white, line width=3pt] (15,-9.6) -- (15,-3);
			\draw (15,-9.6) -- (15,-3);
			\draw[white, line width=3pt] (21,-3) -- (21,-9.6);
			\draw (21,-3) -- (21,-9.6);
			\draw[white, line width=3pt] (19,-3) -- (19,-9.6);
			\draw (19,-3) -- (19,-9.6);
			\draw[white, line width=3pt] (25,-3) -- (25,-9.6);
			\draw (25,-3) -- (25,-9.6);
			\draw[white, line width=3pt] (23,-3) .. controls (23,-6) and (27,-7) .. (27,-9.6);
			\draw (23,-3) .. controls (23,-6) and (27,-7) .. (27,-9.6);
			\filldraw (15,-10) circle (3pt);
			\filldraw (17,-10) circle (3pt);
			\filldraw (19,-10) circle (3pt);
			\filldraw (21,-10) circle (3pt);
			\filldraw (27,-10) circle (3pt);
			\filldraw (25,-10) circle (3pt);
			\draw[thick] (14,-3) -- (26,-3);
			\draw (15,-9.6) -- (15,-10);
			\draw (27,-10) -- (27,-9.6);
			\draw (25,-9.6) -- (25,-10);
			\draw (21,-10) -- (21,-9.6);
			\draw (19,-9.6) -- (19,-10);
			\draw (17,-10) -- (17,-9.6);
			
			\draw[blue] (-9,-20) .. controls (-8,-21) and (-6,-21) .. (-5,-20);
			\draw[blue] (-5,-20) .. controls (-3,-18) and (-1,-18) .. (1,-20);
			\draw[red] (6,-20) .. controls (7,-19) and (8,-19) .. (9,-20);
			\draw[red] (9,-20) .. controls (10,-21) and (11,-21) .. (11,-20);
			\draw[red] (11,-20) .. controls (11,-19) and (7,-15) .. (2,-20);
			\draw[white, line width=3pt] (2,-13) -- (2,-19.6);
			\draw (2,-13) -- (2,-19.6);
			\draw (0,-13) .. controls (0,-16) and (-9,-17) .. (-9,-19.6);
			\draw[white, line width=3pt] (4,-13) .. controls (4,-16) and (-5,-17) .. (-5,-19.6);
			\draw (4,-13) .. controls (4,-16) and (-5,-17) .. (-5,-19.6);
			\draw[white, line width=3pt] (10,-13) .. controls (10,-16) and (1,-17) .. (1,-19.6);
			\draw (10,-13) .. controls (10,-16) and (1,-17) .. (1,-19.6);
			\draw[white, line width=3pt] (8,-13) .. controls (8,-16) and (12,-17) .. (12,-19.6);
			\draw (8,-13) .. controls (8,-16) and (12,-17) .. (12,-19.6);
			\draw[white, line width=3pt] (6,-13) -- (6,-19.6);
			\draw (6,-13) -- (6,-19.6);
			\filldraw (-9,-20) circle (3pt);
			\filldraw (2,-20) circle (3pt);
			\filldraw (-5,-20) circle (3pt);
			\filldraw (6,-20) circle (3pt);
			\filldraw (12,-20) circle (3pt);
			\filldraw (1,-20) circle (3pt);
			\draw[thick] (-1,-13) -- (11,-13);
			\draw (-9,-19.6) -- (-9,-20);
			\draw (12,-20) -- (12,-19.6);
			\draw (1,-19.6) -- (1,-20);
			\draw (6,-20) -- (6,-19.6);
			\draw (-5,-19.6) -- (-5,-20);
			\draw (2,-20) -- (2,-19.6);
			
			\draw[blue] (15,-20) -- (19,-20);
			\draw[red] (21,-20) -- (23,-20);	
			\draw[white, line width=3pt]	 (17,-13) .. controls (17,-15) and (21,-18) .. (21,-19.6);
			\draw (17,-13) .. controls (17,-15) and (21,-18) .. (21,-19.6);
			\draw (15,-13) .. controls (15,-15) and (15,-18) .. (15,-19.6);
			\draw[white, line width=3pt] (19,-13) .. controls (19,-15) and (17,-18) .. (17,-19.6);
			\draw (19,-13) .. controls (19,-15) and (17,-18) .. (17,-19.6);
			\draw[white, line width=3pt] (25,-13) .. controls (25,-15) and (19,-18) .. (19,-19.6);
			\draw (25,-13) .. controls (25,-15) and (19,-18) .. (19,-19.6);
			\draw[white, line width=3pt] (23,-13) .. controls (23,-15) and (25,-18) .. (25,-19.6);
			\draw (23,-13) .. controls (23,-15) and (25,-18) .. (25,-19.6);
			\draw[white, line width=3pt]	 (21,-13) .. controls (21,-15) and (23,-18) .. (23,-19.6);
			\draw (21,-13) .. controls (21,-15) and (23,-18) .. (23,-19.6);
			\filldraw (15,-20) circle (3pt);
			\filldraw (21,-20) circle (3pt);
			\filldraw (17,-20) circle (3pt);
			\filldraw (23,-20) circle (3pt);
			\filldraw (25,-20) circle (3pt);
			\filldraw (19,-20) circle (3pt);
			\draw[thick] (14,-13) -- (26,-13);
			\draw (15,-19.6) -- (15,-20);
			\draw (25,-20) -- (25,-19.6);
			\draw (19,-19.6) -- (19,-20);
			\draw (23,-20) -- (23,-19.6);
			\draw (17,-19.6) -- (17,-20);
			\draw (21,-20) -- (21,-19.6);
		\end{tikzpicture}
	\end{center}
	}{TIKZ}	
	Replacing the archetypal arcs by the identifier operations of the corresponding archetype yields 
	a representative of $\mathrm{weave}(\calA)$ and the class modulo dangling does not depend on the weaving process.
	Thus, we get a well-defined map on the objects of $\mathcal{AC}_2(C,A;T)$. It also respects
	the poset structures, so it is a map of posets.
\end{proof}

It follows from the finiteness of $VE$ that the set of archetypes $A$ is of finite type (though not finite in general)
and from the color-tameness of $\calO$ that it is tame. More precisely, let $m_V$ be the largest degree of very elementary classes and 
$m_C$ be the smallest natural number greater than the degree of any reduced object in $\calS$. Then we can set 
$m_a=m_V$ and $m_r=m_C$ in Theorem \ref{85504}. We thus get the following
\begin{cor}\label{13058}
	$\op{Core}(\calK)$ is $\nu_d(\op{deg}\calK)$-connected where
	\begin{gather*}
		\nu_1(l):=\left\lfloor\frac{l-m_C}{2m_V+m_C-2}\right\rfloor-1\\
		\nu_2(l):=\left\lfloor\frac{l-m_C}{2m_V-1}\right\rfloor-1\\
		\nu_3(l):=\left\lfloor\frac{l-m_C}{2m_V-1}\right\rfloor-1
	\end{gather*}
	Here, $d=1$ corresponds to the planar case, $d=2$ to the braided case and $d=3$ to the symmetric case.
\end{cor}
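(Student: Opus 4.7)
The plan is to derive the corollary directly from Proposition \ref{19549} and Theorem \ref{85504}. Proposition \ref{19549} identifies $\op{Core}(\calK)$ with the arc complex $\mathcal{AC}_d(C,A;T)$, where $T$ is the codomain of any representative of $\calK$ (so $lT = \op{deg}\calK$) and $A$ is the archetype set built from the chosen set $R$ of right-transformation representatives of very elementary operations. It therefore suffices to show that $A$ satisfies the hypotheses of Theorem \ref{85504} with parameters $m_a \leq m_V$ and $m_r \leq m_C$, for then the asserted connectivity bound is an immediate specialization of that theorem.

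For the finite type parameter $m_a$: each archetype has its color word equal to the domain of some $r\in R$, and every $r\in R$ represents a very elementary transformation class of degree at most $m_V$. Hence the maximal archetype length satisfies $m_a\leq m_V$, using only the finite-type hypothesis on $\calO$.

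For the tameness parameter $m_r$, I would prove that if a color word $T'$ is non-reduced as an object of $\calS$, then $T'$ admits an archetypal arc; the contrapositive then shows that arc-complex-reduced words are $\calS$-reduced, so color-tameness gives $m_r \leq m_C$. The key point is that, by the remark following Definition \ref{13449}, every higher-degree transformation class decomposes into very elementary ones, so a non-transformation arrow out of $T'$ can be rewritten so that at least one very elementary operation $\psi$ is applied directly to a subword of $T'$. In the planar case this subword is consecutive; in the braided or symmetric case the admissibility condition on archetypal arcs matches exactly with the permutation or braiding needed to put the subword into consecutive position before applying $\psi$. Writing $\psi = r * \gamma$ with $r\in R$ and $\gamma$ a degree-$1$ operation, we have $\op{dom}(\psi) = \op{dom}(r)$, yielding an archetypal arc on $T'$ whose identifier is $r$.

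Finally, plugging $m_a \leq m_V$ and $m_r \leq m_C$ into the formulas of Theorem \ref{85504} gives exactly the bounds stated in the corollary, since $\nu_\kappa(l)$ is non-increasing in both $\kappa$ and $m_r$ and replacing the true parameters by these larger upper bounds can only weaken the connectivity estimate. The main obstacle in this otherwise bookkeeping argument is the tameness verification, which depends on the ability to push very elementary operations to the input side of any non-transformation arrow; in the braided case this requires being careful about how a permuted subword of $T'$ corresponds to an admissible arc rather than an arbitrary one.
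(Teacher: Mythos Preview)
Your proposal is correct and follows the same route as the paper: identify $\op{Core}(\calK)$ with $\mathcal{AC}_d(C,A;T)$ via Proposition~\ref{19549}, verify that $A$ is of finite type (so $m_a\le m_V$) and tame (so $m_r\le m_C$), and invoke Theorem~\ref{85504}. The paper states these two facts in a single sentence just before the corollary without justification; your argument that arc-complex-reduced words are $\calS$-reduced (by factoring a non-transformation arrow through a very elementary one) is exactly what is needed to fill that gap, and in fact the two notions of reduced coincide so one even has $m_r=m_C$.
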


\subsubsection{The corona}

We build up $\op{Corona}(\calK)$ from $\op{Core}(\calK)$ using again the Morse method for categories. We then get a
connectivity result for the corona from the connectivity result for the core. The idea is attributed to \cite{f-m-w-z:tbg}.

We assumed $\calO$ to be of finite type, i.e.~the set of elementary classes $E$ is finite. Let $m_E$ be the largest degree of 
elementary classes. An object in $\op{Corona}(\calK)$ is a pair $(\calY,\alpha\co\calK\rightarrow\calY)$ where 
$\op{deg}(\calY)<\op{deg}(\calK)$ and $\alpha$ is an elementary arrow in $\calU/\calG$. For $2\leq k\leq m_E$ denote by 
$\#_{se}^k(\alpha)$ the number of strictly elementary operations of degree $k$ in any representative of $\alpha$. Define
\[f\big((\calY,\alpha)\big):=\big(\#_{se}^{m_E}(\alpha),\#_{se}^{m_E-1}(\alpha),\ldots,\#_{se}^2(\alpha),\deg(\calY)\big)\]
Order the values of $f$ lexicographically. Then $f$ becomes a Morse function building up $\op{Corona}(\calK)$ from
$\op{Core}(\calK)$. Define
\begin{gather*}
	\mu_1(l):=\left\lfloor\frac{l-m_C}{2m_V+m_C+m_E}\right\rfloor-2\\
	\mu_2(l):=\left\lfloor\frac{l-m_C}{2m_V+m_E}\right\rfloor-1\\
	\mu_3(l):=\left\lfloor\frac{l-m_C}{2m_V+m_E}\right\rfloor-1\\
\end{gather*}

\begin{prop}\label{81197}
	For each object $(\calY,\alpha)$ in $\op{Corona}(\calK)$ which is not an object in $\op{Core}(\calK)$, the descending link 
	$lk_\downa(\calY,\alpha)$ with respect to the Morse function $f$ above is $\mu_d(\deg\calK)$-connected.
\end{prop}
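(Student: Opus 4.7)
The plan is to identify the descending link $lk_\downa(\calY,\alpha) = \overline{lk}_\downa(\calY,\alpha) * \underline{lk}_\downa(\calY,\alpha)$ in the Morse filtration of $\op{Corona}(\calK)$ and bound its connectivity via Theorem \ref{85504}. First fix a normal form representative of $\alpha$, let $k$ denote the largest degree with $\#_{se}^k(\alpha) > 0$, and set $N := \#_{se}^k(\alpha)$. Objects of $\overline{lk}_\downa(\calY,\alpha)$ are factorizations $\alpha = \alpha' * \gamma$, while objects of $\underline{lk}_\downa(\calY,\alpha)$ are refinements $\alpha * \gamma = \alpha''$, both required to have strictly smaller $f$-value than $(\calY,\alpha)$.

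For the descending down link, a case analysis on which operations $\gamma$ may introduce shows that $f$ decreases precisely when $\gamma$ adds very elementary operations on identity-output strands of $\alpha$: a non-trivial operation touching a strictly elementary output strand either produces a non-elementary composite (leaving the corona) or a strictly elementary composite of even higher degree (raising some $\#_{se}^j$); conversely, inserting a strictly elementary operation on an identity strand directly raises some $\#_{se}^j$ without compensation. The remaining case mirrors the core setup of Proposition \ref{19549}, identifying $\underline{lk}_\downa(\calY,\alpha)$ as an arc complex $\mathcal{AC}_d(C, A; T')$ for a color word $T'$ obtained from the codomain of $\alpha$ by restricting to the identity strands.

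For the descending up link, the mechanism for strictly decreasing $f$ is to decompose a strictly elementary operation $\theta$ of degree $k$ in $\alpha$. Since $[\theta] \in S_i \setminus S_{i-1}$ for some $i \geq 1$, Construction \ref{45209} supplies classes $\Theta_1, \Theta_2 \in S_{i-1}$ with $\Theta_1 < [\theta] > \Theta_2$; in particular, there exist $\theta_1 \in \Theta_1$ of strictly smaller degree and operations $\psi_1, \ldots, \psi_m$ with $(\psi_1, \ldots, \psi_m) * \theta_1 \in [\theta]$. Placing the $\psi_l$ into $\alpha'$ in place of $\theta$ and moving $\theta_1$ into the corresponding slot of $\gamma$ (identities elsewhere) yields a factorization with $\#_{se}^k(\alpha')$ strictly smaller, since $m = \deg(\theta_1) \geq 2$ forces $\deg(\psi_l) < k$. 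An auxiliary Morse argument within $\overline{lk}_\downa(\calY,\alpha)$, filtered by the number of decomposed top-degree strictly elementary operations, identifies it up to homotopy with a join, indexed by the $N$ top-degree strictly elementary operations of $\alpha$, of auxiliary spine posets, each of which is highly connected or contractible.

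Combining via the join formula $\op{conn}(A * B) = \op{conn}(A) + \op{conn}(B) + 2$, applying Theorem \ref{85504} to $T'$, and using the length estimate $\deg(T') \geq \deg(\calK) - N m_E$ obtained from a standard bookkeeping of operation degrees in $\alpha$ yields the claimed $\mu_d(\deg\calK)$-connectivity. The increase in denominator from $\nu_d$ to $\mu_d$ by an additive $m_E$ (respectively $m_E + 2$ in the one-dimensional case) precisely tracks the length cost of stripping the $N$ top-degree strictly elementary operations, each of which consumes at most $m_E$ colors from $T$. The principal obstacle is the structural identification of $\overline{lk}_\downa(\calY,\alpha)$; once this is established, the remaining connectivity estimates follow routinely from Theorem \ref{85504} and the join formula.
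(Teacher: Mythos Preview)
Your proposal has a genuine gap: you do not make the crucial distinction between \emph{mixed} objects (those $(\calY,\alpha)$ where $\alpha$ contains at least one very elementary operation) and \emph{pure} objects (where every higher-degree operation in $\alpha$ is strictly elementary). The paper's proof hinges on this dichotomy.

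For mixed objects, the paper does not compute connectivity at all; instead it shows directly that $\overline{lk}_\downa(\calY,\alpha)$ is contractible. One writes $a = a^v a^{se}$, where $a^v$ keeps only the very elementary operations and $a^{se}$ keeps only the strictly elementary ones, and then defines a functor $F$ on $\overline{lk}_\downa(\calY,\alpha)$ which replaces any factorization by one in which the very elementary operations of $a$ are never split. The object $\Xi = \big((\calY^v,\alpha^v),\alpha^{se}\big)$ then admits arrows to every $F(\Omega)$, and item iii) of Subsection~\ref{03887} applies. Your join-over-top-degree scheme does not capture this phenomenon.

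For pure objects, the paper shows that $\overline{lk}_\downa(\calY,\alpha)$ is the Grothendieck join $\calA_1\circ\cdots\circ\calA_{\mathfrak{n}}$ indexed over \emph{all} $\mathfrak{n}$ higher-degree operations of $a$, not just the $N$ of maximal degree. Each $\calA_i$ is non-empty precisely because in the pure case every higher-degree operation is strictly elementary and hence splittable with a strict drop in $f$; this yields connectivity $\mathfrak{n}-2$. Your length estimate $\deg(T')\geq\deg(\calK)-N m_E$ is then the concrete point of failure: $T'$ is obtained by deleting \emph{all} higher-degree operations, so one only has $\deg(T')\geq\deg(\calK)-\mathfrak{n}\, m_E$, and with $N<\mathfrak{n}$ your inequality is simply false. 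The correct trade-off in the final calculation is between $\mathfrak{n}$ (from the join) and $\mathfrak{n}\, m_E$ (lost from $\deg\calK$), and this is exactly what produces the denominator $2m_V+m_E$ in $\mu_d$. Finally, your ``auxiliary Morse argument'' and ``auxiliary spine posets'' are not defined with enough precision to be checked, and it is unclear how such an argument would avoid the bookkeeping problem above.
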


From Theorem \ref{34332} we get that $\op{Core}(\calK)$ and $\op{Corona}(\calK)$ share the same homotopy groups up to
dimension $\mu_d(\deg\calK)$. We already know that $\op{Core}(\calK)$ is $\nu_d(\deg\calK)$-connected. Furthermore, we have
$\nu_d(l)\geq\mu_d(l)$. Consequently, we get the following

\begin{cor}\label{64879}
	$\op{Corona}(\calK)$ is $\mu_d(\deg\calK)$-connected. In particular, its connectivity tends to infinity as 
	$\deg(\calK)\rightarrow\infty$.
\end{cor}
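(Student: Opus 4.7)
The plan is to apply the Morse method for categories (Theorem \ref{34332}) to the filtration of $\op{Corona}(\calK)$ induced by the Morse function $f$, starting from $\op{Core}(\calK)$ as the base subcategory $\calX_0$. First I would verify that the minimal value of $f$ is achieved exactly on the objects of $\op{Core}(\calK)$: an object $(\calY,\alpha)$ lies in $\op{Core}(\calK)$ precisely when $\alpha$ has no strictly elementary operations, which in turn means $\#_{se}^k(\alpha)=0$ for $2\leq k\leq m_E$. Since the adding order respects the lexicographic ordering on $f$-values, this genuinely builds $\op{Corona}(\calK)$ from $\op{Core}(\calK)$.

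Next, I would invoke Proposition \ref{81197}, which guarantees that every descending link $lk_\downa(\calY,\alpha)$ for $(\calY,\alpha)\in\op{Corona}(\calK)\setminus\op{Core}(\calK)$ is $\mu_d(\deg\calK)$-connected. Theorem \ref{34332} then yields, for any basepoint $x_0\in\op{Core}(\calK)$, an isomorphism
\[
	\pi_k\big(\op{Core}(\calK),x_0\big)\;\cong\;\pi_k\big(\op{Corona}(\calK),x_0\big)
\]
for all $k\leq\mu_d(\deg\calK)$.

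Combining this with Corollary \ref{13058}, which tells us that $\op{Core}(\calK)$ is $\nu_d(\deg\calK)$-connected, I obtain that $\op{Corona}(\calK)$ is $\min\!\big(\nu_d(\deg\calK),\mu_d(\deg\calK)\big)$-connected. A direct inspection of the formulas gives $\nu_d(l)\geq\mu_d(l)$ in each of the three cases: the denominator appearing in $\mu_d$ is larger than that of $\nu_d$ (since $m_E\geq m_V\geq 2$), and the additive constant is no smaller, so the floor in $\mu_d$ dominates. Hence the common lower bound reduces to $\mu_d(\deg\calK)$, which is the claimed connectivity.

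Finally, because $\mu_d(l)\to\infty$ as $l\to\infty$, the second assertion follows immediately. The only substantive content here is the input Proposition \ref{81197}; the rest is a mechanical composition of the Morse method for categories with the core connectivity bound, so I expect no obstacle in the writeup itself — any difficulty is hidden inside \ref{81197}.
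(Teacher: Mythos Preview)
Your proposal is correct and follows essentially the same route as the paper: build $\op{Corona}(\calK)$ from $\op{Core}(\calK)$ via the Morse function $f$, invoke Proposition \ref{81197} for the descending links, apply Theorem \ref{34332}, and then use the core connectivity from Corollary \ref{13058} together with the elementary inequality $\nu_d(l)\geq\mu_d(l)$. The paper states this argument in the paragraph immediately preceding the corollary rather than as a separate proof, but the content is identical.
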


In the rest of this subsubsection, we give a proof of Proposition \ref{81197}. We distinguish between two sorts of objects
$(\calY,\alpha)$ in $\op{Corona}(\calK)$ which are not objects in $\op{Core}(\calK)$: Such an object is called
\emph{mixed} if there is at least one very elementary operation in $\alpha$. It is called \emph{pure} if there is no 
very elementary operation in $\alpha$.

\begin{lem}\label{65633}
	Let $(\calY,\alpha)$ be mixed. Then $\overline{lk}_\downa(\calY,\alpha)$ and therefore $lk_\downa(\calY,\alpha)$ is
	contractible. In particular, Proposition \ref{81197} is true for mixed objects.
\end{lem}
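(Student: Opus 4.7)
The plan is to produce an initial object in $\overline{lk}_\downa(\calY,\alpha)$, so that $\overline{lk}_\downa(\calY,\alpha)$ has contractible classifying space by item (i) of Subsection~\ref{03887}; the join $lk_\downa(\calY,\alpha) = \overline{lk}_\downa(\calY,\alpha)\ast\underline{lk}_\downa(\calY,\alpha)$ will then automatically be contractible, since the join of a contractible space with any other space is contractible.

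First I would pick a representative $(\theta_1,\ldots,\theta_n)$ of $\alpha$ in the normal form of Subsection~\ref{88061} and define the candidate $(\calY^!,\alpha^!)$ by replacing every strictly elementary $\theta_k$ with $\deg(\theta_k)$ identity operations on its input colors, while leaving every very elementary $\theta_k$ and every degree-$1$ operation in place. Since $\alpha$ is mixed, at least one very elementary operation survives in $\alpha^!$, so $\alpha^!$ is not a transformation; as every remaining higher-degree operation is very elementary, $(\calY^!,\alpha^!)$ lies in $\op{Core}(\calK)$. Being in the Morse base $\calX_0=\op{Core}(\calK)$, it is automatically an object of $\overline{lk}_\downa(\calY,\alpha)$, equipped with the canonical arrow $\alpha^!\to\alpha$ given by the refinement $\delta^!$ that re-assembles the stripped strictly elementary operations; no Morse comparison is needed.

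The crux is showing initiality: for every $(\calY',\alpha')\in\overline{lk}_\downa(\calY,\alpha)$ I must supply the (necessarily unique, since $\calU/\calG$ is a generalized poset) arrow $\alpha^!\to\alpha'$, equivalently a factorization $\alpha'=\alpha^!\ast\epsilon$. From the accompanying $\alpha=\alpha'\ast\delta'$ I would build $\epsilon$ slot by slot, using Proposition~\ref{07546} (stability of elementarity under right transformation) and the uniqueness of normal forms: very elementary slots of $\alpha^!$ are matched to the corresponding very elementary pieces of $\alpha'$ up to degree-$1$ adjustments absorbed into $\epsilon$, and strictly elementary operations of $\alpha'$ are inserted into $\epsilon$ against identity slots of $\alpha^!$.

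The main obstacle will be the \emph{sub-case b} scenario in which $\alpha'$ has degraded a very elementary operation $\psi$ of $\alpha$ entirely into identities on its input colors, so that $\alpha^!$ still carries $\psi$ at a slot where $\alpha'$ carries only identities, na\"ively blocking the factorization. The Morse condition $f(\alpha')<f(\alpha)$ plays a decisive role here: a pure sub-case-b degradation of $\psi$ would leave every strictly elementary coordinate of $f$ unchanged while strictly increasing the final coordinate $\deg(\calY')$, forcing $f(\alpha')>f(\alpha)$ and excluding $\alpha'$ from the descending up link. Hence any such $\alpha'$ must simultaneously decompose at least one strictly elementary operation of $\alpha$, and the cancellative calculus of fractions (Definition~\ref{12985}, Proposition~\ref{20889}) together with the equalization clause of Definition~\ref{81202} allow $\epsilon$ to fold the stray identities back around $\psi$'s slot while absorbing the strictly elementary refinements that $\alpha'$ introduces. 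Once initiality is established, contractibility of $\overline{lk}_\downa(\calY,\alpha)$ and thus of $lk_\downa(\calY,\alpha)$ follow.
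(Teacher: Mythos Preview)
Your candidate $(\calY^!,\alpha^!)$---which is exactly the object $\Xi=(\calY^v,\alpha^v)$ in the paper's proof---is \emph{not} initial in $\overline{lk}_\downa(\calY,\alpha)$, and the gap is precisely in the scenario you flag as ``sub-case b''. Your Morse argument only rules out the \emph{pure} sub-case-b degradation; it does not rule out the mixed one, and in the mixed one initiality genuinely fails.

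Here is a concrete obstruction. Suppose $\alpha$ has exactly two higher-degree operations: a very elementary $\psi$ and a strictly elementary $\theta$. Let $\beta_1$ replace $\psi$ by identities (so $\psi$ is pushed into $\beta_2$) while splitting $\theta$ into very elementary pieces. The split of $\theta$ strictly decreases the coordinate $\#_{se}^{\deg\theta}$ of $f$, so $f(\calL,\beta_1)<f(\calY,\alpha)$ regardless of what happens to the last coordinate; thus $\Omega=\big((\calL,\beta_1),\beta_2\big)$ lies in $\overline{lk}_\downa(\calY,\alpha)$. An arrow $\Xi\to\Omega$ would require $\epsilon\co\calY^!\to\calL$ with $\alpha^!\ast\epsilon=\beta_1$. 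But at the $\psi$-slot, $\alpha^!$ has already applied $\psi$, collapsing $\deg(\psi)\geq 2$ inputs to a single output colour, whereas $\beta_1$ keeps those inputs as separate identity strands. No arrow in $\calS(\calO)$ (and hence none in $\calU/\calG$) can re-expand one strand into several: every operation has one output. So no such $\epsilon$ exists. The appeal to the calculus of fractions does not help here; square filling and equalization produce common refinements and coequalize parallel arrows, but they cannot manufacture an arrow that increases degree at a slot.

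The paper handles this by abandoning initiality and using item~(iii) of Subsection~\ref{03887} instead of item~(i). It defines a functor $F$ on $\overline{lk}_\downa(\calY,\alpha)$ which, given $\Omega$, re-inserts into $\beta_1$ every very elementary operation of $\alpha$ that $\Omega$ had dropped, while keeping whatever splitting $\Omega$ performed on the strictly elementary operations. Then there is an arrow $\iota_\Omega\co\Omega\to F(\Omega)$ (re-apply the dropped $\psi$'s) and an arrow $\xi_\Omega\co\Xi\to F(\Omega)$ (perform the strictly-elementary splits that $\Omega$ made). This zigzag $\Omega\to F(\Omega)\leftarrow\Xi$, natural in $\Omega$, contracts the poset to $\Xi$. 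Your construction of $\Xi$ is correct and is the right anchor object; what is missing is the intermediate functor $F$.
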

\begin{proof}
	The data of an object in $\overline{lk}_\downa(\calY,\alpha)$ is $\Omega=\big((\calL,\beta_1),\beta_2\big)$ where $\calL$
	is an object in $\calU/\calG$, $\beta_1$ is an elementary arrow in $\calU/\calG$, $\beta_2$ is an arrow in
	$\calU/\calG$ such that $\beta_1\beta_2=\alpha$ and $(\calL,\beta_1)$ forms an object in $\op{Corona}(\calK)$
	of strictly smaller Morse height than $(\calY,\alpha)$. Let $\Omega'=\big((\calL',\beta'_1),\beta'_2\big)$ be another
	such object. An arrow $\Omega\rightarrow\Omega'$ is represented by an arrow $\delta\co\calL\rightarrow\calL'$ such that
	$\beta_1\delta=\beta'_1$ and $\delta\beta'_2=\beta_2$.
	\begin{displaymath}\xymatrix@-8pt{
		\calK\ar[rrrrrrrr]^{\alpha}\ar[rrdd]_{\beta_1}\ar[rrrrrrdd]_/-15pt/{\beta'_1}&&&&&&&&\calY\\
		&&&&&&&&\\
		&&\calL\ar[uurrrrrr]_/15pt/{\beta_2}\ar[rrrr]_{\delta}&&&&\calL'\ar[uurr]_{\beta'_2}&&
	}\end{displaymath}
	It follows that $\overline{lk}_\downa(\calY,\alpha)$ is a poset since $\calU/\calG$ is a poset.
	
	Choose representatives $K$ and $Y$ of $\calK$ and $\calY$. Then $\alpha$ is represented by a unique arrow $a\co K\rightarrow Y$. 
	We can choose $K$ such that $a$ is a tensor product of higher degree operations and identities. Let $a^v\co K\rightarrow Y^v$ be the 
	arrow obtained from $a$ by replacing all strictly elementary operations $\theta$ with $\deg(\theta)$ identity operations. 
	Let $a^{se}\co Y^v\rightarrow Y$ be the arrow obtained from $a$ by replacing all very elementary operations by one identity operation each.
	We have $a^va^{se}=a$.
	An example of $a,a^v,a^{se}$ is pictured below. There, a white triangle is a placeholder for a strictly elementary operation.
	A black triangle indicates a very elementary operation. A straight horizontal line represents an identity operation.
	\ifthenelse{\boolean{drawtikz}}{
	\begin{center}
		\begin{tikzpicture}[scale=0.12]
		\node at (-20,5.2) {$a$};
		\draw (-18,0.5) -- (-21,3) -- (-21,-2) -- cycle;
		\draw (-21,2) -- (-22,2);
		\draw (-21,1) -- (-22,1);
		\draw (-21,0) -- (-22,0);
		\draw (-21,-1) -- (-22,-1);
		\filldraw[fill=black] (-21,-3) -- (-19,-4.5) -- (-21,-6) -- cycle;
		\draw (-21,-4) -- (-22,-4);
		\draw (-21,-5) -- (-22,-5);
		\draw (-21,-7) -- (-18,-9) -- (-21,-11) -- cycle;
		\draw (-21,-8) -- (-22,-8);
		\draw (-21,-9) -- (-22,-9);
		\draw (-21,-10) -- (-22,-10);
		\filldraw[fill=black] (-21,-13) -- (-19,-14.5) -- (-21,-16) -- cycle;
		\draw (-21,-14) -- (-22,-14);
		\draw (-21,-15) -- (-22,-15);
		\draw  (-23,4) rectangle (-17,-17);
		\node at (-4,5.2) {$a^v$};
		\draw (-3,2) -- (-6,2);
		\draw (-3,1) -- (-6,1);
		\draw (-3,0) -- (-6,0);
		\draw (-3,-1) -- (-6,-1);
		\filldraw[fill=black] (-5,-3) -- (-3,-4.5) -- (-5,-6) -- cycle;
		\draw (-5,-4) -- (-6,-4);
		\draw (-5,-5) -- (-6,-5);
		\draw (-3,-8) -- (-6,-8);
		\draw (-3,-9) -- (-6,-9);
		\draw (-3,-10) -- (-6,-10);
		\filldraw[fill=black] (-5,-13) -- (-3,-14.5) -- (-5,-16) -- cycle;
		\draw (-5,-14) -- (-6,-14);
		\draw (-5,-15) -- (-6,-15);
		\draw  (-7,4) rectangle (-1,-17);
		\node at (4,5.2) {$a^{se}$};
		\draw (6,0.5) -- (3,3) -- (3,-2) -- cycle;
		\draw (3,2) -- (2,2);
		\draw (3,1) -- (2,1);
		\draw (3,0) -- (2,0);
		\draw (3,-1) -- (2,-1);
		\draw (5,-4.5) -- (2,-4.5);
		\draw (3,-7) -- (6,-9) -- (3,-11) -- cycle;
		\draw (3,-8) -- (2,-8);
		\draw (3,-9) -- (2,-9);
		\draw (3,-10) -- (2,-10);
		\draw (5,-14.5) -- (2,-14.5);
		\draw  (1,4) rectangle (7,-17);
		\draw (-22,-12) -- (-19,-12);
		\draw (-6,-12) -- (-3,-12);
		\draw (2,-12) -- (5,-12);
		\end{tikzpicture}
	\end{center}	
	}{TIKZ}
	Set $\calY^v:=[Y^v]$ and $\alpha^v:=[a^v]$ as well as $\alpha^{se}:=[a^{se}]$. Then $(\calY^v,\alpha^v)$ is an object in 
	$\op{Core}(\calK)$ and $\alpha^{se}$ represents an arrow $(\calY^v,\alpha^v)\rightarrow(\calY,\alpha)$ in $\op{Corona}(\calK)$.
	Moreover, the pair $\Xi:=\big((\calY^v,\alpha^v),\alpha^{se}\big)$ is an object in $\overline{lk}_\downa(\calY,\alpha)$.
	
	Let $\Omega=\big((\calL,\beta_1),\beta_2\big)$ be an object in $\overline{lk}_\downa(\calY,\alpha)$.
	We define another object $F(\Omega)=\big((\calM,\gamma_1),\gamma_2\big)$ of $\overline{lk}_\downa(\calY,\alpha)$ 
	as follows: Choose a representative $L$ of $\calL$ such that $\beta_2$ is represented by $b_2\co L\rightarrow Y$ which is
	a tensor product of identities and higher degree operations. Then $\beta_1$ is represented by a unique $b_1\co K\rightarrow L$. Note that 
	$b_1b_2=a$. Think of $b_1$ as splitting higher degree operations of $a$ into operations of smaller degree and of $b_2$ as merging
	them back to their original form. Now define the arrows $g_1\co K\rightarrow M$ and $g_2\co M\rightarrow Y$ to be the same splitting 
	of $a$ with the only exception that no very elementary operation of $a$ is splitted. An example fitting to the example above is pictured 
	below. There, a gray triangle is a placeholder for an elementary operation or a degree $1$ operation, a blue triangle can be any operation 
	and a dot on a straight horizontal line indicates a possibly non-trivial degree $1$ operation.
	\ifthenelse{\boolean{drawtikz}}{
	\begin{center}
		\begin{tikzpicture}[scale=0.12]
		\node at (-35.5,6.5) {$b_1$};
		\filldraw[fill=gray] (-33,-0.5) -- (-35,0.5) -- (-35,-1.5) -- cycle;
		\draw (-35,3) -- (-39,3);
		\draw (-35,2) -- (-36,2);
		\draw (-35,0) -- (-36,0);
		\draw (-35,-1) -- (-39,-1);
		\draw (-33,-4) -- (-36,-4);
		\draw (-33,-5) -- (-36,-5);
		\draw (-35,-7) -- (-32,-9) -- (-35,-11) -- cycle;
		\draw (-35,-8) -- (-39,-8);
		\draw (-35,-9) -- (-39,-9);
		\draw (-35,-10) -- (-39,-10);
		\filldraw[fill=black] (-35,-13) -- (-33,-14.5) -- (-35,-16) -- cycle;
		\draw (-35,-14) -- (-39,-14);
		\draw (-35,-15) -- (-39,-15);
		\filldraw[fill=gray] (-33,2.5) -- (-35,3.5) -- (-35,1.5) -- cycle;
		\node at (-26,6.5) {$b_2$};
		\draw (-27,2.5) -- (-28,2.5);
		\draw (-27,-0.5) -- (-28,-0.5);
		\draw (-27,-4) -- (-28,-4);
		\draw (-27,-5) -- (-28,-5);
		\draw (-25,-9) -- (-28,-9);
		\draw (-25,-14.5) -- (-28,-14.5);
		\filldraw[fill=black] (-27,-3) -- (-27,-6) -- (-25,-4.5) -- cycle;
		\filldraw[fill=cyan] (-27,3.5) -- (-27,-1.5) -- (-24,1) -- cycle;
		\node at (-11.5,6.5) {$g_1$};
		\filldraw[fill=gray] (-9,-0.5) -- (-11,0.5) -- (-11,-1.5) -- cycle;
		\draw (-11,3) -- (-15,3);
		\draw (-11,2) -- (-12,2);
		\draw (-11,0) -- (-12,0);
		\draw (-11,-1) -- (-15,-1);
		\draw (-11,-4) -- (-15,-4);
		\draw (-11,-5) -- (-15,-5);
		\draw (-11,-7) -- (-8,-9) -- (-11,-11) -- cycle;
		\draw (-11,-8) -- (-15,-8);
		\draw (-11,-9) -- (-15,-9);
		\draw (-11,-10) -- (-15,-10);
		\filldraw[fill=black] (-11,-13) -- (-9,-14.5) -- (-11,-16) -- cycle;
		\draw (-11,-14) -- (-15,-14);
		\draw (-11,-15) -- (-15,-15);
		\filldraw[fill=gray] (-9,2.5) -- (-11,3.5) -- (-11,1.5) -- cycle;
		\filldraw[fill=black] (-11,-3) -- (-11,-6) -- (-9,-4.5) -- cycle;
		\node at (-2,6.5) {$g_2$};
		\draw (-3,2.5) -- (-4,2.5);
		\draw (-3,-0.5) -- (-4,-0.5);
		\draw (-1,-4.5) -- (-4,-4.5);
		\draw (-1,-9) -- (-4,-9);
		\draw (-1,-14.5) -- (-4,-14.5);	
		\filldraw[fill=cyan] (-3,3.5) -- (-3,-1.5) -- (0,1) -- cycle;
		\draw  (-40,5) rectangle (-31,-17);
		\draw  (-29,5) rectangle (-23,-17);
		\draw  (-16,5) rectangle (-7,-17);
		\draw  (-5,5) rectangle (1,-17);
		\draw (-39,-12) -- (-33,-12);
		\draw (-28,-12) -- (-25,-12);
		\draw (-15,-12) -- (-9,-12);
		\draw (-4,-12) -- (-1,-12);
		\filldraw (-34.5,-4) circle (6pt);
		\filldraw (-34.5,-5) circle (6pt);
		\draw (-12,0) .. controls (-13,0) and (-14,2) .. (-15,2);
		\draw[white, line width=2pt] (-12,2) .. controls (-13,2) and (-14,0) .. (-15,0);
		\draw (-12,2) .. controls (-13,2) and (-14,0) .. (-15,0);
		\draw (-36,-5) .. controls (-37,-5) and (-38,-4) .. (-39,-4);
		\draw[white, line width=2pt] (-36,-4) .. controls (-37,-4) and (-38,-5) .. (-39,-5);
		\draw (-36,-4) .. controls (-37,-4) and (-38,-5) .. (-39,-5);
		\draw (-36,0) .. controls (-37,0) and (-38,2) .. (-39,2);
		\draw[white, line width=2pt] (-36,2) .. controls (-37,2) and (-38,0) .. (-39,0);
		\draw (-36,2) .. controls (-37,2) and (-38,0) .. (-39,0);
		\end{tikzpicture}
	\end{center}
	}{TIKZ}
	Now set $\calM:=[M]$ and $\gamma_1:=[g_1]$ as well as $\gamma_2=[g_2]$.

	It is not hard to see that $\Omega\mapsto F(\Omega)$ extends to a functor 
	$\overline{lk}_\downa(\calY,\alpha)\rightarrow\overline{lk}_\downa(\calY,\alpha)$ which means that whenever we have 
	an arrow $\delta\co\Omega\rightarrow\Omega'$, then there is an arrow $\Delta\co F(\Omega)\rightarrow F(\Omega')$.
	\begin{displaymath}\xymatrix@-12pt{
		&&&& \calM'\ar[rrrrddd]^{\gamma'_2}&&&&\\
		&&&& \calM\ar[rrrrdd]_{\gamma_2}\ar@{.>}[u]^{\Delta}&&&&\\
		&&&&&&&&\\	
		\calK\ar[uurrrr]_{\gamma_1}\ar[uuurrrr]^{\gamma'_1}\ar[rrrrrrrr]^{\alpha}\ar[rrdd]_{\alpha^v}
		\ar[rrrrrrdd]^{\beta'_1}\ar[rrrrrrddd]_/40pt/{\beta_1}&&&&&&&&\calY\\
		&&&&&&&&\\
		&&\calY^v\ar[uurrrrrr]_/14pt/{\alpha^{se}}&&&& \calL'\ar[uurr]^/-14pt/{\beta'_2}&&\\
		&&&&&& \calL\ar[uuurr]_{\beta_2}\ar[u]^{\delta} &&
	}\end{displaymath}
	We also have arrows $\xi_\Omega\co \Xi\rightarrow F(\Omega)$ and $\iota_\Omega\co \Omega\rightarrow F(\Omega)$.
	\begin{displaymath}\xymatrix@-12pt{
		&&&& \calM\ar[rrrrdd]^{\gamma_2}&&&&\\
		&&&&&&&&\\
		\calK\ar[rrrrrrrr]^{\alpha}\ar[rrdd]_{\alpha^v}
		\ar[rrrrrrdd]_/-38pt/{\beta_1}\ar[uurrrr]^{\gamma_1}&&&&&&&&\calY\\
		&&&&&&&&\\
		&&\calY^v\ar[uurrrrrr]_/38pt/{\alpha^{se}}
		\ar[uuuurr]^/20pt/{\xi_\Omega}&&&&
		\calL\ar[uurr]_{\beta_2}\ar[uuuull]_/20pt/{\iota_\Omega}&&
	}\end{displaymath}
	The arrow $\xi_\Omega$ is represented by an arrow $x_\Omega\co Y^v\rightarrow M$ which satisfies $a^v x_\Omega=g_1$ and 
	$x_\Omega g_2=a^{se}$. The arrow $\iota_\Omega$ is represented by $i_\Omega\co L\rightarrow M$ which satisfies $b_1 i_\Omega=g_1$ 
	and $i_\Omega g_2=b_2$. In the example from above, these arrows look as follows:
	\ifthenelse{\boolean{drawtikz}}{
	\begin{center}
		\begin{tikzpicture}[scale=0.12]
		\node at (-16.5,6.5) {$x_\Omega$};
		\filldraw[fill=gray] (-14,-0.5) -- (-16,0.5) -- (-16,-1.5) -- cycle;
		\draw (-16,3) -- (-20,3);
		\draw (-16,2) -- (-17,2);
		\draw (-16,0) -- (-17,0);
		\draw (-16,-1) -- (-20,-1);
		\draw (-14,-4.5) -- (-20,-4.5);	
		\draw (-16,-7) -- (-13,-9) -- (-16,-11) -- cycle;
		\draw (-16,-8) -- (-20,-8);
		\draw (-16,-9) -- (-20,-9);
		\draw (-16,-10) -- (-20,-10);
		\draw (-14,-14.5) -- (-20,-14.5);
		\filldraw[fill=gray] (-14,2.5) -- (-16,3.5) -- (-16,1.5) -- cycle;
		\node at (0,6.5) {$i_\Omega$};	
		\draw (1,2.5) -- (-2,2.5);	
		\draw (1,-0.5) -- (-2,-0.5);	
		\draw (-1,-4) -- (-2,-4);
		\draw (-1,-5) -- (-2,-5);	
		\draw (1,-9) -- (-2,-9);	
		\draw (1,-14.5) -- (-2,-14.5);		
		\filldraw[fill=black] (-1,-3) -- (-1,-6) -- (1,-4.5) -- cycle;
		\draw  (-21,5) rectangle (-12,-17);
		\draw  (-3,5) rectangle (3,-17);
		\draw (-20,-12) -- (-14,-12);
		\draw (-2,-12) -- (1,-12);
		\draw (-17,0) .. controls (-18,0) and (-19,2) .. (-20,2);
		\draw[white, line width=2pt] (-20,0) .. controls (-19,0) and (-18,2) .. (-17,2);
		\draw (-20,0) .. controls (-19,0) and (-18,2) .. (-17,2);
		\end{tikzpicture}
	\end{center}
	}{TIKZ}
	The claim of the proposition now follows from item iii) in Subsection \ref{03887} applied to the functor $F$ and the 
	object $\Xi$.
\end{proof}

\begin{lem}\label{15097}
	Let $(\calY,\alpha)$ be pure. Then $lk_\downa(\calY,\alpha)$ is $\mu_d(\deg\calK)$-connected and Proposition \ref{81197} 
	is true for pure objects.
\end{lem}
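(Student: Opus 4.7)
The plan is to prove the stronger statement that $\underline{lk}_\downa(\calY,\alpha)$ is already contractible for pure $(\calY,\alpha)$, which together with the join decomposition $lk_\downa(\calY,\alpha) = \overline{lk}_\downa(\calY,\alpha) * \underline{lk}_\downa(\calY,\alpha)$ will yield contractibility of the whole descending link and in particular $\mu_d(\deg\calK)$-connectivity. The argument is analogous in spirit to Lemma \ref{65633}, but with the roles reversed: since $\alpha$ now contains no very elementary operation, we cannot isolate a very elementary part of $a$ as pivot; instead we use that every strictly elementary operation decomposes into very elementary ones.

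First I would construct a distinguished object $\Xi \in \underline{lk}_\downa(\calY,\alpha) \cap \op{Core}(\calK)$. Fix a representative $a\co K \rightarrow Y$ of $\alpha$ in normal form, so that $a$ is a tensor product of identities and strictly elementary operations. Using the fact that every class in $\mathcal{TC}^*(\calO)$ decomposes into very elementary classes, I decompose each strictly elementary operation in $a$ into a partial composition of very elementary ones; this yields a refinement $a \cdot s = a^*\co K \rightarrow Y^*$ with $a^*$ very elementary. Setting $\calY^* = [Y^*]$, $\alpha^* = [a^*]$ and $\sigma = [s]$, the pair $\Xi := ((\calY^*, \alpha^*), \sigma)$ lies in $\underline{lk}_\downa(\calY,\alpha)$ with Morse value $(0,\ldots,0,\deg \calY^*)$, strictly below that of $(\calY,\alpha)$ since the latter has at least one strictly elementary operation.

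Second I would define an endofunctor $F\co \underline{lk}_\downa(\calY,\alpha) \rightarrow \underline{lk}_\downa(\calY,\alpha)$ sending an object $\Omega = ((\calL, \beta_1), \beta_2)$ to a common refinement $F(\Omega) = ((\calM, \gamma_1), \gamma_2)$, obtained from $\beta_1$ by further splitting all remaining strictly elementary operations into very elementary ones in a manner compatible with $\sigma$. The resulting $\gamma_1$ is very elementary, so $(\calM,\gamma_1) \in \op{Core}(\calK)$, and there are canonical arrows $\iota_\Omega\co \Omega \rightarrow F(\Omega)$ (the residual splitting after $\beta_2$) and $\xi_\Omega\co \Xi \rightarrow F(\Omega)$ (completing $\sigma$ to $\gamma_1$). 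One checks that these assemble into natural transformations $\id \rightarrow F$ and $\mathrm{const}_\Xi \rightarrow F$, using Proposition \ref{07546} (right-invariance of the very elementary property) together with uniqueness of arrows in the generalized poset $\calU/\calG$. Item iii) of Subsection \ref{03887} then gives contractibility of $\underline{lk}_\downa(\calY,\alpha)$, and the join decomposition finishes the proof.

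The main obstacle will be establishing the coherence of the refinement $F(\Omega)$ across different objects $\Omega$: the decomposition of a strictly elementary operation into very elementary ones is in general not unique, so one must verify that any two coherent extensions of $\beta_1$ and $\sigma$ agree after passing to $\calU/\calG$. This is where the cancellative calculus of fractions, the square filling property (via Observation \ref{77706}), and Proposition \ref{07546} combine: square filling lets us construct a common refinement of any two splittings, while equalization in $\calU$ together with the absorption of transformations into the subgroupoid $\calG$ guarantees that this common refinement is well-defined as a single object of $\underline{lk}_\downa(\calY,\alpha)$.
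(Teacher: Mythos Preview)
Your proposed argument has a genuine gap at the very first step: the object $\Xi$ you construct does not lie in $\underline{lk}_\downa(\calY,\alpha)$, and in fact no object of the descending down link can lie in $\op{Core}(\calK)$.

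An object of $\underline{lk}_\downa(\calY,\alpha)$ is of the form $\big((\calL,\beta_1),\beta_2\big)$ with $\beta_2\co\calY\rightarrow\calL$ and $\beta_1=\alpha\beta_2$. Thus every higher degree operation of a representative $a$ of $\alpha$ survives (possibly composed on the right with a degree~$1$ operation, or absorbed into a larger composite) inside a representative of $\beta_1$. By Proposition~\ref{07546}, a strictly elementary operation stays strictly elementary under right multiplication by degree~$1$ operations; and if a higher degree operation of $b_2$ absorbs the output of a strictly elementary $\theta$ of $a$, the resulting transformation class strictly dominates $[\theta]$ and is therefore not very elementary either (and in fact forces the Morse height to go up, so this does not occur in the descending down link). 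Hence $\beta_1$ always contains exactly the same strictly elementary operations as $\alpha$, and $(\calL,\beta_1)$ is never in $\op{Core}(\calK)$. In particular your $a^*=a\cdot s$ cannot be very elementary. What ``decomposing $\theta$ into very elementary operations'' actually produces is a \emph{factorization} $a=c\cdot d$ with $c$ very elementary, i.e.\ an object of the descending \emph{up} link, not a post-composition.

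Consequently $\underline{lk}_\downa(\calY,\alpha)$ is not contractible in general; it is (in the symmetric/braided case) isomorphic to the arc complex $\mathcal{AC}_d(C,A;T')$ on the identity strands of $a$, whose connectivity is only $\nu_d(\mathfrak{l})$ with $\mathfrak{l}$ the number of identity operations. The paper's proof instead analyses both halves of the link separately: the up link decomposes as a Grothendieck join $\calA_1\circ\cdots\circ\calA_{\mathfrak{n}}$ over the $\mathfrak{n}$ strictly elementary operations of $a$ (each $\calA_i$ non-empty because strictly elementary operations admit non-trivial splittings), giving connectivity $\mathfrak{n}-2$; the down link is the arc complex above (or, in the planar case, a dual Grothendieck join of arc complexes over the identity components). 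The join of the two then yields connectivity at least $\mathfrak{n}+\nu_d(\mathfrak{l})$, and an arithmetic estimate using $\mathfrak{n}m_E+\mathfrak{l}\geq\deg\calK$ gives the bound $\mu_d(\deg\calK)$. You need this quantitative trade-off between $\mathfrak{n}$ and $\mathfrak{l}$; there is no soft contractibility argument available here.
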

\begin{proof}
	Choose representatives $K$ and $Y$ of $\calK$ and $\calY$ such that $a\co K\rightarrow Y$ representing $\alpha$ is a tensor 
	product of higher degree operations and identities.
	
	First observe the descending up link $\overline{lk}_\downa(\calY,\alpha)$. An object in $\overline{lk}_\downa(\calY,\alpha)$ 
	is a pair $\big((\calL,\beta_1),\beta_2\big)$ with $f(\calL,\beta_1)<f(\calY,\alpha)$ and $\beta_1\beta_2=\alpha$. When
	choosing a representative $L$ of $\calL$, we get unique representatives $b_1\co K\rightarrow L$ of $\beta_1$ and 
	$b_2\co L\rightarrow Y$ of $\beta_2$ such that $b_1b_2=a$. As in the proof of the previous lemma, $b_1$ can be interpreted 
	as splitting higher degree operations of $a$ into operations of smaller degree and $b_2$ as merging them back to their original
	form. Denote by $\calA_i$ the full subcategory of $\overline{lk}_\downa(\calY,\alpha)$ spanned by the objects which only split 
	the $i$'th higher degree operation in $a$. Denote by $\mathfrak{n}$ the number of higher degree operations in $a$. Observe now
	that when splitting operations in $a$ one by one, then we can also split all that operations at once. This observation reveals that
	\[\overline{lk}_\downa(\calY,\alpha)=\calA_1\circ\ldots\circ\calA_{\mathfrak{n}}\]
	is the Grothendieck join of the $\calA_i$ (see Subsection \ref{06663}). Note that the categories 
	$\calA_i$ are all non-empty since all the higher degree operations in $a$ are elementary but not very elementary and
	splitting such a strictly elementary operation decreases the Morse height. Thus, $\overline{lk}_\downa(\calY,\alpha)$
	is $(\mathfrak{n}-2)$-connected.
	
	Now look at the descending down link $\underline{lk}_\downa(\calY,\alpha)$. Objects are pairs
	$\big((\calL,\beta_1),\beta_2\big)$ with $f(\calL,\beta_1)<f(\calY,\alpha)$ and $\alpha\beta_2=\beta_1$. 
	When choosing a representative $L$ of $\calL$, we get unique representatives $b_1\co K\rightarrow L$ of $\beta_1$ and
	$b_2\co Y\rightarrow L$ of $\beta_2$ such that $a b_2=b_1$. Looking at the Morse function $f$ for the corona, 
	one sees that the higher degree operations of $b_2$ must be very elementary operations which only compose with 
	identity operations of $a$. At this point, we have to distinguish between the planar case on the one hand and the
	braided resp.~symmetric case on the other.
	
	We start with the braided resp.~symmetric case: The arguments in the proof of Proposition \ref{19549} reveal
	that $\underline{lk}_\downa(\calY,\alpha)$ is isomorphic to $\mathcal{AC}_d(C,A;T')$ where $T'$ is the color
	word obtained from the codomain of $a$ (viewed as an arrow in $\calS$) after deleting the higher degree operations. 
	Denote by $\mathfrak{l}$ the length of $T'$, i.e.~the number of identity operations in $a$. 
	Then we already know that $\mathcal{AC}_d(C,A;T')$
	is $\nu_d(\mathfrak{l})$-connected (compare with Corollary \ref{13058}). Consequently, the connectivity of the 
	descending link $lk_\downa(\calY,\alpha)=\overline{lk}_\downa(\calY,\alpha)*\underline{lk}_\downa(\calY,\alpha)$ is
	\begin{align*}
		\mathfrak{n}+\nu_d(\mathfrak{l}) &= \mathfrak{n}+\left\lfloor\frac{\mathfrak{l}-m_C}{2m_V-1}\right\rfloor-1\\
			&\geq \mathfrak{n}+\left\lfloor\frac{\deg\calK-\mathfrak{n}m_E-m_C}{2m_V-1}\right\rfloor-1\\
			&\geq \mathfrak{n}+\left\lfloor\frac{\deg\calK-\mathfrak{n}m_E-m_C}{2m_V+m_E}\right\rfloor-1\\
			&= \left\lfloor\frac{\deg\calK-m_C+2m_V\mathfrak{n}}{2m_V+m_E}\right\rfloor-1\\
			&\geq \left\lfloor\frac{\deg\calK-m_C}{2m_V+m_E}\right\rfloor-1\\
			&= \mu_d(\deg\calK)
	\end{align*}
	where we have used that $\mathfrak{n}m_E+\mathfrak{l}\geq \deg\calK$.	
	
	Now we turn to the planar case: An identity component in $a$ is a maximal subsequence of identity operations. 
	Let $\mathfrak{m}$ be the number of identity components and denote by $\mathfrak{l}_i$ for $i=1,\ldots,\mathfrak{m}$ 
	the length of the $i$'th identity component. Denote by $\mathfrak{l}$ the total number of identity operations in $a$,
	i.e.~the sum of the $\mathfrak{l}_i$. Define $\calB_i$ to be the full subcategory of $\underline{lk}_\downa(\calY,\alpha)$
	spanned by the objects which only add very elementary operations into the $i$'th identity component. Observe now that when 
	adding very elementary operations into different identity components one by one, then we can also add all that operations 
	at once. This reveals that $\underline{lk}_\downa(\calY,\alpha)$ is the Grothendieck join of the $\calB_i$. Note, though,
	when inspecting the direction of the arrows in $\underline{lk}_\downa(\calY,\alpha)$, one sees that it is in fact
	the dual Grothendieck join. So we have
	\[\underline{lk}_\downa(\calY,\alpha)=\calB_1\bullet\ldots\bullet\calB_{\mathfrak{m}}\]
	Similarly as in the braided resp.~symmetric case, $\calB_i$ is isomorphic to $\mathcal{AC}_1(C,A;T_i)$ where $T_i$ is the
	color word obtained from the codomain of $a$ after deleting all operations except the identity operations of the $i$'th
	identity component. The length of $T_i$ is $\mathfrak{l}_i$. Then we already know that $\mathcal{AC}_1(C,A;T_i)$ is
	$\nu_1(\mathfrak{l}_i)$-connected. Therefore, the connectivity of $\underline{lk}_\downa(\calY,\alpha)$ is at least
	\[2\mathfrak{m}-2+\sum_{j=1}^\mathfrak{m}\nu_1(\mathfrak{l}_j)\]
	Thus, the connectivity of $lk_\downa(\calY,\alpha)$ is at least
	\begin{align*}
		\mathfrak{n}+2\mathfrak{m}-2+\sum_{j=1}^\mathfrak{m}\nu_1(\mathfrak{l}_j) &\geq
		\mathfrak{n}+\mathfrak{m}-2+\sum_{j=1}^\mathfrak{m}\left\lfloor\frac{\mathfrak{l}_j-m_C}{2m_V+m_C}\right\rfloor\\
		&\geq \mathfrak{n}-2+\sum_{j=1}^\mathfrak{m}\frac{\mathfrak{l}_j-m_C}{2m_V+m_C}\\
		&= \mathfrak{n}-2+\frac{\mathfrak{l}-\mathfrak{m}m_C}{2m_V+m_C}\\
		&\geq \mathfrak{n}-2+\frac{\deg\calK-\mathfrak{n}m_E-(\mathfrak{n}+1)m_C}{2m_V+m_C}\\
		&\geq \mathfrak{n}+\frac{\deg\calK-\mathfrak{n}m_E-\mathfrak{n}m_C-m_C}{2m_V+m_C+m_E}-2\\
		&= \frac{\deg\calK-m_C+2m_V\mathfrak{n}}{2m_V+m_C+m_E}-2\\
		&\geq \frac{\deg\calK-m_C}{2m_V+m_C+m_E}-2\\
		&\geq \mu_1(\deg\calK)
	\end{align*}
	where we have used in the fourth step that $\mathfrak{m}\leq\mathfrak{n}+1$ and $\mathfrak{n}m_E+\mathfrak{l}\geq\deg\calK$.
\end{proof}

\subsubsection{The whole link}

In this last step, we show that the inclusion 
\[\op{Corona}(\calK)\subset\underline{lk}_\downa(\calK)\]
is a homotopy equivalence. It then follows from Corollary \ref{64879} that the connectivity of $\underline{lk}_\downa(\calK)$ tends to 
infinity as $\deg(\calK)\rightarrow\infty$ which is what we wanted to show in order to finish the proof of Theorem \ref{41762}.
This step is analogous to the reduction to the Stein space of elementary intervals 
in \cite{ste:gop}. We again apply the Morse method for categories to build $\underline{lk}_\downa(\calK)$ up from $\op{Corona}(\calK)$. 
The Morse function on objects of $\underline{lk}_\downa(\calK)$ which do not lie in $\op{Corona}(\calK)$ is given by
\[f\big((\calY,\alpha)\big):=-\deg(\calY)\]
We have $\underline{lk}_\downa(\calY,\alpha)=\emptyset$ and thus $lk_\downa(\calY,\alpha)=\overline{lk}_\downa(\calY,\alpha)$ with respect to 
this Morse function. Similarly as in the proofs of Lemmas \ref{65633} and \ref{15097}, we obtain
\[\overline{lk}_\downa(\calY,\alpha)=\calA_1\circ\ldots\circ\calA_{\mathfrak{n}}\]
where the $\calA_i$ are full subcategories of $\overline{lk}_\downa(\calY,\alpha)$ spanned by the objects which correspond to splitting
exactly one of the $\mathfrak{n}$ higher degree operations in a representative $a$ of $\alpha$. At least one of these
operations must be non-elementary since $(\calY,\alpha)$ is not an object in $\op{Corona}(\calK)$. Without loss of generality, 
assume that $\calA_1$ corresponds to such a non-elementary higher degree operation. If we show that $\calA_1$ is contractible, it 
follows that $\overline{lk}_\downa(\calY,\alpha)$ is contractible. Thus, we are building $\underline{lk}_\downa(\calK)$ up from 
$\op{Corona}(\calK)$ along contractible descending links and it follows from Theorem \ref{34332} that the inclusion 
$\op{Corona}(\calK)\subset\overline{lk}_\downa(\calK)$ is a homotopy equivalence. That $\calA_1$ is contractible follows from
Proposition \ref{29625} below.

First, we want to reinterprete the defining property of $E$ as the spine of $\mathcal{TC}^\ast(\calO)$ in terms of the category $\calU/\calG$.

\begin{lem}\label{90173}
	Let $\alpha\co\calK\rightarrow\calY$ be a non-elementary arrow in $\calU/\calG$ such that $\deg(\calK)=n>1$ and $\deg(\calY)=1$.
	Then there is a unique pair $(\alpha_1,\alpha_2)$ of arrows in $\calU/\calG$ (called the maximal elementary factorization of $\alpha$)
	such that $\alpha_2$ is elementary, $\alpha_1\alpha_2=\alpha$ and such that the following universal property is satisfied:
	Whenever $(\beta_1,\beta_2)$ is another pair with $\beta_2$ elementary and $\beta_1\beta_2=\alpha$ (called an elementary factorization
	of $\alpha$), then there is a unique arrow $\gamma$ with $\alpha_1\gamma=\beta_1$ and $\gamma\beta_2=\alpha_2$.
	\begin{displaymath}\xymatrix{
		&\calK\ar[dd]^\alpha\ar[ldd]_{\alpha_1}\ar[ddr]^{\beta_1}&\\
		&&\\
		\calQ\ar[r]_{\alpha_2}\ar@/_25pt/@{-->}[rr]^\gamma &\calY &\calP\ar[l]^{\beta_2}
	}\end{displaymath}
\end{lem}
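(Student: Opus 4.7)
The plan is to reduce the lemma to the defining property of the spine $E \subset \mathcal{TC}^\ast(\calO)$ (Definition \ref{24121}), via a correspondence between elementary factorizations of $\alpha$ and elementary classes $\Psi \in E$ lying below $\Theta := [\alpha]$.

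Since $\deg(\calY) = 1$, any representative $a\co K \to Y$ of $\alpha$ in $\calU$ is (up to the symmetric or braid action) a single higher degree operation, so it determines a well-defined transformation class $\Theta \in \mathcal{TC}^\ast(\calO)$ of degree $n$, and the hypothesis that $\alpha$ is non-elementary translates to $\Theta \notin E$. An elementary factorization $(\beta_1, \beta_2)$ of $\alpha$, with $\beta_2\co \calP \to \calY$ elementary, yields $\Psi := [\beta_2] \in E$, and writing $\beta_1 = (\theta_1, \ldots, \theta_k)$ on representatives shows $(\theta_1, \ldots, \theta_k) \ast \beta_2 \in \Theta$, hence $\Psi \leq \Theta$. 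Conversely, given $\Psi \in E$ with $\Psi \leq \Theta$, I choose $\psi \in \Psi$ with codomain a representative of $\calY$; the definition of $\leq$ furnishes operations $\xi_i$ with $(\xi_1, \ldots, \xi_k) \ast \psi \in \Theta$, and the cancellative calculus of fractions (Proposition \ref{20889}) together with Remark \ref{42310} recovers a unique factorization $(\beta_1, \beta_2)$ of $\alpha$ in $\calU/\calG$ with $[\beta_2] = \Psi$.

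With this correspondence in hand, the spine property (Definition \ref{79040}) applied to $\Theta \notin E$ supplies a greatest $\Psi^\ast \in E$ with $\Psi^\ast < \Theta$, and I define $(\alpha_1, \alpha_2)$ to be the associated factorization. For the universal property, let $(\beta_1, \beta_2)$ be any elementary factorization with $\Psi := [\beta_2]$. Maximality gives $\Psi \leq \Psi^\ast$, which on the level of operations says that $\alpha_2$ is obtained by partially composing $\beta_2$ with further operations; this produces an arrow $\gamma\co \calQ \to \calP$ in $\calU/\calG$ with $\gamma \beta_2 = \alpha_2$. Right cancellation in $\calU/\calG$ (inherited from the cancellative calculus of fractions in $\calS(\calO)$) applied to $\alpha_1 \gamma \beta_2 = \alpha_1 \alpha_2 = \alpha = \beta_1 \beta_2$ then yields $\alpha_1 \gamma = \beta_1$, and uniqueness of $\gamma$ is automatic since $\calU/\calG$ is the underlying poset of the generalized poset $\calU$.

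The technical heart of the argument, and what I expect to be the main obstacle, is setting up the correspondence between elementary factorizations of $\alpha$ in $\calU/\calG$ and elementary classes $\Psi \leq \Theta$ in a way that is well-defined modulo the relation collapsing isomorphisms, and in particular checking that distinct classes produce inequivalent factorizations. This requires combining the calculus of fractions, the definition of transformation classes, and the precise way paths in $\calU$ are identified in $\calU/\calG$. Once this bookkeeping is complete, the existence, uniqueness, and universal property of the maximal elementary factorization follow immediately from the spine definition.
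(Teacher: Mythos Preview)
Your proposal is correct and follows essentially the same route as the paper: both translate the problem to the poset $\mathcal{TC}^\ast(\calO)$, invoke the defining property of the spine $E$ to obtain the greatest elementary class below $\Theta=[\alpha]$, and use that $\calU/\calG$ is a poset for uniqueness. The only cosmetic differences are that the paper works directly with chosen representatives rather than framing things as a correspondence, and concludes $\alpha_1\gamma=\beta_1$ from the poset property (at most one arrow $\calK\to\calP$) rather than via right cancellation; your worry about well-definedness of the correspondence is therefore largely unnecessary, since only the forward direction (factorization $\mapsto$ class) and the construction of the single factorization for $\Psi^\ast$ are actually used.
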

\begin{proof}
	Recall that $\calU/\calG$ is a poset. So there is at most one such $\gamma$. If also $(\beta_1,\beta_2)$ satisfies the universal property,
	then we must have $\calQ=\calP$ and consequently $\alpha_1=\beta_1$ as well as $\alpha_2=\beta_2$. This shows the uniqueness statements.
	
	Remains to prove the existence of such a pair: Choose representatives $K,Y$ of $\calK,\calY$. Then $\alpha$ is represented by a unique arrow 
	$a\co K\rightarrow Y$. Note that $a$ is just an operation since $\deg(Y)=1$. Denote the transformation class of $a$ by $\Omega$. Its degree is 
	$\deg(K)=n>1$ and it is non-elementary by assumption. Thus, by the definition of $E$ as the spine, there is a greatest elementary class $\Theta$ 
	with the property $\Theta<\Omega$. This implies that there is an operation $\theta\in\Theta$ and an arrow $q$ in $\calS$ such that $q*\theta=a$ in $\calS$.
	Define $Q:=K*q$ as an object in $\calU$ and further $\calQ:=[Q]$ as an object in $\calU/\calG$. The arrows $q\co K\rightarrow Q$ 
	resp.~$\theta\co Q\rightarrow Y$ in $\calU$ represent arrows $\alpha_1$ resp.~$\alpha_2$ in $\calU/\calG$ such that $\alpha_1\alpha_2=\alpha$ and 
	$\alpha_2$ is elementary.
	
	These two arrows satisfy the universal property: Let $b_1\co K\rightarrow P$ and $b_2\co P\rightarrow Y$ be representatives of 
	$\beta_1\co\calK\rightarrow\calP$ and $\beta_2\co\calP\rightarrow\calY$. Obviously, the transformation class $[b_2]$ of $b_2$ is elementary and satisfies 
	$[b_2]<[a]=\Omega$. Since $\Theta=[\theta]$ is the greatest such class, we obtain $[b_2]\leq[\theta]$. This means that there is an arrow $g$ in $\calS$ such 
	that $g*b_2=\theta$ in $\calS$. If $g$ is interpreted as an arrow $Q\rightarrow P$ in $\calU$, then it represents an arrow $\gamma\co\calQ\rightarrow\calY$
	in $\calU/\calG$ which satisfies $\gamma\beta_2=\alpha_2$. We then also have $\alpha_1\gamma=\beta_1$ since $\calU/\calG$ is a poset.
\end{proof}

We now turn to the announced proposition which concludes the proof of the main theorem.

\begin{prop}\label{29625}
	Let $\alpha\co\calK\rightarrow\calY$ be a non-elementary arrow in $\calU/\calG$ such that $\deg(\calK)=n>1$ and $\deg(\calY)=1$.
	Let $\calM$ be the full subcategory of $\calK\downa(\calU/\calG)_{n-1}$ spanned by the objects $(\calZ,\beta\co\calK\rightarrow\calZ)$
	with $\deg(\calZ)>1$ and
	\[\calL:=\calM\downa(\calY,\alpha)\]
	the descending up link of $(\calY,\alpha)$ with respect to the Morse function $f$ above. Then $\calL$ is contractible.
\end{prop}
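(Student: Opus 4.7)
The plan is to apply item iii) of Subsection \ref{03887} to $\calL$. Since $\calU/\calG$ is a poset, $\calL$---being a subcategory of a comma category over $\calU/\calG$---is itself a generalized poset. An object of $\calL$ can be identified with a factorization $\calK\xrightarrow{\beta}\calZ\xrightarrow{\gamma}\calY$ of $\alpha$ with $2\leq\op{deg}(\calZ)\leq n-1$, and an arrow to $(\calZ',\beta',\gamma')$ with the unique $f\co\calZ\to\calZ'$ satisfying $\beta f=\beta'$ and $f\gamma'=\gamma$. First I would apply Lemma \ref{90173} to $\alpha$ itself, obtaining its maximal elementary factorization $\alpha=\alpha_1\alpha_2$ through an object $\calQ$. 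The non-elementarity of $\alpha$ forces $\alpha_1\neq\id$ and hence $\op{deg}(\calQ)<n$, while the elementarity of $\alpha_2\co\calQ\to\calY$ together with $\op{deg}(\calY)=1$ forces $\op{deg}(\calQ)\geq 2$. Thus $X_0:=(\calQ,\alpha_1,\alpha_2)$ is an object of $\calL$ and will play the role of the base object in item iii).

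Next I would define an endofunctor $F\co\calL\to\calL$ by factoring the second component. For $(\calZ,\beta,\gamma)\in\calL$, if $\gamma$ is already elementary set $\calZ^\circ:=\calZ$, $\sigma:=\id$, $\tau:=\gamma$; otherwise apply Lemma \ref{90173} to $\gamma\co\calZ\to\calY$ to obtain a maximal elementary factorization $\gamma=\sigma\tau$ through an object $\calZ^\circ$. In either case, set $F(\calZ,\beta,\gamma):=(\calZ^\circ,\beta\sigma,\tau)$; the same degree argument as for $\calQ$ yields $2\leq\op{deg}(\calZ^\circ)\leq n-1$, so the target lies in $\calL$. For an arrow $f\co(\calZ_1,\beta_1,\gamma_1)\to(\calZ_2,\beta_2,\gamma_2)$ in $\calL$ with associated factorizations $\gamma_i=\sigma_i\tau_i$, the pair $(f\sigma_2,\tau_2)$ is an elementary factorization of $\gamma_1$, and the universal property of $(\sigma_1,\tau_1)$ produces a unique $F(f)\co\calZ_1^\circ\to\calZ_2^\circ$ with $\sigma_1 F(f)=f\sigma_2$ and $F(f)\tau_2=\tau_1$; it is straightforward to check $F(f)$ is an arrow of $\calL$, and functoriality of $F$ follows from the uniqueness clause in Lemma \ref{90173}.

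Finally, for each $(\calZ,\beta,\gamma)\in\calL$ I would exhibit arrows $(\calZ,\beta,\gamma)\to F(\calZ,\beta,\gamma)\leftarrow X_0$ in $\calL$. The first is represented by $\sigma\co\calZ\to\calZ^\circ$ (the identity when $\gamma$ is elementary), which is an arrow of $\calL$ because $\beta\sigma=\beta\sigma$ and $\sigma\tau=\gamma$ hold tautologically. The second is represented by the arrow $\omega\co\calQ\to\calZ^\circ$ supplied by the universal property of $(\alpha_1,\alpha_2)$ applied to the elementary factorization $(\beta\sigma,\tau)$ of $\alpha$: it satisfies $\alpha_1\omega=\beta\sigma$ and $\omega\tau=\alpha_2$, and is thus an arrow $X_0\to F(\calZ,\beta,\gamma)$ in $\calL$. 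Item iii) of Subsection \ref{03887} then implies $\calL$ is contractible. The main obstacle I anticipate is the careful bookkeeping required for the degree inequalities ensuring every constructed object lies in $\calL$, together with the two distinct invocations of the universal property in Lemma \ref{90173}---one to make $F$ a functor, one to produce $\omega$.
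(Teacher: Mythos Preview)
Your proposal is correct and follows essentially the same argument as the paper: both apply item iii) of Subsection \ref{03887} with base object the maximal elementary factorization of $\alpha$ from Lemma \ref{90173}, and endofunctor $F$ given by replacing the second component of a factorization by its maximal elementary factorization. The paper is slightly more careful to treat the case where the second component is already elementary separately when checking functoriality (since Lemma \ref{90173} literally only applies to non-elementary arrows), but your construction yields the correct arrow $F(f)=f\sigma_2$ in that case as well.
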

\begin{proof}
	Note that the data of an object of $\calL$ is a non-trivial factorization of $\alpha$, i.e.~a pair $(\alpha_1,\alpha_2)$ of arrows
	in $\calU/\calG$ such that $\alpha_1\neq\id\neq \alpha_2$ and $\alpha_1\alpha_2=\alpha$. An arrow from $(\alpha_1,\alpha_2)$ to 
	$(\beta_1,\beta_2)$ is an arrow $\gamma$ such that $\alpha_1\gamma=\beta_1$ and $\gamma\beta_2=\alpha_2$. Clearly, $\calL$ is a poset.
	
	Apply Lemma \ref{90173} above to obtain a maximal elementary factorization $(\alpha_1,\alpha_2)$ of $\alpha$. Note that 
	$(\alpha_1,\alpha_2)$ is an object of $\calL$ and the universal property says that this object is initial among the objects 
	$(\beta_1,\beta_2)$ of $\calL$ with $\beta_2$ elementary.
	
	More generally, for an object $(\epsilon_1,\epsilon_2)$ of $\calL$ with $\epsilon_2$ non-elementary, we can apply the lemma to obtain
	a maximal elementary factorization $(\epsilon^*_1,\epsilon^*_2)$ of $\epsilon_2$. Then define $F(\epsilon_1,\epsilon_2):=
	(\epsilon_1\epsilon^*_1,\epsilon^*_2)$ which is again an object in $\calL$. If $\epsilon_2$ is already elementary, we set $\epsilon^*_1=\id$
	and $\epsilon^*_2=\epsilon_2$ so that $F(\epsilon_1,\epsilon_2)=(\epsilon_1,\epsilon_2)$.
	
	We claim that $F$ extends to a functor $\calL\rightarrow\calL$. So let $(\epsilon_1,\epsilon_2)$ and $(\beta_1,\beta_2)$ be two objects of 
	$\calL$ and $\gamma\co(\epsilon_1,\epsilon_2)\rightarrow(\beta_1,\beta_2)$ an arrow in $\calL$. We have to show that there is an arrow
	$\varphi\co F(\epsilon_1,\epsilon_2)\rightarrow F(\beta_1,\beta_2)$.
	\begin{displaymath}\xymatrix{
		&\bullet\ar[dd]^/-12pt/\alpha\ar[ld]_{\epsilon_1}\ar[dr]^{\beta_1} &\\
		\bullet\ar[rr]^/-12pt/\gamma\ar[dr]_{\epsilon_2}\ar[d]_{\epsilon^*_1} &&
		\bullet\ar[dl]^{\beta_2}\ar[d]^{\beta^*_1}\\
		\bullet\ar[r]_{\epsilon^*_2}\ar@/_25pt/@{-->}[rr]^\varphi &\bullet &\bullet\ar[l]^{\beta^*_2}
	}\end{displaymath}
	Observe first that if $\epsilon^*_1=\id$, then $\gamma\beta^*_1$ is an arrow $F(\epsilon_1,\epsilon_2)\rightarrow 
	F(\beta_1,\beta_2)$ as required. Else, observe that the pair $(\gamma\beta^*_1,\beta^*_2)$ is another elementary factorization of 
	$\epsilon_2$. Thus, by the universal property, we get a unique arrow $\varphi$ such that $\varphi\beta^*_2=\epsilon^*_2$ and $\epsilon^*_1\varphi=
	\gamma\beta^*_1$. This amounts to an arrow $F(\epsilon_1,\epsilon_2)\rightarrow F(\beta_1,\beta_2)$.
	
	Since $F(\epsilon_1,\epsilon_2)$ is an elementary factorization of $\alpha$, we get an arrow $(\alpha_1,\alpha_2)\rightarrow F(\epsilon_1,\epsilon_2)$ 
	for each object $(\epsilon_1,\epsilon_2)$ in $\calL$. Furthermore, $\epsilon^*_1$ clearly gives an arrow 
	$(\epsilon_1,\epsilon_2)\rightarrow F(\epsilon_1,\epsilon_2)$. The claim of the proposition now follows from item iii) in Subsection \ref{03887}
	applied to the functor $F$ and the object $(\alpha_1,\alpha_2)$.
\end{proof}

\subsection{Applications}

\subsubsection{Suboperads of endomorphism operads}\label{62712}

Consider the example with squares and triangles, the cube cutting operads (planar or symmetric) and the local similarity operads from Subsubsection \ref{42483}.
There, we have seen that they all satisfy the cancellative calculus of fractions. The squares and triangles operad and the cube cutting 
operads are of finite type. The local similarity operads are of finite type if and only if there are only finitely many $\mathrm{Sim}_X$-equivalence classes of 
balls, so we will assume this in the following. Then, in all three cases, the groupoid $\calI(\calO)$ is finite.

In order to apply Theorem \ref{41762}, it therefore remains to check color-tameness. The cube cutting operads are monochromatic, so color-tameness
is trivially satisfied here. The squares and triangles operad has two colors (the square and the triangle). It is easy to check that any sequence of at least
five squares and triangles is the domain of a very elementary arrow in $\calS(\calO)$. Consequently, it is color-tame as well. In general, a local similarity operad
is not color-tame.

As a special case, we obtain that the higher dimensional Thompson groups $nV$ are of type $F_\infty$. This has been shown before in \cite{f-m-w-z:tbg}.

The one dimensional cube cutting operads (planar or symmetric) with trivial groupoid of degree $1$ operations yield the groups of piecewise linear
homeomorphisms of the unit (Cantor) interval studied in \cite{ste:gop} and from the main theorem, it follows that they are of type $F_\infty$. This
has already been shown in \cite{ste:gop}.

The finiteness result for the local similarity groups has also been obtained in \cite{fa-hu:fpo}*{Theorem 6.5}. The hypothesis in this theorem consists 
of demanding that the finite similarity structure posseses only finitely many $\mathrm{Sim}_X$-equivalence classes of balls and of the property rich in
simple contractions which is implied by the easier to state property rich in ball contractions \cite{fa-hu:fpo}*{Definition 5.12}. It is not hard to see 
that the latter property exactly means that $\calO$, the local similarity operad associated to $\mathrm{Sim}_X$, is color-tame.

\subsubsection{Ribbon Thompson group}

The braided operad $\calO$ with transformations discussed in Subsubsection \ref{94422} satisfies the cancellative calculus of fractions.
It is monochromatic and therefore color-tame. There is only one very elementary transformation
class and thus, $\calO$ is of finite type. The groupoid $\calI(\calO)$ is the group $\ZZ$ which is of type $F^+_\infty$. The main theorem
yields that the Ribbon Thompson group $RV$ is of type $F_\infty$.

\begin{bibdiv}
\begin{biblist}

\bib{ben:sro}{article}{
   author={B{\'e}nabou, Jean},
   title={Some remarks on $2$-categorical algebra. I},
   journal={Bull. Soc. Math. Belg. S\'er. A},
   volume={41},
   date={1989},
   number={2},
   pages={127--194}
}

\bib{be-br:mta}{article}{
   author={Bestvina, Mladen},
   author={Brady, Noel},
   title={Morse theory and finiteness properties of groups},
   journal={Invent. Math.},
   volume={129},
   date={1997},
   number={3},
   pages={445--470}
}

\bib{bor:hoc}{book}{
   author={Borceux, Francis},
   title={Handbook of categorical algebra. 1},
   series={Encyclopedia of Mathematics and its Applications},
   volume={50},
   publisher={Cambridge University Press},
   place={Cambridge},
   date={1994}
}

\bib{bri:hdt}{article}{
   author={Brin, Matthew G.},
   title={Higher dimensional Thompson groups},
   journal={Geom. Dedicata},
   volume={108},
   date={2004},
   pages={163--192}
}

\bib{bri:tao}{article}{
   author={Brin, Matthew G.},
   title={The algebra of strand splitting. I. A braided version of
   Thompson's group $V$},
   journal={J. Group Theory},
   volume={10},
   date={2007},
   number={6},
   pages={757--788}
}

\bib{br-sq:gop}{article}{
   author={Brin, Matthew G.},
   author={Squier, Craig C.},
   title={Groups of piecewise linear homeomorphisms of the real line},
   journal={Invent. Math.},
   volume={79},
   date={1985},
   number={3},
   pages={485--498}
}

\bib{bro:fpo}{article}{
   author={Brown, Kenneth S.},
   title={Finiteness properties of groups},
   booktitle={Proceedings of the Northwestern conference on cohomology of
   groups (Evanston, Ill., 1985)},
   journal={J. Pure Appl. Algebra},
   volume={44},
   date={1987},
   number={1-3},
   pages={45--75}
}

\bib{br-ge:ait}{article}{
   author={Brown, Kenneth S.},
   author={Geoghegan, Ross},
   title={An infinite-dimensional torsion-free ${\rm FP}_{\infty }$ group},
   journal={Invent. Math.},
   volume={77},
   date={1984},
   number={2},
   pages={367--381}
}

\bib{b-f-m-w-z:tbt}{article}{
   author={Bux, Kai-Uwe},
   author={Fluch, Martin G.},
   author={Marschler, Marco},
   author={Witzel, Stefan},
   author={Zaremsky, Matthew C. B.},
   title={The braided Thompson's groups are of type $F_\infty$},
   journal={{\tt arXiv:1210.2931v2}},
   date={2014}
}

\bib{cis:lcd}{article}{
   author={Cisinski, Denis-Charles},
   title={La classe des morphismes de Dwyer n'est pas stable par retractes},
   language={French, with English summary},
   journal={Cahiers Topologie G\'eom. Diff\'erentielle Cat\'eg.},
   volume={40},
   date={1999},
   number={3},
   pages={227--231}
}

\bib{dw-ka:csl}{article}{
   author={Dwyer, W. G.},
   author={Kan, D. M.},
   title={Calculating simplicial localizations},
   journal={J. Pure Appl. Algebra},
   volume={18},
   date={1980},
   number={1},
   pages={17--35}
}

\bib{dyd:asp}{article}{
   author={Dydak, Jerzy},
   title={A simple proof that pointed FANR-spaces are regular fundamental
   retracts of ANR's},
   language={English, with Russian summary},
   journal={Bull. Acad. Polon. Sci. S\'er. Sci. Math. Astronom. Phys.},
   volume={25},
   date={1977},
   number={1},
   pages={55--62}
}

\bib{far:aop}{article}{
   author={Farley, Daniel S.},
   title={Actions of picture groups on CAT(0) cubical complexes},
   journal={Geom. Dedicata},
   volume={110},
   date={2005},
   pages={221--242}
}

\bib{far:fac}{article}{
   author={Farley, Daniel S.},
   title={Finiteness and $\rm CAT(0)$ properties of diagram groups},
   journal={Topology},
   volume={42},
   date={2003},
   number={5},
   pages={1065--1082}
}

\bib{far:haf}{article}{
   author={Farley, Daniel S.},
   title={Homological and finiteness properties of picture groups},
   journal={Trans. Amer. Math. Soc.},
   volume={357},
   date={2005},
   number={9},
   pages={3567--3584 (electronic)}
}

\bib{fa-hu:fpo}{article}{
   author={Farley, Daniel S.},
   author={Hughes, Bruce},
   title={Finiteness properties of some groups of local similarities},
   journal={Proc. Edinb. Math. Soc. (2)},
   volume={58},
   date={2015},
   number={2},
   pages={379--402}
}

\bib{fie:act}{article}{
   author={Fiedorowicz, Zbigniew},
   title={A counterexample to a group completion conjecture of J. C. Moore},
   journal={Algebr. Geom. Topol.},
   volume={2},
   date={2002},
   pages={33--35 (electronic)}
}

\bib{fi-le:aac}{article}{
   author={Fiore, Marcelo},
   author={Leinster, Tom},
   title={An abstract characterization of Thompson's group $F$},
   journal={Semigroup Forum},
   volume={80},
   date={2010},
   number={2},
   pages={325--340}
}

\bib{f-l-s:eco}{article}{
   author={Fiore, Thomas M.},
   author={L{\"u}ck, Wolfgang},
   author={Sauer, Roman},
   title={Euler characteristics of categories and homotopy colimits},
   journal={Doc. Math.},
   volume={16},
   date={2011},
   pages={301--354}
}

\bib{f-m-w-z:tbg}{article}{
   author={Fluch, Martin G.},
   author={Marschler, Marco},
   author={Witzel, Stefan},
   author={Zaremsky, Matthew C. B.},
   title={The Brin-Thompson groups $sV$ are of type $\text{F}_\infty$},
   journal={Pacific J. Math.},
   volume={266},
   date={2013},
   number={2},
   pages={283--295}
}

\bib{fr-he:shi}{article}{
   author={Freyd, Peter},
   author={Heller, Alex},
   title={Splitting homotopy idempotents. II},
   journal={J. Pure Appl. Algebra},
   volume={89},
   date={1993},
   number={1-2},
   pages={93--106}
}
	
\bib{ga-zi:cof}{book}{
   author={Gabriel, P.},
   author={Zisman, M.},
   title={Calculus of fractions and homotopy theory},
   series={Ergebnisse der Mathematik und ihrer Grenzgebiete, Band 35},
   publisher={Springer-Verlag New York, Inc., New York},
   date={1967}
}

\bib{geo:tmi}{book}{
   author={Geoghegan, Ross},
   title={Topological methods in group theory},
   series={Graduate Texts in Mathematics},
   volume={243},
   publisher={Springer, New York},
   date={2008}
}

\bib{go-ja:sht}{book}{
   author={Goerss, Paul G.},
   author={Jardine, John F.},
   title={Simplicial homotopy theory},
   series={Modern Birkh\"auser Classics},
   publisher={Birkh\"auser Verlag},
   place={Basel},
   date={2009}
}

\bib{gu-sa:dg}{article}{
   author={Guba, Victor},
   author={Sapir, Mark},
   title={Diagram groups},
   journal={Mem. Amer. Math. Soc.},
   volume={130},
   date={1997},
   number={620}
}

\bib{hug:lsa}{article}{
   author={Hughes, Bruce},
   title={Local similarities and the Haagerup property},
   note={With an appendix by Daniel S. Farley},
   journal={Groups Geom. Dyn.},
   volume={3},
   date={2009},
   number={2},
   pages={299--315}
}

\bib{jo-st:btc}{article}{
   author={Joyal, Andr{\'e}},
   author={Street, Ross},
   title={Braided tensor categories},
   journal={Adv. Math.},
   volume={102},
   date={1993},
   number={1},
   pages={20--78}
}

\bib{jo-st:tgo}{article}{
   author={Joyal, Andr{\'e}},
   author={Street, Ross},
   title={The geometry of tensor calculus. I},
   journal={Adv. Math.},
   volume={88},
   date={1991},
   number={1},
   pages={55--112}
}

\bib{lue:tto}{article}{
   author={L{\"u}ck, Wolfgang},
   title={The type of the classifying space for a family of subgroups},
   journal={J. Pure Appl. Algebra},
   volume={149},
   date={2000},
   number={2},
   pages={177--203}
}

\bib{mac:cft}{book}{
   author={Mac Lane, Saunders},
   title={Categories for the working mathematician},
   series={Graduate Texts in Mathematics},
   volume={5},
   edition={2},
   publisher={Springer-Verlag, New York},
   date={1998}
}

\bib{mcd:otc}{article}{
   author={McDuff, Dusa},
   title={On the classifying spaces of discrete monoids},
   journal={Topology},
   volume={18},
   date={1979},
   number={4},
   pages={313--320}
}

\bib{mon:gop}{article}{
   author={Monod, Nicolas},
   title={Groups of piecewise projective homeomorphisms},
   journal={Proc. Natl. Acad. Sci. USA},
   volume={110},
   date={2013},
   number={12},
   pages={4524--4527}
}

\bib{par:ucc}{article}{
   author={Par{\'e}, Robert},
   title={Universal covering categories},
   journal={Rend. Istit. Mat. Univ. Trieste},
   volume={25},
   date={1993},
   number={1-2},
   pages={391--411 (1994)}
}

\bib{qui:hak}{article}{
   author={Quillen, Daniel},
   title={Higher algebraic $K$-theory. I},
   book={
      publisher={Springer},
      place={Berlin},
   },
   date={1973},
   pages={85--147. Lecture Notes in Math., Vol. 341}
}

\bib{qui:hpo}{article}{
   author={Quillen, Daniel},
   title={Homotopy properties of the poset of nontrivial $p$-subgroups of a
   group},
   journal={Adv. in Math.},
   volume={28},
   date={1978},
   number={2},
   pages={101--128}
}

\bib{sel:aso}{article}{
   author={Selinger, P.},
   title={A survey of graphical languages for monoidal categories},
   conference={
      title={New structures for physics},
   },
   book={
      series={Lecture Notes in Phys.},
      volume={813},
      publisher={Springer, Heidelberg},
   },
   date={2011},
   pages={289--355}
}

\bib{spa:at}{book}{
   author={Spanier, Edwin H.},
   title={Algebraic topology},
   publisher={McGraw-Hill Book Co., New York-Toronto, Ont.-London},
   date={1966}
}

\bib{squ:tha}{article}{
   author={Squier, Craig C.},
   title={The homological algebra of Artin groups},
   journal={Math. Scand.},
   volume={75},
   date={1994},
   number={1},
   pages={5--43}
}

\bib{ste:gop}{article}{
   author={Stein, Melanie},
   title={Groups of piecewise linear homeomorphisms},
   journal={Trans. Amer. Math. Soc.},
   volume={332},
   date={1992},
   number={2},
   pages={477--514}
}

\bib{tho:caa}{article}{
   author={Thomason, R. W.},
   title={Cat as a closed model category},
   journal={Cahiers Topologie G\'eom. Diff\'erentielle},
   volume={21},
   date={1980},
   number={3},
   pages={305--324}
}

\bib{tho:hci}{article}{
   author={Thomason, R. W.},
   title={Homotopy colimits in the category of small categories},
   journal={Math. Proc. Cambridge Philos. Soc.},
   volume={85},
   date={1979},
   number={1},
   pages={91--109}
}

\bib{thesis}{thesis}{
   author={Thumann, Werner},
   title={Operad groups},
   type={PhD thesis, KIT Karlsruhe},
   date={2015},
   note={{\tt urn:nbn:de:swb:90-454145}}
}

\bib{wei:wdt}{article}{
   author={Weiss, Michael},
   title={What does the classifying space of a category classify?},
   journal={Homology Homotopy Appl.},
   volume={7},
   date={2005},
   number={1},
   pages={185--195}
}

\end{biblist}
\end{bibdiv}

\end{document}